\newtheorem{thm}{Theorem}[section]
\newtheorem{lem}[thm]{Lemma}
\newtheorem{cor}[thm]{Corollary}
\newtheorem{prop}[thm]{Proposition}
\newtheorem{thmi}{Theorem}
\newtheorem{cori}[thmi]{Corollary}
\newtheorem{ques}{Question} 
\newtheorem{propappend}{Proposition}
\theoremstyle{definition}
\newtheorem{defnappend}[propappend]{Definition}
\newtheorem{defn}[thm]{Definition}
\newtheorem{rem}[thm]{Remark}
\newtheorem{exmp}[thm]{Example}
\newtheorem{notation}[thm]{Notation}
\newtheorem{claim}{Claim}
\newtheorem{claim*}{Claim}
\DeclareMathOperator{\diam}{diam}
\DeclareMathOperator{\hull}{hull}
\newcommand{\field}[1]{\mathbb{#1}}
\newcommand{\Rmnum}[1]{\mathbf{{\expandafter\@slowromancap\romannumeral #1@}}}
\newcommand{\tsh}[1]{\left\{\kern-.7ex\left\{#1\right\}\kern-.7ex\right\}}
\newcommand{\Tsh}[2]{\tsh{#2}_{#1}}
\newcommand{\ignore}[2]{\Tsh{#2}{#1}}
\newcommand{\co}{\colon}
\newcommand*\mc[1]{\mathcal{#1}}
\newcommand*\mf[1]{\mathfrak{#1}}
\newcommand\nest{\sqsubseteq}
\newcommand\trans{\pitchfork}
\newcommand{\relevant}{\mathbf{Rel}}
\newcommand{\dist}{d}
\newcommand{\propnest}{\sqsubsetneq}
\newcommand{\Path}{\mathcal{P}}
\newcommand*{\sqc}{strongly quasiconvex}
\newcommand*{\sqcity}{strong quasiconvexity}
\newcommand*{\SQC}{Strongly quasiconvex}
\newcommand*{\SQCity}{Strong quasiconvexity}
\def\X{\mathcal{X}}
\def\PF{\mathbf{F}}
\def\PE{\mathbf{E}}
\def\P{\mathbf{P}}
\def\gate{\mathfrak{g}}
\def\myparagraph{\textbf}
\def\cpproj{\mathfrak{p}}
\def\proj{\tau}
\newcommand*\notinfinal[1]{}
\title[Convexity in HHS]{ 
Convexity in Hierarchically Hyperbolic Spaces}
\author{Jacob Russell}
\author{Davide Spriano}
\author{Hung Cong Tran}
\date{\today}
\begin{document}
\begin{abstract}

Hierarchically hyperbolic spaces (HHSs) are a large class of spaces that provide a unified framework for studying the mapping class group, right-angled Artin and Coxeter groups, and many 3--manifold groups. We investigate {{\sqc}} subsets in this class and characterize them in terms of their contracting properties, relative divergence, the coarse median structure, and the hierarchical structure itself. Along the way, we obtain new tools to study HHSs, including two new equivalent definitions of hierarchical quasiconvexity and a version of the bounded geodesic image property for {\sqc} subsets. Utilizing our characterization, we prove that the hyperbolically embedded subgroups of hierarchically hyperbolic groups are precisely those that are almost malnormal and {\sqc}, producing a new result in the case of the mapping class group.  We also apply our characterization to study {\sqc} subsets in several specific examples of HHSs. We show that while many commonly studied HHSs have the property that that every {\sqc} subset is either hyperbolic or coarsely covers the entire space, right-angled Coxeter groups exhibit a wide variety of {\sqc} subsets.

\end{abstract}

\maketitle

\tableofcontents

\section{Introduction}

From Gromov's original work on hyperbolic groups to the resolution of the virtual Haken conjecture, quasiconvex subsets have played a central role in the study of hyperbolic metric spaces  and groups \cite{Gromov1,Gromov2,Riches_to_Raags, Agol_virt_haken}. A subset $Y$ is \emph{quasiconvex} if every geodesic based on $Y$ in contained in a fixed neighborhood of $Y$. A central feature of quasiconvex subsets of hyperbolic spaces is their quasi-isometry invariance, i.e., the image of a quasiconvex subset of a hyperbolic space under a quasi-isometry is quasiconvex.

Outside of hyperbolic spaces, quasiconvexity  fails to be a quasi-isometry invariant. However, a strengthening of this definition to require ``quasiconvexity with respect to quasi-geodesics" and not just geodesics is sufficient to ensure quasi-isometry invariance. A subset $Y$ of a quasi-geodesic metric space $X$ is \emph{{\sqc}} if every quasi-geodesic based on $Y$ is contained in a bounded neighborhood of $Y$, where the radius of the neighborhood is determined by the quasi-geodesic constants. {\SQCity} provides a ``coarse-ification" of the classical definition of a convex subset that ensures that the image of a {\sqc} subset under a quasi-isometry will be {\sqc}, regardless of the geometry of the ambient space. {\SQC} subsets are therefore an avenue to study the geometry of any space up to quasi-isometry.

The study of {\sqc} geodesics in non-hyperbolic spaces (often called \emph{Morse geodesics}) has been a vibrant and fruitful area of research over the last decade (for example \cite{CS_Boundary,ACGH,OOS_Lacunary,DMS_divergence}).  Recently, considerable interest has arisen  in understanding general {\sqc} subsets in non-hyperbolic spaces.

The third author studied {\sqc} subsets and subgroups in \cite{Tran2017} and showed that many important properties of quasiconvex subsets in hyperbolic spaces persist for {\sqc} subsets of any geodesic metric space. These result have found applications in understanding the cell stabilizers of groups acting on CAT(0) cube complexes \cite{Manning_Groves_Non-proper} and the splittings of groups over co-dimension 1 subgroups \cite{Petrosyan_decomposing_groups}.  Using the name Morse instead of {\sqc}, Genevois studied  {\sqc} subsets of CAT(0) cube complexes in \cite{Genevois_Hyp_in_CAT(0)} and Kim studied {\sqc}   subgroups of the mapping class groups in \cite{Kim2017}. {\SQC} subgroup that are also hyperbolic were introduced by Durham-Taylor as \emph{stable} subgroups \cite{DurhamTaylorStability} and have received considerable study; for a sampling see \cite{KMT,MR3742433,AMST,B,ABD}.

In this paper, we are primarily interested in understanding the {\sqc} subsets of hierarchically hyperbolic spaces (HHSs). Introduced by Behrstock-Hagen-Sisto in \cite{BHS} and refined in \cite{BHS17b}, examples of hierarchically hyperbolic spaces include hyperbolic spaces, the mapping class group of a surface, Teichm\"uller space with either the Weil-Petersson or Teichm\"uller metrics, many cocompactly cubulated groups, and the fundamental groups of $3$--manifolds without Nil or Sol components. Important consequences of hierarchical hyperbolicity include a Masur-Minsky style distance formula \cite{BHS17b}, a quadratic isoperimetric inequality \cite{BHS17b}, restrictions on morphisms from higher rank lattices \cite{Haettel_higher_rank_lattices}, a largest acylindrical action on a hyperbolic space \cite{ABD}, rank-rigidity and Tits alternative theorems \cite{DHS_HHS_Boundary},   control over the top-dimensional quasi-flats \cite{BHS_HHS_Quasiflats}, and bounds on the asymptotic dimension \cite{BHSAsyDim}. The definition and much of the theory of  hierarchically hyperbolic  spaces is inspired by the Masur-Minsky subsurface projection machinery for the mapping class group. Our investigation is therefore a natural extension of the problem purposed by Farb in \cite[Problem 2.3.8]{FarbProblems} to study convexity in the mapping class group.

Heuristically, a hierarchically hyperbolic space consists of a metric space ${X}$ with an associated collection of hyperbolic spaces $\mf{S}$, such that for each space $Z$ in $\mf{S}$, there is a projection map $ X \rightarrow Z$.  The philosophy of hierarchically hyperbolic spaces is that one can study the coarse geometry of $X$ by studying the projection of $X$ to each of the spaces in $\mf{S}$.  In this paper, we shall consider hierarchically hyperbolic spaces satisfying the \emph{bounded domain dichotomy}: a minor regularity condition requiring every space in $\mf{S}$ to have either infinite or uniformly bounded diameter. The bounded domain dichotomy simplifies the statements and proofs of our results while being satisfied by all of the examples of hierarchically hyperbolic spaces given above and more broadly by all hierarchically hyperbolic groups.\\

\noindent\textbf{Equivalent conditions to being {\sqc}.} The main goal of this paper is to provide several equivalent conditions for a subset of a hierarchically hyperbolic space to be {\sqc}.  A major theme is that several different notions of convexity that coincide with being quasiconvex in a hyperbolic space, coincide with being {\sqc} in a hierarchically hyperbolic spaces. One such notion of convexity is that of contracting subsets. A subset $Y\subseteq  X$ of a quasi-geodesic space is \emph{contracting}   if there exists a coarsely Lipschitz  retraction  $r\colon  X \rightarrow Y$ under which large balls far from $Y$ have images with uniformly bounded diameter. 
Being contracting generalizes the strong contracting behavior of the closest point projection onto a convex subset of the hyperbolic plane. In general, {\sqc} subsets are not contracting (see Example \ref{ex:quasiconvex_not_contracting}), however these two notions of convexity tend to agree in the presence of non-positive curvature. Indeed, it is a classical fact that a subset of a hyperbolic space is {\sqc} if and only if it is contracting; the same is true for subsets of a CAT(0) cube complex  \cite{Genevois_Hyp_in_CAT(0),CS_Boundary}. The first of our equivalent condition is to extend these results to hierarchically hyperbolic spaces.

\begin{thmi}[{\SQC} and contracting are equivalent]\label{intro:contracting_iff_quasiconvex}
Let ${X}$ be a hierarchically hyperbolic space with the bounded domain dichotomy.  A subset $Y \subseteq {X}$ is {\sqc} if and only if $Y$ is contracting.
\end{thmi}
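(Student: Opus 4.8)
The plan is to prove the two implications separately: the direction \emph{contracting $\Rightarrow$ {\sqc}} will be a general metric-space argument, while \emph{{\SQC} $\Rightarrow$ contracting} will use the full hierarchical machinery. Throughout, $\asymp$ denotes coarse equality, $\pi_U\colon X\to\mathcal{C}U$ is the projection to the hyperbolic space $\mathcal{C}U$, and $d_U(a,b):=d_{\mathcal{C}U}(\pi_U a,\pi_U b)$.

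For \emph{contracting $\Rightarrow$ {\sqc}}, let $r\colon X\to Y$ be a contracting retraction with its constants and let $\gamma$ be an $(L,C)$-quasi-geodesic with endpoints on $Y$; I would bound the distance from $\gamma$ to $Y$ as follows. If the endpoints of $\gamma$ are close it is short and there is nothing to prove, so suppose $\gamma$ reaches distance $D$ from $Y$ at a point $p$. Choose a scale $N$, depending only on $L,C$ and the contracting constants, so that $r$ collapses every ball of radius $LN+C$ centred at distance $\geq 2(LN+C)$ from $Y$ to a set of diameter $O(1)$. Covering the part of $\gamma$ lying in the far zone $\{z:d(z,Y)\geq 2(LN+C)\}$ around $p$ by overlapping such balls along $\gamma$ shows that this part has $r$-image of diameter $O(1)$; since it has length $\asymp D$ but can be replaced by a path of length $O(N)$ through that image, the quasi-geodesic inequality for $\gamma$ forces $D=O_{L,C}(1)$.

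For \emph{{\SQC} $\Rightarrow$ contracting}, I would first check that a {\sqc} $Y$ is hierarchically quasiconvex. Using one of the alternative characterizations of hierarchical quasiconvexity established earlier, it is enough to verify that each $\pi_U(Y)$ is uniformly quasiconvex in $\mathcal{C}U$ and that $Y$ is coarsely closed under the coarse median. The first should follow because any two points of $Y$ are joined by a uniform hierarchy path, which stays uniformly near $Y$ by {\sqcity}, and hierarchy paths project to uniform unparametrized quasi-geodesics, so a $\mathcal{C}U$-geodesic between two points of $\pi_U(Y)$ lies uniformly near $\pi_U(Y)$; the second should follow because the coarse median of three points of $Y$ lies uniformly close to a hierarchy path between two of them. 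Then the gate map $\gate_Y\colon X\to Y$ exists and is a uniformly coarsely Lipschitz coarse retraction with $\pi_U\circ\gate_Y\asymp\gate_{\pi_U(Y)}\circ\pi_U$ for all $U\in\mf{S}$. To see that $\gate_Y$ is contracting, fix $x$ with $d(x,Y)=d$ large, take $x_1,x_2\in B(x,d/2)$, and set $y_i=\gate_Y(x_i)$, so $d(x_i,Y)\geq d/2$ and $d(x_1,x_2)\leq d$. The distance formula gives $d(y_1,y_2)\asymp\sum_U\Tsh{s}{d_U(y_1,y_2)}$ with $d_U(y_1,y_2)\asymp d_{\mathcal{C}U}\big(\gate_{\pi_U(Y)}(\pi_U x_1),\gate_{\pi_U(Y)}(\pi_U x_2)\big)$; since $\mathcal{C}U$ is uniformly hyperbolic and $\pi_U(Y)$ uniformly quasiconvex, $\gate_{\pi_U(Y)}$ is a uniformly strongly contracting coarse closest-point projection, so the $U$-term is negligible unless $U$ lies in the set $\mathcal{U}$ of domains with $d_U(\pi_U x_1,\pi_U(Y))<2\,d_U(x_1,x_2)$.

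The crux will be to bound $\sum_{U\in\mathcal{U}}\Tsh{s}{d_U(y_1,y_2)}$ uniformly; this is where {\sqcity} must be used beyond hierarchical quasiconvexity, since a product-region-like $Y$ fails here — one can translate a point far from $Y$ across $Y$'s flat directions without ever leaving $\pi_U(Y)$ in those $U$. Here I would invoke the bounded geodesic image property for {\sqc} subsets, which limits, in terms of $d(x,Y)$, the domains on which a point far from $Y$ can be coarsely on $\pi_U(Y)$ while $\pi_U(Y)$ has large diameter; combined with the realization/consistency machinery and with the concatenation of a hierarchy path from $y_1$ to $x_1$, a geodesic from $x_1$ to $x_2$, and a hierarchy path from $x_2$ to $y_2$, any unbounded spreading of $\gate_Y$ on $B(x,d/2)$ would produce a uniform quasi-geodesic with endpoints on $Y$ passing through $x_1$ at distance $\gtrsim d$ from $Y$, contradicting {\sqcity} once $d$ is large. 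The hard part will be exactly this last step: {\sqcity} only directly constrains quasi-geodesics with endpoints on $Y$, so the real work — the content of the {\sqc} bounded geodesic image property — is to convert ``$\gate_Y$ spreads on a far ball'' into such a bad quasi-geodesic; the naive concatenation argument on its own yields only a sub-proportional bound $\diam\gate_Y(B(x,d/2))=O(d)$, which must then be upgraded using the hierarchical structure.
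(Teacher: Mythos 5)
Your two-part plan is broadly the right shape, and the direction contracting $\Rightarrow$ strongly quasiconvex is workable, though your sketch is circular as written: covering the far segment of $\gamma$ by $m$ balls of radius comparable to its distance from $Y$ gives an $r$-image of diameter $O(m)$, not $O(1)$, and $m$ grows with the (as yet unbounded) length of the far segment. One more pass through the quasi-geodesic inequality --- bounding $d(\gamma(s),\gamma(t))$ by the $r$-images of its endpoints plus the diameter of $r(\gamma|_{[s,t]})$, then feeding that bound back into the length estimate --- closes it. The paper packages this same computation as ``contracting implies at least quadratic lower relative divergence'' (Proposition \ref{prop:contracting_implies_quad}) and then quotes the equivalence between strong quasiconvexity and completely superlinear divergence; your direct version is essentially equivalent once the circularity is repaired.

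The converse direction has a genuine gap. You correctly reduce to showing that the gate map is contracting and correctly diagnose that hierarchical quasiconvexity alone cannot do it (the product-region example). But the tool you propose --- a ``bounded geodesic image property for strongly quasiconvex subsets'' --- is Proposition \ref{prop: active subpaths for quasiconvex} in the paper, and its proof begins by invoking Theorem \ref{thm:classification_of_quasiconvex_subset}, the very characterization you are trying to prove; reaching for it here is circular. The ingredient actually missing from your plan is the \emph{orthogonal projection dichotomy} (Definition \ref{defn: orthogonal projection dichotomy}): if $\diam(\pi_U(Y))$ is large, then for every $V\perp U$ the projection $\pi_V(Y)$ coarsely covers $CV$. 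This is exactly what rules out product-region-like behaviour, and it is derived from strong quasiconvexity by an explicit construction: inside a product region $\P_U$ one builds \emph{spiral paths} --- quasi-geodesics alternating between the $\PF_U$- and $\PE_U$-factors with geometrically increasing step lengths (Lemma \ref{lem: spiral paths are quasi-geodesics}, Propositions \ref{prop: Hung's construction in product} and \ref{prop: Hung's construction}) --- and, via active subpaths, splices them into genuine quasi-geodesics with endpoints on $Y$ (Lemma \ref{lem: substituting quasi-geodesic}), producing a bad quasi-geodesic whenever the dichotomy fails. Your concatenation of hierarchy path / geodesic / hierarchy path is not automatically a quasi-geodesic (there is no control on backtracking in each $CU$), and, as you observe, that naive argument only yields $\diam\gate_Y(B(x,d/2))=O(d)$. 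Once the dichotomy is established, the final step (Proposition \ref{prop: HQC + Dichotomy implies contracting}) proceeds by a domain-by-domain case analysis on the relation between an arbitrary $U$ and a witness domain $V$ where $d_V(x,\gate_Y(x))$ is large --- the orthogonal case $U\perp V$ being killed precisely by the dichotomy --- rather than by the relevant-domain summation you sketch.
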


In \cite{ACGH}, a different notion of contracting subset is considered, and it is shown that a subset of a geodesic metric space is {\sqc} if and only if the subset is \textit{sublinearly contracting}. Example \ref{ex:quasiconvex_not_contracting} demonstrates that our definition of contracting (Definition \ref{defn:contracting}) is strictly stronger than sublinear contracting, but the two notions agree in the setting of hierarchically hyperbolic spaces. Another key difference between our definition of contracting and that in \cite{ACGH} is that we do not require the contracting map $r \colon {X} \rightarrow Y$ to be the closest point projection, but allow for any coarsely Lipschitz retraction that has the contracting property. This has the advantage of turning contracting into a quasi-isometry invariant directly from the definition and is crucial in allowing us to utilize a naturally occurring retraction map in hierarchically hyperbolic spaces that is far more tractable than the closest point projection.

The third notion of convexity considered is \emph{hierarchical quasiconvexity}, which is specific to hierarchically hyperbolic spaces. Introduced in \cite{BHS17b} by Behrstock-Hagen-Sisto, hierarchically quasiconvex subsets have played a central role in the study of hierarchically hyperbolic space \cite{BHS17b,BHSAsyDim,BHS_HHS_Quasiflats}. Notably, a hierarchical quasiconvex subset of an HHS is itself an HHS. 
While hierarchically quasiconvex subsets are not always {\sqc}, we classify precisely when the two concepts agree. {\SQC} subsets are exactly the hierarchically quasiconvex subsets that satisfy the \emph{orthogonal projection dichotomy} (Definition \ref{defn: orthogonal projection dichotomy}), which describes how the projections of a {\sqc} subset to each of the associated hyperbolic spaces must look.

\begin{thmi}[{\SQC} subsets are hierarchically hyperbolic]\label{intro:quasiconvex and HQC}
Let ${X}$ be a hierarchically hyperbolic space with the bounded domain dichotomy.  A subset $Y \subseteq {X}$ is {\sqc} if and only if $Y$ is hierarchically quasiconvex and has the orthogonal projection dichotomy. In particular, if $Y\subseteq X$ is {\sqc}, then $Y$ is hierarchically hyperbolic.
\end{thmi}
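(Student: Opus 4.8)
The plan is to prove the two implications separately; the closing assertion --- that a {\sqc} set $Y$ is hierarchically hyperbolic --- is then immediate from \cite{BHS17b}, where a hierarchically quasiconvex subset of an HHS is shown to be an HHS with the inherited structure. Throughout I write $\mathcal CU$ for the hyperbolic space attached to $U\in\mathfrak S$, $\pi_U\colon X\to\mathcal CU$ for the projection, and $\mathbf P_U\simeq\mathbf F_U\times\mathbf E_U$ for the standard product region of $U$, and I use freely the distance formula, the existence of hierarchy paths joining any pair of points, the realization theorem, and a hierarchy-path characterization of hierarchical quasiconvexity --- ``the $\pi_U(Y)$ are uniformly quasiconvex and every hierarchy path joining two points of $Y$ lies in a uniform neighborhood of $Y$'' --- one of the equivalent formulations developed earlier in the paper.

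\emph{{\SQC} $\Rightarrow$ hierarchically quasiconvex with the orthogonal projection dichotomy.} Each $\pi_U(Y)$ is uniformly quasiconvex: given a geodesic of $\mathcal CU$ with endpoints in $\pi_U(Y)$, lift the endpoints to $a,b\in Y$ and join them by a hierarchy path $\rho$ of $X$; {\sqcity} keeps $\rho$ in a neighborhood of $Y$ of radius depending only on the HHS constants, so $\pi_U(\rho)$ is a uniform unparametrized quasigeodesic with the same endpoints, and the Morse lemma in the hyperbolic space $\mathcal CU$ puts the original geodesic uniformly close to $\pi_U(\rho)$, hence to $\pi_U(Y)$. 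The hierarchy-path clause is immediate, since hierarchy paths are quasigeodesics and {\sqcity} is exactly the confinement of quasigeodesics based on $Y$ to controlled neighborhoods of $Y$. For the orthogonal projection dichotomy I argue contrapositively: if it fails, there is a domain $U$ with $\pi_U(Y)$ of infinite diameter while the shadow $\gate_{\mathbf E_U}(Y)$ omits arbitrarily large balls of $\mathbf E_U$. Picking points of $Y$ arbitrarily far apart in $\mathcal CU$ and a point of $\mathbf E_U$ at distance $\sim R$ from $\gate_{\mathbf E_U}(Y)$, and working inside $\mathbf P_U\simeq\mathbf F_U\times\mathbf E_U$, one builds an $L$-shaped path with endpoints in $Y$ --- move out $\sim R$ in the $\mathbf E_U$-coordinate, traverse $\sim R$ in the $\mathbf F_U$-coordinate, move back --- which is a uniform quasigeodesic (a concatenation of quasigeodesics lying in complementary factors of a product), but whose middle segment is at distance $\gtrsim R$ from $Y$. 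Letting $R\to\infty$ contradicts {\sqcity}.

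\emph{Hierarchically quasiconvex with the orthogonal projection dichotomy $\Rightarrow$ {\sqc}.} This is the substantive direction. Let $\sigma$ be a $(K,C)$-quasigeodesic with endpoints $x,y\in Y$ and let $p$ lie on $\sigma$; the goal is a bound on $d(p,Y)$ depending only on $(K,C)$. Hierarchical quasiconvexity gives a uniformly coarsely Lipschitz gate $\gate_Y$, and the distance formula reduces the problem to controlling the set $\mathcal U$ of domains on which $\pi_U(p)$ lies past the threshold from $\pi_U(Y)$, together with the distances $d_U(\pi_U(p),\pi_U(Y))$, since $d(p,Y)\asymp d(p,\gate_Y(p))\asymp\sum_U[d_U(\pi_U(p),\pi_U(Y))]_s$. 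For $U\in\mathcal U$: the points $\pi_U(x),\pi_U(y)$ lie in the quasiconvex set $\pi_U(Y)$ and $\pi_U(p)$ is far from it, so $\pi_U(p)$ is far from the $\mathcal CU$-geodesic $[\pi_U(x),\pi_U(y)]$ and $\sigma$ makes a definite excursion in $\mathcal CU$, namely $d_U(x,p)+d_U(p,y)-d_U(x,y)\gtrsim d_U(\pi_U(p),\pi_U(Y))$. The first use of the orthogonal projection dichotomy is structural: when $\pi_U(Y)$ has infinite diameter the dichotomy forces $Y$ to coarsely fill $\mathbf E_U$, and feeding this through the hull-closure of hierarchical quasiconvexity makes $\pi_U(Y)$ coarsely all of $\mathcal CU$; hence every $U\in\mathcal U$ has $\pi_U(Y)$ uniformly bounded or $\mathbf E_U$ uniformly bounded. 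It remains to bound $\sum_{U\in\mathcal U}d_U(\pi_U(p),\pi_U(Y))$ uniformly, using the quasigeodesic inequality (which caps the total wasted length $\sum_U(d_U(x,p)+d_U(p,y)-d_U(x,y))$), the bounded geodesic image axiom (an excursion in a domain irrelevant to $[x,y]$ propagates to excursions in all containing domains), and the dichotomy again (which stops $Y$ from being thin in a product direction that $\sigma$ could otherwise exploit to drift far from $Y$ while staying cheap).

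The main obstacle is this last synthesis. The wasted-length cap by itself yields only $d(p,Y)\lesssim(K-1)\,d(x,y)$, which is not uniform --- exactly why a hierarchically quasiconvex set need not be {\sqc}, the coordinate axis in $\mathbb E^2$ being the model failure --- so the orthogonal projection dichotomy must enter globally to exclude the product-like configurations in which a fixed-constant quasigeodesic can make arbitrarily deep excursions, and converting that exclusion into a quantitative bound on $\sum_{U\in\mathcal U}d_U(\pi_U(p),\pi_U(Y))$ is the real work. Once the converse is established, the ``in particular'' clause is the citation to \cite{BHS17b} noted at the start.
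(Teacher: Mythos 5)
Your forward direction and the ``in particular'' clause are essentially the paper's route (Proposition \ref{prop: quasiconvex implies HQC} for hierarchical quasiconvexity, citation of \cite[Proposition 5.6]{BHS17b} for the HHS claim), but both the remaining forward step and the converse have real problems.

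In the contrapositive argument for the orthogonal projection dichotomy, the $L$-shaped path you build lies in $\P_U \simeq \PF_U \times \PE_U$, so its endpoints are in $\P_U$ --- specifically near $\gate_{\P_U}(Y)$ --- and not in $Y$ itself, unless $Y$ happens to meet $\P_U$. {\SQCity} only controls quasi-geodesics with endpoints actually on $Y$, so the configuration you describe does not yet contradict it. This is precisely why the paper needs Lemma \ref{lem: substituting quasi-geodesic}: an active-subpath argument that extends a uniform quasi-geodesic of $\P_U$ with endpoints $\gate_{\P_U}(x), \gate_{\P_U}(y)$ to a uniform quasi-geodesic with endpoints $x,y \in Y$, provided $d_U(x,y)$ is large. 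You also need the bridge theorem afterwards to convert ``far from $\gate_{\P_U}(Y)$ inside $\P_U$'' into ``far from $Y$ in $\mc{X}$''; without it the deep excursion you manufacture could, a priori, be cheap to reach from $Y$ through the ambient space. These two steps are the technical core of the direction and are missing from the proposal. (The $L$-shaped path itself is also a bit optimistic: in a product of $(L,L)$-quasi-geodesic spaces you cannot freely choose all three segment lengths to be $\sim R$, since the middle segment length is dictated by where $Y$ has wide projection. The paper's spiral-path lemma \ref{lem: spiral paths are quasi-geodesics} with its slope condition $N>4K^2$ is what makes the construction uniformly quasi-geodesic when the segment lengths are forced rather than chosen.)

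For the converse, you correctly identify that a direct distance-formula attack hits a wall --- controlling $\sum_{U \in \mathcal U} d_U(p,Y)$ over the whole quasi-geodesic simultaneously --- and you say as much, so the proposal does not constitute a proof of this implication. The intermediate claim that the orthogonal projection dichotomy ``feeding through hull-closure makes $\pi_U(Y)$ coarsely all of $\mathcal CU$'' is also unjustified: the dichotomy says large $\pi_U(Y)$ forces $\pi_V(Y)$ to cover $CV$ for $V \perp U$, which is information about the orthogonal factors, not about $CU$ itself. The paper avoids the wall entirely by routing through contracting: Proposition \ref{prop: HQC + Dichotomy implies contracting} shows that the gate map $\gate_Y$ is $(A,D)$-contracting by a \emph{pointwise} domain-by-domain comparison of $\gate_Y(x_0)$ and $\gate_Y(x)$ (no quasi-geodesic ever appears), and then Proposition \ref{prop:contracting_implies_quad} and Theorem \ref{thm:quasiconvex_iff_superlinear} convert contracting into at-least-quadratic lower divergence into {\sqcity}. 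The advantage of this detour is exactly that it sidesteps the simultaneous control you found intractable: the divergence criterion only ever looks at pairs of points on the sphere $\partial N_r(Y)$ and paths avoiding $Y$, and the contracting bound on a single ball is enough to feed it. If you want to salvage a direct argument you would need a substantially finer accounting, probably reconstructing the divergence/contracting link by hand.
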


Theorem \ref{intro:quasiconvex and HQC} is truly the central result of this paper as it explains how the {\sqc} subsets  interact with the projections defining the hierarchically hyperbolic structure of the ambient space. Further, this characterization is complete as the theorem fails whenever any of the hypotheses are weakened (see Remark \ref{rem: we need all the hypotheses}).

In \cite{ABD}, Abbott-Behrstock-Durham give several equivalent conditions for quasi-geodesics in a hierarchically hyperbolic space to be {\sqc} and for a map from a quasi-geodesic space $Y$ into a hierarchically hyperbolic space to be a stable embedding (see Definition \ref{defn:stable}). Theorems \ref{intro:contracting_iff_quasiconvex} and \ref{intro:quasiconvex and HQC}  generalize these results to general {\sqc} subsets and do not require the hypothesis of unbounded products utilized by Abbott-Behrstock-Durham. This generalization to all {\sqc} subsets is essential to our applications in Section \ref{sec: Examples} and Section \ref{sec: Hyperbolically embedded subgroups of HHGs}.

Part of the proof of Theorem \ref{intro:quasiconvex and HQC} involves studying hierarchically quasiconvex hulls in hierarchically hyperbolic spaces. The \emph{hierarchically quasiconvex hull} of a subset $Y$ is (coarsely) the smallest hierarchically quasiconvex set containing $Y$. We show that the hull of any subset of a hierarchically hyperbolic space can be constructed using special quasi-geodesics called hierarchy paths (see Theorem \ref{thm: hierarchy paths to hulls} for the precise statement).

\begin{thmi}[Constructing hulls with hierarchy paths]\label{intro:hull_from_hierarchy_paths}
If $Y$ is a subset of a hierarchically hyperbolic space $X$, then the hierarchically quasiconvex hull of $Y$ can be constructed in a uniformly finite number of steps by iteratively connecting points by hierarchy paths.
\end{thmi}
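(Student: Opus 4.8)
The plan is to make ``connecting points by hierarchy paths'' into an operator and to show that it stabilises to the hull in boundedly many steps. Fix $\lambda$ large enough that every pair of points of $X$ is joined by a $\lambda$--hierarchy path, and for $Z\subseteq X$ let $\mathcal P(Z)$ be the union of all $\lambda$--hierarchy paths with both endpoints in $Z$; set $H_0(Y)=Y$ and $H_{n+1}(Y)=\mathcal P(H_n(Y))$. The goal (the precise form being Theorem~\ref{thm: hierarchy paths to hulls}) is a constant $N$, depending only on the constants of the hierarchically hyperbolic structure, with $H_N(Y)$ at uniformly bounded Hausdorff distance from the hierarchically quasiconvex hull $\hull(Y)$; so I must prove $H_n(Y)\subseteq\mathcal N_{C}(\hull(Y))$ for all $n$ and $\hull(Y)\subseteq\mathcal N_{C}(H_N(Y))$. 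The first inclusion is the easy half: by the characterisation of hierarchical quasiconvexity proved earlier in this paper -- a set is hierarchically quasiconvex exactly when every $\lambda$--hierarchy path between two of its points stays uniformly close to it -- and since $\hull(Y)$ is hierarchically quasiconvex and contains $Y$, an induction on $n$ gives $H_n(Y)\subseteq\mathcal N_{C_n}(\hull(Y))$; the radius $C_n$ may grow with $n$, but since $N$ will be bounded this is harmless.

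For the reverse inclusion I would first pin down the coordinates. A $\lambda$--hierarchy path between $a,b\in X$ projects, in every $W\in\mf S$, to a uniform unparametrised quasigeodesic from $\pi_W(a)$ to $\pi_W(b)$; hence $\pi_W(H_1(Y))$ is Hausdorff-close (at distance bounded in terms of the HHS constants only) to the quasiconvex hull of $\pi_W(Y)$ in the hyperbolic space $W$ -- one direction because such a quasigeodesic lies near the geodesic between its endpoints, the other because that geodesic is itself Hausdorff-close to the image of a hierarchy path realising the same endpoints. Since quasiconvex hulls in a hyperbolic space are coarsely idempotent, $\pi_W(H_n(Y))$ is uniformly Hausdorff-close to this hull for every $n\ge 1$ and every $W$. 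Thus after a single step every domain already sees the correct image, and the remaining task is to realise an arbitrary $x\in\hull(Y)$ -- which, by the coordinate description of the hull, is precisely a point whose every coordinate lies in the appropriate quasiconvex hull -- by a point on a short chain of hierarchy paths issuing from $Y$.

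I would carry out this last step by induction on the complexity of $X$. If $X$ is coarsely hyperbolic then $\hull(Y)$ is coarsely the union of geodesics between points of $Y$, so $H_1(Y)$ already works. For the inductive step, given $x$ and the $\nest$--maximal domain $S$, the point $\pi_S(x)$ lies near a geodesic of $S$ between $\pi_S(y_1)$ and $\pi_S(y_2)$ for some $y_1,y_2\in Y$, so a hierarchy path between $y_1$ and $y_2$ passes within bounded distance of a point with the correct $S$--coordinate. One then corrects the remaining coordinates one $\nest$--level at a time: for each $\nest$--maximal domain $W\nest S$, $W\neq S$, whose $Y$--projection is unbounded, the relevant part of $\hull(Y)$ lies, up to the gate map, in the standard product region $P_W$, and inside the factor $F_W$ -- which has strictly smaller complexity, whose hull is the gate of $\hull(Y)$, and into which gates of $\lambda$--hierarchy paths of $X$ are again uniform hierarchy paths -- the inductive hypothesis reaches $x$ in boundedly many further steps. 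Summing over the boundedly many levels of the $\nest$--order produces a uniform $N$, and combined with the first inclusion (now with $C=C_N$, also uniform) this gives the theorem.

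The principal obstacle is exactly this complexity induction together with the uniformity of $N$. The delicate points are: (i) verifying that the gate of a $\lambda$--hierarchy path into a standard product region, and thence into the factor $F_W$, is again a hierarchy path with constants independent of $Y$, so that the inductive hypothesis genuinely applies; (ii) organising the coordinate corrections so that repairing one level does not disturb the levels already handled, which is where the consistency inequalities and the \sqc\ version of the bounded geodesic image property are used; and (iii) bookkeeping all the constants so that both the number of steps $N$ and the final neighbourhood radius depend only on the hierarchically hyperbolic structure and not on $Y$.
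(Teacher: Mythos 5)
Your plan takes a genuinely different route from the paper. The paper first proves a Carath\'eodory-type lemma (Lemma \ref{lem:finite_catch}): using the partial order on $\relevant_\sigma(x,y)$ from Proposition \ref{prop:partial_ordering_of_domains}, any $x \in H_\theta(Y)$ already lies in $H_{\theta'}(x_1,\dots,x_{\ell+1})$ for at most $2\chi+1$ points of $Y$. It then handles finite sets by induction on the \emph{number of points} (Proposition \ref{prop:hulls_finite_case}): the base case shows that concatenating two hierarchy paths through any $z \in H_\theta(x,y)$ is again a uniform hierarchy path (a non-trivial distance-formula bookkeeping argument, since the distance formula does not distribute over addition), and the inductive step (Claim \ref{claim:inductive_step}) peels off one point, showing $z\in H_\theta(x_1,\dots,x_n)$ lies on a hierarchy path from $x_n$ to a point of $H_{\theta'}(x_1,\dots,x_{n-1})$. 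The bound $N = 2\chi$ therefore comes from the antichain bound $\chi$ of Proposition \ref{prop:partial_ordering_of_domains}, not from the $\nest$-depth. You instead propose induction on the \emph{complexity of the HHS}, passing through gates into standard product regions and correcting coordinates one $\nest$-level at a time.

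The complexity induction as written has real gaps precisely at the points you flag, and they are not minor. First, you never establish the base case the paper actually needs: for \emph{general} $X$ (not coarsely hyperbolic) and two points $x,y$, showing that $H_\theta(x,y)\subseteq \Path^1_\lambda(x,y)$ requires showing that a concatenation $\gamma_0 * \gamma_1$ through $z$ is a genuine quasi-geodesic of $X$; this is exactly where the threshold manipulation in the paper's Claim \ref{claim:hulls_of_pairs_of_points} is needed, and your hyperbolic base case does not substitute for it. Second, ``correcting the remaining coordinates one $\nest$-level at a time'' silently assumes that repairs at different sub-maximal domains at the same level do not interfere; this is true for orthogonal $W_1\perp W_2$ (since $W_2$ contributes to $\PE_{W_1}$), but for transverse $W_1\trans W_2$ the consistency inequalities force an \emph{order} in which they must be handled --- this is the Behrstock-inequality partial order, which is exactly the mechanism the paper uses in Lemma \ref{lem:finite_catch}, and which your outline never invokes. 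Third, for the inductive hypothesis to apply in $\PF_W$ you would need that $\gate_{\P_W}(H_\theta(Y))$ coarsely agrees with the hull of $\gate_{\P_W}(Y)$ inside $\P_W$ and that gated hierarchy paths have endpoints controlled by $\gate_{\P_W}(Y)$; establishing this requires the bridge theorem and the active-subpath lemma and is not automatic. Until these three points are carried out, the reverse inclusion $H_\theta(Y)\subseteq N_C(\Path^N_\lambda(Y))$ --- the substantive half of the theorem --- is not proved.
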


This construction is reminiscent of the construction of quasiconvex hulls in hyperbolic spaces by connecting pairs of points by geodesics and is similar to the join construction of hulls in coarse median spaces presented by Bowditch in \cite{BowditchConvexity}. The main purpose of Theorem \ref{intro:hull_from_hierarchy_paths} in our paper is to establish that hierarchically quasiconvex subsets are exactly the subsets that are ``quasiconvex with respect to hierarchy paths."  However, we expect this construction to have further applications in the study of hierarchically hyperbolic spaces. Indeed, Hagen-Petyt have used this construction to build quasi-isometries from some hierarchically hyperbolic groups to cube complexes \cite{Hagen_Petyt_HHG_QI_cube_complex}, and in Section \ref{subsec:hulls_and_coarse_medians} we apply Theorem \ref{intro:hull_from_hierarchy_paths} to provide a characterization of hierarchical quasiconvexity in terms of the coarse median structure on a hierarchically hyperbolic space.  This later result allows us to conclude that, in the setting of hierarchically hyperbolic spaces, the coarse median hull constructed in \cite{BowditchConvexity} is coarsely equal to the hierarchically quasiconvex hull; extending \cite[Lemma 7.3]{BowditchConvexity} from finite to arbitrary subsets.

Charney-Sultan proved that {\sqc} geodesics in a CAT(0) space are characterized by having at least quadratic \emph{lower divergence} \cite{CS_Boundary}. The third author introduced a generalization of lower divergence to all subsets \cite{Tran2015} and studied its relationship with {\sqcity} \cite{Tran2017}. If $Y$ is a subset of the quasi-geodesic space $X$, the \emph{lower relative divergence of $X$ with respect to $Y$} (or the divergence of $Y$ in $X$) is a family of functions that measures how efficiently one can travel in $X$ while avoiding $Y$.  Building on the work in \cite{Tran2017}, we establish the following.

\begin{thmi}[Contracting subsets have at least quadratic divergence]\label{intro:divergence and quasiconvex}
Let $X$ be a quasi-geodesic metric space. If $Y \subseteq X$ is contracting, then the lower relative divergence of $X$ with respect to $Y$ is at least quadratic. Further, if $X$ is a hierarchically hyperbolic space with the bounded domain dichotomy, then the lower relative divergence of $X$ with respect to $Y$ is at least quadratic if and only if $Y$ is {\sqc} (equivalently if and only if $Y$ is contracting).
\end{thmi}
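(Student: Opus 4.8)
The plan is to prove the general implication by a direct ``sample and retract'' estimate, and to deduce the equivalence over a hierarchically hyperbolic space by combining that estimate with Theorems~\ref{intro:contracting_iff_quasiconvex} and~\ref{intro:quasiconvex and HQC}. For the general direction, suppose $Y$ is contracting, witnessed by a coarsely Lipschitz retraction $r\colon X\to Y$ and a constant $C$ such that the $r$--image of any ball $B(x,\rho)$ with $\rho\le d(x,Y)/C$ has diameter at most $C$ (this is the content of Definition~\ref{defn:contracting}). Let $\rho\in(0,1]$, let $x_1,x_2$ be points with $d(x_i,Y)\asymp R$ and $d(x_1,x_2)\ge nR$ as in the definition of lower relative divergence (see \cite{Tran2015,Tran2017}), and let $\gamma$ be a path from $x_1$ to $x_2$ avoiding the $\rho R$--neighborhood of $Y$. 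Chopping $\gamma$ into at most $C\ell(\gamma)/(\rho R)+1$ sub-arcs of diameter at most $\rho R/C$, each sub-arc lies in a ball $B(x,d(x,Y)/C)$ for some $x$ on $\gamma$, so its $r$--image has diameter at most $C$, whence $d(r(x_1),r(x_2))\le C^{2}\ell(\gamma)/(\rho R)+C$. Since $r$ is coarsely Lipschitz and coarsely fixes $Y$, we have $d(x_i,r(x_i))\preceq R$, so $nR\le d(x_1,x_2)\preceq R+C^{2}\ell(\gamma)/(\rho R)$, forcing $\ell(\gamma)\succeq\rho R^{2}$ as soon as $n$ exceeds a threshold depending only on the contracting constants and not on $\rho$. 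This is precisely the assertion that the lower relative divergence of $X$ with respect to $Y$ is at least quadratic, and it refines the analogous result of \cite{Tran2017}, the point being that $r$ is not required to be a closest-point projection.

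For the equivalence in an HHS, the forward direction is immediate: if $Y$ is {\sqc} then $Y$ is contracting by Theorem~\ref{intro:contracting_iff_quasiconvex}, hence has at least quadratic lower relative divergence by the previous paragraph. For the converse we argue by contraposition, producing, when $Y$ is not {\sqc}, configurations that realize \emph{linear} lower relative divergence for some fixed $\rho$ and every $n$. By Theorem~\ref{intro:quasiconvex and HQC}, $Y$ is either not hierarchically quasiconvex, or is hierarchically quasiconvex but fails the orthogonal projection dichotomy (Definition~\ref{defn: orthogonal projection dichotomy}); the main case is the failure of the dichotomy. In that case there is a domain $U\in\mf{S}$ in which $Y$ projects unboundedly together with an ``orthogonal'' direction along which $Y$ is coarsely bounded, and hence, for arbitrarily large $R$, product-region-like subsets near $Y$ of diameter $\asymp R$ that are transverse to $Y$. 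Given $n$, one picks $y_1,y_2\in Y$ with $d_U(y_1,y_2)\ge nR$, moves them a distance $R$ into the orthogonal direction to obtain $x_1,x_2$ with $d(x_i,Y)\asymp R$ and $d(x_1,x_2)\succeq nR$, and joins $x_1$ to $x_2$ by a hierarchy path that moves only inside the relevant product region --- first across the $U$--factor and then within the orthogonal factor. Such a path has length $\asymp nR$ and, crucially, remains at distance $\asymp R$ from $Y$, so for small $\rho$ it avoids the $\rho R$--neighborhood of $Y$; this exhibits linear lower relative divergence and contradicts the at-least-quadratic hypothesis. The case where $Y$ is not hierarchically quasiconvex is treated by a parallel argument: by Theorem~\ref{intro:hull_from_hierarchy_paths}, a hierarchy path joining points of $Y$ that travels far from $Y$ again exposes a domain along which $Y$ fails to be coarsely convex, which supplies the analogous escape paths.

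The main obstacle is verifying that the escape paths stay a fixed multiple of $R$ away from $Y$, and this is exactly where hierarchical hyperbolicity and the bounded domain dichotomy are used: one must control every coordinate projection of the path against the corresponding projections of $Y$ via the distance formula of \cite{BHS17b} and a bounded-geodesic-image estimate, and confirm that the only coordinate in which the path travels a large distance is the intended one, while the bounded domain dichotomy guarantees that the ``orthogonal'' projections of $Y$ that must be bounded are uniformly so. The remaining ingredients --- selecting $y_1,y_2$, building the hierarchy path inside the product region, and bounding its length --- are routine given the tools already developed in the paper, in particular Theorems~\ref{intro:quasiconvex and HQC} and~\ref{intro:hull_from_hierarchy_paths}.
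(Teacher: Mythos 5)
Your argument for the general implication --- contracting implies at-least-quadratic lower relative divergence --- is essentially the paper's (Proposition~\ref{prop:contracting_implies_quad}): fix a path $\gamma$ outside the $\rho r$--neighborhood of $Y$, chop it into subarcs of length $\asymp \rho r$, use the contracting property to bound the diameter of the image of each subarc by the constant $D$, and combine this with the estimate $d(x_i,g(x_i)) \preceq r$ to force $\ell(\gamma) \succeq \rho r^2$ once $n$ exceeds a threshold depending only on the contracting constants. The bookkeeping is slightly different but the idea and the resulting bound are the same, and you are right to emphasize that no closest-point hypothesis on $r$ is needed.

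Where you depart from the paper is the converse direction in the HHS setting, and here your proposal both takes a genuinely harder route and has real gaps. You attempt to prove ``at least quadratic $\implies$ {\sqc}'' by contraposition, constructing explicit escape paths inside product regions that realize linear lower relative divergence when $Y$ fails to be {\sqc}. This is far more than is needed, and the proposal itself flags the critical verification --- that the escape paths stay a definite multiple of $R$ from $Y$ --- as ``the main obstacle'' without supplying it. The case analysis is also incomplete: when $Y$ fails the orthogonal projection dichotomy you need to extract, for arbitrarily large $B$, orthogonal pairs $U_B\perp V_B$ with $\diam(\pi_{U_B}(Y))>B$ and $CV_B\not\subseteq N_B(\pi_{V_B}(Y))$, and then show this survives as a genuine orthogonal ``escape direction'' of scale $\asymp R$ near $Y$, which does not follow directly from the dichotomy's failure; and when $Y$ is not hierarchically quasiconvex the argument is only gestured at via Theorem~\ref{intro:hull_from_hierarchy_paths}. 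In contrast, the paper's proof of this direction requires no HHS input at all: at-least-quadratic trivially implies completely superlinear, and completely superlinear implies {\sqc} in any quasi-geodesic space by Theorem~\ref{thm:quasiconvex_iff_superlinear} (which is~\cite[Theorem~1.5]{Tran2017}). The only place the HHS hypothesis is used is the other direction --- {\sqc} $\implies$ contracting via hierarchical quasiconvexity and the orthogonal projection dichotomy (Theorem~\ref{thm:classification_of_quasiconvex_subset}) --- which you already invoke correctly. So the cleaner and correct route is: {\sqc} $\implies$ contracting (Theorem~\ref{intro:contracting_iff_quasiconvex}) $\implies$ at-least-quadratic (your first paragraph) $\implies$ completely superlinear (elementary) $\implies$ {\sqc} (Theorem~\ref{thm:quasiconvex_iff_superlinear}), closing the cycle; no bespoke escape-path construction is needed or, as written, supplied.
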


Since the lower relative divergence of $X$ with respect to $Y$ agrees with Charney-Sultan's lower divergence when $Y$ is a geodesic in $X$, Theorem \ref{intro:divergence and quasiconvex} proves that {\sqc} geodesics (aka Morse geodesics) in hierarchically hyperbolic spaces with the bounded domain dichotomy are also characterized by having at least quadratic lower divergence.

After proving Theorems \ref{intro:contracting_iff_quasiconvex} through \ref{intro:divergence and quasiconvex}, we establish several HHS analogues of the ``bounded geodesic image property" of quasiconvex subsets of hyperbolic spaces. One of these analogues is the following.

\begin{thmi}\label{intro:contracting_features}
Let $Y$ be a {\sqc} subset of a hierarchically hyperbolic space $X$ with the bounded domain dichotomy. There is a contracting map  $\gate_Y \colon {X} \rightarrow Y$ so that for each $\lambda \geq 1$ there exists a constant $r_\lambda>0$ such that for all $x,y\in\mc{X}$, if $d(\gate_Y(x),\gate_Y(y))>r_\lambda$, then any $\lambda$--hierarchy path from $x$ to $y$ must intersect the $r_\lambda$--neighborhood of $Y$.\\
\end{thmi}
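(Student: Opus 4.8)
The plan is to take $\gate_Y$ to be the gate map associated to $Y$. Since $Y$ is \sqc, Theorem~\ref{intro:quasiconvex and HQC} shows $Y$ is hierarchically quasiconvex, so the gate map $\gate_Y\colon X\to Y$ is defined and is a coarsely Lipschitz retraction; Theorem~\ref{intro:contracting_iff_quasiconvex} (via the proof that \sqcity\ implies contracting) shows that $\gate_Y$ satisfies Definition~\ref{defn:contracting}, so it is the desired contracting map. I will use two standard features of gates: first, for every $W\in\mf{S}$ the point $\pi_W(\gate_Y(x))$ coarsely agrees with the closest-point projection in the hyperbolic space $C(W)$ of $\pi_W(x)$ onto $\pi_W(Y)$; second, because $Y$ is hierarchically quasiconvex, each $\pi_W(Y)$ is uniformly quasiconvex in $C(W)$. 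I will argue the contrapositive: fixing $\lambda\geq 1$, I produce $r_\lambda$ so that if $\gamma$ is a $\lambda$--hierarchy path from $x$ to $y$ avoiding the $r_\lambda$--neighborhood of $Y$, then $d(\gate_Y(x),\gate_Y(y))\leq r_\lambda$; choosing a single constant large enough for both roles then yields the theorem.

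The core is a domain-by-domain application of the bounded geodesic image property for hyperbolic spaces. For each $W\in\mf{S}$, the projection $\pi_W(\gamma)$ is a uniform unparametrised quasi-geodesic in $C(W)$ from $\pi_W(x)$ to $\pi_W(y)$, with constants depending only on $\lambda$ and the HHS data. If $\pi_W(\gamma)$ avoids the $\rho_\lambda$--neighborhood of $\pi_W(Y)$ — where $\rho_\lambda$ depends on $\lambda$, the hyperbolicity constant, and the quasiconvexity constant of $\pi_W(Y)$ — then by quasi-geodesic stability and the bounded geodesic image property applied to the quasiconvex set $\pi_W(Y)$, the closest-point projections of $\pi_W(x)$ and $\pi_W(y)$ to $\pi_W(Y)$ lie within a universal constant $C_0$ of each other, so $d_W(\gate_Y(x),\gate_Y(y))\leq C_0+O(1)$. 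Choosing the distance-formula threshold $s>C_0+O(1)$, it follows that \emph{every} domain contributing to $d(\gate_Y(x),\gate_Y(y))$ is one in which $\pi_W(\gamma)$ enters the $\rho_\lambda$--neighborhood of $\pi_W(Y)$. It then remains to convert this subsurface-level information into the statement that $\gamma$ itself enters a bounded neighborhood of $Y$ in $X$, contradicting the avoidance hypothesis once $r_\lambda$ is chosen relative to $\rho_\lambda$, $s$, and the distance-formula constants.

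This last conversion is the main obstacle: a priori, distinct relevant domains might see $\pi_W(\gamma)$ approach $\pi_W(Y)$ at different parameters along $\gamma$, so there need not be a single point of $\gamma$ simultaneously close to $\pi_W(Y)$ in all relevant domains. This is precisely where the orthogonal projection dichotomy (Definition~\ref{defn: orthogonal projection dichotomy}) is needed: it forces each relevant domain to have $\pi_W(Y)$ coarsely filling $C(W)$ and forbids orthogonal domains from both having unbounded $Y$--projection, which constrains the relevant domains enough to guarantee a single point $p\in\gamma$ with $d_W(p,Y)$ uniformly small for every relevant $W$ and below threshold otherwise; the distance formula then bounds $d(p,Y)$, the desired contradiction. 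An alternative route to this step, bypassing the analysis of relevant domains, is to invoke Theorem~\ref{intro:divergence and quasiconvex}: a $\lambda$--hierarchy path avoiding a large neighborhood of $Y$ has length at most linear in $d(x,y)$, while the at-least-quadratic lower relative divergence of $X$ with respect to $Y$ forces any path avoiding such a neighborhood and joining points whose gates are far apart to be long; matching the divergence parameters to $\lambda$ and to the neighborhood radius is then the technical heart of that argument.
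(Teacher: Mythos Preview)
Your setup is correct and you have correctly located the heart of the matter: the difficulty is precisely in passing from ``for each relevant $W$, some point of $\gamma$ projects close to $\pi_W(Y)$'' to ``some \emph{single} point of $\gamma$ is close to $Y$ in $\mc{X}$''. But your proposed resolution of this obstacle is not a proof, and the way you describe the orthogonal projection dichotomy is wrong. The dichotomy (Definition~\ref{defn: orthogonal projection dichotomy}) does \emph{not} say that relevant domains have $\pi_W(Y)$ coarsely filling $C(W)$, nor does it forbid orthogonal domains from both having unbounded $Y$--projection; it says the opposite: if $\diam(\pi_U(Y))$ is large then for every $V\perp U$ the projection $\pi_V(Y)$ coarsely \emph{covers} $CV$. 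You appear to be conflating this with the stability criterion in Corollary~\ref{cor:classification_of_stable_subsets}. As written, your paragraph asserts that the dichotomy ``constrains the relevant domains enough to guarantee a single point $p$'' without indicating any mechanism, so the argument stops exactly at the hard step.

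The paper's argument (Proposition~\ref{prop: active subpaths for quasiconvex}) resolves this obstacle concretely and differently from what you sketch. Discretize $\gamma$ as $x=x_0,\dots,x_n=y$. For each relevant $V$ one finds indices $I_V<J_V$ where $\pi_V(\gamma)$ is close to $\pi_V(Y)$, and then one further chooses $I_V<i_V<j_V<J_V$ with $d_V(x_{i_V},x_{I_V})$ and $d_V(x_{j_V},x_{J_V})$ of controlled size. The candidate point is $x_{i_0}$ where $i_0=\min_V i_V$, and one \emph{fixes} a single relevant domain $V$ realizing this minimum. The key is then a four-way case analysis, for every $U\in\mf{S}$, on the relation between $U$ and this fixed $V$: if $U\perp V$, the dichotomy (applied with the roles reversed from your description) gives $CU\subseteq N_B(\pi_U(Y))$, so $d_U(x_{i_0},\gate_Y(x_{i_0}))$ is automatically small; if $U\trans V$ or $U\nest V$ or $V\nest U$, one uses the consistency and bounded geodesic image axioms together with the buffer between $i_V$ and $I_V,J_V$ to transport the closeness in $CV$ to closeness in $CU$. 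The distance formula then bounds $d_\mc{X}(x_{i_0},Y)$. Your divergence alternative is also not a proof as stated: lower relative divergence concerns points on $\partial N_r(Y)$ at distance at least $nr$, and converting ``gates far apart'' into that setup, while controlling $r$ against the avoidance radius and the quasi-geodesic length, is itself nontrivial and not addressed.
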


\noindent \textbf{{\SQC} subsets in specific examples.} 
After characterizing the {\sqc} subsets of hierarchically hyperbolic spaces, we apply our results to study the {\sqc} subsets of some of the most common examples of hierarchically hyperbolic spaces: the mapping class group, Teichm\"uller space, right-angled Artin and Coxeter groups, and the fundamental groups of graph manifolds.

It has been shown that {\sqc} subgroups of the mapping class group \cite{Kim2017}, right-angled Artin groups with connected defining graph \cite{Tran2017,Genevois_Hyp_in_CAT(0)}, and certain $\mathcal{CFS}$ right-angled Coxeter groups \cite{HH2017} are either hyperbolic or finite index. We give sufficient conditions for a hierarchically hyperbolic space to have the property that all its {\sqc} subsets are either hyperbolic or coarsely cover the entire space (see Proposition \ref{prop:one_orthogonal_component}). Applying this criteria to specific examples yields a new, unified proof of the work of Kim, Tran, Genevois, and Nguyen-Tran as well as the following new results for Teichm\"uller space, graph manifolds, and a class of right-angled Coxeter groups that we call strongly $\mc{CFS}$.

\begin{cori}\label{intro:Hyp_vs_coarsely_covering}
The following HHSs have the property that every {\sqc} subset is either hyperbolic or coarsely covers the entire space:
\begin{enumerate}[(a)]

	\item The Teichm\"uller space of a finite type surface with the Teichm\"uller metric
	\item The Teichm\"uller space of a finite type surface of complexity at least $6$ with the Weil-Petersson metric
	\item The mapping class group of a oriented, connected, finite type surface
	\item A right-angled Artin group with connected defining graph
	\item A right-angled Coxeter group with strongly $\mathcal{CFS}$ defining graph
	\item The fundamental group of a non-geometric graph manifold
\end{enumerate}

\noindent In particular, if  $H$ is a {\sqc} subgroup in any of the groups (c)-(f),  then $H$ is either stable or finite index.
\end{cori}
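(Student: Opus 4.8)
The strategy is to verify the hypotheses of Proposition~\ref{prop:one_orthogonal_component} for each of the listed HHSs (a)--(f), and then apply it directly. According to the discussion preceding the corollary, Proposition~\ref{prop:one_orthogonal_component} gives a criterion on the hierarchically hyperbolic structure $(\X,\mf S)$ of an HHS $X$ under which every {\sqc} subset is either hyperbolic or coarsely covers $X$; I would expect this criterion to be phrased in terms of the orthogonality relation on $\mf S$ and the nesting structure---roughly, that the domains supporting ``product regions'' are arranged so that a {\sqc} subset with the orthogonal projection dichotomy (Theorem~\ref{intro:quasiconvex and HQC}) cannot have large, unbounded projection to two orthogonal domains at once, forcing it to be either bounded in all but a ``hyperbolic-like'' collection of domains or else to be coarsely all of $X$. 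So the real content of the corollary is: (i) recall/state precisely the criterion from Proposition~\ref{prop:one_orthogonal_component}; and (ii) for each of the six classes, produce (or cite) an HHS structure and check the criterion.

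\medskip

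\textbf{The case-by-case verification.} For the mapping class group (c), the standard HHS structure has domains indexed by (isotopy classes of) essential subsurfaces with nesting and orthogonality given by containment and disjointness; the criterion should amount to the observation that a pair of disjoint subsurfaces together with their complement already ``fill'' the surface, so a {\sqc} subgroup that is unbounded in two orthogonal domains coarsely contains a whole product region and then, by an induction on complexity, coarsely contains $X$. This is essentially Kim's argument \cite{Kim2017}, recovered uniformly. For right-angled Artin groups with connected defining graph (d), I would use the Salvetti-complex HHS structure: connectedness of the defining graph means that the ``top'' domain is the only one whose complement of orthogonals is nonempty in the relevant sense, or more precisely that there is no join decomposition of the whole graph, which is exactly what rules out the bad configuration; this recovers Tran \cite{Tran2017} and Genevois \cite{Genevois2017}. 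For right-angled Coxeter groups with strongly $\mc{CFS}$ defining graph (e), ``strongly $\mc{CFS}$'' should be precisely the combinatorial condition on the defining graph that translates into the Proposition~\ref{prop:one_orthogonal_component} criterion for the Davis-complex HHS structure---this is where I'd need to unwind the definition of strongly $\mc{CFS}$ (given later in Section~\ref{sec: Examples}) and check it lines up. For the two Teichm\"uller cases (a), (b): with the Teichm\"uller metric the HHS structure (due to Durham, or Eskin--Masur--Rafi-type considerations) has the same subsurface-indexed domain poset as the mapping class group but with the domains replaced by the appropriate curve-complex-like or $\mathbb H^2$-like spaces, so the same orthogonality argument applies; with the Weil--Petersson metric and complexity $\geq 6$, the relevant HHS structure again has a subsurface poset and the complexity bound is what guarantees the hierarchy is ``deep enough'' for the inductive step to work (below complexity $6$ the WP metric is hyperbolic or has exceptional behavior). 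For non-geometric graph manifolds (f), the Bass--Serre/JSJ structure gives an HHS with domains: the whole space, the vertex (Seifert-fibered) pieces, and the Bass--Serre tree; non-geometric means there is more than one vertex piece glued non-trivially, and one checks the orthogonality poset has the required shape, recovering Nguyen--Tran.

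\medskip

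\textbf{The consequence for subgroups.} For the final sentence, given a {\sqc} subgroup $H$ of one of the groups in (c)--(f): by the first part, $H$ is either hyperbolic or coarsely covers $X$. If $H$ coarsely covers $X$ then $H$ has finite index (a coarsely dense subgroup of a finitely generated group is finite index). If $H$ is hyperbolic, then being a {\sqc} hyperbolic subgroup is exactly the definition of a \emph{stable} subgroup (Durham--Taylor \cite{DurhamTaylorStability}, Definition~\ref{defn:stable}), so $H$ is stable. The dichotomy follows.

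\medskip

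\textbf{Main obstacle.} The genuinely nontrivial point is not the logical structure---that is immediate from Proposition~\ref{prop:one_orthogonal_component} and Theorem~\ref{intro:quasiconvex and HQC}---but checking the criterion in each family, and in particular two things: first, pinning down the right HHS structure for Teichm\"uller space in both metrics (especially the Weil--Petersson metric, where the complexity-$\geq 6$ hypothesis is doing real work and the HHS structure is more delicate), and second, verifying that the combinatorial notion ``strongly $\mc{CFS}$'' is exactly strong enough to imply the criterion while still being satisfied by a genuinely broad class of graphs (so that (e) is a real generalization of the Nguyen--Tran $\mc{CFS}$ result rather than a vacuous restatement). I would expect the bulk of the work, and any subtlety, to live in Section~\ref{sec: Examples} where these structures and the definition of strongly $\mc{CFS}$ are set up; once those are in place, the proof of the corollary itself is a short application of the general machinery.
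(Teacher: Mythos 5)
Your plan is exactly the paper's: verify the hypotheses of Proposition~\ref{prop:one_orthogonal_component} for each of the six classes by examining its HHS/HHG structure, then deduce the stable/finite-index dichotomy for subgroups. However, your guess at what Proposition~\ref{prop:one_orthogonal_component} says has the mechanism reversed. You propose a criterion preventing a {\sqc} subset from having ``large, unbounded projection to two orthogonal domains at once'' --- but the orthogonal projection dichotomy of Theorem~\ref{intro:quasiconvex and HQC} does the opposite: a {\sqc} subset with large projection to $U$ must coarsely cover $CV$ for \emph{every} $V\perp U$. What Proposition~\ref{prop:one_orthogonal_component} actually requires is (i) that the orthogonality graph on $\mf{S}^*$ (the domains $U$ with $\diam CU=\infty$ admitting an orthogonal $V$ with $\diam CV=\infty$) be connected, so that large projection to one element of $\mf{S}^*$ propagates to all of $\mf{S}^*$; and (ii) a coverage condition guaranteeing the remaining unbounded domains are then swept up via $\rho$-points. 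A {\sqc} subset with bounded projection to all of $\mf{S}^*$ is hyperbolic by Corollary~\ref{cor:classification_of_stable_subsets}; otherwise (i) and (ii) force it to coarsely cover $\mc{X}$. Consequently, the case-by-case verifications in the paper are more mechanical than the heuristics you sketch (no ``induction on complexity'' or ``fill the surface'' step for MCG, and for RAAGs connectedness of the defining graph is used to build explicit orthogonality paths between cosets rather than to ``rule out a bad configuration''), though you correctly identify the genuinely delicate points: unwinding strongly $\mc{CFS}$ to produce orthogonality paths of $4$-cycles, and using the Weil--Petersson complexity bound $\ge 6$ to ensure that every complexity-one subsurface lies in $\mf{S}^*$. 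Your argument for the final ``in particular'' (coarsely dense subgroup is finite index; {\sqc} $+$ hyperbolic is stable by definition) is exactly what the paper intends.
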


Stable subgroups of the mapping class group and right-angled Artin groups have been studied extensively and have several interesting equivalent characterizations including convex cocompactness in the mapping class group and purely loxodromic in right-angled Artin groups \cite{DurhamTaylorStability,KMT}.

We also use HHS theory and Theorem \ref{intro:quasiconvex and HQC} to give a new proof of \cite[Theorem 1.11]{Tran2017} and \cite[Proposition 4.9]{Genevois_Hyp_in_CAT(0)} characterizing when a special subgroup of a right-angled Coxeter group is {\sqc}. We then utilize this characterization, along with a construction of Behrstock, to demonstrate the large variety of different {\sqc} subsets that can be found in the class of $\mc{CFS}$ right-angled Coxeter groups.

\begin{thmi}
Every right-angled Coxeter group is an infinite index {\sqc} subgroup of some $\mc{CFS}$ right-angled Coxeter group.
\end{thmi}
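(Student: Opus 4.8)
The plan is: given a finite simplicial graph $\Gamma$ with vertices $v_1,\dots,v_n$ (the case $n=0$ is trivial, so assume $n\ge 1$), produce a $\mc{CFS}$ graph $\widehat\Gamma$ in which $\Gamma$ is a full subgraph, and then verify separately that $\widehat\Gamma$ is $\mc{CFS}$, that $W_\Gamma\le W_{\widehat\Gamma}$ is {\sqc}, and that $[W_{\widehat\Gamma}:W_\Gamma]=\infty$. Following Behrstock's square-completion, form $\widehat\Gamma$ from $\Gamma$ by adjoining new vertices $a,b$ and $c_1,d_1,\dots,c_n,d_n$, the only new edges being $c_i\sim v_i$, $d_i\sim v_i$, $c_i\sim a$, $d_i\sim a$, $c_i\sim b$, $d_i\sim b$ for $1\le i\le n$. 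Thus $a\not\sim b$, neither $a$ nor $b$ is adjacent to any vertex of $\Gamma$, no two vertices of $\Gamma$ have a common neighbour outside $\Gamma$ (indeed the neighbours of $v_i$ outside $\Gamma$ are exactly $c_i$ and $d_i$), and $\Gamma$ is manifestly induced in $\widehat\Gamma$. Infinite index is then immediate: $\langle a,b\rangle\cong D_\infty$ is an infinite subgroup of $W_{\widehat\Gamma}$ meeting $W_\Gamma$ trivially (its nontrivial elements are reduced words in the generators $a,b\notin V(\Gamma)$), so $[W_{\widehat\Gamma}:W_\Gamma]=\infty$.

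First I would check that $\widehat\Gamma$ is $\mc{CFS}$. For each $i$, the vertex sets $\{c_i,v_i,d_i,a\}$, $\{c_i,v_i,d_i,b\}$ and $\{c_i,a,d_i,b\}$ each span an induced $4$--cycle of $\widehat\Gamma$, and all three share the diagonal $\{c_i,d_i\}$, while $\{c_i,a,d_i,b\}$ also has $\{a,b\}$ as a diagonal. Hence these $4$--cycles are mutually adjacent in the $4$--cycle graph of $\widehat\Gamma$, and the cycles $\{c_i,a,d_i,b\}$ for distinct $i$ are pairwise adjacent along $\{a,b\}$; so one connected component of the $4$--cycle graph has support $\bigcup_i\{v_i,c_i,d_i\}\cup\{a,b\}=V(\widehat\Gamma)$, which is the defining property of a $\mc{CFS}$ graph. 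If one's definition of $\mc{CFS}$ further requires $\widehat\Gamma$ to be connected with no separating clique and not a join, these are routine: $\widehat\Gamma$ is connected through $a$, all its cliques have size at most $2$ and none separates it, and it admits no nontrivial join decomposition (the part containing $a$ must also contain $b$ and all $v_i$, hence every $c_i,d_i$ as well).

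Next, for {\sqcity} of $W_\Gamma$ inside $W_{\widehat\Gamma}$: since standard subgroups of a right-angled Coxeter group are hierarchically quasiconvex, one could argue via Theorem~\ref{intro:quasiconvex and HQC}, but the cleanest route is to invoke the characterization of {\sqc} standard subgroups of right-angled Coxeter groups established earlier in the paper (our new proof of \cite{Tran2017,Genevois2017}): $W_\Lambda$ fails to be {\sqc} in $W_{\Gamma'}$ exactly when $\Gamma'$ has an induced join $\Psi_1\ast\Psi_2$ with $W_{\Psi_1}$ and $W_{\Psi_2}$ both infinite, $W_{\Psi_1\cap\Lambda}$ infinite and $W_{\Psi_2\cap\Lambda}$ finite (i.e. $W_\Lambda$ meets a product region of $W_{\Gamma'}$ unboundedly in one direction and boundedly in the transverse direction). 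Suppose, toward a contradiction, that such a join $\Psi_1\ast\Psi_2\subseteq\widehat\Gamma$ existed with $\Lambda=\Gamma$. Then $W_{\Psi_1\cap\Gamma}$ infinite gives two distinct non-adjacent vertices $v_i,v_j\in\Psi_1\cap V(\Gamma)$, and $W_{\Psi_2}$ infinite gives non-adjacent $y,y'\in\Psi_2$. As $W_{\Psi_2\cap\Gamma}$ is finite, $\Psi_2\cap V(\Gamma)$ is a clique, so at least one of $y,y'$ --- say $y$ --- lies outside $\Gamma$; since $y$ is adjacent to $v_i$ (they lie in opposite factors of the join) while $a,b$ are adjacent to no vertex of $\Gamma$, we get $y\in\{c_1,d_1,\dots,c_n,d_n\}$, and then $y\sim v_i$ together with $y\sim v_j$ forces $y\in\{c_i,d_i\}\cap\{c_j,d_j\}=\emptyset$ --- a contradiction. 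So no such join exists, whence $W_\Gamma$ is {\sqc} in $W_{\widehat\Gamma}$, completing the argument.

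The main obstacle is the tension built into the statement: forcing $\widehat\Gamma$ to be $\mc{CFS}$ means covering every vertex of $\Gamma$ by a connected net of squares --- hence introducing many new commuting pairs, i.e. many new product regions --- while {\sqcity} of $W_\Gamma$ forbids $\Gamma$ from being swallowed by a product region in the ``escaping'' manner above. Behrstock's construction threads this needle by attaching a private pair $\{c_i,d_i\}$ to each $v_i$ and keeping the ``hub'' vertices $a,b$ off of $\Gamma$, so that distinct vertices of $\Gamma$ have no common neighbour outside $\Gamma$; that disjointness is precisely what collapses the potential bad join. Everything else --- that standard subgroups are hierarchically quasiconvex, the precise form of the {\sqc} characterization and of the definition of $\mc{CFS}$ --- is supplied by the earlier sections and the cited literature, so the proof reduces to the combinatorial verifications above.
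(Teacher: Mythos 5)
Your proof is correct, and it uses a genuinely different construction from the paper's. The paper works with Behrstock's inductively built ``ladder'' graphs $\Omega_n$ (pairs $\{a_i,b_i\}$ with consecutive pairs joined to form a chain of squares), takes $n$ large enough that $\Omega_n$ contains $m$ vertices at pairwise distance at least $3$, and glues $\Gamma$ onto that scattered set; the distance-$\geq 3$ condition is what prevents two vertices of $\Gamma$ from sharing a common neighbour outside $\Gamma$, and hence (by Theorem~\ref{thm:quasiconvex_subgroups_of_RACG}) what guarantees {\sqcity}. You instead build the ambient graph $\widehat\Gamma$ directly from $\Gamma$: a hub pair $\{a,b\}$ detached from $\Gamma$ together with a private non-adjacent pair $\{c_i,d_i\}$ glued onto each $v_i$ and onto the hub. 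This achieves the same goal --- a connected web of induced $4$--cycles covering every vertex, while no vertex outside $\Gamma$ has more than one neighbour inside $\Gamma$ --- but with a construction of explicit size $3n+2$ and no ``choose $n$ large enough'' step, and your {\sqcity} argument (phrased via the join formulation of the characterization, which is equivalent to the $4$--cycle formulation of Theorem~\ref{thm:quasiconvex_subgroups_of_RACG}) is spelled out rather than asserted. You also make the infinite-index claim explicit via $\langle a,b\rangle\cong D_\infty$ meeting $W_\Gamma$ trivially, which the paper leaves tacit. Two small remarks: you attribute this hub construction to Behrstock, but the construction the paper cites from \cite{B} is the ladder $\Omega_n$, so the attribution is off; and your side remark that $\widehat\Gamma$ admits no nontrivial join decomposition fails for $n=1$ (there $\widehat\Gamma=\{v_1,a,b\}\ast\{c_1,d_1\}$), though this is harmless since the paper's Definition~\ref{defn:CFS} permits $K=\emptyset$ and never excludes joins with non-clique factors.
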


\noindent \textbf{Hyperbolically Embedded Subgroups.}
As a final application of our characterization of {\sqc} subsets, we study the hyperbolically embedded subgroups of hierarchically hyperbolic groups. Hyperbolically embedded subgroups are generalizations of peripheral subgroups in relatively hyperbolic groups (see \cite{DGO}) and are a key component of studying acylindrically hyperbolic groups, a large class of groups exhibiting hyperbolic-like behavior (see \cite{Osin_acyl_hyp}). Work of Dahmani-Guirardel-Osin \cite{DGO} and Sisto \cite{Sisto_quasiconveity_of_hyperbolically_embedded} showed that if a finite collection of subgroups $\{H_i\}$  is hyperbolically embedded in a finitely generated group $G$, then $\{H_i\}$ is an almost malnormal collection and each $H_i$ is {\sqc}. While the converse of this statement is false in general (see the beginning of Section \ref{sec: Hyperbolically embedded subgroups of HHGs} for a counterexample), the converse does hold in the case of hyperbolic groups \cite[Theorem 7.11]{Bowditch_Rel_Hyp_Groups} and cocompactly cubulated groups \cite[Theorem 6.31]{Genevois_Hyp_in_CAT(0)}. We prove the converse in the setting of hierarchically hyperbolic groups.

\begin{thmi}[Characterization of hyperbolically embedded subgroups]\label{intro:hyperbolically_embedded}
Let $G$ be a hierarchically hyperbolic group. A finite collection of subgroups $\{H_i\}$ is hyperbolically embedded in $G$ if and only if $\{H_i\}$ is an almost malnormal collection and each $H_i$ is {\sqc}.
\end{thmi}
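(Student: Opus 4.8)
The plan is to prove both directions, using the forward direction essentially from the literature and concentrating on the converse, where the characterization of {\sqc} subsets (Theorem~\ref{intro:quasiconvex and HQC}) does the real work. For the forward direction, if $\{H_i\}$ is hyperbolically embedded in $G$, then by Dahmani--Guirardel--Osin \cite{DGO} the collection is almost malnormal, and by Sisto \cite{MR3519976} each $H_i$ is {\sqc} (Sisto proves that hyperbolically embedded subgroups are Morse, i.e.\ {\sqc}, in any finitely generated group). Since $G$ is a hierarchically hyperbolic group it is finitely generated, so this direction is immediate and requires no HHS machinery.

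For the converse, suppose $\{H_i\}$ is almost malnormal and each $H_i$ is {\sqc}. The first step is to pass to the hierarchical picture: by Theorem~\ref{intro:quasiconvex and HQC}, each $H_i$ is hierarchically quasiconvex with the orthogonal projection dichotomy, hence $H_i$ is itself a hierarchically hyperbolic group. The second step is to use the contracting behavior: by Theorem~\ref{intro:contracting_features} there is a contracting gate map $\gate_{H_i}\co G\to H_i$, and more importantly a "bounded hierarchy-path image" property — if the gates of two points are far apart in $H_i$, then every hierarchy path between them penetrates deeply into a neighborhood of $H_i$. The third step is to combine this with almost malnormality to verify Osin's criterion for hyperbolic embeddedness: one must produce a relative generating set $Z$ so that $G$ is hyperbolic relative to $\{H_i\}$ in the coarse/metric sense, equivalently verify that the coned-off Cayley graph $\widehat{\Gamma}(G, \{H_i\}, Z)$ is hyperbolic and that each $H_i$ satisfies the relative bounded geodesic image / finiteness condition on the induced metric $\widehat{d}_i$ (the distances $\widehat{d}_i(1, g)$ grow properly on $H_i \setminus \{1\}$).

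The strategy for that final step is to exploit the fact that $G$, being an HHS, acts acylindrically on the top-level hyperbolic space $\mc{C}(S)$ (indeed by \cite{ABD} it has a largest acylindrical action), and more flexibly to build an auxiliary hyperbolic space by coning off the $H_i$-cosets inside $G$ directly. Almost malnormality guarantees that distinct cosets $g H_i$, $g' H_j$ have coarsely bounded coarse intersection, so the neighborhoods one cones off are "spread out"; the {\sqc} (equivalently contracting) property guarantees that geodesics in $G$ fellow-travel these cosets in a controlled way, so coning them off cannot create short cycles — this is the mechanism by which the coned-off graph stays hyperbolic. Concretely, one shows a path in $G$ avoiding a large neighborhood of $gH_i$ must diverge at least quadratically (Theorem~\ref{intro:divergence and quasiconvex}), which prevents the kind of back-tracking that would destroy hyperbolicity of the cone-off; combined with the contracting gate maps this is exactly the input needed to run an argument in the spirit of Bowditch's characterization \cite{Bowditch_Rel_Hyp_Groups} of relative hyperbolicity, or Genevois's cube-complex argument \cite{Genevois2017}, now in the HHS setting.

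The main obstacle I expect is the passage from "each $H_i$ is contracting and the collection is almost malnormal" to actual hyperbolicity of the coned-off space: one needs a \emph{uniform} fellow-traveling/contracting constant across the whole (infinite) family of cosets $\{g H_i : g \in G, i\}$, and one needs almost malnormality to interact correctly with the contracting constants so that a geodesic cannot spend a long time near one coset, leave, and return near a different coset without paying a definite cost. This is where the precise form of Theorem~\ref{intro:contracting_features} — that the threshold $r_\lambda$ depends only on the hierarchy-path constant $\lambda$ and not on the points — is essential, together with a Ramsey/pigeonhole argument bounding how many distinct cosets a single hierarchy path can effectively interact with. Verifying Osin's condition that $\widehat{d}_i$ is a proper (equivalently, locally finite-type) metric on each $H_i$ will then follow because $H_i$ is itself an HHG and its intrinsic metric is quasi-isometrically embedded, so a ball in $\widehat{d}_i$ pulls back to a bounded set in $H_i$.
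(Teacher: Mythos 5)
Your forward direction is exactly the paper's (Dahmani--Guirardel--Osin for almost malnormality, Sisto for the Morse/{\sqc} property), and this part needs no HHS theory. The converse is where you diverge substantially from the paper, and where your outline has real gaps.

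The paper does \emph{not} go through Osin's original definition, nor does it construct and prove hyperbolicity of a coned-off Cayley graph, nor does it use divergence or acylindricity. Instead, it invokes the Bestvina--Bromberg--Fujiwara projection-complex criterion (combined with Sisto's translation into hyperbolic embeddedness), recorded as Theorem~\ref{thm:short_BBF_construction}: one must produce, for each pair of distinct cosets $Z_1\neq Z_2$ in $\mc Z$, a set $\proj_{Z_1}(Z_2)\subseteq Z_1$ satisfying the four BBF projection axioms (P0)--(P3). The paper takes $\proj_{Z_1}(Z_2)$ to be (an equivariantized version of) the HHS gate $\gate_{Z_1}(Z_2)$, and verifies (P0)--(P2) in Proposition~\ref{prop:BBF_axioms_for_malnormal_quasiconvex_subgroups} using exactly the tools you list — the contracting gate map, the bridge theorem, almost malnormality via Lemma~\ref{lem:malnormal_finite_intersection} — but assembled into three concrete diameter estimates rather than a hyperbolicity-of-cone-off argument. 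Condition (P3), equivariance, requires averaging the gate over the group action (Lemma~\ref{lem:coarse_equiv_of_gate}), a wrinkle you don't mention. This is a genuinely cleaner route: the BBF machinery absorbs precisely the ``uniform fellow-traveling across infinitely many cosets'' and ``bound the number of interacting cosets'' difficulties you flag at the end as the main obstacles.

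The gap in your version is therefore in the third step. You assert that almost malnormality plus contracting ``guarantees coning off cannot create short cycles,'' that quadratic divergence ``prevents back-tracking,'' and that a ``Ramsey/pigeonhole argument'' bounds how many cosets a hierarchy path interacts with — but none of these is an actual proof of hyperbolicity of $\widehat{\Gamma}(G,\{H_i\},Z)$ or of local finiteness of $\widehat{d}_i$; they are exactly the points one would have to make rigorous, and the paper never needs to because BBF/Sisto already reduce the problem to finite, checkable projection estimates. If you want to salvage a direct coned-off argument you should realize it would essentially be reproving Theorem~\ref{thm:short_BBF_construction} in this special case; better to prove (P0)--(P3) for the gate projections directly, as the paper does, where the contracting characterization (Theorem~\ref{intro:quasiconvex and HQC}), the bridge theorem, and almost malnormality each plug in at a specific, isolated place.
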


By \cite[Theorem A]{Kim2017}, an infinite index subgroup of the mapping class group of a surface is {\sqc} if and only if it is convex cocompact (this fact can also be deduced from Corollary \ref{intro:Hyp_vs_coarsely_covering}). Thus, as a specific case of Theorem \ref{intro:hyperbolically_embedded}, we have the following new result for the mapping class group.

\begin{cori}\label{intro:hyp_embedded_MCG}
If $S$ is oriented, connected, finite type surface of complexity at least 2 and $\{H_i\}$ is a finite collection of subgroups of the mapping class group of $S$ then the following are equivalent:
\begin{itemize}
	\item $\{H_i\}$ is hyperbolically embedded.
	\item $\{H_i\}$ is an almost malnormal collection and each $H_i$ is {\sqc}.
	\item $\{H_i\}$ is an almost malnormal collection and each $H_i$ is convex cocompact.
\end{itemize}

\end{cori}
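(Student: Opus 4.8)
The plan is to obtain the corollary by combining Theorem~\ref{intro:hyperbolically_embedded} with Kim's characterization of {\sqc} subgroups of the mapping class group, so that essentially no genuinely new argument is needed. First I would recall that for a surface $S$ of complexity at least $2$ the mapping class group $\MCG(S)$ is a hierarchically hyperbolic group, and as such it satisfies the bounded domain dichotomy (every hierarchically hyperbolic group does). Theorem~\ref{intro:hyperbolically_embedded} then applies with $G=\MCG(S)$ and gives, word for word, the equivalence of the first two bullets: a finite collection $\{H_i\}$ of subgroups of $\MCG(S)$ is hyperbolically embedded if and only if it is almost malnormal and each $H_i$ is {\sqc}.

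To pass to the third bullet I would show that, assuming $\{H_i\}$ is almost malnormal, each $H_i$ is {\sqc} if and only if each $H_i$ is convex cocompact. The input here is Kim's Theorem~A \cite[Theorem A]{Kim2017}, that an \emph{infinite index} subgroup of $\MCG(S)$ is {\sqc} if and only if it is convex cocompact; equivalently, one may cite Corollary~\ref{intro:Hyp_vs_coarsely_covering}(c) together with the identification of the stable subgroups of $\MCG(S)$ with its convex cocompact subgroups. The only thing left to check is that the infinite index hypothesis is automatic in our situation. This follows from the observation that a proper subgroup of finite index cannot sit in an almost malnormal collection: if $[\MCG(S):H_i]<\infty$ and $g\notin H_i$, then $gH_ig^{-1}\cap H_i$ is again of finite index and hence infinite, contradicting almost malnormality. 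So in an almost malnormal collection every proper $H_i$ has infinite index, and --- discarding the degenerate case $H_i=\MCG(S)$ (excluded once one requires the $H_i$ to be proper subgroups) --- Kim's theorem applies to each $H_i$ separately, completing the chain of equivalences.

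I would expect no real obstacle here: the substantive content is entirely carried by Theorem~\ref{intro:hyperbolically_embedded}, whose proof is the work of Section~\ref{sec: Hyperbolically embedded subgroups of HHGs}, and by Kim's theorem. The hypothesis that the complexity is at least $2$ is not used in the argument itself; it serves only to make $\MCG(S)$ non-hyperbolic, so that ``convex cocompact'' is a strictly stronger condition than ``finite index'' and the statement is non-vacuous. The one place that rewards a moment of care is the bookkeeping around almost malnormal collections that contain a finite index (in particular the whole) subgroup, handled as above.
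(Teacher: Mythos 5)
Your proof is correct and follows exactly the route the paper takes: the equivalence of the first two bullets is read off from Theorem~\ref{intro:hyperbolically_embedded} (MCG is an HHG, hence satisfies the bounded domain dichotomy), and the third bullet is then obtained from Kim's Theorem~A identifying strongly quasiconvex infinite-index subgroups with convex cocompact ones (the paper also mentions the alternative deduction via Corollary~\ref{intro:Hyp_vs_coarsely_covering}). Your extra observation that almost malnormality forces each proper $H_i$ to have infinite index, so that Kim's theorem is actually applicable, is a correct and useful piece of bookkeeping that the paper leaves implicit; the one residual edge case you note, $H_i = \MCG(S)$ (where the first two bullets hold trivially but the third fails since $\MCG(S)$ is not hyperbolic for complexity at least $2$), is genuinely not covered by the statement as written, and your remark that it should be excluded by requiring the $H_i$ to be proper is the right fix.
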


\subsection{Open questions}

We believe that {\sqc} subgroups are a rich area of study with many interesting open questions both in the setting of hierarchically hyperbolic groups and beyond. In light of Theorem \ref{intro:contracting_iff_quasiconvex}, it is natural to wonder which results for {\sqc} subgroups of hyperbolic groups can be extended to {\sqc} subgroups of hierarchically hyperbolic groups (or even finitely generated groups). As a starting point, one may  aim to extend work of Gromov, Arzhantseva, and Gitik on combination theorems for {\sqc} subgroups of hyperbolic groups \cite{Gromov1,MR1866849,MR1700476}.

\begin{ques} Prove combination theorems for {\sqc} subgroups of hierarchically hyperbolic groups (or even finitely generated groups). In particular, investigate conditions guaranteeing that the subgroup generated by two  {\sqc} subgroups, $Q_1$ and $Q_2$, is {\sqc} and isomorphic to $Q_1*_{ Q_1\cap Q_2}Q_2$.
\end{ques}

As {\sqc} subsets are  invariant under quasi-isometry, they have the potential to play an important role in the quasi-isometric classification of hierarchically hyperbolic spaces. The following would be an interesting first step in this direction. 

\begin{ques}
Provide necessary conditions for an HHS to have the property that all its {\sqc} subsets are either hyperbolic or coarsely cover the entire space. Using defining graphs, characterize all right-angled Coxeter groups whose {\sqc} subsets are hyperbolic or coarsely cover the entire group\footnote{The case of right-angled Coxeter groups has been resolved by Genevois \cite{Genevois_RACG_SQC_subgroups}.}. 
\end{ques}

Looking beyond hierarchically hyperbolic spaces, we wonder about the possibilities of understanding {\sqc} subsets in other spaces with a notion of non-positive curvature. Specifically we ask the following.

\begin{ques}\label{ques:quasiconvex iff contracting}
For what other spaces are {\sqc} subsets contracting (in the sense of Definition \ref{defn:contracting})?
\end{ques}

Some of the first spaces one could consider are CAT(0) spaces, coarse median spaces, and the outer automorphism groups of free groups. In \cite{Sultan2014} it is shown that {\sqc} geodesics in  CAT(0) spaces are always contracting. We conjecture the same holds for all {\sqc} subsets of a CAT(0) space\footnote{This conjecture has been confirmed by Cashen \cite{Cashen_CAT(0)}.}. A possible starting point for coarse median spaces could be the recently posted paper \cite{BowditchConvexity}, in which Bowditch constructs  hulls for subsets of coarse median spaces and produces a number of  results similar to our work in Section \ref{sec:constructing_hulls}.

Our proof of Theorem \ref{intro:hyperbolically_embedded} rests strongly upon the equivalence between {\sqc} and contracting subsets. One may then presume that any group that is an answer  to Question \ref{ques:quasiconvex iff contracting} is also an answer for the following question.

\begin{ques}\label{ques:hyperbolically_embedded}
For what other finitely generated groups are almost malnormal, {\sqc} subgroups hyperbolically embedded? 
\end{ques}
A long standing open question in the study of quasiconvex subgroups of hyperbolic group is whether or not finitely generated, almost malnormal subgroups of hyperbolic groups must be quasiconvex.  Accordingly, we ask the analogous question for the larger class of hierarchically hyperbolic groups.

\begin{ques}
Are finitely generated, almost malnormal subgroups of hierarchically hyperbolic groups {\sqc}?
\end{ques}

\noindent \textbf{Acknowledgments.}
We gratefully acknowledge Mark F. Hagen for suggesting the strategy to attack Theorem \ref{thm: hierarchy paths to hulls};  Mark V. Sapir for an example of an almost malnormal {\sqc} subgroup that is not hyperbolically embedded; and Johanna Mangahas for suggesting the relation between hierarchical quasiconvexity and coarse median quasiconvexity in Section \ref{subsec:hulls_and_coarse_medians}. We thank Brian Bowditch, Tai-Danae Bradley, Heejoung Kim, Chris Hruska, and Dan Berylne for their comments on early versions of this paper. We are also grateful to Kevin Schreve for pointing out an error in the first version of this paper. The first two authors thank the organizers of YGGT 2018 and GAGTA 2018 where some of the work on this paper was completed. They  also give special thanks to their respective advisors,  Jason Behrstock and Alessandro Sisto, for their ongoing support and their many helpful comments on early drafts of this paper. Finally, we would like to thank the anonymous referees for a number of comments that improved this paper.

\subsection{Outline of the paper}

In Section \ref{sec:coarse_geometry}, we begin with the basic definitions and properties of {\sqc} subsets and the related notions of stability and contracting subsets of general quasi-geodesic spaces. In Section \ref{sec: contracting and relative divergence}, we define lower relative divergence and study the relationship between contracting subsets, {\sqc} subsets, and lower relative divergence in any quasi-geodesic space. We move on to hierarchically hyperbolic spaces in Section \ref{sec:HHS_Background}, where we give the definition of an HHS and detail the relevant tools and constructions we will need from the theory. In Section \ref{sec:constructing_hulls}, we explain how to construct hierarchically quasiconvex hulls using hierarchy paths.  As applications of this construction, we give a characterization of hierarchically quasiconvex sets in terms of the coarse median structure on the HHS and prove that {\sqc} subsets are also hierarchically quasiconvex.  In Section \ref{sec:characterizing_quasiconvex_subsets}, we state and prove our equivalent characterizations of {\sqc} subsets, finishing the proofs of Theorems \ref{intro:contracting_iff_quasiconvex}, \ref{intro:quasiconvex and HQC}, and \ref{intro:divergence and quasiconvex}. The remaining sections are devoted to applications of this characterization. We give a generalization of the bounded geodesic image property for {\sqc} subsets in Section \ref{subsec:bounded_geodesic_image}, study {\sqc} subsets in specific examples in Section \ref{sec: Examples}, and characterize hyperbolically embedded subgroups of HHGs in Section \ref{sec: Hyperbolically embedded subgroups of HHGs}.

\section{Coarse geometry}\label{sec:coarse_geometry}

\subsection{Quasi-geodesic spaces, conventions, and notations}\label{subsec:quasigeodesic_spaces}

This paper focuses on understanding the geometry of metric spaces up to quasi-isometry. While many of the metric spaces we are interested in applying our results to are geodesic metric spaces, many of the subspaces we will be studying will be quasi-geodesic, but not geodesic metric spaces. Thus, we will almost always assume our metric spaces are \emph{quasi-geodesic metric spaces}.

\begin{defn}
A metric space $X$ is a \emph{$(K,L)$--quasi-geodesic metric space} if for all $x,y \in X$ there exists a $(K,L)$--quasi-geodesic $\gamma \colon [a,b] \rightarrow X$ with $\gamma(a) = x$ and $\gamma(b) = y$.
\end{defn}

Given a $(K,L)$--quasi-geodesic metric space $X$, we can construct a geodesic metric space quasi-isometric to $X$ as follows: fix an  $\epsilon$--separated net  $N \subseteq X$ and connect a pair of points $x,y \in N$ by an edge of length $d(x,y)$ if $d(x,y) < 2\epsilon$. The resulting metric graph will be quasi-isometric to $X$. Since $\epsilon$ can be chosen to depend only on $K$ and $L$, this graph can be constructed such that the quasi-isometry constants will also depend only on $K$ and $L$. When convenient, we will exploit this fact to reduce proofs to the geodesic case.

A particularly important collection of metric spaces in geometric group theory is the class of $\delta$--hyperbolic metric spaces, introduced by Gromov in \cite{Gromov1, Gromov2}. While \(\delta\)--hyperbolic spaces are usually required to be geodesic, the following  is a direct extension of the definition to the setting of quasi-geodesic metric spaces. 

\begin{defn}\label{def:hyperbolic}
A $(K,L)$--quasi-geodesic metric space is \emph{$\delta$--hyperbolic} if for every $(K,L)$--quasi-geodesic triangle the  $\delta$--neighborhood of the union of any two of the edges contains the third. 
\end{defn}

Gromov's four-point condition can also be used to define a hyperbolic quasi-geodesic metric space, however as shown in \cite[Example 11.36]{DrutuKapovichBook}, this definition fails to be a quasi-isometry invariant if the spaces are not geodesic. In contrast, Definition \ref{def:hyperbolic} is a quasi-isometry invariant among quasi-geodesic spaces.  In particular, using the ``guessing geodesic" criterion, from \cite[Theorem 3.15]{MasurSchleimer} or \cite[Theorem 3.1]{Bowditch14}, you can show a quasi-geodesic space is hyperbolic in the sense of Definition \ref{def:hyperbolic} if and only if it is quasi-isometric to a geodesic metric space that is hyperbolic in the usual sense.

When referring to a property defined by a parameter (e.g. $\delta$--hyperbolic), we will often suppress that parameter when its specific value is not needed. To reduce the proliferation of additive and multiplicative constants throughout this paper, we will adopt the following notations.
\begin{notation}
Let \(A, B, K, L\) be real numbers. We write \[A \stackrel{K,L}{\preceq} B \text{ if } A \leq KB +L.\] If \(A \stackrel{K,L}{\preceq} B\) and \(B \stackrel{K,L}{\preceq} A\), we write \(A \stackrel{K,L}{\asymp} B\). 

We say two subsets of a metric space \emph{\(K\)--coarsely coincide} if their Hausdorff distance is at most \(K\). 
\end{notation}
\subsection{{\SQCity}, contracting, and stability}\label{subsec:Defn_of_quasiconvex}

The primary notion of convexity we will consider is the following notion of {\sqcity}.

\begin{defn}[{\SQC} subset]\label{defn:quasiconvex}

A subset $Y$ of a quasi-geodesic metric space $X$ is \emph{{\sqc}} if there is a function $Q \colon [1,\infty) \times [0,\infty) \rightarrow [0,\infty)$ such that for every $(K,L)$--quasi-geodesic $\gamma$ with endpoints on $Y$, we have $\gamma \subseteq N_{Q(K,L)}(Y)$. We call the function $Q$ the \emph{convexity gauge} for $Y$.
\end{defn}

It follows directly from the definition that {\sqcity} is a quasi-isometry invariant in the following sense.

\begin{lem}
\label{lem:quasiconvex_qi_invariant} 
Let $X$ and $Z$ be a quasi-geodesic metric spaces and $f\colon X \rightarrow Z$ be a $(K,L)$--quasi-isometry. If $Y$ is a $Q$--{\sqc} subset of $X$, then $f(Y)$ is a $Q'$--{\sqc} subset of $Z$, with $Q'$ depending only on $Q$, $K$ and $L$.
\end{lem}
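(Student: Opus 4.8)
The plan is to transport quasi-geodesics from $Z$ back to $X$ using a coarse inverse of $f$, invoke the {\sqcity} of $Y$ there, and then push the resulting neighborhood bound forward to $Z$.

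First I would fix a coarse inverse $\bar f \colon Z \to X$ of $f$. Standard quasi-isometry theory guarantees that $\bar f$ can be taken to be a $(K_1,L_1)$--quasi-isometry with $K_1,L_1$ depending only on $K,L$, together with a constant $D = D(K,L)$ satisfying $d_X(\bar f f(x),x) \le D$ and $d_Z(f\bar f(z),z) \le D$ for all $x \in X$, $z \in Z$. Now given a $(K',L')$--quasi-geodesic $\gamma\colon[a,b]\to Z$ with $\gamma(a)=f(y_a)$ and $\gamma(b)=f(y_b)$ for some $y_a,y_b\in Y$, I would form $\bar f\circ\gamma$; composing the quasi-isometric embedding $\bar f$ with $\gamma$ shows this is a quasi-geodesic in $X$ whose constants depend only on $K',L',K,L$.

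The key point --- and the only place any care is needed --- is that Definition \ref{defn:quasiconvex} requires a quasi-geodesic whose endpoints lie \emph{on} $Y$, whereas $\bar f\circ\gamma$ has endpoints only within $D$ of $Y$ (since $\bar f(\gamma(a))=\bar f f(y_a)$ is $D$--close to $y_a$, and likewise at $b$). I would repair this by redefining the map at its two endpoints to be $y_a$ and $y_b$. The resulting map $\sigma$ is everywhere within distance $D$ of $\bar f\circ\gamma$, and since any map within bounded distance of a quasi-geodesic is again a quasi-geodesic (with additive constant enlarged by twice that bound), $\sigma$ is a $(K'',L'')$--quasi-geodesic with $K'',L''$ depending only on $K',L',K,L$, now with endpoints genuinely on $Y$. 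Applying the $Q$--{\sqcity} of $Y$ gives $\sigma\subseteq N_{Q(K'',L'')}(Y)$, and since $\sigma(t)=\bar f(\gamma(t))$ at every interior $t$ while $\sigma$ already lands in $Y$ at the endpoints, we conclude $\bar f(\gamma(t))\in N_{Q(K'',L'')}(Y)$ for every $t\in[a,b]$.

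Finally I would push forward by $f$: since $f$ is $(K,L)$--coarsely Lipschitz and $d_Z(f\bar f(\gamma(t)),\gamma(t))\le D$, every point $\gamma(t)$ lies within $D + K\,Q(K'',L'') + L$ of $f(Y)$. Setting $Q'(K',L') := D + K\,Q(K'',L'')+L$, and noting that $K'',L''$ are functions of $K',L',K,L$ while $D$ depends only on $K,L$, the function $Q'$ depends only on $Q$, $K$, and $L$, so $f(Y)$ is $Q'$--{\sqc}. I do not expect a genuine obstacle here: the substantive step is the endpoint-repair of the pulled-back quasi-geodesic, and the remainder is bookkeeping to track how the coarse-inverse constants propagate into $Q'$.
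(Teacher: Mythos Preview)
Your argument is correct and is precisely the standard verification the paper has in mind; the paper itself omits the proof entirely, remarking only that the lemma ``follows directly from the definition.'' Your write-up simply unpacks that remark, and the one nontrivial observation you flag---repairing the endpoints of the pulled-back quasi-geodesic so they lie on $Y$---is exactly the point one must check.
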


In the setting of hyperbolic spaces, {\sqcity} is  equivalent to the  weaker  condition of quasiconvexity.

\begin{defn}
A subset $Y$ of a geodesic metric space $X$ is \emph{quasiconvex}  if there exists $D\geq 0$ such that for any geodesic $\gamma$ with endpoints on $Y$, we have $\gamma \subseteq N_{D}(Y)$. We call the constant $D$ the \emph{convexity constant} for $Y$.
\end{defn}

If $Y$ is a $Q$--{\sqc} subset of the $(K,L)$--quasi-geodesic space $X$, then any two points in $Y$ can be joined by a $(K,L)$--quasi-geodesic in $X$ that lies uniformly close to $Y$.  Thus $Y$ equipped with the metric inherited from $X$ will be a $(K',L')$--quasi-geodesic metric space where $K'$ and $L'$ depend only on $K$, $L$, and $Q$. For the rest of the paper, when discussing geometric properties (such as hyperbolicity) of a {\sqc} subset, we shall implicitly do so with respect to the metric inherited from the ambient space. In particular, if $f \colon X \rightarrow Z$ is a quasi-isometry between quasi-geodesic spaces and $Y$ is a {\sqc} subset of $X$, then $Y$ is quasi-isometric to $f(Y)$.

In \cite{DurhamTaylorStability}, Durham and Taylor introduced the following related notion of convexity.

\begin{defn}
A quasi-isometric embedding  $\Phi$ from a quasi-geodesic metric space \(Y\) into a quasi-geodesic metric space \(X\) is a \emph{stable embedding} if there is a function $R \colon [1,\infty) \times [0,\infty) \rightarrow [0,\infty)$  such that  if $\alpha$ and $\beta$ are two $(K,L)$--quasi-geodesics of \(X\) with the same endpoints in $\Phi(Y)$, then $d_{Haus}(\alpha,\beta)\leq R(K,L)$. 
\end{defn}

While the images of stable embeddings maintain many of the features of quasiconvex subsets of hyperbolic spaces, the definition is highly restrictive.  In particular, as the next proposition records, stable embeddings must always be onto hyperbolic subsets.

\begin{prop}
\label{defn:stable}
Let $\Phi \colon Y\to X$ be a quasi-isometric embedding from a quasi-geodesic metric space $Y$ to a quasi-geodesic metric space $X$. Then $\Phi$ is a stable embedding if and only if $Y$ is hyperbolic and $\Phi(Y)$ is {\sqc}.  In particular, if $Y$ is a {\sqc} subset of $X$, then the inclusion $i \colon Y \hookrightarrow X$ is a stable embedding if and only if $Y$ is hyperbolic with respect to the metric inherited from $X$.
\end{prop}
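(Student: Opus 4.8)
The plan is to prove the two implications of the biconditional separately, reducing everything to the definitions of stable embedding and {\sqc} subset. First I would prove the easy direction: if $\Phi$ is a stable embedding, then $\Phi(Y)$ is {\sqc} and $Y$ is hyperbolic. For {\sqcity} of $\Phi(Y)$: given any $(K,L)$--quasi-geodesic $\gamma$ with endpoints on $\Phi(Y)$, use that $\Phi$ is a quasi-isometric embedding and $Y$ is a quasi-geodesic space to find a $(K',L')$--quasi-geodesic $\beta$ of $X$ with the same endpoints whose image lies in $\Phi(Y)$ (push a $Y$--quasi-geodesic forward through $\Phi$, with $K',L'$ depending only on $K,L$ and the quasi-isometry constants of $\Phi$). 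Stability then bounds $d_{Haus}(\gamma,\beta) \leq R(K'',L'')$ where $K'' = \max\{K,K'\}$, so $\gamma \subseteq N_{R(K'',L'')}(\Phi(Y))$; this gives a convexity gauge. For hyperbolicity of $Y$: it suffices (via the quasi-isometry $\Phi \colon Y \to \Phi(Y)$ and quasi-isometry invariance of Definition \ref{def:hyperbolic}) to show $\Phi(Y)$, with the metric inherited from $X$, is hyperbolic. Take a quasi-geodesic triangle in $\Phi(Y)$; push each side into $X$ as a uniform quasi-geodesic of $X$ with endpoints on $\Phi(Y)$, but also, for each pair of vertices, take the ``direct'' quasi-geodesic $\beta$ lying in $\Phi(Y)$ as above. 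Thin-triangle behavior is inherited because: in $X$ the three sides of the triangle each lie within bounded Hausdorff distance of the corresponding $\beta$ (by stability), and then a standard quasi-geodesic-triangle thinness argument in $X$ — using that $X$ need not be hyperbolic, this is where I have to be careful — actually no: the right move is to observe that any two $(K,L)$--quasi-geodesics of $X$ with common endpoints in $\Phi(Y)$ are uniformly Hausdorff-close, so $\Phi(Y)$ with its inherited metric has the property that quasi-geodesic bigons are uniformly thin, and then invoke that in a quasi-geodesic metric space uniformly thin quasi-geodesic bigons imply the space is hyperbolic (this is essentially the ``guessing geodesics'' criterion referenced after Definition \ref{def:hyperbolic}, or a direct bigon-thinness criterion). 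I expect this to be the main obstacle — packaging the stability hypothesis into a clean hyperbolicity criterion for the non-geodesic space $\Phi(Y)$ — and I would handle it by passing to a geodesic space quasi-isometric to $\Phi(Y)$ (via the net construction from Section \ref{subsec:quasigeodesic_spaces}) and applying the standard bigon-thinness characterization of hyperbolic geodesic spaces there.

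Next I would prove the converse: assume $Y$ is hyperbolic and $\Phi(Y)$ is {\sqc}, with convexity gauge $Q$; show $\Phi$ is a stable embedding. Let $\alpha,\beta$ be $(K,L)$--quasi-geodesics of $X$ with the same endpoints $p,q \in \Phi(Y)$. By {\sqcity}, both $\alpha$ and $\beta$ lie in $N_{Q(K,L)}(\Phi(Y))$. Now use the coarse retraction: the closest-point projection (or any coarse projection) onto $\Phi(Y)$ moves points of $\alpha \cup \beta$ a bounded amount, and sends $\alpha$, $\beta$ to uniform quasi-geodesics $\bar\alpha, \bar\beta$ of $\Phi(Y)$ (with constants depending only on $K,L,Q$) joining $p$ to $q$. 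Since $\Phi$ is a quasi-isometric embedding, pull back through $\Phi^{-1}$ to get uniform quasi-geodesics in $Y$ with common endpoints; since $Y$ is $\delta$--hyperbolic and quasi-geodesics in a hyperbolic space with common endpoints uniformly fellow-travel (stability of quasi-geodesics in hyperbolic spaces), these are Hausdorff-close in $Y$, hence $\bar\alpha,\bar\beta$ are Hausdorff-close in $\Phi(Y)$, hence $\alpha,\beta$ are Hausdorff-close in $X$ with a bound $R(K,L)$ depending only on $K,L,Q$ and $\delta$. This gives the stability function $R$.

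Finally, the ``in particular'' statement is immediate: if $Y$ is already a {\sqc} subset of $X$, then $\Phi(Y) = Y$ is {\sqc} by hypothesis, so by the biconditional just proved the inclusion $i \colon Y \hookrightarrow X$ is a stable embedding if and only if $Y$ (with the inherited metric) is hyperbolic. I would remark that throughout we freely use the observation recorded just before the proposition, that a {\sqc} subset of a quasi-geodesic space is itself a quasi-geodesic space with constants controlled by the convexity gauge, so that all the quasi-geodesics produced inside $\Phi(Y)$ have uniform constants and the various ``depends only on'' claims are legitimate.
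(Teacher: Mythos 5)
Your proposal is correct, but it takes a genuinely different route from the paper. The paper does not prove this proposition directly at all: it cites \cite[Proposition 4.3]{Tran2017} for the case of geodesic ambient spaces, and then obtains the general quasi-geodesic statement ``for free'' from the observation that any quasi-geodesic space is quasi-isometric to a geodesic one and that {\sqcity}, stability, and hyperbolicity are all quasi-isometry invariants. You instead give a self-contained argument from the definitions: for the forward direction you derive a convexity gauge for $\Phi(Y)$ by comparing an arbitrary quasi-geodesic with endpoints in $\Phi(Y)$ to a quasi-geodesic running inside $\Phi(Y)$ and invoking the stability function, and you derive hyperbolicity by noting that stability makes (uniform-constant) quasi-geodesic bigons with endpoints in $\Phi(Y)$ uniformly thin, passing to a geodesic model to apply the bigon-thinness criterion (Papasoglu); for the converse you use {\sqcity} to push a pair of quasi-geodesics of $X$ into $\Phi(Y)$, transport to $Y$, and apply the Morse lemma in the hyperbolic space $Y$. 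The trade-off: the paper's proof is shorter but opaque without chasing the reference, while your argument is self-contained and makes visible exactly where each hypothesis is used. A couple of points in your write-up are stated slightly loosely — the fellow-travelling of quasi-geodesics in a \emph{quasi-geodesic} hyperbolic space, and the passage from thin quasi-geodesic bigons to hyperbolicity — but in both places you correctly flag that one should pass to a geodesic model via the net construction, which resolves them, so these are expository rather than mathematical gaps.
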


In \cite[Proposition 4.3]{Tran2017}, the third author proves the above proposition for the case of geodesic spaces. The more general statement above follows immediately from the fact that a quasi-geodesic space is always quasi-isometric to a geodesic space plus the fact that {\sqcity}, stability, and hyperbolicity are all quasi-isometry invariants.

One class of metric spaces we are particularly interested in are finitely generated groups equipped with a word metric.  In this setting we are particularly interested in understanding the {\sqc} and stable subgroups.

\begin{defn}
Let $G$ be a finitely generated group equipped with a word metric from some finite generating set. A subgroup $H<G$ is a \emph{{\sqc} subgroup} of $G$ if $H$ is a {\sqc} subset of $G$ with respect to the word metric on $G$. A subgroup $H<G$ is a \emph{stable subgroup} if $H$ is a {\sqc} subgroup and $H$ is a hyperbolic group.
\end{defn}

The above definition of stable subgroup is different than the one originally given in \cite{DurhamTaylorStability}, but it is equivalent by Proposition \ref{defn:stable}.

If $H$ is a {\sqc} subgroup of $G$, then $H$ is also finitely generated and undistorted in $G$.  Further, since {\sqc} is a quasi-isometry invariant, being a {\sqc} or a stable subgroup is independent of the choice of finite generating set for $G$.

It is common in the literature to study various ``contracting" properties of {\sqc} subsets.  In this paper, we compare {\sqc} subsets with the following notion of a contracting subset.

\begin{defn}
\label{defn:contracting}
Let $X$ be a quasi-geodesic metric space and $Y\subseteq X$. A map $g \colon X\to Y$ is said to be \emph{$(A,D)$--contracting} for some $A\in (0,1]$ and $D\geq 1$ if the following hold:
\begin{enumerate}
	\item $g$ is $(D,D)$--coarsely Lipschitz.
	\item For any $y\in Y$, $d\bigl(y,g(y)\bigr)\leq D$.
	\item For all $x\in X$, if we set $R=A d(x,Y)$, then $\diam\bigl(g\bigl(B_R(x)\bigr)\bigr)\leq D$.
\end{enumerate}

A subset $Y$ is said to be \emph{$(A,D)$--contracting} if there is an $(A,D)$--contracting map from $X$ to $Y$. 
\end{defn}

The above definition is motivated by \cite[Definition 2.2]{MMI} and generalizes the usual definition of contracting in hyperbolic and CAT(0) spaces to include maps that are not the closest point projection. This is critical to our study of hierarchically hyperbolic spaces in Section \ref{sec:characterizing_quasiconvex_subsets} and allows quasi-isometry invariance to be established directly from the definition.
\begin{lem}
\label{lem:contracting_qi_invariant} 
Let $X$ and $Z$ be quasi-geodesic metric spaces and $f\colon X \rightarrow Z$ be a $(K,L)$--quasi-isometry. If $Y$ is an $(A,D)$--contracting subset of $X$, then $f(Y)$ is an $(A',D')$--contracting subset of $Z$, where $A'$ and $D'$ depend only on $A$, $D$, $K$, and $L$.
\end{lem}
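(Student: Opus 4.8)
The plan is to transport the contracting map through a quasi-inverse. Let $\bar f \colon Z \to X$ be a quasi-inverse of $f$; it is a quasi-isometry whose constants depend only on $(K,L)$, it is $(K_0,L_0)$--coarsely Lipschitz for some $K_0,L_0$ depending only on $(K,L)$, and there is a constant $C$ depending only on $(K,L)$ with $d(\bar f f(x),x)\le C$ and $d(f\bar f(z),z)\le C$ for all $x\in X$, $z\in Z$. If $g\colon X\to Y$ is $(A,D)$--contracting, I would set $h := f\circ g\circ\bar f\colon Z\to f(Y)$ (well-defined into $f(Y)$ since $g(X)\subseteq Y$) and verify that $h$ is $(A',D')$--contracting for suitable $A',D'$ depending only on $A,D,K,L$.

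Conditions (1) and (2) of Definition \ref{defn:contracting} are bookkeeping. For (1), $h$ is a composition of three uniformly coarsely Lipschitz maps, hence uniformly coarsely Lipschitz. For (2), given $y'=f(y)\in f(Y)$ with $y\in Y$: move $\bar f f(y)$ to within $C$ of $y$, use that $g$ is coarsely Lipschitz together with $d(g(y),y)\le D$, then apply $f$; each step contributes a uniformly bounded error, giving $d(h(y'),y')\le D'$.

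The real content is condition (3). First, a routine quasi-isometry estimate (using $\bar f f \approx \mathrm{id}$, that $f,\bar f$ are coarsely Lipschitz, and choosing near-optimal points of $Y$, resp.\ of $f(Y)$) produces a constant $P$, depending only on $K,L$, with $d(x,Y)\le P\,d(z,f(Y))+P$ and $d(z,f(Y))\le P\,d(x,Y)+P$, where $x:=\bar f(z)$. I would then set $A':=\min\{1,\ A/(2K_0P)\}$ and choose a threshold $T$, depending only on $A,D,K,L$, large enough that $K_0A'(Pt+P)+L_0\le At$ for all $t\ge T$. Now fix $z\in Z$ and put $R':=A'\,d(z,f(Y))$. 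If $d(x,Y)<T$, then $d(z,f(Y))<PT+P$, so $R'$ is uniformly bounded and $\diam h(B_{R'}(z))$ is uniformly bounded since $h$ is coarsely Lipschitz. If $d(x,Y)\ge T$, then for any $z'\in B_{R'}(z)$ we get $d(\bar f(z'),x)\le K_0R'+L_0\le K_0A'(P\,d(x,Y)+P)+L_0\le A\,d(x,Y)$, so $\bar f(B_{R'}(z))\subseteq B_{A\,d(x,Y)}(x)$; the contracting property of $g$ then gives $\diam g(\bar f(B_{R'}(z)))\le D$, hence $\diam h(B_{R'}(z))\le KD+L$. Taking $D'$ to be the maximum of the coarse Lipschitz constant of $h$ and all the bounds above completes the argument.

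The only genuine obstacle is the mismatch of contracting radii in condition (3): the comparison between $d(x,Y)$ and $d(z,f(Y))$ is only coarse (multiplicative and additive), so the naive radius bound degenerates near $Y$, and that regime must be disposed of separately via the coarse Lipschitz property of $h$. Once that case split is in place, the rest is tracking quasi-isometry constants.
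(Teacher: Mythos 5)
Your proof is correct, and it is exactly the argument the paper alludes to when it says quasi-isometry invariance can ``be established directly from the definition'' (the paper itself does not spell out a proof of Lemma \ref{lem:contracting_qi_invariant}). Conjugating the contracting map by a quasi-inverse is the natural route, and you correctly identify and resolve the only subtle point, namely that the comparison $d(\bar f(z),Y)\asymp d(z,f(Y))$ is only coarse, so the small-distance regime must be handled separately via the coarse Lipschitz property of $h$; your choice of $A'$ and threshold $T$ and the resulting estimate $\bar f(B_{R'}(z))\subseteq B_{A\,d(x,Y)}(x)$ are all sound.
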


In the setting of hyperbolic spaces, {\sqc} subsets are contracting.  The contracting map will be the following  coarse closest point projection: if $X$ is a $\delta$--hyperbolic metric space and $Y \subseteq X$ is $Q$--{\sqc}, then there exist $K$ depending on $\delta$ and $Q$ and a $(1,K)$--coarsely Lipschitz map $\mf{p}_Y \colon X \rightarrow Y$ such that for all $x \in X$, $d(x,\mf{p}_Y(x)) \leq d(x,Y)+1$. By an abuse of language, we will refer to $\mf{p}_Y$ as the \emph{closest point projection} of $X$ onto $Y$. For any $Q$--{\sqc} subset $Y$ of a $\delta$--hyperbolic space, the map $\mf{p}_Y$ is $(1,D)$--contracting where $D$ depends only on $Q$ and $\delta$.

\section{Divergence of contracting subsets}\label{sec: contracting and relative divergence}

In this section we show that contracting subsets are always {\sqc}.  Without some negative curvature hypotheses, such as being hierarchically hyperbolic, the converse is not always true as we show in Example \ref{ex:quasiconvex_not_contracting}. Both of these statements are proved using lower relative divergence which was originally introduced by the third author in \cite{Tran2015}. The lower relative divergence is a family of functions that measures how efficiently one can travel in $X$ while avoiding a subset $Y$ (see Figure \ref{fig:Relative_Divergence}).

\begin{defn}[Lower relative divergence]
Let \(X\) be a geodesic space  and \(Y \subseteq X\). For \(r>0\) we adopt the following notations:
\begin{enumerate}
	\item \(\partial N_r(Y) = \{x \in X \mid d(x,Y)= r\}\)
	\item  \(d_r\) is the induced path metric on \(X - N_r(Y)\). 
\end{enumerate}
The \emph{lower relative divergence of $X$ with respect to $Y$} (or the \emph{divergence of $Y$ in $X$}), denoted $div(X,Y)$ is the set of functions $\{\sigma_\rho^n\}$ defined as follows:
For each $\rho \in (0,1]$, integer $n\geq 2$ and $r \in (0,\infty)$, if there is no pair of $x_1, x_2 \in \partial N_r(Y)$ such that $d_r(x_1, x_2)<\infty$ and $d(x_1,x_2)\geq nr$, we define $\sigma^n_{\rho}(r)=\infty$. Otherwise, we define $\sigma^n_{\rho}(r)=\inf d_{\rho r}(x_1,x_2)$ where the infimum is taken over all $x_1, x_2 \in \partial N_r(Y)$ such that $d_r(x_1, x_2)<\infty$ and $d(x_1,x_2)\geq nr$.
\end{defn}

\begin{figure}[H]
\centering
\def\svgscale{.7}
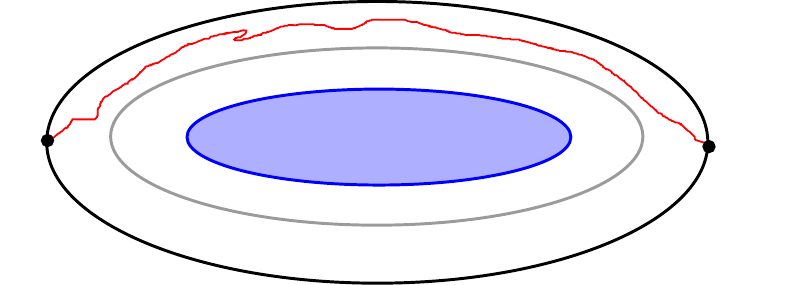
\caption{\label{fig:Relative_Divergence} A sketch of a step in the construction of the function $\sigma_\rho^n$.  The points $x_1,x_2\in \partial N_r(Y)$ are at least $nr$ far apart, so we measure the distance between $x_1$ and $x_2$ in the complement of the $\rho r$--neighborhood of $Y$. We then take the infimum of these distances over all such pairs of points to obtain $\sigma_\rho^n(r)$. }
\end{figure}

The lower relative divergence is often characterized by how the asymptotics of the functions $\{\sigma_\rho^n\}$ compare to linear, polynomial and exponential functions. Such descriptions are described in detail in \cite{Tran2015}.  For this paper we will restrict our attention to the following two properties of $div(X,Y)$.

\begin{defn}
\label{superlinear-quadratic}
Let $X$ be a geodesic metric space and $Y \subseteq X$. 

The lower relative divergence of $X$ with respect to $Y$ is \emph{completely superlinear} if there exists $n_0 \geq 3$ such that for every $\rho \in (0,1]$ and $C>0$ the set $\{r \in [0,\infty) :\sigma_\rho^{n_0}(r) \leq Cr\}$ is bounded. 

The lower relative divergence of $X$ with respect to $Y$ is \emph{at least quadratic} if there exists a positive integer $M$ such that for every $\rho \in (0,1]$ and $n\geq 2$ there exist $C>0$ and $r_0>0$ such that $ \sigma_\rho^{Mn}(r) > Cr^2$ for all $r>r_0$.
\end{defn}

The properties of being completely superlinear and at least quadratic are preserved under quasi-isometry in the following sense.

\begin{lem}[Consequence of {\cite[Proposition 4.9]{Tran2015}}]\label{lem:invariance_of_superlinear_and_quadratic}  
Let $f\colon X \rightarrow Z$ be a quasi-isometry between geodesic spaces. If $Y \subseteq X$ and $W \subseteq Z$ with $d_{Haus}(f(Y),W)< \infty$, then $div(X,Y)$ is completely superlinear (resp. at least quadratic) if and only if $div(Z,W)$ is completely superlinear (resp. at least quadratic).
\end{lem}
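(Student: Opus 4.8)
The plan is to harvest this from \cite[Proposition 4.9]{Tran2015} by unwinding Definition \ref{superlinear-quadratic}. That proposition shows that a quasi-isometry $f\colon X\to Z$ carrying $Y$ to within finite Hausdorff distance of $W$ makes the two-parameter families $\{\sigma^{n}_{\rho}\}=div(X,Y)$ and $\{\tau^{n}_{\rho}\}=div(Z,W)$ equivalent with respect to the preorder on two-parameter families of functions used in \cite{Tran2015}; concretely, there is a positive integer $M$ such that for every $n\geq 2$ and $\rho\in(0,1]$ one can choose $L\geq 1$ with
\[
\sigma^{n}_{\rho}(r)\ \leq\ L\,\tau^{Mn}_{\rho}(Lr)+Lr
\qquad\text{and}\qquad
\tau^{n}_{\rho}(r)\ \leq\ L\,\sigma^{Mn}_{\rho}(Lr)+Lr
\]
for all sufficiently large $r$. (One first invokes the part of Proposition 4.9 asserting that $div$ is unchanged up to equivalence when a subset is replaced by one at finite Hausdorff distance, to reduce to the case $W=f(Y)$.) So the statement reduces to the purely formal claim that ``completely superlinear'' and ``at least quadratic'' depend only on the equivalence class of a two-parameter family under this preorder, which I would check directly from Definition \ref{superlinear-quadratic}.

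The one preliminary I would record is monotonicity: $\sigma^{n}_{\rho}(r)$ is non-decreasing in $n$ (enlarging $n$ shrinks the set of pairs $x_1,x_2$ over which the infimum defining $\sigma^{n}_{\rho}(r)$ is taken) and non-decreasing in $\rho$ (removing a larger neighborhood of $Y$ only increases the path metric $d_{\rho r}$), and likewise for $\{\tau^{n}_{\rho}\}$; this absorbs the index shift $n\mapsto Mn$ coming from the preorder. For complete superlinearity, suppose $div(X,Y)$ is completely superlinear with witness $n_0$; I claim $n_0':=Mn_0$ witnesses it for $div(Z,W)$. Indeed, fix $\rho\in(0,1]$ and $C>0$; if $\tau^{Mn_0}_{\rho}(s)\leq Cs$ for some large $s$, then writing $s=Lr$ and applying the first displayed inequality at $n=n_0$ gives $\sigma^{n_0}_{\rho}(r)\leq L\,\tau^{Mn_0}_{\rho}(Lr)+Lr\leq(CL^{2}+L)r$, so $r$ --- hence $s$ --- lies in a bounded set because $\{r:\sigma^{n_0}_{\rho}(r)\leq(CL^{2}+L)r\}$ is bounded. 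For being at least quadratic, suppose $div(X,Y)$ is at least quadratic with integer $M_1$; I claim $M_2:=MM_1$ works for $div(Z,W)$. Fix $\rho\in(0,1]$ and $n\geq 2$; the first displayed inequality at index $M_1n$ rearranges to $\tau^{MM_1n}_{\rho}(Lr)\geq\tfrac1L\sigma^{M_1n}_{\rho}(r)-r$, and since $\sigma^{M_1n}_{\rho}(r)>C_0r^{2}$ for $r$ large, substituting $r\mapsto r/L$ yields $\tau^{M_2n}_{\rho}(r)>\tfrac{C_0}{L^{3}}r^{2}-\tfrac rL>\tfrac{C_0}{2L^{3}}r^{2}$ for $r$ large. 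The reverse implications are identical using the second displayed inequality; and if the preorder in \cite{Tran2015} also rescales the parameter ($\rho\mapsto\rho/M$), the same estimates apply for $\rho\in(0,1/M]$ and the remaining values follow by monotonicity of $\tau^{n}_{\rho}$ in $\rho$.

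I do not anticipate a genuine obstacle: the substantive work --- controlling how a quasi-isometry distorts the neighborhoods $N_r(Y)$ and the paths competing to avoid $Y$, so that the divergence families come out equivalent --- is exactly what \cite[Proposition 4.9]{Tran2015} does, and this lemma merely extracts the consequence. The only point demanding care is reconciling the single integer witnesses ($n_0$, resp.\ $M_1$) with the index shift produced by the preorder, which is precisely what the monotonicity of $\sigma^{n}_{\rho}$ and $\tau^{n}_{\rho}$ is used for.
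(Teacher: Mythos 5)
The paper gives no proof of this lemma --- it is stated as an unproved consequence of \cite[Proposition~4.9]{Tran2015} --- so there is no argument of the paper's to compare against; your proposal correctly supplies the omitted verification. The route (invoke Tran's Proposition~4.9 for the equivalence of the two divergence families, then check that ``completely superlinear'' and ``at least quadratic'' are invariants of that equivalence class in the sense of Definition~\ref{superlinear-quadratic}) is exactly what the paper implicitly intends, and your two key observations are the ones that make the reduction formal: monotonicity of $\sigma^n_\rho(r)$ in both $n$ and $\rho$, which absorbs the index shift $n\mapsto Mn$ and any $\rho$-rescaling in Tran's preorder, and the observation that ``at least quadratic'' already quantifies over a uniform multiplicative index $M_1$, so passing to $MM_1$ costs nothing. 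The algebraic manipulations in both the completely-superlinear case and the quadratic case are correct, and your closing remark about covering the remaining $\rho$-range by monotonicity is precisely the needed fix if the preorder rescales $\rho$. The only thing worth making explicit rather than deferring is that the preorder constant $L$ is permitted to depend on $n$ and $\rho$, and this is compatible with Definition~\ref{superlinear-quadratic} because only the integer witnesses $n_0$ (resp.\ $M$) are required to be uniform there, while $C$ and $r_0$ may depend on $\rho$ and $n$ --- and your choices $n_0'=Mn_0$ and $M_2=MM_1$ respect this by using only the uniform integer from the preorder.
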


In \cite{Tran2015}, the lower relative divergence was defined only for geodesic ambient spaces, however the definition can be extended to include quasi-geodesic metric spaces as follows.

\begin{defn}[Lower relative divergence in quasi-geodesic spaces]\label{defn:lower relative div in quasi-geodesic}
Let $X$ be a quasi-geodesic space and $Y \subseteq X$. Let $Z$ be a geodesic space and $f\colon X\to Z$ be a quasi-isometry. Then \emph{the lower relative divergence of $X$ with respect to $Y$} (or the \emph{divergence of $Y$ in $X$}), denoted $div(X,Y)$, is the lower relative divergence of $Z$ with respect to $f(Y)$. 

We say $div(X,Y)$ is \emph{completely superlinear} (resp. \emph{at least quadratic}) if $div(Z,f(Y))$ is completely superlinear (resp. at least quadratic).

\end{defn}

While the definition of $div(X,Y)$ in a quasi-geodesic space depends on a choice of $Z$ and $f$, $div(X,Y)$ being completely superlinear (resp. at least quadratic) is independent of this choice by Lemma \ref{lem:invariance_of_superlinear_and_quadratic}. In fact, while it will not be relevant for the content of this paper, $div(X,Y)$ is independent of the choice of $Z$ and $f$ in a much stronger sense. In \cite{Tran2015} the third author defined an equivalence relation $\sim$ between the collections of functions used to define the lower relative divergence. If $f_1:X\to Z_1$ and $f_2:X\to Z_2$ are two quasi-isometries with $Z_1$ and $Z_2$ geodesic spaces, then by  \cite[Proposition 4.9]{Tran2015}, $div(Z_1,f_1(Y)) \sim div(Z_2,f_2(Y))$. Thus  $div(X,Y)$ is well defined up to this notion of equivalence.

The following proposition shows that contracting subsets always have at least quadratic divergence.

\begin{prop}
\label{prop:contracting_implies_quad}
If $X$ is a quasi-geodesic space and  $Y$ is a contracting subset of $X$, then the lower relative divergence of $X$ with respect to $Y$ is at least quadratic.
\end{prop}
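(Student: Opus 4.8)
\medskip
\noindent\emph{Proof proposal.} The plan is to reduce to the geodesic case and then run a short ``counting the chunks'' argument. A path that efficiently connects two far-apart points of $\partial N_r(Y)$ while staying in the complement of $N_{\rho r}(Y)$ is collapsed by a contracting map $g\colon X\to Y$ to a path in $Y$ whose image moves at a bounded rate per unit of length; on the other hand $g$ must send the two endpoints to points of $Y$ that are at least $\asymp r$ apart, and these two facts together force the original path to have length $\asymp r^2$. For the reduction, I would pick a quasi-isometry $f\colon X\to Z$ onto a geodesic space $Z$: by Lemma \ref{lem:contracting_qi_invariant} the set $f(Y)$ is contracting in $Z$, and by Definition \ref{defn:lower relative div in quasi-geodesic} together with Lemma \ref{lem:invariance_of_superlinear_and_quadratic} it is enough to prove that $div(Z,f(Y))$ is at least quadratic. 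So from now on I assume $X$ is geodesic and $g\colon X\to Y$ is an $(A,D)$--contracting map.

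Next I fix the constants. Set $M=\lceil D\rceil+2$; this is the integer demanded in Definition \ref{superlinear-quadratic}, and it depends only on $D$. Fix $\rho\in(0,1]$ and an integer $n\ge 2$; the goal is to produce $C>0$ and $r_0>0$ with $\sigma^{Mn}_\rho(r)>Cr^2$ for all $r>r_0$. Fix such an $r$ and a pair $x_1,x_2\in\partial N_r(Y)$ with $d_r(x_1,x_2)<\infty$ and $d(x_1,x_2)\ge Mnr$ (if no such pair exists then $\sigma^{Mn}_\rho(r)=\infty$ and there is nothing to prove). The first step is to bound $d(g(x_1),g(x_2))$ from below. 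For $i\in\{1,2\}$ choose $y_i\in Y$ with $d(x_i,y_i)\le r+1$; then parts (1) and (2) of Definition \ref{defn:contracting} give $d(x_i,g(x_i))\le d(x_i,y_i)+d(y_i,g(y_i))+d(g(y_i),g(x_i))\le (1+D)(r+1)+2D$. Feeding this into the triangle inequality,
\[
d\bigl(g(x_1),g(x_2)\bigr)\ \ge\ d(x_1,x_2)-d\bigl(x_1,g(x_1)\bigr)-d\bigl(x_2,g(x_2)\bigr)\ \ge\ \bigl(Mn-2(1+D)\bigr)r-2(1+3D),
\]
and since $M\ge D+2$ and $n\ge 2$ the coefficient of $r$ is at least $2$, so $d(g(x_1),g(x_2))\ge r$ once $r$ exceeds a threshold depending only on $D$.

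The second step estimates the length of an avoiding path. Let $\gamma$ be a rectifiable path from $x_1$ to $x_2$ contained in $X-N_{\rho r}(Y)$, parametrized by arc length, of length $\ell$ (paths of infinite length may be ignored, and a finite-length one exists because $d_r(x_1,x_2)<\infty$ and $X-N_r(Y)\subseteq X-N_{\rho r}(Y)$). Subdivide $\gamma$ by points $x_1=p_0,p_1,\dots,p_m=x_2$ with $d(p_i,p_{i+1})\le A\rho r$ and $m\le \ell/(A\rho r)+1$. Each $p_i$ satisfies $d(p_i,Y)\ge\rho r$, hence $B_{A\rho r}(p_i)\subseteq B_{A\,d(p_i,Y)}(p_i)$, so part (3) of Definition \ref{defn:contracting} gives $\diam g\bigl(B_{A\rho r}(p_i)\bigr)\le D$ and therefore $d(g(p_i),g(p_{i+1}))\le D$. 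Summing along the chain, $d(g(x_1),g(x_2))\le mD\le D\bigl(\ell/(A\rho r)+1\bigr)$, and combining with Step~1 yields $r\le D\bigl(\ell/(A\rho r)+1\bigr)$, i.e.\ $\ell\ge \tfrac{A\rho}{D}\,r(r-D)\ge \tfrac{A\rho}{2D}\,r^2$ once $r>2D$. Taking the infimum over all such $\gamma$ gives $\sigma^{Mn}_\rho(r)\ge \tfrac{A\rho}{2D}r^2>\tfrac{A\rho}{4D}r^2$, so $C=\tfrac{A\rho}{4D}$ and a suitable $r_0$ depending only on $A,D$ work; since $M$ was chosen independently of $\rho$ and $n$, this shows $div(X,Y)$ is at least quadratic.

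I do not anticipate a serious obstacle. The two points that need attention are the reduction to a geodesic model (so that the divergence functions are literally defined) and the choice of $M$: without the slack provided by the factor $M$ in $\sigma^{Mn}_\rho$, the a priori estimate $d(x_i,g(x_i))\preceq r$ would not force $g(x_1)$ and $g(x_2)$ to be at least $r$ apart, and the length estimate would break down. Everything else is the straightforward ``count the diameter-$D$ chunks along the image'' computation.
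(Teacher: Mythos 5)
Your proposal is correct and follows essentially the same approach as the paper: reduce to the geodesic case via quasi-isometry invariance of contracting and of the divergence property, bound $d(x_i,g(x_i))$ linearly in $r$ so that $d(g(x_1),g(x_2))\gtrsim r$, subdivide an avoiding path into chunks small enough that the contracting condition forces each chunk's image under $g$ to have diameter at most $D$, and count the chunks. The only differences are bookkeeping: you choose $M=\lceil D\rceil+2$ up front and use a slightly tighter linear bound $d(x_i,g(x_i))\le(1+D)(r+1)+2D$, whereas the paper uses the cruder $2Dr+4D$ and absorbs the slack by working directly with $n\ge 4D+2$; the resulting constants $C=A\rho/(4D)$ agree.
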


\begin{proof}
Since every quasi-geodesic space is quasi-isometric to a geodesic metric space, Lemma \ref{lem:contracting_qi_invariant} allows us to assume $X$ is geodesic. Assume that $Y$ is $(A,D)$--contracting and let $g:X\to Y$ be an $(A,D)$--contracting map.  We first show that for all $x \in X$, \[d(x,g(x)) \leq 2D d(x,Y) + 4D.\]

Let $y \in Y$ such that $d(x,y) \leq d(x,Y)+1$. Then from the definition of $(A,D)$--contracting we have
\begin{align*}
	d(x,g(x)) \leq& d(x,y) + d\bigl(y,g(y)\bigr) + d\bigl(g(y),g(x)\bigl)\\
	\leq& d(x,Y)+1 + D + D  d(x,y)+ D\\
	\leq& (D+1)d(x,Y) +3D +1\\
	\leq& 2D d(x,Y) + 4D.
\end{align*}

Now, let $\{\sigma^n_{\rho}\}$ be the lower relative divergence of $X$ with respect to $Y$. We claim that for each $n\geq  4D+2 $ and $\rho\in(0,1]$ \[\sigma^n_{\rho}(r)\geq \biggl(\frac{A\rho}{4D}\biggr)r^2\text{ for each  }r>8D. \]
Let $r>8D$ , $n$ be an integer greater than ${ 4D+2 }$, and $\rho \in (0,1]$. If $\sigma^n_{\rho}(r)=\infty$, then the above inequality is true. Otherwise, let \(x_1, x_2\in \partial N_r(Y)\) be such that \(d(x_1, x_2) \geq nr\) and $d_r(x_1,x_2) \leq \infty$. The distances $d\bigl(x_1,g(x_1)\bigr)$ and $d\bigl(x_2,g(x_2)\bigr)$ are bounded above by  $ 2Dr+4D $. Therefore, 

\begin{align*}
	d\bigl(g(x_1),g(x_2)\bigr) &\geq d(x_1,x_2)-d\bigl(x_1,g(x_1)\bigr)-d\bigl(x_2,g(x_2)\bigr)\\
	&\geq nr - 4Dr-8D \\
	&\geq r
\end{align*}

Let $\gamma$ be a rectifiable path in $N_{\rho r}(Y)$ connecting $x_1$ and $x_2$ and $R=A\rho r/2$. There exist $t_0<t_1<t_2<\cdots<t_{m-1}<t_m$ such that $\gamma(t_0) =x_1$, $\gamma(t_m)=x_2$ and \[\frac{R}{2}\leq \ell(\gamma_{|[t_{i-1},t_i]})\leq R\] where $\ell(\cdot)$ denotes the length of a path. 
This implies \[\ell(\gamma)=\sum\limits_{i=1}^m \ell(\gamma_{|[t_{i-1},t_i]})\geq \frac{mR}{2}.\label{eq:contracting1}\tag{$1$} \]
Since $g$ is an $(A,D)$--contracting map and $d\bigl(\gamma(t_{i-1}), \gamma(t_i)\bigr)<Ad\bigl(\gamma(t_{i-1}), Y\bigr)$, we have $d\bigl(g(\gamma(t_{i-1})),g(\gamma(t_i))\bigr)\leq D$ for each $1\leq i \leq m$. Thus \[
d\bigl(g(x_1),g(x_2)\bigr)\leq \sum\limits_{i=1}^m d\bigl(g(\gamma(t_{i-1})),g(\gamma(t_i))\bigr)\leq mD. \label{eq:contracting2} \tag{$2$}
\]

Since $d\bigl(g(x_1),g(x_2)\bigr) \geq r$, Inequality (\ref{eq:contracting2}) implies $m\geq r/D$. Combining this with Inequality (\ref{eq:contracting1}), we have \[\ell(\gamma)\geq \frac{mR}{2}\geq \biggl(\frac{A\rho}{4D}\biggr)r^2.\]
Therefore, \[\sigma^n_{\rho}(r)\geq \biggl(\frac{A\rho}{4D}\biggr)r^2\] for $n\geq 4D+2$, $\rho\in (0,1]$, and $r>8D$. This implies that the lower relative divergence of $X$ with respect to $Y$ is at least quadratic.  
\end{proof}

In \cite{Tran2015}, the third author classified {\sqc} subsets in terms of their lower relative divergence. This result continues to hold in the slightly more general setting of quasi-geodesic spaces.

\begin{thm}[{\cite[Theorem 3.1]{Tran2017}}]
\label{thm:quasiconvex_iff_superlinear}
Let $X$ be a quasi-geodesic space and $Y \subseteq X$. Then $Y$ is {\sqc} if and only if the lower relative divergence of $X$ with respect to $Y$ is completely superlinear.
\end{thm}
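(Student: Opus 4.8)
The plan is to reduce the statement to the geodesic case, which is \cite[Theorem 3.1]{Tran2017} (building on \cite{Tran2015}), and let the quasi-isometry invariance of the two sides do the rest. First I would invoke the construction recalled in Section~\ref{subsec:quasigeodesic_spaces}: fix a geodesic metric space $Z$ together with a quasi-isometry $f\colon X\to Z$. Unwinding Definition~\ref{defn:lower relative div in quasi-geodesic}, the assertion ``$div(X,Y)$ is completely superlinear'' means precisely ``$div(Z,f(Y))$ is completely superlinear'', and this is independent of the chosen pair $(Z,f)$ by the discussion following that definition (or directly by Lemma~\ref{lem:invariance_of_superlinear_and_quadratic}). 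Next I would record that $Y$ is {\sqc} in $X$ if and only if $f(Y)$ is {\sqc} in $Z$: the forward implication is Lemma~\ref{lem:quasiconvex_qi_invariant} applied to $f$, and the reverse implication is Lemma~\ref{lem:quasiconvex_qi_invariant} applied to a quasi-inverse $g\colon Z\to X$ of $f$, using that $g(f(Y))$ lies at finite Hausdorff distance from $Y$ and that {\sqcity} is insensitive to bounded Hausdorff perturbations. Finally I would chain these two equivalences with the geodesic case applied to $Z$ — that $div(Z,f(Y))$ is completely superlinear if and only if $f(Y)$ is {\sqc} in $Z$ — to conclude that $Y$ is {\sqc} in $X$ if and only if $div(X,Y)$ is completely superlinear.

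I do not expect a genuine obstacle at this level: the argument is a routine bootstrap once the definitions and the invariance lemmas are in place. The one point worth a sentence of care is the two-sidedness of Lemma~\ref{lem:quasiconvex_qi_invariant}, i.e.\ that {\sqcity} transfers back along $f$ as well as forward; passing to a quasi-inverse and a bounded Hausdorff perturbation handles it, in the same spirit as the proof of Proposition~\ref{defn:stable}.

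The real work, of course, lives inside the cited geodesic-space statement, and it is worth recalling its shape. For ``{\sqc} $\Rightarrow$ completely superlinear'' one argues by contraposition: from a sequence of $(K,L)$--quasi-geodesics with endpoints on $Y$ straying a distance $R_j\to\infty$ from $Y$, one produces, at the scale $r_j\asymp R_j/(Kn_0+2)$, a pair of points $x_1,x_2\in\partial N_{r_j}(Y)$ with $d(x_1,x_2)\geq n_0 r_j$ joined by a sub-quasi-geodesic staying outside $N_{r_j}(Y)$, witnessing $\sigma^{n_0}_\rho(r_j)\leq C r_j$. For the converse one runs this in reverse, turning the efficient path realizing $\sigma^{n_0}_\rho(r)$ into a quasi-geodesic with endpoints on $Y$ that is forced to stray a distance comparable to $r$ from $Y$. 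The subtle point in both directions — and the reason one cannot simply splice geodesics from $Y$ onto the path realizing $\sigma^{n_0}_\rho(r)$ — is that concatenations of geodesics (or quasi-geodesics) need not be quasi-geodesics; the connecting path must be chosen taut (realizing $d_{\rho r}$ up to bounded additive error) and the scale $r$ calibrated against the straying distance so that the resulting path genuinely is a quasi-geodesic with constants depending only on $K$, $L$, and the divergence data. Since this local-to-global bookkeeping is already carried out in \cite{Tran2017}, I would not reproduce it here.
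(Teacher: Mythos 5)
Your reduction to the geodesic case via a quasi-isometry $f\colon X \to Z$ is the same strategy the paper uses, and your observation that the backward direction of Lemma~\ref{lem:quasiconvex_qi_invariant} requires a quasi-inverse plus stability of {\sqcity} under bounded Hausdorff perturbation is a correct filling-in of a detail the paper leaves implicit. The one gap is that you apply the geodesic-space theorem from \cite{Tran2017} unconditionally, whereas the paper explicitly restricts to the case where $Y$ has infinite diameter before invoking it, and then dispatches the bounded case by a separate one-line argument. That restriction is not cosmetic: the cited geodesic-space result is stated for unbounded subsets, so your chain of equivalences does not close when $\diam(Y)=r_0<\infty$. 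The missing case is degenerate — for any $n\geq 3$ and any $r$ with $(n-2)r>r_0$, every pair $x_1,x_2\in\partial N_r(Y)$ satisfies $d(x_1,x_2)\leq 2r+r_0<nr$, so $\sigma^n_\rho(r)=\infty$ by the definitional convention, making $div(X,Y)$ trivially completely superlinear; and a bounded subset is trivially {\sqc}. Adding that observation closes the gap, after which your argument matches the paper's.
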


\begin{proof}
Since every quasi-geodesic metric space is quasi-isometric to a geodesic metric space,  the result follows immediately from \cite[Theorem 1.5]{Tran2017} when $Y$ is infinite diameter. If $\diam(A) = r_0 < \infty$, then for all $r>r_0$, $\partial N_{r_0}(Y) = \emptyset$ and thus $\sigma_\rho^n(r) = \infty$. Hence $div(X,Y)$ is completely superlinear and $Y$ is {\sqc}. 
\end{proof}

Proposition \ref{prop:contracting_implies_quad} and Theorem \ref{thm:quasiconvex_iff_superlinear} combine to say that if a subset $Y \subseteq  X$ is $(A,D)$--contracting, then $Y$ is {\sqc}. A direct proof of this result was shown by Sultan for the case of quasi-geodesics, but their proof goes through for any subset without modification \cite[Lemma 3.3]{Sultan2014}. For completeness, we include a proof using the bound on the lower relative divergence of $Y$ from Proposition \ref{prop:contracting_implies_quad}.


\begin{cor}\label{cor:contracting implies quasiconvex}
Let $X$ be a $(K,L)$--quasi-geodesic space and $Y \subseteq X$. If $Y$ is $(A,D)$--contracting, then $Y$ is $Q$--{\sqc} where $Q$ is determined by $A$, $D$, $K$, and $L$.
\end{cor}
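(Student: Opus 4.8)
The plan is to combine Proposition \ref{prop:contracting_implies_quad} with Theorem \ref{thm:quasiconvex_iff_superlinear}, but to track every constant produced along the way, so that the resulting convexity gauge is visibly a function of $A$, $D$, $K$, and $L$. First I would reduce to the geodesic case. Using the net construction of Section \ref{subsec:quasigeodesic_spaces}, fix a geodesic metric space $X'$ and a quasi-isometry $f\colon X\to X'$ whose constants depend only on $K$ and $L$. By Lemma \ref{lem:contracting_qi_invariant}, $f(Y)$ is $(A_1,D_1)$--contracting with $A_1,D_1$ functions of $A,D,K,L$; by Lemma \ref{lem:quasiconvex_qi_invariant} (and the quantitative insensitivity of {\sqcity} to bounded Hausdorff perturbations), a convexity gauge for $f(Y)$ depending only on $A_1$ and $D_1$ pulls back to a convexity gauge for $Y$ depending only on $A,D,K,L$. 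Hence it suffices to treat the case where $X$ is geodesic and to show the gauge depends only on $A$ and $D$. This reduction is also what makes a naive direct argument (mimicking the proof of Proposition \ref{prop:contracting_implies_quad} inside a quasi-geodesic with endpoints on $Y$) awkward: unlike geodesics, quasi-geodesics can have wildly uncontrolled length, so one genuinely wants the divergence machinery, which takes infima of lengths and so is insensitive to this.

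With $X$ geodesic, the proof of Proposition \ref{prop:contracting_implies_quad} is already effective: it establishes that the lower relative divergence $\{\sigma^n_\rho\}$ of $X$ with respect to $Y$ satisfies $\sigma^n_\rho(r)\geq \tfrac{A\rho}{4D}\,r^2$ for all integers $n\geq 4D+2$, all $\rho\in(0,1]$, and all $r>8D$. Setting $n_0:=\max\{3,\lceil 4D+2\rceil\}$, this gives, for every $\rho\in(0,1]$ and $C>0$,
\[
\{\, r\geq 0 : \sigma^{n_0}_\rho(r)\leq Cr \,\}\ \subseteq\ \Bigl[\,0,\ \max\bigl\{8D,\ \tfrac{4DC}{A\rho}\bigr\}\,\Bigr]\ =:\ \bigl[0,\,B(\rho,C)\bigr],
\]
where $n_0$ and the bounding function $B(\rho,C)$ are explicit functions of $A,D,\rho,C$. (When $\diam(Y)<\infty$ this is even more immediate, since then $\sigma^n_\rho\equiv\infty$ and the failure set is empty; one must still use the argument below rather than the naive ``short domain'' bound, since the latter depends on $\diam(Y)$.)

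It remains to feed this into the proof of the implication ``completely superlinear $\Rightarrow$ {\sqc}'' underlying Theorem \ref{thm:quasiconvex_iff_superlinear} (i.e. the argument of \cite[Proposition 3.1]{Tran2017}). Given a $(\lambda,\epsilon)$--quasi-geodesic with endpoints on $Y$, that argument bounds its distance from $Y$ by a quantity assembled from the threshold $n_0$ and from finitely many values $B(\rho,C)$ at choices of $\rho,C$ that themselves depend only on $\lambda,\epsilon$ (and $n_0$); the extraction terminates precisely because $B$ grows only linearly in $C$, which is exactly the quadratic divergence bound above. Substituting the explicit $n_0$ and $B$ therefore turns this into an explicit function $Q(\lambda,\epsilon)$ of $\lambda,\epsilon,A,D$, and undoing the reduction of the first paragraph reinstates the dependence on $K,L$, yielding a convexity gauge $Q$ determined by $A,D,K,L$.

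The only step that is not purely formal is the claim, made in the previous paragraph, that the convexity gauge produced inside the proof of \cite[Proposition 3.1]{Tran2017} depends only on the superlinearity data (the threshold $n_0$ and the function $(\rho,C)\mapsto B(\rho,C)$ that bounds the failure set) together with the test quasi-geodesic constants, with no auxiliary unquantified constant slipping in. Verifying this is a bookkeeping exercise inside that proof rather than a new idea, and I would expect the gauge there to come out as something of the shape $Q(\lambda,\epsilon)\approx\inf\{\,r: B(\rho_0,\,C_0(\lambda,\epsilon))<r\,\}$ for a fixed $\rho_0\in(0,1]$ and an explicit $C_0$ — manifestly a function of $A$, $D$, $\lambda$, $\epsilon$ once $n_0$ and $B$ are. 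This is the main place where care is required.
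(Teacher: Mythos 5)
Your proposal matches the paper's proof: reduce to the geodesic case via Lemmas \ref{lem:quasiconvex_qi_invariant} and \ref{lem:contracting_qi_invariant}, extract the quantitative bound $\sigma_\rho^{n}(r) \geq (A\rho/4D)r^2$ for $n\geq 4D+2$, $r>8D$ from the proof of Proposition \ref{prop:contracting_implies_quad}, and feed it into the effective content of \cite[Proposition 3.1]{Tran2017}. The paper simply cites the latter a bit more concretely — the key extracted inequality is $\sigma_1^{n_0}(C_0 m)\leq C_1 m$ for $m$ the infimal neighborhood radius, with $C_0,C_1$ depending only on $\lambda,\epsilon,n_0$ — but your reformulation of the quadratic bound as an explicit failure-set bound $B(\rho,C)=\max\{8D,\,4DC/(A\rho)\}$ is an equivalent way of packaging the same input, and the ``bookkeeping'' you flag as the remaining step is precisely what that citation supplies.
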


\begin{proof}

Let $Y$ be a $(A,D)$--contracting subset of $X$. We first assume that $X$ is a geodesic metric space. Let $\{\sigma_\rho^n\}$ be the lower relative divergence of $X$ with respect to $Y$. The proof of Proposition \ref{prop:contracting_implies_quad} shows that for each $n\geq 4D+2$ and $\rho\in(0,1]$ we have \[\sigma_{\rho}^{n}(r) \geq \biggl(\frac{A\rho}{4D}\biggr)r^2   \text{ for all } r>8D.\] Therefore, by fixing $n=n_0=4D+3$ and $\rho=1$ we have \[\sigma_{1}^{n_0}(r) \geq \biggl(\frac{A}{4D}\biggr)r^2    \text{ for all } r>8D.\] If $\gamma$ is a $(\lambda,\epsilon)$--quasi-geodesic with endpoints on $Y$, let $m = \inf \{B \in \mathbb{R} : \gamma \subseteq N_B(Y)\}$. The proof of \cite[Proposition 3.1]{Tran2017} establishes that if $m$ is larger than a fixed constant depending on $\lambda$ and $\epsilon$, then there exist constants $C_0$ and $C_1$ depending only on $\lambda$, $\epsilon$ and $n_0$, such that $ \sigma_1^{n_0}(C_0 m) \leq C_1m$. Thus, we have
\[ \biggl(\frac{A}{4D}\biggr)(C_0m)^2 \leq \sigma_{1}^{n_0}(C_0 m) \leq C_1 m,\]
and hence $m$ is bounded by some constant depending only on $\lambda$, $\epsilon$, $A$, $D$. Thus, there exists a function $Q$ depending only on $A$ and $D$ such that $Y$ is $Q$--{\sqc}. 

When $X$ is a $(K,L)$--quasi-geodesic space, there exist a geodesic metric space $Z$ and a quasi-isometry $f\colon X \rightarrow Z$ with constants determined by $K$ and $L$. The result follows from the geodesic case by Lemmas~\ref{lem:quasiconvex_qi_invariant} and \ref{lem:contracting_qi_invariant}.
\end{proof}

We finish this section by adapting \cite[Example 3.4]{ACGH} to give a counterexample to the converse of Corollary \ref{cor:contracting implies quasiconvex}.

\begin{exmp}[{\SQC} subsets need not be contracting]\label{ex:quasiconvex_not_contracting}

Let $Y$ be a ray with initial point $x_0$ and let $(x_n)$ be the sequence of points along $Y$ such that for each $n\geq 1$ the distance between $x_{n-1}$ and $x_n$ is equal to $n$. We connect each pair $(x_{n-1},x_n)$ by an additional segment $J_n$ of length $n^{3/2}$ as shown below. Let $X$ be the resulting geodesic space. 

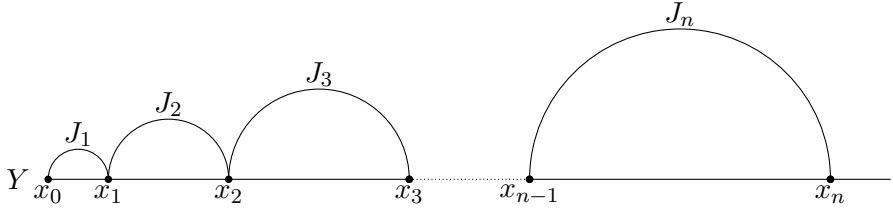
\begin{figure}[H]
	\begin{tikzpicture}[scale=.8]
		
		\draw (1,0) node[circle,fill,inner sep=1pt, color=black](1){} -- (2,0) node[circle,fill,inner sep=1pt, color=black](1){}-- (4,0) node[circle,fill,inner sep=1pt, color=black](1){}-- (7,0) node[circle,fill,inner sep=1pt, color=black](1){}; 
		
		\draw[densely dotted] (7,0) node[circle,fill,inner sep=1pt, color=black](1){} -- (9,0) node[circle,fill,inner sep=1pt, color=black](1){};
		
		\draw (9,0) node[circle,fill,inner sep=1pt, color=black](1){} -- (14,0) node[circle,fill,inner sep=1pt, color=black](1){};
		
		\draw (14,0) node[circle,fill,inner sep=1pt, color=black](1){} -- (15,0);

		\draw (2,0) arc (0:180:0.5);
		
		\draw (4,0) arc (0:180:1);
		
		\draw (7,0) arc (0:180:1.5);
		
		\draw (14,0) arc (0:180:2.5);

		\node at (1,-0.25) {$x_0$};
		
		\node at (2,-0.25) {$x_1$};
		
		\node at (4,-0.25) {$x_2$};
		
		\node at (7,-0.25) {$x_3$};
		
		\node at (9,-0.25) {$x_{n-1}$};
		
		\node at (14,-0.25) {$x_n$};

		\node at (1.5,0.75) {$J_1$};
		
		\node at (3,1.25) {$J_2$};
		
		\node at (5.5,1.75) {$J_3$};
		
		\node at (11.5,2.75) {$J_n$};

		\node at (0.5,0) {$Y$};

	\end{tikzpicture}
	\caption{The space $X$}\label{fig:quasiconvex not contracting}
	
\end{figure}

By Proposition \ref{prop:lower_relative_div_example}  the lower relative divergence of $X$ with respect to $Y$ is completely superlinear, but not at least quadratic (heuristically, $div(X,Y)$ behaves like $r^{3/2}$). So $Y$ is {\sqc}, but not contracting by Proposition~\ref{prop:contracting_implies_quad} and Theorem~\ref{thm:quasiconvex_iff_superlinear}.
\end{exmp}

\section{Hierarchically hyperbolic spaces}\label{sec:HHS_Background}
We now recall the main definitions of hierarchically hyperbolic groups and spaces.  The main references, where not specified, are \cite{BHS, BHS17b}. While we give the entire definition of an HHS for completeness, we advise the reader that we shall only directly utilize Axioms (\ref{item:dfs_curve_complexes}), (\ref{item:dfs_nesting}), (\ref{item:dfs_orthogonal}), (\ref{item:dfs_transversal}), (\ref{item:dfs:bounded_geodesic_image}), and (\ref{item:dfs_uniqueness}) of Definition \ref{defn:HHS} in the remainder of the paper. 

\begin{defn}[Hierarchically hyperbolic space]\label{defn:HHS}
Let $\mc{X}$ be a quasi-geodesic space. A \emph{hierarchically hyperbolic space (HHS) structure} on $\mc{X}$ consists of constants $E\geq \kappa_0>0$, an index set $\mathfrak S$, and a set $\{ C W : W\in\mathfrak S\}$ of geodesic $\delta$--hyperbolic spaces $( C W,\dist_W)$,  such that the following conditions are satisfied: \begin{enumerate}
	\item\textbf{(Projections.)}\label{item:dfs_curve_complexes} For each $W \in \mf{S}$, there exists a \emph{projection} $\pi_W \colon \mc{X} \rightarrow 2^{CW}$ such that for all $x \in \mc{X}$, $\pi_W(x) \neq \emptyset$, and $\diam(\pi_W(x))<E$. Moreover, there exists $K$ so that each $\pi_W$ is $(K,K)$--coarsely
	Lipschitz and $\pi_W(\mc{X})$ is $K$--quasiconvex in $CW$. 
	
	\item \textbf{(Nesting.)} \label{item:dfs_nesting} $\mathfrak S$ is
	equipped with a  partial order $\nest$, and either $\mathfrak
	S=\emptyset$ or $\mathfrak S$ contains a unique $\nest$--maximal
	element; when $V\nest W$, we say $V$ is \emph{nested} in $W$.  For each
	$W\in\mathfrak S$, we denote by $\mathfrak S_W$ the set of
	$V\in\mathfrak S$ such that $V\nest W$.  Moreover, for all $V,W\in\mathfrak S$
	with $V \propnest W$ there is a specified non-empty subset
	$\rho^V_W\subseteq C W$ with $\diam_{ C W}(\rho^V_W)\leq E$.
	There is also a \emph{projection} $\rho^W_V\colon  C
	W\rightarrow 2^{ C V}$. 
	
	\item \textbf{(Orthogonality.)} 
	\label{item:dfs_orthogonal} $\mathfrak S$ has a symmetric and
	anti-reflexive relation called \emph{orthogonality}; we write $V\perp
	W$ when $V,W$ are orthogonal. Whenever $V\nest W$ and $W\perp
	U$, we require that $V\perp U$. Additionally, if $V\perp W$, then $V,W$ are not
	$\nest$--comparable.
	
	\item \textbf{(Containers.)} 
	\label{item:dfs_containers} For each
	$T\in\mathfrak S$ and each $U\in\mathfrak S_T$ for which
	$\{V\in\mathfrak S_T : V\perp U\}\neq\emptyset$, there exists $W\in
	\mathfrak S_T-\{T\}$, so that whenever $V\perp U$ and $V\nest T$, we
	have $V\nest W$. We say $W$ is a \emph{container} for $U$ in $\mf{S}_T$.  
	
	\item \textbf{(Transversality and consistency.)}
	\label{item:dfs_transversal} If $V,W\in\mathfrak S$ are not
	orthogonal and neither is nested in the other, then we say $V,W$ are
	\emph{transverse}, denoted $V\trans W$.  If $V\trans W$, then there are non-empty
	sets $\rho^V_W\subseteq C W$ and
	$\rho^W_V\subseteq C V$ each of diameter at most $E$ and 
	satisfying: \[\min\left\{\dist_{
		W}(\pi_W(x),\rho^V_W),\dist_{
		V}(\pi_V(x),\rho^W_V)\right\}\leq \kappa_0\] for all $x\in\mc X$.
	
	For $V,W\in\mathfrak S$ satisfying $V\nest W$ and for all
	$x\in\mc X$, we have: \[\min\left\{\dist_{
		W}(\pi_W(x),\rho^V_W),\diam_{ C
		V}(\pi_V(x)\cup\rho^W_V(\pi_W(x)))\right\}\leq\kappa_0.\]
	
	Finally, if $U\nest V$, then $\dist_W(\rho^U_W,\rho^V_W)\leq\kappa_0$ whenever $W\in\mathfrak S$ satisfies either $V \propnest W$ or $V\trans W$ and $W\not\perp U$.
	
	\item \textbf{(Finite complexity.)} \label{item:dfs_complexity} There exists $n\geq0$ so that any set of pairwise--$\nest$--comparable elements has cardinality at most $n$.

	\item \textbf{(Large links.)} \label{item:dfs_large_link_lemma} There
	exists $\zeta\geq1$  such that the following holds.
	Let $W\in\mathfrak S$ and $x,x'\in\mc X$.    There exist $\{U_i\}_{i=1,\dots,m}\subseteq\mathfrak S_W-\{W\}$ such that $m \leq \zeta\dist_{_W}(\pi_W(x),\pi_W(x'))+\zeta$ and for all $V\in\mathfrak
	S_W-\{W\}$, either $V\in\mathfrak S_{U_i}$ for some $i$, or $\dist_{
		V}(\pi_V(x),\pi_V(x'))<E$.  Also, $\dist_{
		W}(\pi_W(x),\rho^{U_i}_W)\leq \zeta\dist_{W}(\pi_W(x),\pi_W(x'))+\zeta$ for each $i$.

	\item \textbf{(Bounded geodesic image.)} \label{item:dfs:bounded_geodesic_image} For all $W\in\mathfrak S$, all $V\in\mathfrak S_W-\{W\}$, and all geodesics $\gamma$ of $ C W$, either $\diam(\rho^W_V(\gamma))\leq E$ or $\gamma\cap N_E(\rho^V_W)\neq\emptyset$. 
	
	\item \textbf{(Partial realization.)} \label{item:dfs_partial_realization} There exists a constant $\alpha$ with the following property. Let $\{V_j\}$ be a family of pairwise orthogonal elements of $\mathfrak S$, and let $p_j\in \pi_{V_j}(\mc X)\subseteq  C V_j$. Then there exists $x\in \mc X$ so that:
	\begin{itemize}
		\item $\dist_{V_j}(x,p_j)\leq \alpha$ for all $j$,
		\item for each $j$ and 
		each $V\in\mathfrak S$ with $V_j\nest V$, we have 
		$\dist_{V}(x,\rho^{V_j}_V)\leq\alpha$,
		\item if $W\trans V_j$ for some $j$, then $\dist_W(x,\rho^{V_j}_W)\leq\alpha$.
	\end{itemize}
	
	\item\textbf{(Uniqueness.)} For each $\kappa\geq 0$, there exists
	$\theta_u=\theta_u(\kappa)$ such that if $x,y\in\mc X$ and
	$\dist(x,y)\geq\theta_u$, then there exists $V\in\mathfrak S$ such
	that $\dist_V(x,y)\geq \kappa$.\label{item:dfs_uniqueness}
\end{enumerate}
\end{defn}

We will refer to the elements of the index set $\mf{S}$ as \emph{domains} and use $\mf{S}$ to denote the entire HHS structure, including all the spaces, constants, projections and relations defined above. A quasi-geodesic space $\mc{X}$ is a \emph{hierarchically hyperbolic space} (HHS) if it admits a hierarchically hyperbolic structure. We will use the pair $(\mc{X},\mf{S})$ to denote $\mc{X}$ equipped with the hierarchically hyperbolic structure $\mf{S}$.\\

If $(\mc{X},\mf{S})$ is a hierarchically hyperbolic space and $f \colon \mc{Y} \rightarrow \mc{X}$ is a quasi-isometry, then $\mf{S}$ is also an HHS structure for $\mc{Y}$ where the projections maps are defined by $\pi_W \circ f$ for each $W \in \mf{S}$.

Many of the key examples of hierarchically hyperbolic spaces are finitely generated groups where the Cayley graph admits an HHS structure. In the case where this structure is preserved by the group action, we will call those groups hierarchically hyperbolic groups.

\begin{defn}[Hierarchically hyperbolic groups]\label{defn:hierarchically hyperbolic groups}

Let $G$ be a finitely generated group.  We say $G$ is a \emph{hierarchically hyperbolic group} (HHG) if

\begin{enumerate}[(1)]
	\item $G$ with the word metric from a finite generating set admits an HHS structure $\mf{S}$.
	\item There is a $\nest$, $\perp$, and $\trans$ preserving action of $G$ on $\mf{S}$ by bijections such that $\mf{S}$ contains finitely many $G$ orbits.
	\item \label{item:HHG conditions} For each $W \in \mf{S}$ and $g\in G$, there exists an isometry $g_W \colon CW \rightarrow C(gW)$ satisfying the following for all $V,W \in \mf{S}$ and $g,h \in G$:
	\begin{itemize}
		\item The map $(gh)_W \colon CW \to C(ghW)$ is equal to the map $g_{hW} \circ h_W \colon CW \to C(ghW)$.
		\item For each $h \in G$, $g_W(\pi_W(h))$ and $\pi_{gW}(gh)$ $E$--coarsely coincide.
		\item If $V \trans W$ or $V \nest W$, then $g_W(\rho_W^V)$  and $\rho_{gW}^{gV}$ $E$--coarsely coincide.
		\item If $V \nest W$ and $p \in CW - N_E\left( \rho_W^V \right)$, then $g_W(\rho_W^V(p))$ and $\rho_{gW}^{gV}(g_W(p))$ $E$--coarsely coincide.
	\end{itemize}
\end{enumerate}

The HHS structure $\mf{S}$ satisfying (1) - (3)  is called a hierarchically hyperbolic group (HHG) structure on $G$ and we use $(G,\mf{S})$ to denote a group $G$ equipped with a specific HHG structure $\mf{S}$.

\end{defn}

Being a hierarchically hyperbolic group is independent of choice of generating set by virtue of being able to pass the HHG structure through a $G$--equivariant quasi-isometry. The reader may find it helpful to note that the conditions in (\ref{item:HHG conditions}) above can be summarized by saying the following two diagrams coarsely commute whenever $V,U \in\mf{S}$ are not orthogonal.

\begin{multicols}{2}
\[
\begin{diagram}
	\node{ 
		G}\arrow[2]{e,t}{g}\arrow{s,l}{\pi_W}\node{}\node{
		G}\arrow{s,r}{\pi_{gW}}\\
	\node{ C W}\arrow[2]{e,t}{g_W}\node{}\node{ C 
		(gW)}
\end{diagram}
\]

\columnbreak

\[
\begin{diagram}
	\node{ 
		CV}\arrow[2]{e,t}{g_V}\arrow{s,l}{\rho_W^V}\node{}\node{
		C(gV)}\arrow{s,r}{\rho_{gW}^{gV}}\\
	\node{ C W}\arrow[2]{e,t}{g_W}\node{}\node{ C 
		(gW)}
\end{diagram}
\]

\end{multicols}

\begin{notation}\label{notation:suppress_pi}
When writing distances in $ C W$ for some $W\in\mathfrak S$, we 
often simplify the notation by suppressing the projection
map $\pi_W$, that is, given $x,y\in\mc X$ and
$p\in C W$  we write
$\dist_W(x,y)$ for $\dist_W(\pi_W(x),\pi_W(y))$ and $\dist_W(x,p)$ for
$\dist_W(\pi_W(x),p)$. Note that when we measure distance between a 
pair of sets (typically both of bounded diameter) we are taking the minimum distance 
between the two sets. 
Given $A\subseteq \mc X$ and $W\in\mathfrak S$ 
we let $\pi_{W}(A)$ denote $\bigcup_{a\in A}\pi_{W}(a)$.
\end{notation}

The guiding philosophy of hierarchically hyperbolic spaces is that one can ``pull back" the hyperbolic geometry of the various $CW$s to obtain features of negative curvature in the original space. The most prominent example of this philosophy is the following distance formula which allows distances in the main space $\mc{X}$ to be approximated by distances in the hyperbolic spaces.

\begin{thm}[The distance formula;  
{\cite[Theorem 4.4]{BHS17b}}]\label{thm:distance_formula}
Let $(\mc X, \mathfrak S)$ be a hierarchically hyperbolic space.  Then
there exists $\sigma_0$ such that for all $\sigma \geq \sigma_0$, there exist $K\geq 1$, $L\geq 0$ so
that for all $x,y\in\mc X$,
$$\dist_\mc{X}(x,y)\stackrel{{K,L}}{\asymp}\sum_{U\in\mathfrak
	S}\ignore{\dist_U(x,y)}{\sigma}$$

\noindent where $ \ignore{N}{\sigma} = N$ if $N \geq \sigma$ and $0$ otherwise.
\end{thm}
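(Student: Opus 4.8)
The statement asserts a quasi-isometry between $\mathcal{X}$ and its ``coordinate data'' $\bigl(\dist_U(x,y)\bigr)_{U\in\mathfrak{S}}$ assembled via a thresholded $\ell^1$--sum. The plan is to prove the two inequalities $\sum_{U\in\mathfrak{S}}\ignore{\dist_U(x,y)}{\sigma}\preceq\dist_\mathcal{X}(x,y)$ and $\dist_\mathcal{X}(x,y)\preceq\sum_{U\in\mathfrak{S}}\ignore{\dist_U(x,y)}{\sigma}$ separately, reducing to the geodesic case as in Section~\ref{subsec:quasigeodesic_spaces} when convenient, with each argument proceeding by induction on the complexity $n$ of Axiom~(\ref{item:dfs_complexity}) (the maximal length of a $\nest$--chain). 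The key auxiliary tool for the harder second inequality is a \emph{Realization Theorem}: every tuple $(b_U)_{U\in\mathfrak{S}}$ with $b_U\in CU$ that satisfies the consistency inequalities of Axiom~(\ref{item:dfs_transversal}) is coarsely realized by some point of $\mathcal{X}$. I would prove this first, by induction on complexity: feed the coordinates of the $\nest$--maximal domain and a maximal pairwise--orthogonal family into the partial realization Axiom~(\ref{item:dfs_partial_realization}), then recurse on the lower-complexity sub-structures nested below, the consistency inequalities being exactly what keeps the recursively constructed coordinates mutually compatible.

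For $\sum_{U}\ignore{\dist_U(x,y)}{\sigma}\preceq\dist_\mathcal{X}(x,y)$: pick $x=z_0,z_1,\dots,z_N=y$ along a quasi-geodesic with $\dist(z_i,z_{i+1})\le 1$ and $N\asymp\dist_\mathcal{X}(x,y)$. Coarse Lipschitzness (Axiom~(\ref{item:dfs_curve_complexes})) gives $\dist_U(x,y)\preceq N$ for each \emph{single} domain, so the $\nest$--maximal $S$ contributes $\preceq N$; the subtlety is that the whole sum is non-local (one step $z_i\to z_{i+1}$ moves every $\pi_U$ by less than $\sigma$, hence contributes nothing after thresholding) and must be unpacked hierarchically. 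Apply the large links Axiom~(\ref{item:dfs_large_link_lemma}) in $S$ to get $U_1,\dots,U_m$ with $m\le\zeta\dist_S(x,y)+\zeta$, each $\rho^{U_i}_S$ lying $\preceq\dist_S(x,y)$--close to $\pi_S(x)$ along the $CS$--geodesic to $\pi_S(y)$, and with every $U\propnest S$ having $\dist_U(x,y)\ge E$ nested into some $U_i$. Ordering the $\rho^{U_i}_S$ along that geodesic and using bounded geodesic image (Axiom~(\ref{item:dfs:bounded_geodesic_image})) to locate, in order along the chosen quasi-geodesic, points $w_i$ with $\pi_S(w_i)$ near $\rho^{U_i}_S$, one gets $\sum_i\dist_\mathcal{X}(w_i,w_{i+1})\preceq\dist_\mathcal{X}(x,y)$. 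Consistency then shows that for $V\nest U_i$ the projection $\dist_V(x,y)$ is coarsely $\dist_V(w_{i-1},w_{i+1})$, so the inductive hypothesis on $\mathfrak{S}_{U_i}$ bounds $\sum_{V\nest U_i}\ignore{\dist_V(x,y)}{\sigma}$ by the length of the corresponding sub-path; summing over $i$, plus the $\preceq N$ from $S$, finishes the inequality once $\sigma$ is taken large.

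For $\dist_\mathcal{X}(x,y)\preceq\sum_{U}\ignore{\dist_U(x,y)}{\sigma}$: I would construct a \emph{hierarchy path} from $x$ to $y$ --- a uniform quasi-geodesic of $\mathcal{X}$ whose image in every $CU$ is a uniform unparametrized quasi-geodesic --- of length $\preceq\sum_U\ignore{\dist_U(x,y)}{\sigma}$, once more by induction on complexity. Take a geodesic $\beta$ in $CS$ from $\pi_S(x)$ to $\pi_S(y)$; every $U\propnest S$ with large $\dist_U(x,y)$ has $\rho^U_S$ coarsely on $\beta$ (bounded geodesic image) and so inherits a coarse linear order along $\beta$. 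Subdividing $\beta$ at these points cuts $[x,y]$ into sub-problems, boundedly many per unit of length of $\beta$; between consecutive markers I use the Realization Theorem together with the inductive hypothesis to splice in a hierarchy path through the lower-complexity sub-structure determined by the relevant domains, and concatenating these (controlling the bounded overlaps) yields a path whose length telescopes to $\preceq\dist_S(x,y)+\sum_{U\propnest S}\ignore{\dist_U(x,y)}{\sigma}$. That this bound is not lossy --- a bounded sum forcing $\dist_\mathcal{X}(x,y)$ bounded --- is exactly the uniqueness Axiom~(\ref{item:dfs_uniqueness}).

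The main obstacle is the second inequality: establishing the Realization Theorem and building hierarchy paths with \emph{uniform} quasi-geodesic constants. One must check at each inductive step that the coordinates read off $\beta$ genuinely satisfy the consistency inequalities of Axiom~(\ref{item:dfs_transversal}) so realization applies; that the constants ($\delta$, $\zeta$, $\kappa_0$, $E$ and the quasi-geodesic constants) do not blow up with the complexity; and that the nesting, orthogonality, transversality and container Axioms~(\ref{item:dfs_nesting})--(\ref{item:dfs_transversal}) interlock so the ``windows'' along $\beta$ are essentially disjoint and their contributions additive rather than over-counted. Once uniform hierarchy paths exist, the distance formula follows by comparing their length to the thresholded sum in both directions, choosing $\sigma_0$ --- depending on $E$, $\delta$, $\zeta$, $\kappa_0$ and $n$ --- large enough to swamp all accumulated additive error.
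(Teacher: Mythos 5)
This theorem is stated in the paper as a citation to \cite[Theorem 4.4]{BHS17b}; the paper does not prove it, so there is no internal argument to compare against. Your sketch should therefore be judged against the original Behrstock--Hagen--Sisto proof, and at the level of structure it tracks it faithfully: you correctly identify the Realization Theorem (proved by induction on complexity via the partial realization axiom) as the essential auxiliary result, you put the large links and bounded geodesic image axioms in charge of the inequality $\sum_U\ignore{\dist_U(x,y)}{\sigma}\preceq\dist_{\mc X}(x,y)$, you build hierarchy paths (via nested realization along a $CS$--geodesic $\beta$, ordering the relevant $\rho_S^U$ on $\beta$) to get $\dist_{\mc X}(x,y)\preceq\sum_U\ignore{\dist_U(x,y)}{\sigma}$, and you invoke the uniqueness axiom to anchor the case of a vanishing sum. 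That is the right skeleton.

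Where the sketch is thin, as you partly acknowledge, is precisely where the original proof spends most of its effort: (i) in the realization induction you do not feed ``the coordinates of the maximal domain'' into partial realization directly, but rather a carefully chosen pairwise-orthogonal subfamily, and then use consistency to propagate to the rest of $\mf{S}$ --- verifying that the recursively produced coordinates stay $\kappa$--consistent with uniform constants is delicate; (ii) the large-links decomposition does not hand you cleanly disjoint ``windows'' along $\beta$ --- the $U_i$ can be orthogonal or can have overlapping $\rho_S^{U_i}$'s, and the actual accounting that prevents over-counting across levels of the recursion (the passing-up/passing-down mechanism) is a real lemma, not a formality; (iii) the claim that the quasi-geodesic constants of the spliced hierarchy path do not degrade with complexity depth must be argued, since a naive induction multiplies constants at each level. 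None of these are wrong turns --- they are exactly the places where \cite{BHS17b} is long --- but a proof at the level of your sketch has not yet engaged with them.
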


The distance formula can  be ``distributed'' over a sum of distances in the hyperbolic spaces as described in the next lemma.

\begin{lem}[{\cite[Lemma 2.26]{Russell_Rel_Hyp}}]\label{lem:distributing_distance_formula}
Let $(\mc{X},\mf{S})$ be an HHS and  $x_0,x_1, \dots, x_n$ be points in $\mc{X}$. If there exists $C\geq 1$ such that  $ \displaystyle \sum\limits_{i=0}^{n-1} d_W(x_i,x_{i+1}) \stackrel{C,C}{\asymp} d_W(x_0,x_n) $ for all $W \in \mf{S}$, then there exist $K$ depending only on  $C$, $n$, and $(\mc{X},\mf{S})$ such that \[\sum\limits_{i=0}^{n-1} d_\mc{X}( x_{i}, x_{i+1}) \stackrel{K,K}{\asymp} d_\mc{X} (x_0,x_n).  \]
\end{lem}

Part of the content of Theorem \ref{thm:distance_formula} is that for any  pair of points in an HHS, there is only a finite number of domains where that pair of points can have a large projection. More precisely, if  $(\mc X,\mathfrak S)$ is a hierarchically hyperbolic space, then a domain $W \in \mf{S}$ is said to be \emph{$\sigma$--relevant for $x,y \in \mc{X}$} if $d_W(x,y) > \sigma$. We denote  set of all $\sigma$--relevant domains for $x,y\in\mc{X}$ by $\relevant_{\sigma}(x,y)$. By Theorem \ref{thm:distance_formula}, for all $\sigma \geq \sigma_0$, $\relevant_\sigma(x,y)$ has finite cardinality. The relevant facts about $\relevant_\sigma(x,y)$ that we will need are summarized in the following proposition.

\begin{prop}[{\cite[Lemma  2.2, Proposition 2.8, Lemma 2.14]{BHS17b}}]\label{prop:partial_ordering_of_domains}
Let $(\mc{X},\mf{S})$ be a hierarchically hyperbolic space and $E\geq 0$ be the maximum of all the constants in the HHS structure for $(\mc{X},\mf{S})$. 
\begin{enumerate}[(1)]
	\item There exists $\chi>0$ such that if $\mf{U} \subseteq \mf{S}$ does not contain a pair of transverse domains, then $|\mf{U}| \leq \chi$.
	\item If $\sigma \geq 100E$ and $x,y \in \mc{X}$, then the set $\relevant_\sigma(x,y)$ can be partially ordered as follows:\[U\leq V \text{ if } U=V \text{ or } U\trans V \text{ and }d_V(\rho_V^U,y)\leq \kappa_0.\]
	\item  If $\sigma \geq 100E$ and $x,y \in\mc{X}$, then there exists $n \leq \chi$ such that $\relevant_\sigma (x,y)$ can be partitioned into $n$ disjoint subsets $\mc{U}_1,\hdots,\mc{U}_n$ where for each $i$, $\mc{U}_i$ is totally ordered with respect to the above ordering on $\relevant_\sigma(x,y)$.
\end{enumerate}

\end{prop}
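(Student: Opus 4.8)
The three parts feed into one another: (1) is a combinatorial consequence of the container axiom~(\ref{item:dfs_containers}) and finite complexity~(\ref{item:dfs_complexity}); (2) rests on the consistency axiom~(\ref{item:dfs_transversal}); and (3) follows formally from (1), (2), Dilworth's theorem, and the finiteness of $\relevant_\sigma(x,y)$ coming from Theorem~\ref{thm:distance_formula}. Throughout I use the standard normalization that $E$ bounds the other structure constants $\kappa_0,\alpha,\zeta$.

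For (1), I would induct on the complexity $n$. A collection $\mathfrak U$ with no transverse pair consists of domains that are pairwise $\nest$--comparable or orthogonal; its set of $\nest$--maximal elements $M_1,\dots,M_k$ is therefore a pairwise orthogonal family (any two distinct maximal elements are neither $\nest$--comparable nor transverse). Such a family has $k\le n$: applying~(\ref{item:dfs_containers}) to $M_1$ forces $M_2,\dots,M_k$ into a common container $W_1\propnest T$ (with $T$ the $\nest$--maximal domain), and iterating builds a $\nest$--chain of length $k$, so (\ref{item:dfs_complexity}) bounds $k$. Every element of $\mathfrak U$ lies $\nest$--below some $M_i$: the elements of $\mathfrak U$ containing a fixed $U$ are pairwise $\nest$--comparable, since two orthogonal such would contradict~(\ref{item:dfs_orthogonal}), so they form a chain with a maximum. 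Hence $\mathfrak U=\bigcup_i\{U\in\mathfrak U:U\nest M_i\}$, and each $\{U\in\mathfrak U:U\propnest M_i\}$ has no transverse pair and sits in a sub--poset of complexity $\le n-1$, so has size $\le f(n-1)$ by induction; thus $|\mathfrak U|\le n\bigl(1+f(n-1)\bigr)=:f(n)$ and we take $\chi=f(n)$.

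For (2), reflexivity is clear. The workhorse is consistency: for transverse $V,W$ and any $z\in\mc X$, $\min\{\dist_W(z,\rho^V_W),\dist_V(z,\rho^W_V)\}\le\kappa_0$. The key deduction is that if $U,V\in\relevant_\sigma(x,y)$ are transverse with $\sigma\ge100E$ and $\dist_V(\rho^U_V,y)\le\kappa_0$, then $\dist_V(\rho^U_V,x)>\kappa_0$ (else $\dist_V(x,y)\le2\kappa_0<\sigma$), so consistency at $x$ forces $\dist_U(\rho^V_U,x)\le\kappa_0$, and relevance of $U$ then gives $\dist_U(\rho^V_U,y)>\kappa_0$. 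Feeding this into consistency at $y$ shows that, for transverse $U,V\in\relevant_\sigma(x,y)$, \emph{exactly one} of $U\le V$, $V\le U$ holds; this yields antisymmetry and the comparability of transverse pairs (needed for (3)). For transitivity, let $U,V,W\in\relevant_\sigma(x,y)$ be distinct with $U\le V\le W$. One first checks $U\trans W$: the case $U\perp W$ is impossible because partial realization~(\ref{item:dfs_partial_realization}) applied to the orthogonal pair $\{U,W\}$ gives $\dist_V(\rho^U_V,\rho^W_V)\le2\alpha$, while the deductions above give $\dist_V(x,\rho^W_V)\le\kappa_0$ and $\dist_V(y,\rho^U_V)\le\kappa_0$ and hence $\dist_V(\rho^U_V,\rho^W_V)>\sigma-2\kappa_0>2\alpha$; and a nesting relation between $U$ and $W$ contradicts the last clause of~(\ref{item:dfs_transversal}) applied with $V$ as ambient domain, which would force $\dist_V(\rho^U_V,\rho^W_V)\le\kappa_0$.

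Once $U\trans W$, comparability reduces transitivity to excluding a ``$3$--cycle'' $U\le V\le W\le U$, and \textbf{this exclusion is the main obstacle}: it does not follow from the pairwise consistency inequalities alone, which the $3$--cycle configuration satisfies. I would run the argument familiar from the mapping class group: fix a hierarchy path $\gamma$ from $x$ to $y$ and, for each $V\in\relevant_\sigma(x,y)$, let $t_V$ be the first time $\pi_V\circ\gamma$ becomes uniformly close to $\pi_V(y)$ in $CV$; using the bounded geodesic image axiom~(\ref{item:dfs:bounded_geodesic_image}) for the relevant domains nested in a given one and the consistency inequality for the transverse ones, one shows $U\le V\Rightarrow t_V<t_U$ up to a definite gap, so $V\mapsto t_V$ is strictly order--reversing into $\reals$ and no cycle is possible. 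Finally, for (3): $\sigma\ge100E\ge\sigma_0$ makes $\relevant_\sigma(x,y)$ finite by Theorem~\ref{thm:distance_formula}; by (2) any two distinct transverse elements are comparable, so every antichain of $(\relevant_\sigma(x,y),\le)$ is transverse--free and hence has cardinality $\le\chi$ by (1); Dilworth's theorem then partitions $\relevant_\sigma(x,y)$ into $n\le\chi$ totally ordered subsets.
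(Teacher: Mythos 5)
The paper does not prove this proposition — it is cited from \cite{BHS17b} — so there is nothing in the paper to compare with directly; I evaluate your reconstruction on its merits.

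Parts (1) and (3) are correct; the container/finite-complexity induction for (1) and the Dilworth reduction for (3) are the standard arguments. One caveat on (3): finiteness of $\relevant_\sigma(x,y)$ needs $\sigma\ge\sigma_0$, which is not automatic from $\sigma\ge 100E$; however, Dilworth's theorem also holds for infinite posets whose antichains are uniformly bounded, so the conclusion is unaffected.

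On part (2), your identification of the transitivity $3$--cycle $U\le V\le W\le U$ as the crux is exactly right, and the preliminary reductions (comparability of transverse relevant pairs, exclusion of $U\perp W$ via partial realization, exclusion of nesting via the last clause of the consistency axiom) are all sound. However, the hierarchy-path machinery you invoke for the $3$--cycle is stronger than needed and, relative to the cited source (where the partial order is developed well before hierarchy paths exist), would be circular. You already hold the right tool: apply partial realization to the singleton $\{V\}$ to obtain $z\in\mc X$ with $d_U(z,\rho_U^V)\le\alpha$ and $d_W(z,\rho_W^V)\le\alpha$. Since you have already derived $d_U(x,\rho_U^V)\le\kappa_0$ and $d_W(y,\rho_W^V)\le\kappa_0$ from $U\le V\le W$, the point $z$ is $O(E)$--close to $x$ in $CU$ and $O(E)$--close to $y$ in $CW$. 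Now the transverse consistency inequality for $U,W$ at $z$ gives $d_U(z,\rho_U^W)\le\kappa_0$ or $d_W(z,\rho_W^U)\le\kappa_0$; combined with $d_U(y,\rho_U^W)\le\kappa_0$ in the first case, or with $d_W(x,\rho_W^U)\le\kappa_0$ in the second (both consequences of $W\le U$), one of $d_U(x,y)$, $d_W(x,y)$ is $O(E)<100E\le\sigma$, contradicting relevance. A minor side remark: the bounded geodesic image axiom plays no role here, since distinct $\le$--comparable elements of $\relevant_\sigma(x,y)$ are necessarily transverse.
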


Hierarchically hyperbolic spaces contain a particularly nice class of quasi-geodesics, called hierarchy paths. Even when considering a geodesic HHS, it is often preferable to work with hierarchy paths over geodesics.

\begin{defn}[Hierarchy path]\label{defn:hierarchy_path}
For $\lambda \geq 1$, a (not necessarily continuous) path $\gamma\co[a,b]\to \mc X$ is a \emph{$\lambda$--hierarchy path} if
\begin{enumerate}
	\item $\gamma$ is a $(\lambda,\lambda)$--quasi-geodesic,
	\item for each $W\in\mathfrak S$, the path $\pi_W\circ\gamma$ is an unparameterized $(\lambda,\lambda)$--quasi-geodesic. 
\end{enumerate}
\end{defn}

Recall that a map \(f \colon [a,b] \to X\) is an unparameterized \((\lambda, \lambda)\)--quasi-geodesic if there exists  an increasing function \(g \colon [0, \ell] \to [a, b]\) such that $g(0)=a$, $g(\ell)=b$, and \(f \circ g \) is a \((\lambda, \lambda)\)--quasi-geodesic of \(X\). 

While not every quasi-geodesic in an HHS is a hierarchy path, every pair of points can be connected by a hierarchy path as the next theorem describes.

\begin{thm}[Existence of hierarchy paths; {\cite[Theorem 5.4]{BHS17b}}]\label{thm:monotone_hierarchy_paths}
Let $(\mc X,\mathfrak S)$ be a hierarchically hyperbolic space. Then there exists $\lambda_0$ so that any $x,y\in\mc X$ are joined by a $\lambda_0$--hierarchy path. 
\end{thm}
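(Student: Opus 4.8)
The plan is to prove this by induction on the complexity $n$ of the hierarchically hyperbolic structure $\mf{S}$, with the $\nest$--maximal domain $S$ playing the organizing role. Given $x,y\in\mc{X}$, one resolves a geodesic of $CS$ from $\pi_S(x)$ to $\pi_S(y)$ into a path in $\mc{X}$, descending --- at the parameters where that geodesic forces large projections in some domain --- into substructures of strictly smaller complexity. When $n=1$ the only domain is $S$, the distance formula (Theorem~\ref{thm:distance_formula}) identifies $\mc{X}$ up to quasi--isometry (via $\pi_S$) with a quasiconvex subset of the geodesic hyperbolic space $CS$, and any $CS$--geodesic between the relevant points --- which stays uniformly near $\pi_S(\mc{X})$ by quasiconvexity --- pulls back to a uniform quasi--geodesic whose only nontrivial coordinate projection is again a quasi--geodesic, hence a $\lambda_0$--hierarchy path.

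For the inductive step I would fix $x,y$, a geodesic $\beta$ of $CS$ from $\pi_S(x)$ to $\pi_S(y)$, and a subdivision $p_0=\pi_S(x),\dots,p_k=\pi_S(y)$ of $\beta$ at bounded spacing. The structural input is the interaction of the bounded geodesic image axiom with consistency and large links: if $V\propnest S$ has $d_V(x,y)$ above a threshold depending only on $E$ and $\kappa_0$, then consistency forces $\pi_V(x)$ to coarsely agree with $\rho^S_V(\pi_S(x))$ unless $\pi_S(x)\in N_E(\rho^V_S)$, and likewise for $y$, so bounded geodesic image forces $\beta$ to enter $N_E(\rho^V_S)$; the large links axiom bounds how many such $V$ cluster near a given $p_i$, and Proposition~\ref{prop:partial_ordering_of_domains} orders the pairwise transverse ones along $\beta$ while capping the pairwise orthogonal ones, the latter being exactly those one can descend into simultaneously. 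For each such relevant $V$ the substructure $\mf{S}_V$ --- with its orthogonal complement packaged into a single domain by the container axiom --- is an HHS of complexity at most $n-1$ realized inside $\mc{X}$ as a standard product region, so by induction it carries a uniform hierarchy path between the $V$--coordinates of $x$ and $y$; one transfers the needed coordinate tuples back to $\mc{X}$ using the realization theorem (the strengthening of the partial realization axiom already invoked in the proof of Theorem~\ref{thm:distance_formula}) and the uniqueness axiom. Concatenating single coarse steps along $\beta$ with the spliced--in descents at the appropriate parameter windows produces a candidate path $\gamma$; that $\gamma$ is a uniform quasi--geodesic of $\mc{X}$ is precisely the content of the distance formula ($\ell(\gamma)\asymp\sum_{U}\ignore{d_U(x,y)}{\sigma}$ with each coordinate moving monotonically), and that each $\pi_W\circ\gamma$ is a uniform unparameterized quasi--geodesic follows from that monotonicity together with bounded geodesic image, checked separately for $W=S$, $W\propnest S$, $W\trans S$, and $W\perp S$.

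The main obstacle is entirely organizational: choosing the order of the descents, arranging that the entry and exit coordinate tuples of each descent are $\kappa$--consistent so that realization applies with constants depending only on $\mf{S}$, and verifying that passing one transverse descent does not inflate any other coordinate. This is where the constants $\zeta$, $\chi$, $\kappa_0$ and $E$ must be tracked, and it is what forces $\lambda_0$ to depend only on the HHS constants rather than on $x$ and $y$. A route with less explicit bookkeeping would be to use that an HHS is a coarse median space of finite rank, that the median hull of $\{x,y\}$ is quasi--isometric to a finite--dimensional CAT(0) cube complex, and to pull back a combinatorial geodesic; but verifying that such a pullback projects to a quasi--geodesic in every $CW$ reduces, via bounded geodesic image, to essentially the same analysis.
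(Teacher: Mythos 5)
This statement is not proved in this paper at all: it is imported verbatim as Theorem 5.4 of Behrstock--Hagen--Sisto's \cite{BHS17b} and used as a black box, so there is no in-paper argument to compare your sketch against. Taking your sketch on its own terms: the broad strategy --- induct on complexity, resolve a $CS$-geodesic into $\mc{X}$ via realization, and splice in descents into smaller-complexity product regions at the parameter windows where some $V\propnest S$ accumulates a large projection --- is a credible reconstruction of how \cite{BHS17b} proceeds, and your identification of realization, large links, and Proposition~\ref{prop:partial_ordering_of_domains} as the key inputs is correct; the dependency on the distance formula is also legitimate and non-circular, since in \cite{BHS17b} it is Theorem~4.4 and is established before hierarchy paths appear in Section~5.

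Two assertions in your sketch are wrong as stated, even though they do not doom the approach. First, the container axiom (Definition~\ref{defn:HHS}.(\ref{item:dfs_containers})) does not ``package the orthogonal complement of $V$ into a single domain''; it merely supplies some proper subdomain $W\propnest S$ with $\mf{S}^{\perp}_V\cap\mf{S}_S\subseteq\mf{S}_W$, i.e.\ a bound on where the orthogonal domains sit, not a collapse of them to rank one. What you actually want is that the standard product region $\P_V=\phi_V(\PF_V\times\PE_V)$ lets you vary the $\mf{S}_V$-coordinates while holding the $\mf{S}_V^\perp$-coordinates fixed, so that the spliced subpath projects to a coarse point in each $CU$ with $U\perp V$ and is trivially an unparameterized quasi-geodesic there. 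Second, your closing case split into $W=S$, $W\propnest S$, $W\trans S$, and $W\perp S$ is partly vacuous: $S$ is the unique $\nest$-maximal element, and in a poset of finite complexity this forces $W\nest S$ for every $W\in\mf{S}$, so the cases $W\trans S$ and $W\perp S$ never occur. The split you need is on how $W$ relates to the current descent domain $V$, not to $S$. Finally, you are right that the residual obstacle is ``organizational,'' but be aware that choosing $\kappa$-consistent entry/exit tuples for each descent, controlling the number and nesting of descents via large links and the $\chi$-bound, and verifying coordinatewise monotonicity through the splices is the bulk of the proof in \cite{BHS17b} and is where essentially all the work lies.
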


\subsection{Hierarchical quasiconvexity and gate maps}\label{subsec:HQC} 
In \cite{BHS17b}, Behrstock-Hagen-Sisto introduced \emph{hierarchical quasiconvexity}, a notion of convexity unique to hierarchically hyperbolic spaces. 

\begin{defn}[Hierarchical quasiconvexity; {\cite[Definition 5.1]{BHS17b}}]\label{defn:hierarchical_quasiconvexity}
Let $(\mc X,\mathfrak S)$ be a hierarchically hyperbolic space and $k\colon[0,\infty)\to[0,\infty)$.  
A subset $ Y\subseteq\mc X$ is \emph{$k$--hierarchically quasiconvex} if the following hold:
\begin{enumerate}
	\item For all $U\in\mathfrak S$, the projection $\pi_U(Y)$ is 
	a $k(0)$--quasiconvex subspace of the $\delta$--hyperbolic space $C U$.
	\item For every $\kappa>0$ and every point 
	$x\in\mc X$ satisfying 
	$\dist_{U}(x, Y)\leq\kappa$ 
	for all $U\in\mathfrak S$, we have that 
	$\dist_\mc{X}(x, Y)\leq k(\kappa)$.
\end{enumerate}
\end{defn}

While hierarchically quasiconvex subsets need not be {\sqc}, they are ``quasiconvex with respect to hierarchy paths." That is, if $Y \subseteq \mc{X}$ is $k$--hierarchically quasiconvex then any $\lambda$--hierarchy path with endpoints on $Y$ must stay uniformly close to $Y$.  The existence of hierarchy paths (Theorem \ref{thm:monotone_hierarchy_paths}) therefore ensures that if $Y$ is equipped with the induced metric from $\mc{X}$, then  $Y$ is also a quasi-geodesic metric space with constants depending only on $(\mc{X},\mf{S})$ and $k$. In Section \ref{sec:constructing_hulls} we will prove that hierarchically quasiconvex subsets are actually characterized by this ``quasiconvexity with respect to hierarchy paths."

One of the key features of hierarchically quasiconvex subsets is that they are hierarchically hyperbolic spaces with the restriction of the HHS structure from the ambient space.
\begin{thm}[{\cite[Proposition 5.6]{BHS17b}}]
Let $(\mc X,\mathfrak S)$ be a hierarchically hyperbolic space and $ Y\subseteq\mc X$ be {$k$--hierarchically quasiconvex}. Then $(Y,\mf{S})$ is a hierarchically hyperbolic space, where \(Y\) is equipped with the induced metric from \(\mc X\).
\end{thm}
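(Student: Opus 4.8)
The statement to prove is Theorem~\cite[Proposition 5.6]{BHS17b}: a $k$--hierarchically quasiconvex subset $Y \subseteq \mc{X}$, equipped with the induced metric, is itself a hierarchically hyperbolic space with index set $\mf{S}$. The strategy is to simply verify each axiom of Definition~\ref{defn:HHS} for $Y$, using as the projection maps the restrictions $\pi_W|_Y$ and keeping all the nesting, orthogonality, and transversality relations, as well as all the $\rho$--projections, exactly as in $\mf{S}$. First I would record the key consequence of hierarchical quasiconvexity established just above the theorem: since any $\lambda_0$--hierarchy path between two points of $Y$ stays uniformly close to $Y$, and since any two points of $\mc{X}$ are joined by such a path (Theorem~\ref{thm:monotone_hierarchy_paths}), the set $Y$ with the induced metric is a quasi-geodesic metric space with constants controlled by $(\mc{X},\mf{S})$ and $k$. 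This makes $Y$ a legitimate candidate for an HHS.

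Next I would dispatch the axioms that are inherited essentially verbatim. The hyperbolic spaces $CW$ are unchanged, so they are still $\delta$--hyperbolic. For the Projections axiom (\ref{item:dfs_curve_complexes}): $\pi_W|_Y$ is still $(K,K)$--coarsely Lipschitz and has images of diameter $<E$; the only genuine new content is that $\pi_W(Y)$ be uniformly quasiconvex in $CW$, which is exactly clause~(1) of the definition of hierarchical quasiconvexity. The Nesting, Orthogonality, Containers, and Finite complexity axioms (\ref{item:dfs_nesting}), (\ref{item:dfs_orthogonal}), (\ref{item:dfs_containers}), (\ref{item:dfs_complexity}) involve only the combinatorial structure of $\mf{S}$ and the $\rho$--sets, none of which change, so they hold immediately. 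Transversality and consistency (\ref{item:dfs_transversal}), Large links (\ref{item:dfs_large_link_lemma}), and Bounded geodesic image (\ref{item:dfs:bounded_geodesic_image}) are statements quantified over all $x \in \mc{X}$ (or over all geodesics of $CW$, which do not depend on $Y$ at all), so restricting the quantifier to $x \in Y \subseteq \mc{X}$ only makes them easier; they transfer directly. The Uniqueness axiom (\ref{item:dfs_uniqueness}) likewise: if $x,y \in Y$ have $\dist(x,y) \geq \theta_u(\kappa)$ in the induced metric, then a fortiori $\dist_\mc{X}(x,y)$ is large (the induced metric dominates the ambient one), so some domain sees them $\kappa$--far apart. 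The comparison between the induced and ambient metrics here should be made explicit since it is used repeatedly.

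The one axiom requiring real work is Partial realization (\ref{item:dfs_partial_realization}): given pairwise orthogonal $V_j \in \mf{S}$ and points $p_j \in \pi_{V_j}(Y)$, we need a \emph{realization point lying in $Y$}, not merely in $\mc{X}$. Partial realization in $\mc{X}$ produces some $x \in \mc{X}$ with $\dist_{V_j}(x,p_j) \leq \alpha$ and the appropriate control over $\rho^{V_j}_V$ and $\rho^{V_j}_W$; the task is to replace $x$ by a nearby point of $Y$ without destroying these estimates. The natural tool is the gate map $\gate_Y \colon \mc{X} \to Y$ associated to a hierarchically quasiconvex set (developed in Section~\ref{subsec:HQC}): one takes $y = \gate_Y(x)$ and checks that $\pi_U(y)$ is uniformly close to the ``correct'' value for every relevant $U$ --- namely close to $\pi_U(x)$ when $U = V_j$ (using that $p_j \in \pi_{V_j}(Y)$, so $\pi_{V_j}(x)$ is already close to $\pi_{V_j}(Y)$ and the gate does not move it much), and close to $\rho^{V_j}_U$ in the other two cases, which follows from a consistency/bounded-geodesic-image argument applied to the point $\gate_Y(x)$. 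I expect this realization-point verification, together with carefully propagating the quantitative constants through the gate estimates, to be the main obstacle; the remaining axioms are bookkeeping.
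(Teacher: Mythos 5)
Your overall architecture --- verify the axioms of Definition~\ref{defn:HHS} one by one, with partial realization singled out as the step requiring new content and the gate map as the tool for it --- is in fact the right shape, and matches the proof given in the cited source \cite{BHS17b}. But there is a genuine gap in your setup that would break the argument: you declare that all $\rho$--projections (and the hyperbolic spaces $CU$) are kept ``exactly as in $\mf{S}$.'' They cannot be. In the HHS structure on $Y$, each $CU$ must be replaced by a thickening of $\pi_U(Y)$ (so that the structure is normalized, i.e.\ the projection is coarsely onto), and each $\rho^V_U$ must correspondingly be replaced by its closest-point projection onto $\pi_U(Y)$. Without this, partial realization simply fails, even in the most degenerate case: take $Y=\{y_0\}$ a single marking in the mapping class group, $V_j$ an annulus, and $V=S$ the whole surface. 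The axiom would force $\dist_S(y_0,\rho^{V_j}_S)\leq\alpha$ uniformly over all annuli $V_j$, but $\rho^{V_j}_S$ is the core curve of $V_j$, which ranges over all of $CS$, so no $\alpha$ works. Once you realize the $\rho$--sets must be projected, your gate-map argument for the realization axiom is salvaged --- $\pi_U(\gate_Y(x))$ is indeed close to $\cpproj_{\pi_U(Y)}(\rho^{V_j}_U)$ because $\pi_U(x)$ is close to $\rho^{V_j}_U$ and closest-point projection in a hyperbolic space is coarsely Lipschitz --- but then the axioms you dismissed as ``inherited verbatim'' (transversality/consistency, bounded geodesic image, large links) all now involve the modified $\rho$--sets and need to be re-checked against them; this is more than bookkeeping, though none of it is deep.

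Separately, your justification of the uniqueness axiom contains a logical slip. You say $\dist_Y(x,y)\geq\theta_u(\kappa)$ gives $\dist_{\mc{X}}(x,y)$ large ``a fortiori'' because the induced metric dominates the ambient one. Domination, $d_Y \geq d_{\mc X}$, gives the implication in the wrong direction. What you actually need is the observation, which you do make elsewhere, that hierarchical quasiconvexity forces hierarchy paths with endpoints on $Y$ to stay uniformly near $Y$, so the identity map $(Y,d_Y)\to(Y,d_{\mc X}|_Y)$ is a uniform quasi-isometry; that two-sided comparison, not mere domination, is what transports uniqueness from $\mc{X}$ to $Y$.
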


The following lemma is a special case of the powerful realization theorem for hierarchically hyperbolic spaces (see \cite[Theorem 3.1]{BHS17b}). It is often useful when verifying that a subset is hierarchically quasiconvex.

\begin{lem}[{\cite[Theorem 3.1, Lemma 5.3]{BHS17b}}]\label{lem:proj_consistent}
For each $R \geq 0$ there exists $\mu\geq 0$ so that the following holds. Let $Y\subseteq\mc X$ be such that $\pi_W(Y)$ is $R$--quasiconvex for each $W\in\mathfrak S$. Let  $x\in\mc X$ and for each $W\in\mathfrak S$, let $p_W \in \pi_W(Y)$ satisfy $\dist_V(x,p_W)\leq d_W(x,Y)+1$. Then there exists $p \in \mc{X}$ such that $d_W(p,p_W) \leq \mu$ for all $W \in \mf{S}$.
\end{lem}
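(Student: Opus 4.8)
The plan is to exhibit the tuple $\vec p=(p_W)_{W\in\mf{S}}$ as uniformly consistent and then invoke the realization theorem \cite[Theorem 3.1]{BHS17b} to produce $p$. Two preliminary remarks underpin everything. First, each $p_W$ is, up to error $E$, a closest-point projection of $\pi_W(x)$ onto $\pi_W(Y)$, which is $Q$-quasiconvex in the $\delta$-hyperbolic space $CW$; recall that in a hyperbolic space quasiconvexity and {\sqcity} coincide, and that closest-point projection onto a $Q$-quasiconvex subset $Z$ of a $\delta$-hyperbolic space is coarsely Lipschitz and has the property that for any $a$ and any $b\in Z$ the geodesic $[a,b]$ passes within $Q+O(\delta)$ of the projection of $a$. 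Second, fixing any $y_0\in Y$ and applying the distance formula (Theorem \ref{thm:distance_formula}) to $d_W(x,p_W)\le d_W(x,Y)+1\le d_W(x,y_0)+1$, the tuple $\vec p$ coarsely agrees with $(\pi_W(x))_W$ off a finite set of domains; since $(\pi_W(x))_W$ is realized by $x$, the realization theorem is applicable once consistency is checked.

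So the core of the argument is to verify the consistency inequalities of Axiom (\ref{item:dfs_transversal}) for $\vec p$. Fix $V,W\in\mf{S}$ that are transverse or nested. The tools are: $\kappa_0$-consistency of $(\pi_U(x))_U$ and of $(\pi_U(y))_U$ for every $y\in Y$ (both automatic, being realized); the bounded geodesic image axiom (\ref{item:dfs:bounded_geodesic_image}); and $Q$-quasiconvexity of each $\pi_U(Y)$. The governing dichotomy is whether $\rho^V_W$ lies within a large threshold of $\pi_W(Y)$. If $\rho^V_W$ is \emph{far} from $\pi_W(Y)$, then every $\pi_W(y)$, $y\in Y$, is far from $\rho^V_W$, so $\kappa_0$-consistency of each such $y$ pins down its companion coordinate uniformly over $Y$: in the transverse case $\pi_V(Y)$ lies in a bounded neighbourhood of $\rho^W_V$, so $d_V(p_V,\rho^W_V)$ is bounded; in the nesting case, bounded geodesic image together with $Q$-quasiconvexity forces $\rho^W_V(\pi_W(Y))$ to be bounded, hence $\pi_V(Y)$ is bounded, and since $p_W$ is far from $\rho^V_W$ the point $\rho^W_V(p_W)$ lies near $\pi_V(Y)\ni p_V$; either way the consistency inequality holds. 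If $\rho^V_W$ is \emph{close} to $\pi_W(Y)$, we use $\kappa_0$-consistency of $x$: if $\pi_W(x)$ is near $\rho^V_W$ then $d_W(\pi_W(x),p_W)=d_W(\pi_W(x),\pi_W(Y))$ is bounded and the triangle inequality bounds $d_W(p_W,\rho^V_W)$; otherwise $x$'s companion coordinate is controlled, and in the transverse case the symmetry in $V$ and $W$ reduces to the situation already handled, while in the nesting case one inspects the geodesic $[\pi_W(x),p_W]$ — if it avoids $N_E(\rho^V_W)$ then bounded geodesic image, combined with consistency applied to a $y\in Y$ with $p_W\in\pi_W(y)$, bounds $d_V(\pi_V(x),\pi_V(Y))$ and hence $\diam_{CV}(p_V\cup\rho^W_V(p_W))$; if it meets $N_E(\rho^V_W)$ at a point $q$, then the nearest-point inequality $d_W(q,\pi_W(Y))\ge d_W(q,p_W)$ forces $\rho^V_W$ to be far from $\pi_W(Y)$, returning us to the first alternative. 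The remaining, purely structural, consistency requirements on the various $\rho^U_W$ hold with constant $\kappa_0$ straight from Axiom (\ref{item:dfs_transversal}).

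Once $\vec p$ has been shown $\kappa$-consistent for some $\kappa$ depending only on $Q$ and the HHS constants, the realization theorem yields a $\mu$ depending only on $Q$ (the ambient HHS being fixed) and a point $p\in\mc{X}$ with $d_W(p,p_W)\le\mu$ for all $W$, as required. I expect the main difficulty to be the bookkeeping in the case analysis above — in particular the step bounding $\diam_{CV}(\pi_V(Y))$ uniformly when $\rho^V_W$ is far from $\pi_W(Y)$, which is precisely where quasiconvexity of the projections must be fed into the bounded geodesic image axiom — together with carefully propagating the additive $E$-errors attached to every $\rho$-set and every projection $\pi_U(y)$.
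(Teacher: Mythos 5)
Your proposal is correct and follows the same route as the cited source: the paper does not reprove this lemma (it attributes it to \cite[Theorem 3.1, Lemma 5.3]{BHS17b}), and your argument — build the tuple $(p_W)_W$ from coarse nearest-point projections, verify $\kappa$-consistency for $V\trans W$ and $V\nest W$ by a dichotomy on whether $\rho^V_W$ lies far from or near $\pi_W(Y)$ (using the consistency of realized tuples, the bounded geodesic image axiom, and quasiconvexity of $\pi_W(Y)$), and then invoke the realization theorem — is precisely the proof of \cite[Lemma 5.3]{BHS17b}. One small expository remark: in the nested, ``$\rho^V_W$ close to $\pi_W(Y)$, $[\pi_W(x),p_W]$ meets $N_E(\rho^V_W)$ at $q$'' subcase, the conclusion you actually want is not always that $\rho^V_W$ is far from $\pi_W(Y)$ — if $q$ is near $p_W$ you instead conclude directly that $d_W(p_W,\rho^V_W)$ is small, which also settles the inequality — but either branch gives the bound, so the argument is sound.
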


Given a subset $Y \subseteq \mc{X}$, there exists a \textit{hierarchically quasiconvex hull} of $Y$ which can be thought of as the coarsely smallest hierarchically quasiconvex subset of $\mc{X}$ containing $Y$.

\begin{defn}[Hierarchically quasiconvex hull]\label{defn:hull}
For each set $Y\subseteq \mc X$ and $W \in \mf{S}$, let $\hull_{CW}(Y)$ denote the convex hull of $\pi_W(Y)$ in $CW$, i.e., the union of all $CW$--geodesics connecting pairs of points in $\pi_W(Y)$. Given $\theta\geq 0$, let $H_{\theta}(Y)$ be the set of all $p\in\mc X$ so that, for each
$W\in\mathfrak S$, the set $\pi_W(p)$ lies at distance at most
$\theta$ from $\hull_{C W}(Y)$.
Note that $Y\subseteq H_{\theta}(Y)$.
\end{defn}

\begin{lem}[{\cite[Lemma 6.2]{BHS17b}}]\label{lem:hull}
Let $(\mc{X},\mf{S})$ be an HHS.
There exists $\theta_0$ so that for each $\theta\geq \theta_0$ there exists $k:[0,\infty)\to [0,\infty)$ such that for each $Y\subseteq \mc X$, the hull $H_{\theta}(Y)$ is $k$--hierarchically quasiconvex.
\end{lem}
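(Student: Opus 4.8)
The statement is \cite[Lemma 6.2]{BHS17b}, and the plan is to follow that approach: verify directly the two clauses of Definition \ref{defn:hierarchical_quasiconvexity} for $H_\theta(Y)$, once $\theta$ is large enough. Write $H_W := \hull_{CW}(Y)$ for brevity. A preliminary observation is that $H_W$, being the union of all $CW$--geodesics between points of the fixed set $\pi_W(Y)$, is $Q_0$--quasiconvex in the $\delta$--hyperbolic space $CW$ for a constant $Q_0 = Q_0(\delta)$ depending only on $\delta$: given $p, q \in H_W$, concatenate subgeodesics through two of the defining endpoints to obtain a path of three geodesic segments inside $H_W$ joining $p$ to $q$, and invoke thinness of geodesic polygons in a hyperbolic space.

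First I would handle clause (1), by showing that $\pi_W(H_\theta(Y))$ coarsely coincides with the $Q_0$--quasiconvex set $H_W$, with coarseness constant depending only on the HHS data and $\theta$; this immediately gives $k(0)$. The inclusion $\pi_W(H_\theta(Y)) \subseteq N_\theta(H_W)$ is immediate from Definition \ref{defn:hull}. For the reverse: given $q \in H_W$, pick $y_1, y_2 \in Y$ with $q$ on a $CW$--geodesic between $\pi_W(y_1)$ and $\pi_W(y_2)$, and let $\gamma$ be a $\lambda_0$--hierarchy path from $y_1$ to $y_2$ (Theorem \ref{thm:monotone_hierarchy_paths}). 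For every $V \in \mf{S}$, the path $\pi_V \circ \gamma$ is an unparameterized $(\lambda_0,\lambda_0)$--quasi-geodesic between $\pi_V(y_1)$ and $\pi_V(y_2)$, so by stability of quasi-geodesics in $\delta$--hyperbolic spaces its image lies in $N_{C_1}(H_V)$ for a uniform $C_1 = C_1(\lambda_0,\delta)$; hence $\gamma \subseteq H_{C_1}(Y)$. Taking $p$ to be the point of $\gamma$ whose $CW$--projection comes closest to $q$ yields $\dist_W(p,q) \leq C_1$, since $\pi_W \circ \gamma$ fellow-travels the geodesic through $q$. Thus $H_W \subseteq N_{C_1}(\pi_W(H_{C_1}(Y)))$, and for $\theta \geq C_1$ the same holds with $H_\theta(Y)$ in place of $H_{C_1}(Y)$.

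For clause (2) I would build a gate into $H_\theta(Y)$. Fix any $x \in \mc X$ and, for each $W$, let $c_W$ be a point of $H_W$ closest to $\pi_W(x)$. The crucial point is that the tuple $(c_W)_{W\in\mf{S}}$ is $\kappa'$--consistent for a \emph{universal} constant $\kappa'$, depending only on the HHS constants and not on $Y$ or $x$. This is proved by the mechanism underlying Lemma \ref{lem:proj_consistent}: for $U \trans V$, consistency (Axiom (\ref{item:dfs_transversal})) for the \emph{point} $x$ gives, say, $\dist_U(x,\rho^V_U) \leq \kappa_0$; if $H_U$ meets a bounded neighborhood of $\rho^V_U$ then $\dist_U(c_U,\rho^V_U)$ is uniformly bounded, and otherwise consistency applied to every $y \in Y$ forces $\pi_V(Y)$, hence $H_V$, hence $c_V$, into a uniformly bounded neighborhood of $\rho^U_V$; the case $U \nest V$ is handled the same way, now also invoking bounded geodesic image (Axiom (\ref{item:dfs:bounded_geodesic_image})) for the geodesics making up $H_V$. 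Granting this, the realization theorem \cite[Theorem 3.1]{BHS17b} produces $\gate(x) \in \mc X$ with $\dist_W(\gate(x),c_W) \leq \mu_0$ for all $W$, with $\mu_0$ universal; since $c_W \in H_W$, this gives $\gate(x) \in H_{\mu_0}(Y)$.

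Now set $\theta_0 := \max\{C_1,\mu_0\}$. For $\theta \geq \theta_0$, clause (1) holds by the second paragraph. For clause (2): if $\dist_W(x,H_\theta(Y)) \leq \kappa$ for all $W$, then since $\pi_W(H_\theta(Y)) \subseteq N_\theta(H_W)$ we get $\dist_W(x,c_W) = \dist_W(x,H_W) \leq \kappa+\theta$, and hence $\dist_W(x,\gate(x)) \leq \kappa+\theta+\mu_0$ for every $W$; moreover $\gate(x) \in H_{\mu_0}(Y) \subseteq H_\theta(Y)$ because $\mu_0 \leq \theta_0 \leq \theta$. Applying the distance formula (Theorem \ref{thm:distance_formula}) with threshold $\sigma := \max\{\sigma_0,\kappa+\theta+\mu_0+1\}$, every term of the sum vanishes, so $\dist_{\mc X}(x,\gate(x)) \leq L_\sigma =: k(\kappa)$. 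Thus $H_\theta(Y)$ is $k$--hierarchically quasiconvex. The step I expect to be the main obstacle is the universality of $\kappa'$ in the third paragraph: one must rule out ``mutual inconsistency'' of the closest-point projections onto the various $H_W$ with constants independent of $Y$, which is precisely where bounded geodesic image and the dichotomy ``$H_W$ is unbounded, or uniformly bounded near the relevant $\rho$--set'' enter. Everything else is bookkeeping with the distance formula and the universal realization constant; the point of taking $\theta_0 = \max\{C_1,\mu_0\}$ is to keep the argument from being circular.
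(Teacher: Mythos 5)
The paper cites this from \cite[Lemma 6.2]{BHS17b} and gives no proof of its own, so there is nothing in-paper to compare against; I will assess the proposal on its own terms. Your outline is the expected argument: clause (1) via Morse stability of the projected hierarchy paths, clause (2) via realization of the tuple of closest-point projections onto the per-domain hulls $H_W$, then the distance formula. Projecting onto $H_W$ directly rather than onto $\pi_W(H_\theta(Y))$ is indeed the right move for keeping the realization constant $\mu_0$ independent of $\theta$ and avoiding circularity in setting $\theta_0$.

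The one place the sketch is genuinely thin is the consistency of $(c_W)$ in the nested case. Your dichotomy (``$H_U$ near $\rho^V_U$ or not'') is not quite clean there: you also need to rule out the configuration where $c_W$ is far from $\rho^U_W$ while some \emph{other} part of $H_W$ passes near $\rho^U_W$. The standard fix is to observe that if $c_W$ is a nearest point of the quasiconvex set $H_W$ to $\pi_W(x)$ and $t$ lies on the $CW$--geodesic from $\pi_W(x)$ to $c_W$, then $d_W(t,H_W)=d_W(t,c_W)$; so once $d_W(c_W,\rho^U_W)$ exceeds a threshold the entire geodesic stays $E$--far from $\rho^U_W$, and BGI bounds $\diam_{CU}\bigl(\rho^W_U(\pi_W(x))\cup\rho^W_U(c_W)\bigr)$, after which the consistency of the single point $x$ closes the inequality for $c_U$. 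An alternative, and arguably cleaner, route avoids re-deriving consistency at all: once clause (1) is established for $\theta_1:=C_1$, each $\pi_W(H_{\theta_1}(Y))$ is uniformly quasiconvex, so the paper's Lemma~\ref{lem:proj_consistent} applies to $H_{\theta_1}(Y)$ and produces a realization point lying in $H_{\theta_1+\mu}(Y)$; setting $\theta_0:=\theta_1+\mu$ absorbs the additive error and both clauses follow for $\theta\geq\theta_0$ with the same distance-formula bookkeeping you describe.
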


In Section \ref{sec:constructing_hulls} we strengthen the analogy between hierarchically quasiconvex hulls and convex hulls in hyperbolic spaces, by showing that $H_\theta(Y)$ can be constructed by iteratively connecting points in $Y$ by hierarchy paths.

One of the important properties of hierarchically quasiconvex subsets is the existence of a \textit{gate map} which retracts the entire space onto the hierarchically quasiconvex subset.  The gate map is a generalization to hierarchically hyperbolic spaces of the closest point projection, $\mf{p}$, defined at the end of Section \ref{sec:coarse_geometry}.

\begin{lem}[Existence of coarse gates; {\cite[Lemma 5.5]{BHS17b}}]\label{lem:gate}
If $(\mc{X},\mf{S})$ is a hierarchically hyperbolic space and $ Y\subseteq\mc X$ is $k$--hierarchically quasiconvex and non-empty, then there exists a  gate map $\gate_{Y}\colon \mc{X} \rightarrow{Y}$ such that
\begin{enumerate}[(1)]
	\item $\gate_Y$ is $(K,K)$-coarsely Lipschitz;
	\item for all $y \in Y$, $d_\mc{X}\bigl(y,\gate_Y(y)\bigr) \leq K$;
	\item for all $x \in \mc{X}$ and $U \in \mf{S}$, $d_U(\gate_{Y}(x),\cpproj_{\pi_U({Y})}(\pi_U(x))) \leq K$;
\end{enumerate}
where $K$ depends only on $k$ and $\mf{S}$.
\end{lem}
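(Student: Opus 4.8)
The plan is to construct $\gate_Y(x)$ domain by domain, using closest point projections in the hyperbolic spaces $CU$, then glue these choices into a single point of $\mc X$ via realization, and finally push that point onto $Y$ using hierarchical quasiconvexity. Fix $x\in\mc X$. For each $U\in\mf S$ the set $\pi_U(Y)$ is $k(0)$--quasiconvex in the $\delta$--hyperbolic space $CU$, so the coarse closest point projection $\cpproj_{\pi_U(Y)}\colon CU\to\pi_U(Y)$ described at the end of Section~\ref{sec:coarse_geometry} is defined; put $p_U:=\cpproj_{\pi_U(Y)}(\pi_U(x))$, so that $d_U(x,p_U)\le d_U(x,Y)+1$. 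Since quasiconvex and {\sqc} coincide in hyperbolic spaces, the tuple $(p_U)_{U\in\mf S}$ is precisely of the form to which Lemma~\ref{lem:proj_consistent} applies (with $Q=k(0)$), so there is a constant $\mu=\mu(k,\mf S)$ and a point $p\in\mc X$ with $d_U(p,p_U)\le\mu$ for all $U\in\mf S$.

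Since $p_U\in\pi_U(Y)$ we get $d_U(p,Y)\le d_U(p,p_U)\le\mu$ for every $U$, so part~(2) of Definition~\ref{defn:hierarchical_quasiconvexity} gives $d_{\mc X}(p,Y)\le k(\mu)$; define $\gate_Y(x)$ to be any point of $Y$ within $k(\mu)+1$ of $p$. Property~(3) then follows because the uniform coarse Lipschitz constant for the $\pi_U$ from Axiom~(\ref{item:dfs_curve_complexes}) bounds $d_U(\gate_Y(x),p)$ in terms of $k(\mu)+1$, which combines with $d_U(p,p_U)\le\mu$ to bound $d_U\bigl(\gate_Y(x),\cpproj_{\pi_U(Y)}(\pi_U(x))\bigr)$ by a constant depending only on $k$ and $\mf S$. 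For property~(2), when $x=y\in Y$ we have $\pi_U(y)\subseteq\pi_U(Y)$, hence $d_U(y,p_U)$ is uniformly bounded for all $U$; with~(3) this makes $d_U(y,\gate_Y(y))$ uniformly bounded over all $U$, so the Uniqueness axiom~(\ref{item:dfs_uniqueness}) (equivalently, the distance formula, Theorem~\ref{thm:distance_formula}) forces $d_{\mc X}(y,\gate_Y(y))$ to be uniformly bounded. The same argument shows $\gate_Y$ is well defined up to uniformly bounded error, since any two admissible choices have uniformly close projections to every $CU$.

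The remaining and, I expect, most delicate point is the coarse Lipschitz estimate~(1). Given $x,x'\in\mc X$, closest point projection onto a uniformly quasiconvex subset of a uniformly hyperbolic space is uniformly coarsely Lipschitz, so $d_U(p_U,p_{U}')$ is bounded above by $d_U(x,x')$ plus a uniform constant for every $U$; combined with~(3) this yields per-domain bounds of the form $d_U(\gate_Y(x),\gate_Y(x'))\le d_U(x,x')+C$ with $C$ uniform. Feeding these into the distance formula (Theorem~\ref{thm:distance_formula}) and observing that, for $\sigma$ chosen large enough, the threshold cutoff $\ignore{\cdot}{\sigma}$ annihilates every domain that is not $\sigma'$--relevant for $x,x'$ (for a suitable $\sigma'$), one converts the per-domain estimates into $d_{\mc X}(\gate_Y(x),\gate_Y(x'))\le K d_{\mc X}(x,x')+K$; keeping track of how the additive constants interact with the multiplicative ones in this last comparison is the only real subtlety, and all constants produced depend only on $k$ and $\mf S$.
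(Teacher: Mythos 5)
Your construction is exactly the one behind the cited result: realize the tuple of per-domain closest point projections via Lemma~\ref{lem:proj_consistent}, push the realization onto $Y$ using condition~(2) of hierarchical quasiconvexity, and then derive the coarse Lipschitz bound by converting the additive per-domain estimates $d_U(\gate_Y(x),\gate_Y(x'))\le d_U(x,x')+C$ into a global bound through the distance formula with a suitably enlarged threshold. The paper itself only cites this lemma, but your argument correctly reproduces the proof from BHS17b, and the threshold bookkeeping in part~(1) is handled in the same way the paper handles the analogous manipulation in the proof of Claim~\ref{claim:hulls_of_pairs_of_points}.
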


While the gate map need not be the closest point projection, it approximates the closest point projection with a multiplicative and additive error.

\begin{lem}[{\cite[Lemma 1.27]{BHS_HHS_Quasiflats}}]\label{lem: closest_point_proj}
Let ${Y}$ be a $k$-hierarchically quasiconvex subset of the HHS $(\mc{X},\mf{S})$ and $x \in \mc{X}$. If $y \in {Y}$ is a  point such that $d_{\mc{X}}(x,y)\leq d_\mc{X}(x,{Y})+1$, then $d_\mc{X}(x,y) \asymp d_\mc{X}(x,\gate_Y(x))$ where the constants depend only on  $k$ and $\mf{S}$.
\end{lem}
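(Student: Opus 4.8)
The plan is to compare distances domain-by-domain using the distance formula (Theorem~\ref{thm:distance_formula}) and the defining properties of the gate map (Lemma~\ref{lem:gate}). Fix $x\in\mc X$ and a point $y\in Y$ with $d_\mc{X}(x,y)\le d_\mc{X}(x,Y)+1$. First I would record the easy inequality: since $\gate_Y(x)\in Y$, we immediately have $d_\mc{X}(x,y)\le d_\mc{X}(x,Y)+1\le d_\mc{X}(x,\gate_Y(x))+1$, which is the ``$\preceq$'' direction with constants $(1,1)$. The substance is the reverse inequality $d_\mc{X}(x,\gate_Y(x))\preceq d_\mc{X}(x,y)$.

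For the reverse direction I would argue on each hyperbolic space $CU$. By Lemma~\ref{lem:gate}(3), $\pi_U(\gate_Y(x))$ coarsely coincides (within $K$) with $\cpproj_{\pi_U(Y)}(\pi_U(x))$, the coarse closest point projection of $\pi_U(x)$ onto the $k(0)$--quasiconvex set $\pi_U(Y)$ inside the $\delta$--hyperbolic space $CU$. Since $\pi_U(y)\in\pi_U(Y)$, the standard properties of closest point projection in a hyperbolic space (bounded by a constant depending on $\delta$ and $k(0)$) give
\[
d_U(x,\gate_Y(x))\stackrel{1,C_1}{\preceq} d_U(x,\pi_U(Y))\le d_U(x,y),
\]
where $C_1$ depends only on $\mf S$ and $k$. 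Thus for \emph{every} domain $U$, $d_U(x,\gate_Y(x))\le d_U(x,y)+C_1$. Now fix a threshold $\sigma\ge\sigma_0$ from the distance formula. The set of domains contributing to $d_\mc{X}(x,\gate_Y(x))$ at scale $\sigma$ is contained in $\relevant_{\sigma}(x,\gate_Y(x))$; for each such $U$ we have $d_U(x,\gate_Y(x))\le 2\max\{d_U(x,y),\sigma\}+C_1$ — but to turn this into a bound on the \emph{sum} I need control on the number of relevant domains. Here I would use that $\gate_Y$ is $(K,K)$--coarsely Lipschitz, so $d_\mc{X}(\gate_Y(x),\gate_Y(y))\le Kd_\mc{X}(x,y)+K$, while $d_\mc{X}(\gate_Y(y),y)\le K$ by Lemma~\ref{lem:gate}(2); hence $d_\mc{X}(x,\gate_Y(x))\le d_\mc{X}(x,y)+d_\mc{X}(y,\gate_Y(y))+d_\mc{X}(\gate_Y(y),\gate_Y(x))\le (K+1)d_\mc{X}(x,y)+2K$, which \emph{already} is the desired reverse inequality with constants depending only on $K$, i.e.\ on $k$ and $\mf S$.

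So in fact the cleaner route avoids the distance formula entirely: both inequalities follow from Lemma~\ref{lem:gate}(1)--(2) together with the hypothesis $d_\mc{X}(x,y)\le d_\mc{X}(x,Y)+1$ and the triangle inequality. The main (and only mild) obstacle is bookkeeping the additive and multiplicative constants and confirming they depend on nothing beyond $\kappa$ (i.e.\ the hierarchical quasiconvexity gauge $k$) and $\mf S$, which is exactly what Lemma~\ref{lem:gate} guarantees for $K$. I would present the argument in the two displayed inequalities above: $d_\mc{X}(x,y)\le d_\mc{X}(x,\gate_Y(x))+1$ and $d_\mc{X}(x,\gate_Y(x))\le (K+1)d_\mc{X}(x,y)+2K$, and conclude $d_\mc{X}(x,y)\asymp d_\mc{X}(x,\gate_Y(x))$ with constants depending only on $\kappa$ and $\mf S$.
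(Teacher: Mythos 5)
Your proof is correct. Since the paper cites \cite[Lemma 1.26]{BHS17c} without reproducing an argument, there is no in-paper proof to compare against, but the streamlined version you arrive at is the right one: both inequalities follow from the triangle inequality and Lemma~\ref{lem:gate}(1)--(2) alone, with no need for the distance formula or any domain-by-domain bookkeeping. Two small points worth making explicit in a write-up: (i) the bound $d_\mc{X}(x,y)\le d_\mc{X}(x,\gate_Y(x))+1$ uses only $\gate_Y(x)\in Y$ together with the hypothesis that $y$ is an almost-closest point; (ii) the reverse bound $d_\mc{X}(x,\gate_Y(x))\le (K+1)d_\mc{X}(x,y)+2K$ in fact holds for \emph{every} $y\in Y$, not just an almost-closest one, so taking an infimum gives $d_\mc{X}(x,\gate_Y(x))\le (K+1)d_\mc{X}(x,Y)+2K$. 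The hypothesis on $y$ is used only in (i). Your initial domain-by-domain sketch via $CU$-closest-point projections and $\relevant_\sigma$ would also work, but, as you observe, it is more bookkeeping than the problem requires; the triangle-inequality route is cleaner and gives the constant dependence on $k$ and $\mf{S}$ transparently (the $\kappa$ in the statement is a typo for $k$).
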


In the case of hierarchically hyperbolic groups, the gate is also coarsely equivariant.
\begin{lem}[Coarse equivariance of gate maps]\label{lem:coarse_equiv_of_gate}
Let \((G,\mf{S})\) be a hierarchically hyperbolic group and let \(Y\) be a \(k\)-hierarchically quasiconvex subspace of \(G\). There exists \(K\) depending on \((G,\mf{S})\) and \(k\) such that for every \(g, x \in G\), we have \[d_G (g \gate_Y(x), \gate_{gY}(gx)) \leq K.\]
\end{lem}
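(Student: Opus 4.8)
The statement is a coarse equivariance assertion for gate maps in an HHG, and the natural strategy is to prove it ``one domain at a time'' and then invoke uniqueness. The plan is to use the fact (Lemma \ref{lem:gate}(3)) that for every $U \in \mf{S}$ the gate $\gate_Y(x)$ is $K$--close in $CU$ to the coarse closest point projection $\cpproj_{\pi_U(Y)}(\pi_U(x))$; so it suffices to show that $g\gate_Y(x)$ and $\gate_{gY}(gx)$ have uniformly close projections to every $CW$, and then apply the Uniqueness Axiom (\ref{item:dfs_uniqueness}) to conclude they are uniformly close in $G$.

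First I would fix $W \in \mf{S}$ and write $V = g^{-1}W$, so that $g_V \colon CV \to CW$ is an isometry. By Lemma \ref{lem:gate}(3) applied to the hierarchically quasiconvex set $gY$ in the domain $W$, the point $\gate_{gY}(gx)$ has projection $K$--close to $\cpproj_{\pi_W(gY)}(\pi_W(gx))$ in $CW$. On the other hand, by Definition \ref{defn:hierarchically hyperbolic groups}(\ref{item:HHG conditions}), the isometry $g_V$ carries $\pi_V(x)$ to within $E$ of $\pi_W(gx)$ and carries $\pi_V(Y)$ to within $E$ (Hausdorff) of $\pi_W(gY)$; since closest-point projection onto a $Q$--quasiconvex set in a $\delta$--hyperbolic space is coarsely well-defined and is respected by isometries, $g_V$ sends $\cpproj_{\pi_V(Y)}(\pi_V(x))$ to within a uniform constant of $\cpproj_{\pi_W(gY)}(\pi_W(gx))$ in $CW$. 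Combining this with Lemma \ref{lem:gate}(3) applied to $Y$ and the domain $V$ — which says $\pi_V(\gate_Y(x))$ is $K$--close to $\cpproj_{\pi_V(Y)}(\pi_V(x))$ — and then pushing forward by $g_V$ and using once more the coarse commutation of $g_V$ with projections, we get that $\pi_W(g\gate_Y(x))$ and $\pi_W(\gate_{gY}(gx))$ are within a uniform constant $\kappa$, depending only on $(G,\mf{S})$ and $k$, of each other.

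Since $W \in \mf{S}$ was arbitrary and $\kappa$ is uniform, the Uniqueness Axiom (\ref{item:dfs_uniqueness}) gives $d_G(g\gate_Y(x), \gate_{gY}(gx)) \leq \theta_u(\kappa) =: K'$, which is the claim. Two small points must be checked along the way: that $gY$ is itself $k'$--hierarchically quasiconvex with $k'$ depending only on $k$ and $(G,\mf{S})$ (so that Lemma \ref{lem:gate} applies to it with uniform constants), which follows because left multiplication by $g$ is an isometry of $G$ and the HHG action preserves the structure; and the basic fact that coarse closest-point projection onto a uniformly quasiconvex subset of a uniformly hyperbolic space is coarsely canonical and coarsely natural with respect to isometries, which is standard hyperbolic geometry. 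The main obstacle is bookkeeping: carefully tracking that every error term — from the $E$'s in the HHG axioms, the $K$'s in the gate lemma, and the $\delta,Q$--dependent constant for closest-point projections — is uniform and independent of $g$, $x$, and $W$, so that the final application of uniqueness yields a single constant $K$.
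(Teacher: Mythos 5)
Your proof is correct and follows essentially the same strategy as the paper's: bound $d_W\bigl(\pi_W(g\gate_Y(x)),\pi_W(\gate_{gY}(gx))\bigr)$ uniformly over $W \in \mf{S}$ using Lemma \ref{lem:gate}(3) together with the coarse commutation of $g_W$ with the projections $\pi_W$ and with closest-point projections onto uniformly quasiconvex sets, then pass to a distance bound in $G$. The only (cosmetic) difference is your final step: you invoke the Uniqueness Axiom (\ref{item:dfs_uniqueness}) directly, whereas the paper cites the distance formula (Theorem \ref{thm:distance_formula}); both turn a uniform bound on all projection distances into a uniform bound in $G$, and using uniqueness is if anything slightly more elementary since the distance formula is a consequence of the axioms. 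Your side remarks — that $gY$ is hierarchically quasiconvex with constants depending only on $k$ and $(G,\mf{S})$, and that coarse closest-point projection in a hyperbolic space is coarsely natural under isometries — are exactly the points that need checking, and both are standard.
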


\begin{proof}
Since $G$ acts on the disjoint union of the $CW$s by isometries,  Lemma \ref{lem:gate} and the definition of HHG provide a uniform bound on \(d_W(\pi_W(g\gate_Y(x)),\pi_W(\gate_{gY}(gx))\) for all $W \in \mf{S}$, which depends only on $\mf{S}$, $k$, and the choice of finite generating set for $G$. The result now follows from the distance formula (Theorem \ref{thm:distance_formula}). 
\end{proof}

The following lemma explains the nice behavior of the gates of hierarchically quasiconvex sets onto each other. The lemma is stated in slightly more generality than presented in \cite{BHS_HHS_Quasiflats}, but the more general statement is implicit in the proof of \cite[Lemma 1.20]{BHS_HHS_Quasiflats}. The following notation will simplify the exposition.

\begin{notation}
If $\mf{S}$ is an HHS structure on a metric space $\mc{X}$ and $\mc{H} \subseteq \mf{S}$ we use
$\mc{H}^\perp$ to denote the set $\{W \in \mf{S} : \forall H \in \mc{H}, \ H \perp W\}$. In particular,  $\mf{S}_U^\perp = \{W \in \mf{S} : U\perp W \}$ for any  $U \in \mf{S}$. Note, if $\mc{H} = \emptyset$, then $\mc{H}^\perp = \mf{S}$ as every domain in $\mf{S}$ would vacuously satisfy the condition of the set.

\end{notation}

\begin{thm}[The bridge theorem; {\cite[Lemma 1.20]{BHS_HHS_Quasiflats}} ]\label{thm:parallel_gates}
Let $(\mc{X},\mf{S})$ be a hierarchically hyperbolic space and $\theta_0$ be as in Lemma \ref{lem:hull}. For every $k$ and $\theta \geq \theta_0$, there exist $k' \colon [0,\infty) \to [0,\infty)$ and $K_0\geq 0$ such that for any $k$--hierarchically quasiconvex sets $A,B$, the following hold.
\begin{enumerate}
	\item\label{item:gate_hq} $\gate_{A}(B)$ is
	$k'$--hierarchically quasiconvex.  
	
	\item\label{item:double_gate_surjective}  The composition $\gate_A\circ\gate_B|_{\gate_A(B)}$ is bounded distance from the identity $\gate_A(B)\to\gate_A(B)$.
	
	\item\label{item:bridge}
	For any $a\in \gate_A(B),b=\gate_B(a)$, we have a $(K_0,K_0)$--quasi-isometric
	embedding $f\co\gate_A(B)\times H_\theta(a,b)\to \mc{X}$ with
	image $H_\theta(\gate_A(B)\cup\gate_B(A))$, so that 
	$f(\gate_A(B)\times\{b\})$ $K_0$--coarsely coincides with $\gate_B(A)$.
\end{enumerate}

Let $K\geq K_0$ and $\mathcal H=\{U\in\mathfrak S: \diam\bigl(\pi_U(\gate_A(B))\bigr)> K\}$.
\begin{enumerate}\setcounter{enumi}{3}
	\item \label{item:df_bridge_0}For each $p,q\in \gate_A(B)$ and $t\in H_\theta(a,b)$, we have \[\relevant_K(f(p,t),f(q,t))\subseteq \mathcal H.\]
	\item\label{item:df_bridge}  For each $p\in \gate_A(B)$ and $t_1,t_2\in H_\theta(a,b)$, we have \[\relevant_K(f(p,t_1),f(p,t_2))\subseteq \mathcal H^\perp.\]
	\item\label{item:take_the_bridge} For each $p\in A,q\in B$ we have $$d(p,q)\asymp_{K_0,K_0} d(p,\gate_A(B))+d(q,\gate_B(A))+d(A,B)+d(\gate_{\gate_B(A)}(p),\gate_{\gate_B(A)}(q)).$$
\end{enumerate}
\end{thm}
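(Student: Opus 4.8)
The plan is to reduce every assertion to a statement about closest point projections onto quasiconvex subsets of the hyperbolic spaces $CU$, $U\in\mathfrak S$, and then reassemble using the distance formula (Theorem~\ref{thm:distance_formula}) and the realization lemma (Lemma~\ref{lem:proj_consistent}). The translation device is Lemma~\ref{lem:gate}(3): for a $k$--hierarchically quasiconvex set $A$, the projection $\pi_U(\gate_A(x))$ coarsely coincides with $\mathfrak p_{\pi_U(A)}(\pi_U(x))$, hence $\pi_U(\gate_A(B))$ coarsely coincides with $\mathfrak p_{\pi_U(A)}(\pi_U(B))$ for every $U$. I will call on three standard facts about a $\delta$--hyperbolic space, all uniform in $\delta$ and the relevant quasiconvexity constants: (i) the closest point projection of one quasiconvex set onto another is quasiconvex; (ii) $\mathfrak p_P\circ\mathfrak p_Q$ is coarsely the identity on $\mathfrak p_P(Q)$; and (iii) the convex hull of $\mathfrak p_P(Q)\cup\mathfrak p_Q(P)$ is, up to bounded error, the union of $\mathfrak p_P(Q)$ with a geodesic from $p\in\mathfrak p_P(Q)$ to $q=\mathfrak p_Q(p)$, with $d(P,Q)\asymp d(p,q)$ and a coarse additive distance decomposition across this ``bridge''.

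For item~(\ref{item:gate_hq}), fact (i) together with the translation device gives uniform quasiconvexity of each $\pi_U(\gate_A(B))$, which is the first clause of hierarchical quasiconvexity; the second clause follows by feeding these projections into Lemma~\ref{lem:proj_consistent} with $Y=\gate_A(B)$. Item~(\ref{item:double_gate_surjective}) is fact (ii) applied coordinatewise: $\gate_A\circ\gate_B|_{\gate_A(B)}$ and the identity have coarsely equal projections to every $CU$, so they are a bounded distance apart by the uniqueness axiom~(\ref{item:dfs_uniqueness}).

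Item~(\ref{item:bridge}) is the heart of the proof. Fixing $a\in\gate_A(B)$ and $b=\gate_B(a)$, I would build $f$ coordinatewise and then invoke realization: for $U$ in which $\gate_A(B)$ has large diameter --- that is, $U\in\mathcal H$ for an appropriate threshold --- set the $U$--coordinate of $f(p,t)$ to be $\pi_U(p)$; for $U$ in which the bridge $H_\theta(a,b)$ has large projection to $CU$ --- these will lie in $\mathcal H^\perp$, by partial realization~(\ref{item:dfs_partial_realization}) together with the orthogonality and container axioms --- set it to be $\pi_U(t)$; in the remaining domains both factors are coarsely constant. One checks this assignment is consistent, so that Lemma~\ref{lem:proj_consistent} produces $f$ up to bounded error; then the distance formula shows $f$ is a $(K_0,K_0)$--quasi-isometric embedding, because the defining sum splits into the $\mathcal H$--part, which records the distance in $\gate_A(B)$ between the first coordinates, and the $\mathcal H^\perp$--part, which records the distance in $H_\theta(a,b)$ between the second coordinates. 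Finally, the image is identified with $H_\theta(\gate_A(B)\cup\gate_B(A))$ by a domainwise comparison --- $\pi_U$ of the image coarsely equals the convex hull of $\pi_U(\gate_A(B))\cup\pi_U(\gate_B(A))$ by fact (iii) --- and $f(\gate_A(B)\times\{b\})$ matches $\gate_B(A)$ coordinatewise by fact (ii).

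Items~(\ref{item:df_bridge_0}) and~(\ref{item:df_bridge}) then read off the coordinatewise description of $f$: varying the first coordinate with $t$ fixed changes $\pi_U(f(p,t))$ only on domains where $\gate_A(B)$ is non-constant, which at threshold $K$ means $U\in\mathcal H$, giving $\relevant_K(f(p,t),f(q,t))\subseteq\mathcal H$; varying the second coordinate with $p$ fixed changes it only on domains where $H_\theta(a,b)$ is non-constant, and these lie in $\mathcal H^\perp$. For item~(\ref{item:take_the_bridge}) I would triangulate $p\in A$ and $q\in B$ through $\gate_A(B)$ and $\gate_B(A)$, using Lemma~\ref{lem:gate}(1),(2) to split off $d(p,\gate_A(B))$ and $d(q,\gate_B(A))$, item~(\ref{item:bridge}) applied to the bridge to produce the $d(A,B)\asymp d(a,b)$ term, and the distance formula to recognize the remaining ``transverse to the bridge'' contribution as $d(\gate_{\gate_B(A)}(p),\gate_{\gate_B(A)}(q))$. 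The main obstacle is the consistency verification inside item~(\ref{item:bridge}), and in particular showing that the domains carrying the bridge direction are orthogonal to every domain in $\mathcal H$; this is where the orthogonality, container, and consistency axioms do the real work --- rather than the hyperbolic facts (i)--(iii) --- and is the reason the statement is genuinely HHS-theoretic.
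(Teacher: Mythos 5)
The paper does not prove this theorem: it imports the statement from \cite[Lemma 1.19]{BHS17c}, remarking only that the slightly more general formulation stated here is implicit in the proof given there. So there is no in-paper argument for me to compare your sketch against; I can only assess it against the cited source and on its own internal merits.

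Your overall architecture --- build $f$ coordinatewise using the coherence $\pi_U\circ\gate_A\approx\mathfrak p_{\pi_U(A)}\circ\pi_U$, realize the resulting tuple via Lemma~\ref{lem:proj_consistent}, compute with the distance formula, read items~(\ref{item:df_bridge_0}) and~(\ref{item:df_bridge}) off the coordinate description, and triangulate through the gates for item~(\ref{item:take_the_bridge}) --- matches the shape of the argument in \cite{BHS17c}, and your treatment of items~(\ref{item:gate_hq}) and~(\ref{item:double_gate_surjective}) is essentially right. You also correctly single out the genuinely hierarchical step: showing that every domain $V$ on which $d_V(a,b)$ is large is orthogonal to every $U\in\mathcal H$. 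However, the tools you name for that step do not do the job. Partial realization (Axiom~(\ref{item:dfs_partial_realization})) takes a family of pairwise orthogonal domains as \emph{input} and produces a realizing point; it cannot deliver orthogonality as a conclusion. The container axiom governs the structure of $\mathfrak S_T$-orthogonal complements, not the relation between two given domains. What actually forces $U\perp V$ is a case analysis carried out with the consistency inequalities (Axiom~(\ref{item:dfs_transversal})) and the bounded geodesic image axiom (Axiom~(\ref{item:dfs:bounded_geodesic_image})): if $U\trans V$, consistency would cluster the $CU$--projections of two far-apart points of $\gate_A(B)$ (and of their $\gate_B$--images) near $\rho^V_U$, contradicting $U\in\mathcal H$; and if $U\nest V$ or $V\nest U$, applying BGI to $CU$-- or $CV$--geodesics between gate points bounds one of the two diameters, again a contradiction. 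Your sketch correctly identifies the crux but neither selects the right axioms nor carries out this analysis, so the key claim inside item~(\ref{item:bridge}) --- and with it items~(\ref{item:df_bridge_0}) and~(\ref{item:df_bridge}) --- is not actually established.
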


We name Theorem \ref{thm:parallel_gates}  the bridge theorem as one should think of the set $H_\theta(\gate_A(B)\cup\gate_B(A))$ as a ``bridge" between $A$ and $B$: in order to efficiently travel between $A$ and $B$ one needs to always traverse this bridge. The bridge theorem, along with the construction of the gate map and hulls produces the following fact about the set $H_\theta(\gate_A(B)\cup\gate_B(A))$ which we will need in Section \ref{sec: Hyperbolically embedded subgroups of HHGs}.

\begin{lem}\label{lem:gate_of_bridge}
For every $k$ and $\theta \geq \theta_0$, there exists $K$ such that for any $k$--hierarchically quasiconvex sets $A,B$, the sets $\gate_B(H_\theta(\gate_A(B)\cup\gate_B(A)))$ and $\gate_B(A)$ $K$--coarsely coincide.
\end{lem}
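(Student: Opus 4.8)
The plan is to set $P=H_\theta(\gate_A(B)\cup\gate_B(A))$ and prove the two coarse inclusions $\gate_B(A)\subseteq N_K(\gate_B(P))$ and $\gate_B(P)\subseteq N_K(\gate_B(A))$ separately. Throughout, Theorem~\ref{thm:parallel_gates}(\ref{item:gate_hq}), applied to the pair $(A,B)$ and to the pair $(B,A)$, gives a function $k'$ depending only on $k$, $\theta$ and $\mf S$ such that $\gate_A(B)$ and $\gate_B(A)$ are both $k'$--hierarchically quasiconvex; every constant below will depend only on $k$, $\theta$ and $\mf S$. The first inclusion is immediate: $\gate_A(B)\cup\gate_B(A)\subseteq P$ by Definition~\ref{defn:hull}, so $\gate_B(\gate_B(A))\subseteq\gate_B(P)$, and since $\gate_B$ displaces each point of $B\supseteq\gate_B(A)$ by at most $K$ (Lemma~\ref{lem:gate}(2)), we get $\gate_B(A)\subseteq N_K(\gate_B(\gate_B(A)))\subseteq N_K(\gate_B(P))$.

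For the reverse inclusion, the key reduction is that $\gate_B(A)$ is $k'$--hierarchically quasiconvex, so by the second clause of Definition~\ref{defn:hierarchical_quasiconvexity} it suffices to produce a constant $C$ with $d_W(\gate_B(p),\gate_B(A))\leq C$ for every $p\in P$ and every $W\in\mf S$; this then forces $d(\gate_B(p),\gate_B(A))\leq k'(C)$. Fix $p$ and $W$, write $A_W=\pi_W(A)$, $B_W=\pi_W(B)$, and let $\cpproj$ be a coarse closest--point projection of the $\delta$--hyperbolic space $CW$ onto the uniformly quasiconvex set $B_W$. Applying Lemma~\ref{lem:gate}(3) to each point of the relevant set shows that $\pi_W(\gate_B(p))$ coarsely coincides with $\cpproj(\pi_W(p))$, that $\pi_W(\gate_B(A))$ coarsely coincides with $\cpproj(A_W)$, and that $\pi_W(\gate_A(B))$ coarsely coincides with $\cpproj_{A_W}(B_W)$. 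So it is enough to show $\cpproj(\pi_W(p))$ lies in a uniform neighbourhood of $\cpproj(A_W)$.

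By the definition of $H_\theta$, $\pi_W(p)$ lies within $\theta$ of some $CW$--geodesic $[x,y]$ joining two points of $\pi_W(\gate_A(B))\cup\pi_W(\gate_B(A))$. The crucial point is that $\cpproj$ sends both of these pieces into a uniform neighbourhood of $\cpproj(A_W)$: for $\pi_W(\gate_B(A))$ this holds because it coarsely equals $\cpproj(A_W)\subseteq B_W$ and $\cpproj$ coarsely fixes $B_W$; for $\pi_W(\gate_A(B))$ it holds because it coarsely equals $\cpproj_{A_W}(B_W)$, which is a subset of $A_W$, so its $\cpproj$--image is literally contained in $\cpproj(A_W)$, and coarse Lipschitzness of $\cpproj$ transports this across the coarse coincidence. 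Hence $\cpproj(x)$ and $\cpproj(y)$ both lie uniformly close to the quasiconvex set $\cpproj(A_W)$. Standard hyperbolic geometry finishes the argument: $\cpproj$ is uniformly coarsely Lipschitz, the $\cpproj$--image of $[x,y]$ stays uniformly Hausdorff close to a $CW$--geodesic between $\cpproj(x)$ and $\cpproj(y)$, and that geodesic, having both endpoints uniformly close to the quasiconvex $\cpproj(A_W)$, stays uniformly close to $\cpproj(A_W)$. Since $\cpproj(\pi_W(p))$ is within $C_0\theta+C_0$ of this image, it lies within a uniform constant $C$ of $\cpproj(A_W)$, completing the reduction.

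Taking $K$ to be the larger of the two constants produced yields the lemma. The only step requiring care is the last one: controlling the $\cpproj$--image of the geodesic $[x,y]$ when its two endpoints lie in the two different pieces, i.e.\ controlling the ``bridge'' portion of such a geodesic. One can avoid even the (standard) statement that the projection of a geodesic is Hausdorff close to a geodesic, replacing it by the two elementary facts that a geodesic from a point to its nearest--point projection onto $B_W$ projects uniformly close to that projection point, and that inserting the vertex $\cpproj(x)$ splits $[x,y]$, up to a $\delta$--thin triangle, into two segments each of which projects into a uniform neighbourhood of $\cpproj(A_W)$. Everything else is bookkeeping with the coarse--coincidence constants, all of which depend only on $k$, $\theta$ and $\mf S$ via Theorem~\ref{thm:parallel_gates}(\ref{item:gate_hq}) and Lemma~\ref{lem:gate}.
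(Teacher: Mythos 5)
The paper states this lemma without a written proof, saying only that it is a consequence of the bridge theorem together with the construction of the gate map and hulls, so there is no proof in the paper to compare against directly. Your argument is correct and supplies the missing details.

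Your strategy is the natural one given how these objects are defined: the easy inclusion $\gate_B(A)\subseteq N_K(\gate_B(P))$ is immediate, and for the reverse inclusion you reduce via Theorem~\ref{thm:parallel_gates}(\ref{item:gate_hq}) (hierarchical quasiconvexity of $\gate_B(A)$) to bounding $d_W(\gate_B(p),\gate_B(A))$ for each $W$, then translate everything into $CW$ via Lemma~\ref{lem:gate}(3). The remaining hyperbolic-geometry steps all check out: $\pi_W(\gate_A(B))$ and $\pi_W(\gate_B(A))$ coarsely coincide with $\cpproj_{A_W}(B_W)\subseteq A_W$ and $\cpproj_{B_W}(A_W)\subseteq B_W$ respectively, so the endpoints of any geodesic used to build $\hull_{CW}$ project under $\cpproj_{B_W}$ into a uniform neighborhood of $\cpproj_{B_W}(A_W)$; and $\cpproj_{B_W}(A_W)$ is uniformly quasiconvex because it coarsely equals $\pi_W(\gate_B(A))$, which is $k'(0)$--quasiconvex by Theorem~\ref{thm:parallel_gates}(\ref{item:gate_hq}) and clause (1) of the definition of hierarchical quasiconvexity — this is a cleaner way to see the quasiconvexity than deriving it from scratch, and it is worth stating explicitly rather than leaving it implicit in ``standard hyperbolic geometry.'' With that observation in place, the projection of the geodesic $[x,y]$ lies uniformly close to a $CW$-geodesic between two points near $\cpproj_{B_W}(A_W)$ and hence near $\cpproj_{B_W}(A_W)$ itself, and your chain of coarse Lipschitz estimates closes the argument. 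One small remark: your parenthetical sketch of how to ``avoid'' the projection-of-a-geodesic fact by inserting the vertex $\cpproj(x)$ still ultimately needs quasiconvexity of $\cpproj_{B_W}(A_W)$ to handle the piece $[\cpproj(x),y]$, so it does not really circumvent anything — but since the fact you are avoiding is genuinely standard, this is harmless and the main line of the proof stands.
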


We finish this section by recalling the construction of 
\emph{standard product regions} introduced in 
\cite[Section~13]{BHS} and studied further in 
\cite{BHS17b}. For what follows, fix a hierarchically hyperbolic space $(\mc{X},\mf{S})$.

\begin{defn}[Nested partial tuple $\PF_U$]\label{defn:nested_partial_tuple}
Recall $\mathfrak S_U=\{V\in\mathfrak S \mid V\nest U\}$.  Define $\PF_U$ to be the set of
tuples  in $\prod_{V\in\mathfrak S_U}2^{C
	V}$ satisfying the conditions 
of Definition~\ref{defn:HHS}.(\ref{item:dfs_transversal}) for all $V,W \in \mf{S}_U$ with $V \not\perp W$.
\end{defn}

\begin{defn}[Orthogonal partial tuple $\mathbf E_U$]\label{defn:orthogonal_partial_tuple}
Recall $\mathfrak S_U^\perp=\{V\in\mathfrak S\mid V\perp U\}$.  Define $\mathbf E_U$ to be the set of
tuples in $\prod_{V\in\mathfrak
	S_U^\perp}2^{C V}$ satisfying the conditions 
of Definition~\ref{defn:HHS}.(\ref{item:dfs_transversal}) for all $V,W \in \mf{S}^\perp_U$ with $V \not\perp W$.
\end{defn}

\begin{defn}[Product regions in $\mc X$]\label{const:embedding_product_regions}
Let $U \in \mf{S}$.  There exists $\mu$ depending only on $\mf{S}$ such that for each $(a_V)_{V \in\mf{S}_U} \in \PF_U$ and $(b_V)_{V \in \mf{S}_U^\perp} \in \PE_U$, there exists $x\in \mc{X} $ such that the following hold for each $V \in\mf{S}$:
\begin{itemize}
	\item If $V \nest U $, then $d_V(x,a_V) \leq \mu$.
	\item If $V \perp U$, then $d_V(x,b_V) \leq \mu$.
	\item If $V \trans U$ or $U \nest V$, then $d_V(x,\rho_V^U) \leq \mu$.
\end{itemize}
Thus there is a map $\phi_U \colon \PF_U \times \PE_U \rightarrow \mc{X}$, whose image is $k$--hierarchically quasiconvex where $k$ only depends on $\mf{S}$. We call $\phi_U(\PF_U \times \PE_U)$ the \emph{product region for $U$} and denote it $\P_U$. 
\end{defn}

For any $e \in \PE_U$, $f \in \PF_U$, the sets $\phi_U(\PF_U \times \{e\})$ and $\phi_U(\{f\} \times \PE_U)$ will also be hierarchically quasiconvex, thus $\PE_U$ and $\PF_U$ are quasi-geodesic metric spaces when equipped with the subspace metric from $\phi_U(\PF_U \times \{e\})$ and $\phi_U(\{f\} \times \PF_U)$. While these metrics depend on the choice of $e$ and $f$, the distance formula (Theorem \ref{thm:distance_formula}) ensures that the different choices are all uniformly quasi-isometric.

The definition of the product regions ensure that they are not only uniformly hierarchically quasiconvex, but have easily described gate maps.

\begin{lem}\label{lem:gate_onto_product_region}
Let $(\mc{X},\mf{S})$ be an HHS. The exists $k \colon [0,\infty) \to [0,\infty)$ so that for all $U \in \mf{S}$, the product region $\P_U$ is $k$-hierarchically quasiconvex. Moreover, there exists $K\geq 0$ depending only on $(\mc{X},\mf{S})$ so that for all $x \in \mc{X}$ we have.
\begin{itemize}
	\item $d_V\left(\gate_{\P_U}(x),x\right) \leq K$ if $V \nest U$ or $V \perp U$;
	\item $d_V\left(\gate_{\P_U}(x),\rho_V^U\right) \leq K$ if $V \trans U$ or $U \propnest V$.
\end{itemize}
\end{lem}

A version of our last result appeared as \cite[Proposition 5.17]{BHS17b}. However, that result contains an error in both its statement and its proof\footnote{The error in the proof of  \cite[Proposition 5.17]{BHS17b} is the incorrect claim that $V \nest U \implies \P_V \subseteq \P_U$. The error in the statement  is that \emph{all} hierarchy paths have the stated properties instead of there existing at least one hierarchy path with the stated properties.}. We provide a corrected statement and proof.

\begin{prop}[Active subpaths, {Corrected version of \cite[Proposition 5.17]{BHS17b}}]\label{prop: active subpath}
Let $(\mc{X},\mf{S})$ be an HHS. There exist constants $D, \nu,\lambda \geq 1$ so  that for all $x, y \in \X$, if $d_U(x,y) > D$ for some $U \in \mf{S}$, then there exists a $\lambda$--hierarchy path $\gamma\colon [a,b]\to \X$ joining $x$ and $y$ that has a subpath $\alpha=\gamma_|{[a_1,b_1]}$ such that:
\begin{enumerate}
	\item $\alpha \subseteq N_\nu (\mathbf{P}_U)$.
	\item  The diameters of $\pi_W\bigl(\gamma([a,a_1])\bigr)$ and $\pi_W\bigl(\gamma([b_1,b])\bigr)$ are both bounded by $\nu$, for all $W \in \mf{S}_U \cup \mf{S}_U^\bot$. 
	\item  For any point $p\in \gamma\bigl([a,a_1]\bigr)$ or  $q \in \gamma\bigl([b_1,b]\bigr)$, we have $$d_{\mc{X}}\left(\gate_{\P_U}(x),\gate_{\P_U}(p)\right) \leq \nu  \text{ and } d_{\mc{X}}\left(\gate_{\P_U}(y),\gate_{\P_U}(q)\right) \leq \nu.$$
\end{enumerate}
We call $\alpha$ the \emph{active subpath} of $\gamma$ for $U$.
\end{prop}

\begin{proof}
Let $\delta$, $E$, and $\kappa_0$ be the constants appearing in the HHS structure $\mf{S}$ for $\mc{X}$. Let $x' = \gate_{\P_U}(x)$ and $y' = \gate_{\P_U}(y)$. Let $\lambda_0 \geq 1$ be the constant so that every pair of points in $\mc{X}$ can be joined by a $\lambda_0$--hierarchy path and $\mu$ be the constant from Definition \ref{const:embedding_product_regions}. Both $\mu$ and $\lambda_0$ depend only on $(\mc{X},\mf{S})$.

Let $\gamma_0$, $\gamma_1$, and $\gamma_2$ be  $\lambda_0$--hierarchy paths connecting the pairs $(x, x')$, $(x',y')$, and $(y',y)$ respectively. Let $\gamma \colon [a,b] \to \mc{X}$ be the concatenation $\gamma_0 \ast \gamma_1 \ast \gamma_2$. We first verify that the path $\gamma$ satisfies the requirements of the proposition with $\alpha = \gamma_1$ and then verify that $\gamma$ is in fact a hierarchy path with  constant depending only on the HHS $(\mc{X},\mf{S})$.

For the first item, let $z \in \alpha = \gamma_1$. By Lemma \ref{lem:gate_onto_product_region}, $\pi_W\left(\gate_{\P_U}(z)\right)$ and $\pi_W(z)$ are uniformly close for all $W \in \mf{S}_U \cup \mf{S}_U^\bot$. If $W \not \in \mf{S}_U \cup \mf{S}_U^\bot$, then $\pi_W\left(x'\right)$, $\pi_W(y')$, and $\pi_W \left( \gate_{\P_U}(z) \right)$ are all $\mu$--close to $\rho_W^U$ because $x'$, $y'$, and $\gate_{\P_U}(z)$ are all in $\P_U$.  Since $\pi_W \circ \gamma_1$ is an unparameterized  $\lambda_0$--quasi-geodesic, $\pi_W(z)$ must also be uniformly close to $\rho_W^U$. Therefore, $d_W(\gate_{\P_U}(z), z)$ is uniformly bounded for all $W \not \in \mf{S}_U \cup \mf{S}_U^\perp$. Since  $d_W(\gate_{\P_U}(z), z)$ is uniformly bounded for all $W \in \mf{S}$,  the distance formula (Theorem \ref{thm:distance_formula}) provides  $\nu_1 \geq 0$ so that $\gamma_1 \subseteq N_\nu (\mathbf{P}_U)$.

For the second  item, if $W  \in \mf{S}_U \cup \mf{S}_U^\bot$, then  $d_W(x,x')$ and $d_W(y',y)$ are both uniformly bounded by Lemma \ref{lem:gate_onto_product_region}. Since $\pi_W \circ \gamma_0$ and $\pi_W\circ \gamma_2$ are unparameteized $(\lambda_0,\lambda_0)$--quasi-geodesics, there is a constant $\nu_2 \geq 0$ satisfying the second item.

We prove the third item for $p \in \gamma_0$ as the case $q \in \gamma_2$ is identical. By the second item $d_W(x,p) \leq \nu_2$ for all $W \in \mf{S}_U \cup \mf{S}_U^\perp$. Since $d_W(x,\gate_{\P_U}(x))$ and $d_W(p,\gate_{\P_U}(p))$ are uniformly bounded for all $W \in \mf{S}_U \cup \mf{S}_U^\perp$ as well (Lemma \ref{lem:gate_onto_product_region}), we have that $d_W(\gate_{\P_U}(x), \gate_{\P_U}(p))$ has a bound depending only on $(\mc{X},\mf{S})$ for all $W \in \mf{S}_U \cup \mf{S}_U^\perp$.  If instead $U \propnest W$ or $W \trans U$, then $\pi_W\left(\gate_{\P_U}(x)\right)$ and $\pi_W\left(\gate_{\P_U}(p)\right)$  are both uniformly close of $\rho_W^U$ as they are points in the product region $\P_U$. Hence $d_W(\gate_{\P_U}(x), \gate_{\P_U}(p)) $ is uniformly bounded for all $W \in \mf{S}$. Thus, the distance formula provides  $\nu_3 \geq 0$ depending only on $\mf{S}$ so that $d_{\mc{X}}\left(\gate_{\P_U}(x),\gate_{\P_U}(p)\right) \leq \nu_3$.

Set $\nu = \max\{\nu_1,\nu_2,\nu_3\}$. This depends only on $(\mc{X},\mf{S})$ since each of the $\nu_i$ depend only on $(\mc{X},\mf{S})$. It remains to show that $\gamma$ is a hierarchy path with constant depending only on $(\mc{X}, \mf{S})$. For this we need to assume that $d_U(x,y)>10(E+\kappa_0)$.

We first show that $\pi_W \circ \gamma$ is a uniform unparameterized quasi-geodesic for each $W \in \mf{S}$. 
\begin{itemize}
	\item If $W \in  \mf{S}_U \cup \mf{S}_U^\bot$, then $\diam\left(\pi_W(\gamma_0)\right) \leq \nu$, $\diam\left(\pi_W(\gamma_2)\right) \leq \nu$, and $\pi_W\circ \gamma_1$ is an unparameterized $(\lambda_0,\lambda_0)$--quasi-geodesic. Hence $\pi_W\circ \gamma$ is an unparameterized $(\lambda_0, \lambda_0 +2\nu)$--quasi-geodesic. 
	\item If $U \propnest W$, then by the bounded geodesic image axiom (Axiom \ref{item:dfs:bounded_geodesic_image}) any $CW$--geodesic from $\pi_W(x)$ to $\pi_W(y)$ must intersect the $E$--neighborhood of $\rho_W^U$. Since  all of $\pi_W \circ \gamma_1$ is contained in $N_{\lambda_0(E+\mu)+\lambda_0}(\rho_W^U)$, the hyperbolicity of $CW$ implies that both of the unparameterized quasi-geodesics $\pi_W\circ \gamma_0$ and $\pi_W \circ \gamma_2$ are contained in a regular neighborhood of a $CW$--geodesic from $\pi_W(x)$ to $\pi_W(y)$. Thus $\pi_W \circ \gamma$ will be a unparameterized quasi-geodesic with constants depending on $\lambda_0$, $\mu$, $E$, and $\delta$.
	\item If $W \trans U$, then since $d_U(x,y) > 10(E+\kappa_0)$, the consistency axiom (Axiom \ref{item:dfs_transversal}) ensures that at most one of $d_W(x,\rho_W^U)$ and $d_W(y,\rho_W^U)$ are larger than $\kappa_0$. Without loss of generality, assume $d_W(x,\rho_W^U) \leq \kappa_0$.  Since $\pi_W(x')$ and $\pi_W(y')$  are  $\mu$--close to $\rho_W^U$ and $\gamma_0$ and $\gamma_1$ are both $\lambda_0$--hierarchy paths, the diameter of $\pi_W(\gamma_0) \cup \pi_W(\gamma_1)$ is at most $2\lambda_0(3E + \mu +\kappa_0)+ 2\lambda_0$. This makes $\pi_W \circ \gamma$ an unparameterized $(\lambda_0, 2\lambda_0(3E + \mu +\kappa_0)+ 3\lambda_0)$--quasi-geodesic.
\end{itemize}
The above shows that there exists  $\lambda'\geq 1$ depending only on $(\mc{X},\mf{S})$ so that $\pi_W \circ \gamma$ is an unparameterized $(\lambda',\lambda')$--quasi-geodesic for all $W \in \mf{S}$.

Finally we show that $\gamma \colon [a,b] \to \mc{X}$ is a quasi-geodesic with constants depending only on $(\mc{X},\mf{S})$.  Let $t,s \in [a,b]$ and let $u = \gamma(t)$ and $v = \gamma(s)$. Since $\gamma_0, \gamma_1, \gamma_2$ are all $(\lambda_0,\lambda_0)$--quasi-geodesics, we can assume $u$ and $v$ do not lie in the same $\gamma_i$. Without loss of generality we have two cases.

In the first case, $u \in \gamma_0$ and $v \in \gamma_1$. Since $\pi_W \circ \gamma$ is a uniform unparameterized quasi-geodesic, there exists $C \geq 1$ so that \[ d_W(u, x') + d_W(x',v) \stackrel{C,C}{\asymp} d_W(u,v)\] for all $W \in \mf{S}$. By Lemma \ref{lem:distributing_distance_formula}, there is $K\geq 1$ depending only on $(\mc{X},\mf{S})$ so that  \[ d_{\mc{X}}(u, x') + d_{\mc{X}}(x',v) \stackrel{K,K}{\asymp} d_{\mc{X}}(u,v)\] which implies \[ \frac{1}{\lambda_0 K} | t-s| -\frac{2\lambda_0}{K} - K \leq  d_{\mc{X}}(\gamma(t),\gamma(s)) \leq  \lambda_0 |t-s| + 2\lambda_0  \] because $\gamma_0$ and $\gamma_1$ are $ (\lambda_0,\lambda_0)$--quasi-geodesics.

The second case is when $u \in \gamma_0$ and $v\in\gamma_2$. The proof is the same as the first case using the fact that \[ d_W(u, x') + d_W(x',y') +d_W(y',v) \asymp d_W(u,v)\] for all $W \in \mf{S}$ instead. Hence $\gamma$ is a quasi-geodesic with constants depending only on $(\mc{X},\mf{S})$ as desired.
\end{proof}

\subsection{Summary of constants}
Before continuing we summarize the constants associated to the hierarchically hyperbolic space $(\mc{X},\mf{S})$ that we will utilize frequently.

\begin{itemize}
\item $\delta$ is the hyperbolicity constant of $CW$ for each $W \in \mf{S}$.
\item $\kappa_0$ is the consistency constant from Axiom (\ref{item:dfs_transversal}).
\item $E$ is the bound on projections in Axioms (\ref{item:dfs_curve_complexes}), (\ref{item:dfs_transversal}), and (\ref{item:dfs:bounded_geodesic_image}). 
\item $\sigma_0$ is the minimal threshold constant from the distance formula (Theorem \ref{thm:distance_formula}).
\item $\lambda_0$ is the constant such that any two points in $\mc{X}$ can be joined by a $\lambda_0$--hierarchy path (Theorem \ref{thm:monotone_hierarchy_paths}).
\item $\chi$ is the constant from  Proposition \ref{prop:partial_ordering_of_domains} which  bounds the cardinality of any subset of $\mf{S}$ that does not contain a pair of transverse domains.
\item $\theta_0$ is the constant such that for all $\theta\geq \theta_0$ and $Y \subset \mc{X}$, $H_\theta(Y)$ is hierarchically quasiconvex (Lemma \ref{lem:hull}). 
\end{itemize}
We can and shall assume that $E\geq \kappa_0$ and $E \geq \delta$. When we say that a quantity depends on $\mf{S}$, we mean that it depends on any of the above constants.

\section{Constructing hulls with hierarchy paths} \label{sec:constructing_hulls}

In this section, we study hierarchically quasiconvex hulls in hierarchically hyperbolic spaces.  The main result is Theorem \ref{thm: hierarchy paths to hulls} below which says that the hierarchically quasiconvex hull can be constructed by iteratively connecting points with hierarchy paths. While our motivation for such a construction is to establish that {\sqc} subsets are hierarchically quasiconvex (Proposition \ref{prop: quasiconvex implies HQC}) we believe it will have many other applications. At the end of the section, we give an example of such an application by characterizing hierarchical quasiconvexity in terms of the coarse median structure on a hierarchically hyperbolic space.

\begin{defn}[Hierarchy path hull]\label{defn:hierarchy_path_hull}
Let $Y$ be a subset of the hierarchically hyperbolic space $(\mc{X},\mf{S})$.  Define $\Path_\lambda^1(Y)$ to be the union of all $\lambda$--hierarchy paths between points in $Y$. Inductively define $\Path_\lambda^n(Y) = \Path_\lambda^1(\Path_\lambda^{n-1}(Y))$ for all integers $n \geq 2$. For all $\lambda \geq \lambda_0$ and $n\geq 1$, $\Path_\lambda^n(Y) \neq \emptyset$.
\end{defn}

\begin{thm}[Constructing hulls using hierarchy paths]\label{thm: hierarchy paths to hulls}
Let $(\mc{X},\mf{S})$ be a hierarchically hyperbolic space and $N = 2\chi$, where \(\chi\) is as in Proposition \ref{prop:partial_ordering_of_domains}. There exist $\overline{\theta}\geq \theta_0$ and $ \overline{\lambda}\geq\lambda_0$ depending only on $\mf{S}$ such that for all $\theta\geq \overline{\theta}$, $\lambda\geq\overline{\lambda}$ and $Y \subseteq \mc{X}$ \[d_{Haus}(\Path_\lambda^N(Y),H_\theta(Y)) < D\] where $D$ depends only on $\theta$, $\lambda$, and $\mf{S}$.
\end{thm}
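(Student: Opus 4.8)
The plan is to prove the two inclusions $\Path_\lambda^N(Y) \subseteq N_D(H_\theta(Y))$ and $H_\theta(Y) \subseteq N_D(\Path_\lambda^N(Y))$ separately, with the bulk of the work going into the second one. The first inclusion is comparatively soft: by Lemma~\ref{lem:hull} the hull $H_\theta(Y)$ is $k$--hierarchically quasiconvex, and hierarchically quasiconvex sets are quasiconvex with respect to hierarchy paths (as recorded after Definition~\ref{defn:hierarchical_quasiconvexity}). Since $Y \subseteq H_\theta(Y)$, any $\lambda$--hierarchy path between two points of $Y$ stays uniformly close to $H_\theta(Y)$; thus $\Path_\lambda^1(Y) \subseteq N_{D_1}(H_\theta(Y))$. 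One then needs that $H_\theta(Y)$ is coarsely preserved under enlarging $Y$ within its own neighborhood, i.e. $H_\theta(N_{D_1}(H_\theta(Y)))$ coarsely equals $H_\theta(Y)$ — this follows from the definition of $H_\theta$ via convex hulls in each $CW$ together with the coarse Lipschitz property of the $\pi_W$ and $\delta$--hyperbolicity (nested convex hulls of coarsely-equal sets are coarsely equal). Iterating $N$ times gives $\Path_\lambda^N(Y) \subseteq N_D(H_\theta(Y))$.

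For the reverse inclusion, the key idea is to realize an arbitrary point $p \in H_\theta(Y)$ using the projections and then ``build'' it out of $Y$ by hierarchy paths, controlling the number of steps via the partial order on relevant domains from Proposition~\ref{prop:partial_ordering_of_domains}. Concretely: given $p \in H_\theta(Y)$, for each $W \in \mf{S}$ the projection $\pi_W(p)$ lies within $\theta$ of a $CW$--geodesic joining two points of $\pi_W(Y)$. I would first pick $y_1, y_2 \in Y$ witnessing the largest relevant domains, connect them by a $\lambda$--hierarchy path, and argue that this path passes $\theta'$--close to a point $p_1$ which agrees with $p$ on those top domains. The engine here is the bounded geodesic image axiom (\ref{item:dfs:bounded_geodesic_image}) together with the consistency/realization machinery (Lemma~\ref{lem:proj_consistent}): a point on a hierarchy path between $y_1$ and $y_2$ whose projection to a maximal relevant domain $V$ matches $\pi_V(p)$ will automatically have the ``correct'' (i.e. $\rho$--governed) projection to every domain nested in or transverse to $V$ on the correct side. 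Since by Proposition~\ref{prop:partial_ordering_of_domains}(3) the relevant domains for $p$ relative to $Y$ partition into at most $\chi$ totally ordered chains, resolving the top element of each chain and then iterating reduces the ``complexity'' of the remaining discrepancy; formally one wants a monovariant — the number of chains needed to cover $\relevant_\sigma(p, \Path^{j}_\lambda(Y))$ — that strictly decreases. Because each chain can be ``peeled'' in one pass and there are at most $\chi$ chains, but resolving one chain may require a hierarchy path plus one more to correct orthogonal complementary directions (hence the factor $2$ in $N = 2\chi$), after $2\chi$ rounds the point $p$ is within bounded distance of $\Path_\lambda^{2\chi}(Y)$.

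The main obstacle is making the ``peeling'' step precise and uniform: one must show that a single application of $\Path^1_\lambda(\cdot)$ to the current set produces points matching $p$ on a full chain of relevant domains, without disturbing the domains already resolved by more than a bounded amount, and that all constants stay independent of $Y$. This requires carefully combining: (i) the existence of hierarchy paths (Theorem~\ref{thm:monotone_hierarchy_paths}) between the two points of $Y$ realizing the extremes of a chain; (ii) the fact that along such a path the projection to each domain is an unparameterized quasi-geodesic, so it sweeps past $\pi_W(p)$; (iii) bounded geodesic image to force the nested and transverse domains into consistency; and (iv) the distance formula (Theorem~\ref{thm:distance_formula}) to convert ``correct projection in every domain'' into ``close in $\mc{X}$''. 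A subtlety is that orthogonal domains are not comparable in the chain order, which is exactly why one needs two passes per chain (one for the chain itself, one for its orthogonal complement, using the container axiom (\ref{item:dfs_containers}) to locate where the orthogonal part lives), accounting for $N = 2\chi$. I would also use Lemma~\ref{lem:proj_consistent} as the final ``glue'' step: once a candidate point on a hierarchy path has projections $\theta'$--close to $\hull_{CW}(Y)$ in every $W$ and matches the target realization, it is coarsely a point of $H_\theta(Y)$, and conversely the constructed sequence of hierarchy-path points converges (coarsely) to $p$.
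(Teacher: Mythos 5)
Your first inclusion $\Path_\lambda^N(Y)\subseteq N_D(H_\theta(Y))$ is correct and is essentially the paper's Lemma~\ref{lem:Path_in_hulls}, established by a short induction using the quasiconvexity of $\hull_{CU}(Y)$ in each $CU$. The issue lies in your argument for the reverse inclusion, which has a genuine gap and also misidentifies where the factor $2$ in $N = 2\chi$ comes from.

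Your ``peeling'' scheme — iteratively apply $\Path^1_\lambda(\cdot)$, resolve a chain of relevant domains per pass, and argue some monovariant decreases — is not substantiated. You acknowledge that making the peeling step precise is the main obstacle, and indeed it is: you never establish that applying $\Path^1_\lambda$ once strictly reduces the number of chains in $\relevant_\sigma(p,\cdot)$ (which is itself not a well-defined quantity, since relevance is defined between pairs of points, not a point and a set), nor that the resolved domains stay resolved. Moreover, your accounting for $N=2\chi$ — ``one pass for the chain, one pass for its orthogonal complement using the container axiom'' — does not match the actual source of the $2$, and the container axiom plays no role in the paper's proof. The paper instead proves a Carath\'eodory-type statement (Lemma~\ref{lem:finite_catch}): any $x \in H_\theta(Y)$ lies in $H_{\theta'}(x_1,\dots,x_{\ell+1})$ for at most $\ell+1 \leq 2\chi+1$ points of $Y$, where the $2\chi$ arises because each of the $\leq \chi$ chains in the partition of $\relevant_K(x,y)$ contributes exactly \emph{two} witness points $a_i,b_i \in Y$ (the endpoints governing the $CU_{i,1}$--geodesic that $\pi_{U_{i,1}}(x)$ lies near), not because of any orthogonal-complement correction. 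Having reduced to finitely many points, the paper then proves by induction on the number of points that $H_\theta(x_1,\dots,x_n) \subseteq \Path_\lambda^{n-1}(x_1,\dots,x_n)$ (Proposition~\ref{prop:hulls_finite_case}): the base case shows concatenation of two hierarchy paths through a point of $H_\theta(x,y)$ is again a hierarchy path, and the inductive step (Claim~\ref{claim:inductive_step}) shows any $x \in H_\theta(x_1,\dots,x_n)$ lies on a hierarchy path from $x_n$ to a point of $H_{\theta'}(x_1,\dots,x_{n-1})$. Your sketch contains neither the finite-reduction step nor the correct induction, so as written it does not close.
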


In a recent paper, Bowditch independently constructs hulls in coarse medians spaces in a similar manner to the construction in Definition \ref{defn:hierarchy_path_hull} \cite{BowditchConvexity}. Hierarchically hyperbolic spaces are one of the primary examples of coarse median spaces and \cite[Lemma 7.3]{BowditchConvexity} establishes a version of Theorem \ref{thm: hierarchy paths to hulls} for finite subsets of hierarchically hyperbolic spaces. At the end of this section we show that Bowditch's coarse median hull is coarsely equal to the hierarchical quasiconvex hull for any subset of an HHS. This is achieved by using Theorem \ref{thm: hierarchy paths to hulls} to give a new characterization of the hierarchical quasiconvexity in terms of the coarse median structure on a hierarchically hyperbolic space.

The number of iterations of connecting pairs of points by hierarchy paths required by Theorem \ref{thm: hierarchy paths to hulls} is unlikely to be optimal. However, a simple example illustrates that the number of iteration required must increase with the maximal number of pairwise orthogonal domains. Consider the group $\mathbb{Z}^n$ with the standard HHG structure. Let $Y$ be the union of the positive halves of each of the coordinate axes. The hull $H_\theta(Y)$ then coarsely coincides with the positive orthant of $\mathbb{Z}^n$, but $\Path_\lambda^m(Y)$ coarsely coincides with the set of points in the positive orthant where at most $2^m$ coordinates are non-zero. Thus, the number of iterations of $\Path_\lambda^{1}(\cdot)$ required to achieve $H_\theta(Y)$ will be approximately $\log(n)$.

For the remainder of this section, let $(\mc{X},\mf{S})$ be a hierarchically hyperbolic space and $Y \subseteq \mc{X}$. Recall, there exist $\theta_0$ and $\lambda_0$ such that for all $\theta \geq \theta_0$, $H_\theta(Y)$ is hierarchically quasiconvex (Lemma \ref{lem:hull}) and any two points in $\mc{X}$ can be joined by a $\lambda_0$--hierarchy path (Theorem \ref{thm:monotone_hierarchy_paths}). 

The following lemma can be found in \cite[Proposition 6.4.4]{BHS17b} and says for sufficiently large $\theta$, all hierarchically quasiconvex hulls coarsely coincide. We record the proof for completeness.

\begin{lem}[{\cite[Proposition 6.4.4]{BHS17b}}]\label{lem:unique_hulls}
There exists $\overline{\theta}\geq \theta_0$ depending only on $\mf{S}$, such that for all $\theta_1,\theta_2 \geq \overline{\theta}$ \[d_{Haus}(H_{\theta_1}(Y),H_{\theta_2}(Y)) \leq D\] where $D$ depends on $\theta_1$ and $\theta_2$.
\end{lem}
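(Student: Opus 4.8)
\textbf{Proof plan for Lemma \ref{lem:unique_hulls}.}

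The plan is to show that $H_{\theta_1}(Y)$ and $H_{\theta_2}(Y)$ coarsely coincide by exploiting that both are hierarchically quasiconvex and that their projections to every $CW$ coarsely coincide. First I would fix $\overline{\theta} \geq \theta_0$ large enough that Lemma \ref{lem:hull} applies, so that for all $\theta \geq \overline{\theta}$ the hull $H_\theta(Y)$ is $k$--hierarchically quasiconvex for a fixed gauge $k$ depending only on $\mf{S}$ (note the gauge can be taken uniform for all $\theta$ in a bounded range, or one simply takes $k$ associated to $\max\{\theta_1,\theta_2\}$). By symmetry, it suffices to bound $d_\mc{X}(p, H_{\theta_2}(Y))$ uniformly for every $p \in H_{\theta_1}(Y)$ (assuming, WLOG by relabeling, $\theta_1 \leq \theta_2$, though in fact the argument is symmetric).

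The key observation is that for every $W \in \mf{S}$, by definition of $H_\theta(Y)$ we have that $\pi_W(H_\theta(Y))$ lies in the $\theta$--neighborhood of $\hull_{CW}(Y)$, and conversely $\hull_{CW}(Y) \subseteq N_{C}(\pi_W(H_\theta(Y)))$ for a uniform $C$ — indeed, since $\pi_W$ is coarsely Lipschitz and any point of $\hull_{CW}(Y)$ lies on a geodesic between two points of $\pi_W(Y) \subseteq \pi_W(H_\theta(Y))$, and $\pi_W(H_\theta(Y))$ is quasiconvex, geodesics between its points stay uniformly close to it. Hence $\pi_W(H_{\theta_1}(Y))$ and $\pi_W(H_{\theta_2}(Y))$ each coarsely coincide with $\hull_{CW}(Y)$, with constants depending only on $\theta_1, \theta_2, \delta$, and $\mf{S}$; in particular $d_{Haus}\bigl(\pi_W(H_{\theta_1}(Y)), \pi_W(H_{\theta_2}(Y))\bigr)$ is bounded by a uniform constant $\kappa$ for every $W$. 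So for any $p \in H_{\theta_1}(Y)$ and every $W \in \mf{S}$ we have $d_W(p, H_{\theta_2}(Y)) \leq \kappa$.

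Now I would invoke hierarchical quasiconvexity of $H_{\theta_2}(Y)$: by part (2) of Definition \ref{defn:hierarchical_quasiconvexity}, since $d_W(p, H_{\theta_2}(Y)) \leq \kappa$ for all $W \in \mf{S}$, it follows that $d_\mc{X}(p, H_{\theta_2}(Y)) \leq k(\kappa)$. Running the symmetric argument with the roles of $\theta_1$ and $\theta_2$ reversed shows $d_\mc{X}(q, H_{\theta_1}(Y)) \leq k(\kappa)$ for every $q \in H_{\theta_2}(Y)$, hence $d_{Haus}(H_{\theta_1}(Y), H_{\theta_2}(Y)) \leq k(\kappa) =: D$, which depends only on $\theta_1$, $\theta_2$, and $\mf{S}$. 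The one point requiring a little care — and the main (minor) obstacle — is the uniform bound on $d_{Haus}(\pi_W(H_\theta(Y)), \hull_{CW}(Y))$: one inclusion is immediate from the definition of the hull, while for the reverse inclusion one needs that $\pi_W$ applied to the geodesic realizing a point of $\hull_{CW}(Y)$ lands uniformly close to $\pi_W(Y)$, which follows since the endpoints of that geodesic are in $\pi_W(Y)$ and $CW$ is $\delta$--hyperbolic with $\pi_W(Y)$ — hence $\pi_W(H_\theta(Y))$ — quasiconvex; this is exactly the content already extracted from \cite[Proposition 6.4.4]{BHS17b}.
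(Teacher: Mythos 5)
Your proof is correct, and it is a mild but genuine streamlining of the paper's argument. The paper bounds $d_{\mc{X}}(x, H_{\theta_1}(Y))$ for $x \in H_{\theta_2}(Y)$ by first invoking the realization theorem (Lemma \ref{lem:proj_consistent}) to build a concrete point $y$ whose projections coarsely realize the closest-point projections of $\pi_U(x)$ to $\pi_U(H_{\theta_0}(Y))$, verifying that $y \in H_{\overline{\theta}}(Y)$ (which is why the paper inflates $\overline{\theta}$ to $\theta_0 + \theta'$), and then applying the distance formula to bound $d_\mc{X}(x,y)$. You instead observe that $\pi_W(H_{\theta_1}(Y))$ and $\pi_W(H_{\theta_2}(Y))$ each coarsely coincide with $\hull_{CW}(Y)$ for every $W$, so every point of one hull has uniformly small $d_W$ to the other, and then appeal directly to condition (2) of hierarchical quasiconvexity (supplied by Lemma \ref{lem:hull}) to convert this into a bound in $\mc{X}$. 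This bypasses both the explicit realization step and the distance formula, and as a side effect lets you take $\overline{\theta} = \theta_0$ without the extra $\theta'$; of course condition (2) is itself established via realization and the distance formula, so the underlying machinery is the same, but your packaging is tighter. Your one implicit appeal — that $\hull_{CW}(Y) \subseteq N_C(\pi_W(H_\theta(Y)))$ for uniform $C$ because the latter is quasiconvex and contains $\pi_W(Y)$ — is exactly right. The only thing to keep explicit is that the gauge $k$ in condition (2) depends on the $\theta$ of the hull being invoked, so the two directions of the Hausdorff estimate use $k_{\theta_1}$ and $k_{\theta_2}$ respectively, which is fine since $D$ is allowed to depend on both.
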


\begin{proof}
Without loss of generality, assume $\theta_0<\overline{\theta}\leq\theta_1 < \theta_2$ with $\overline{\theta}$ to be determined below. By definition $H_{\theta_1}(Y) \subseteq H_{\theta_2}(Y)$. Let $x \in H_{\theta_2}(Y)$. For each $U\in \mf{S}$, $\pi_U(H_{\theta_0}(Y))$ is $K$--{quasiconvex}, where $K$ depends on $\theta_0$ and $\delta$. Let $y_U$ be the closest point projection of $\pi_U(x)$ onto $\pi_U(H_{\theta_0}(Y))$. By Lemma \ref{lem:proj_consistent}, there exist $y \in \mc{X}$ and $\theta'$ depending on $\theta_0$ and $\mf{S}$ such that $d_U(\pi_U(y),y_U) \leq \theta'$. In particular, setting $\overline{\theta}=\theta_0 + \theta'$, we have $y \in H_{\overline{\theta}}(Y) \subseteq H_{\theta_1}(Y) $. To bound $d_\mc{X}(x,y)$, we will uniformly bound $d_U(x,y_U)$ in terms of $\theta_2$ for every $U\in \mf{S}$; the bound on $d_\mc{X}(x,y)$ will then follow from the distance formula (Theorem \ref{thm:distance_formula}). By the definition of \(y_U\) we have \(d_{U}(x,y_U) \leq d_{U}(x,\pi_U(H_{\theta_0}(Y))) +1\). Since $\pi_U(H_{\theta_0}(Y))$ is quasiconvex, contains \(Y\), and is contained in the \(\theta_0\)--neighborhood of \(\hull_{CU}(Y)\), there exists a $D'$ depending only on $\mf{S}$ such that $\hull_{CU}(Y) \subseteq N_{D'}(\pi_U(H_{\theta_0}(Y)))$.
Since \(d_{U}(x, \hull_{CU}(Y)) \leq \theta_2\), we have  \[d_U(x,y_U) \leq d_{U}(x, \pi_U(H_{\theta_0}(Y))) +1\leq \theta_2 + D' +1\] providing the result.
\end{proof}

For the remainder of this section, $\overline{\theta}$ will denote the constant from Lemma \ref{lem:unique_hulls}.

To prove Theorem \ref{thm: hierarchy paths to hulls} we will show for sufficiently large $\theta$ and $\lambda$, we can find $\theta' > \theta$ and $\lambda' > \lambda$ such that
\[\Path^N_\lambda(Y) \subseteq H_{\theta'}(Y) \hspace{1cm} \text{and} \hspace{1cm}H_\theta(Y) \subseteq \Path^N_{\lambda'}(Y) . \]

Theorem \ref{thm: hierarchy paths to hulls} will then follow by applying Lemma \ref{lem:unique_hulls}. The inclusion  $\Path^N_\lambda(Y) \subseteq H_{\theta'}(Y)$ is the following direct consequence of hierarchical quasiconvexity.

\begin{lem}\label{lem:Path_in_hulls}
For each $\lambda,n \geq 1$, there exists $\theta \geq \overline{\theta}$, such that for any $Y\subseteq\mc{X}$ \[\Path_\lambda^n(Y) \subseteq H_\theta(Y).\]
\end{lem}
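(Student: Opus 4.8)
The plan is to prove the inclusion $\Path_\lambda^n(Y) \subseteq H_\theta(Y)$ by induction on $n$, using the fact that hierarchically quasiconvex sets are quasiconvex with respect to hierarchy paths. The base case $n = 1$ is the heart of the argument: I claim that for a fixed $\lambda$, there is a $\theta \geq \overline{\theta}$ such that every $\lambda$--hierarchy path with endpoints in $Y$ stays within $H_\theta(Y)$. To see this, fix $\theta_1 \geq \theta_0$ so that $H_{\theta_1}(Y)$ is $k$--hierarchically quasiconvex (Lemma \ref{lem:hull}), and recall the remark following Definition \ref{defn:hierarchical_quasiconvexity} that $k$--hierarchically quasiconvex sets are quasiconvex with respect to $\lambda$--hierarchy paths, with a constant depending only on $k$, $\lambda$, and $\mf{S}$. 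Since the two endpoints of such a path lie in $Y \subseteq H_{\theta_1}(Y)$, the entire path lies in the $R$--neighborhood of $H_{\theta_1}(Y)$ for some $R = R(k,\lambda,\mf{S})$. Projecting to each $CW$, this forces every point $p$ of the path to satisfy $d_W(p, H_{\theta_1}(Y)) \leq KR + K$ (using that $\pi_W$ is $(K,K)$--coarsely Lipschitz), hence $d_W(p, \hull_{CW}(Y)) \leq \theta_1 + KR + K$. Setting $\theta = \theta_1 + KR + K$ (and enlarging to at least $\overline{\theta}$ if necessary) gives $\Path_\lambda^1(Y) \subseteq H_\theta(Y)$.

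For the inductive step, suppose the result holds for $n - 1$: there is $\theta' \geq \overline{\theta}$ with $\Path_\lambda^{n-1}(Y) \subseteq H_{\theta'}(Y)$. By definition $\Path_\lambda^n(Y) = \Path_\lambda^1(\Path_\lambda^{n-1}(Y))$, so I want to apply the base case with $\Path_\lambda^{n-1}(Y)$ in place of $Y$. The base case argument above applies verbatim to any subset, so it yields a $\theta'' \geq \overline{\theta}$, depending only on $\lambda$, $\mf{S}$, and $\theta'$, such that $\Path_\lambda^1(\Path_\lambda^{n-1}(Y)) \subseteq H_{\theta''}(\Path_\lambda^{n-1}(Y))$. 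It then remains to compare $H_{\theta''}(\Path_\lambda^{n-1}(Y))$ with a hull of $Y$ itself. Since $Y \subseteq \Path_\lambda^{n-1}(Y) \subseteq H_{\theta'}(Y)$ and hulls are monotone in the subset, we get $H_{\theta''}(\Path_\lambda^{n-1}(Y)) \subseteq H_{\theta''}(H_{\theta'}(Y))$; and the hull of a hull is coarsely the original hull (this is standard, or can be extracted from the proof of Lemma \ref{lem:unique_hulls}), so $H_{\theta''}(H_{\theta'}(Y)) \subseteq H_{\theta}(Y)$ for a suitable $\theta$ depending only on $\theta'$, $\theta''$, and $\mf{S}$. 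Chaining these inclusions completes the induction, with the final $\theta$ depending only on $\lambda$, $n$, and $\mf{S}$.

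The main obstacle I anticipate is the bookkeeping in the inductive step: one must be careful that $H_{\theta''}(\Path_\lambda^{n-1}(Y))$ is controlled not just by $H_{\theta'}(Y)$ but by a hull of $Y$ with a uniform constant, so that the induction closes with the claimed dependence on $n$ only (and not, say, on an unbounded tower). This is handled cleanly by the ``hull of a hull'' stabilization — for $\theta', \theta''$ above the threshold $\overline{\theta}$, $H_{\theta''}(H_{\theta'}(Y))$ is within bounded Hausdorff distance of $H_{\overline{\theta}}(Y)$, with the bound growing controllably in the parameters — so each iteration only adds a bounded amount. An alternative, perhaps cleaner, route that sidesteps iterating hulls: show directly by induction that $\Path_\lambda^n(Y)$ lies in a bounded neighborhood of $H_{\theta}(Y)$ for a single fixed $\theta \geq \overline{\theta}$, by noting that at stage $n$ the hierarchy paths have endpoints in (a bounded neighborhood of) the hierarchically quasiconvex set $H_\theta(Y)$, hence, by quasiconvexity with respect to hierarchy paths applied to $H_\theta(Y)$ directly, remain in a bounded neighborhood of it; then absorb all the accumulated neighborhood constants into a final choice of $\theta$ via Lemma \ref{lem:unique_hulls}. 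Either way, no step requires more than the coarse Lipschitz property of the projections, Lemma \ref{lem:hull}, and the hierarchy-path quasiconvexity of $k$--hierarchically quasiconvex sets.
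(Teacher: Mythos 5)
Your proof is correct, but it takes a noticeably different (and heavier) route than the paper's. The paper never leaves the hyperbolic spaces $CU$: it observes that if $\gamma$ is a $\lambda$--hierarchy path with endpoints in $H_{\theta'}(Y)$, then for each $U$ the projection $\pi_U\circ\gamma$ is an unparameterized $(\lambda,\lambda)$--quasi-geodesic whose endpoints lie $\theta'$--close to $\hull_{CU}(Y)$; since $\hull_{CU}(Y)$ is quasiconvex in the $\delta$--hyperbolic space $CU$ and unparameterized quasi-geodesics are stable, the whole projection stays within $\theta(\lambda,\theta',\delta)$ of $\hull_{CU}(Y)$, so each point of $\gamma$ lies in $H_\theta(Y)$. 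The only inputs are the definition of hierarchy path, hyperbolicity of $CU$, and the definition of $H_\theta$. Your argument instead passes up to the ambient space $\mc{X}$: you invoke Lemma \ref{lem:hull} to get that $H_{\theta_1}(Y)$ is hierarchically quasiconvex, use the (easy, non-circular, stated-before-this-point) fact that hierarchically quasiconvex sets are quasiconvex with respect to hierarchy paths to bound $d_\mc{X}(\gamma, H_{\theta_1}(Y))$, project back down via coarse Lipschitzness, and then close the induction with a ``hull of a hull'' absorption. All of these steps are valid — the monotonicity of $H_\theta(\cdot)$ is immediate from the definition, and the hull-of-hull inclusion $H_{\theta''}(H_{\theta'}(Y))\subseteq H_\theta(Y)$ follows from quasiconvexity of $\hull_{CW}(Y)$ in $CW$ exactly as you suggest — and there is no circularity issue (Lemma \ref{lem:hull} precedes this lemma, and you only use the easy direction of Proposition \ref{prop: quasiconvex implies HQC}, which does not depend on Theorem \ref{thm: hierarchy paths to hulls}). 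The round-trip through $\mc{X}$ is, however, avoidable: one can argue directly at the $CU$ level in the base case too (the projection of the path is an unparameterized quasi-geodesic with endpoints already inside $\hull_{CU}(Y)$), which makes the argument both shorter and independent of Lemma \ref{lem:hull}. Also, the ``unbounded tower'' worry you raise is moot here: the number of iterations is the fixed parameter $n$, so the accumulated constant automatically depends only on $\lambda$, $n$, and $\mf{S}$, which is exactly what the statement asks for.
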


\begin{proof}
The $n=1$ case follows directly from the definition of $H_\theta(Y)$ and hierarchy paths. We can proceed by induction on $n$ and assume there exists $\theta' \geq \overline{\theta}$  such that $\Path^{n-1}_\lambda(Y) \subseteq H_{\theta'}(Y)$. Let  $x\in \Path_\lambda^n(Y)$. There exist $y_1, y_2 \in \Path^{n-1}_\lambda(Y)$ such that $x$ is on a $\lambda$--hierarchy path from $y_1$ to $y_2$. For each $U \in \mf{S}$, $\pi_U(y_i)$ is within $\theta'$ of $\hull_{CU}(Y)$.  Therefore, quasiconvexity of $\hull_{CU}(Y)$ in $CU$ guarantees there exists a $\theta$ depending only on $\lambda$ and $\theta'$ (which in turn depends on $n$) such that $\pi_U(x)$ is within $\theta$ of $\hull_{CU}(Y)$ and thus $x \in H_\theta(Y)$.
\end{proof}

The other inclusion, $H_\theta(Y) \subseteq \Path^N_{\lambda'}(Y)$, requires two main steps.  First we prove that if $x \in H_\theta(Y)$, then there exists at most $2\chi+1$ points, $x_1,\hdots, x_n$, in $Y$ such that $x \in H_{\theta'}(x_1,\dots, x_n)$  where $\theta'$ depends only on $\theta$ (Lemma \ref{lem:finite_catch}). We then  show that for any finite collection of points $x_1,\hdots,x_n \in \mc{X}$, $H_{\theta'}(x_1,\hdots,x_n) \subseteq \Path_\lambda^{n-1}(x_1,\hdots,x_n)$  where $\lambda$ ultimately depends only on $n$ and $\theta$ (Proposition \ref{prop:hulls_finite_case}). Together, these imply  $H_\theta(Y) \subseteq \Path_\lambda^{2\chi+1}(Y)$.

We start with the first step, which can be thought of  a version of Carath\'eodory's Theorem for HHSs.

\begin{lem} \label{lem:finite_catch}
Let $Y \subseteq \mc{X}$, $\theta \geq \overline{\theta}$, and $\chi$ be as in Proposition \ref{prop:partial_ordering_of_domains}. For each $x \in H_\theta(Y)$, there exist  $x_1,\hdots,x_{\ell+1} \in Y$,  where $1\leq \ell\leq 2\chi$, and $\theta'$ depending only on $\theta$ such that $x \in H_{\theta'}(x_1,\hdots,x_{\ell +1})$.
\end{lem}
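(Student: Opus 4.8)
The plan is to show that a point $x \in H_\theta(Y)$ lies in the hull of boundedly many points of $Y$ by exploiting the partial order on relevant domains from Proposition \ref{prop:partial_ordering_of_domains}, together with the fact that, in each hyperbolic space $CU$, a point in the convex hull of $\pi_U(Y)$ lies on a geodesic between \emph{two} points of $\pi_U(Y)$. First I would use the distance formula (Theorem \ref{thm:distance_formula}) to pick a threshold $\sigma \geq \sigma_0$ large enough (at least $100E$ and larger than $\theta$ plus the hyperbolicity constants) so that the set $\mf{U}$ of domains $U$ with $d_U(\gate_Y(x),\text{(something witnessing the hull)}) > \sigma$ is finite and the distance from $x$ to a candidate point is controlled by the projections to $\mf{U}$. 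For each $U \in \mf{U}$, since $\pi_U(x)$ is within $\theta$ of $\hull_{CU}(Y)$, there is a $CU$--geodesic between two points $a_U, b_U \in \pi_U(Y)$ passing within $\theta + \delta$ of $\pi_U(x)$; choose preimages $p_U, q_U \in Y$ of (points coarsely equal to) $a_U, b_U$.

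The naive approach would take all of the $\{p_U, q_U : U \in \mf{U}\}$, but $\mf{U}$ is only finite, not uniformly bounded, so this does not suffice. The key step is to apply Proposition \ref{prop:partial_ordering_of_domains}(3): partition $\mf{U}$ into at most $\chi$ chains $\mc{U}_1, \dots, \mc{U}_n$ (with $n \leq \chi$), each totally ordered by the relation $U \leq V \iff U \trans V$ and $d_V(\rho_V^U, \cdot) \leq \kappa_0$. Within a single chain $\mc{U}_i$, I expect that the $\nest$/transversality structure forces the geodesic-endpoint data to be consistent along the chain, so that a \emph{single} pair of points $(s_i, t_i) \in Y \times Y$ — namely the endpoints associated to the $\leq$--minimal and $\leq$--maximal domains of $\mc{U}_i$ — has the property that for every $U \in \mc{U}_i$, the point $\pi_U(x)$ lies close to a $CU$--geodesic from $\pi_U(s_i)$ to $\pi_U(t_i)$. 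This is where the consistency inequalities of Definition \ref{defn:HHS}(\ref{item:dfs_transversal}) and the bounded geodesic image axiom (\ref{item:dfs:bounded_geodesic_image}) do the work: if $U \leq V$ are transverse and both relevant, the projection $\rho^U_V$ records the "direction" in which $CU$ sits relative to $CV$, and stringing these together along the chain shows the projections of a fixed pair of extreme endpoints dominate all intermediate projections. Collecting one pair per chain gives at most $2\chi$ points $x_1, \dots, x_{2\chi}$ (after relabeling, $\ell + 1$ points with $\ell \leq 2\chi$), and we may throw in $x$ itself or a single basepoint if needed to handle the at-most-$\chi$ domains that are not transverse to anything.

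Finally I would verify that $x \in H_{\theta'}(x_1, \dots, x_{\ell+1})$ for $\theta'$ depending only on $\theta$: for each $U \in \mf{U}$, $\pi_U(x)$ is within $\theta'$ of $\hull_{CU}(\{x_1,\dots,x_{\ell+1}\})$ by construction (it lies near a geodesic between two of the chosen points), and for each $U \notin \mf{U}$, $d_U(x, x_j) < \sigma$ for the relevant $j$ forces $\pi_U(x)$ to be within $\sigma$ (hence within $\theta'$) of $\hull_{CU}$ of those points as well; Lemma \ref{lem:hull} and Definition \ref{defn:hull} then give $x \in H_{\theta'}(x_1,\dots,x_{\ell+1})$. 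The main obstacle I anticipate is the bookkeeping in the middle step: showing rigorously that along a single $\leq$--chain one pair of extreme endpoints suffices to coarsely witness the hull in \emph{every} domain of that chain, which requires carefully combining the partial-order definition, the consistency inequalities, and bounded geodesic image, and tracking how the additive constants accumulate over a chain of length up to the complexity $n$.
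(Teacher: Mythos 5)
Your high-level plan — use a single reference point to define a finite relevant set, partition it into at most $\chi$ chains via Proposition \ref{prop:partial_ordering_of_domains}, and then find one pair of points of $Y$ per chain — is exactly the structure of the paper's proof. But the step you flag as ``the main obstacle I anticipate'' is precisely where your sketch has a gap, and the route you propose for it (taking one endpoint from the $\leq$--minimal and one from the $\leq$--maximal domain, and ``stringing together'' consistency inequalities along the chain) is not how the difficulty resolves, nor is it clearly correct. There is no reason the witness pair in the maximal domain should have controlled projections in the lower domains of the chain, so you would not obviously get $\pi_U(x)$ near a $CU$--geodesic between $\pi_U(s_i)$ and $\pi_U(t_i)$ even in the minimal domain $U_{i,1}$, where the actual witnessing geodesic runs between $a_i$ and some other $b_i \in Y$ that need not be your $t_i$.

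The paper's resolution is simpler and more decisive than what you envision. Fix one point $y\in Y$ with $\relevant_K(x,y)\neq\emptyset$ and order each chain so that $U_{i,1}$ is the $\leq$--minimum. Let $a_i, b_i \in Y$ be a witnessing pair for the hull condition in $CU_{i,1}$, with $a_i$ chosen (WLOG) to be $\geq 2\kappa_0+E$ far from $y$ in $CU_{i,1}$. Then for every $j>1$, consistency together with the chain ordering gives that \emph{both} $\pi_{U_{i,j}}(x)$ and $\pi_{U_{i,j}}(a_i)$ lie within $\kappa_0$ of $\rho_{U_{i,j}}^{U_{i,1}}$, so $d_{U_{i,j}}(x,a_i)\leq 2\kappa_0+E$. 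In other words, in every non-minimal domain of the chain the projection of $x$ coarsely \emph{coincides with a single endpoint} $a_i$; being near the geodesic from $a_i$ to $b_i$ is then automatic, because you are near the endpoint itself. So the pair $(a_i,b_i)$ from the minimal domain alone handles the entire chain, with no maximal-domain data and no accumulation of constants along the chain length. Collecting these pairs with the basepoint gives $\{y,a_1,b_1,\dots,a_n,b_n\}$, i.e.\ at most $2\chi+1$ points, all in $Y$.

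Two smaller issues in your write-up: you cannot ``throw in $x$ itself,'' since the lemma requires the chosen points to lie in $Y$ and $x$ need not; the paper uses $y\in Y$ as the extra point. And your description of the finite relevant set in terms of $\gate_Y(x)$ and ``something witnessing the hull'' is too loose to run the argument — the partial order of Proposition \ref{prop:partial_ordering_of_domains} is anchored to a specific ordered pair of points, so you need to fix a single $y\in Y$ and work with $\relevant_K(x,y)$.
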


\begin{proof}
Let $K = 100(E+2\kappa_0+\theta)$ and $x \in H_\theta(Y)$. If for all $y \in Y,$ $\relevant_{K}(x,y) = \emptyset$, then $x \in H_{K}(y)$ for each $y \in Y$. Thus we can assume there is \(y \in Y\) such that  $\relevant_{K}(x,y) \neq \emptyset$.  

As in Proposition \ref{prop:partial_ordering_of_domains}, we can partition $\relevant_{K}(x,y)$ in subsets  $\mc{U}_1,\hdots,\mc{U}_n$ where $n\leq \chi$. Further, for each $i$, all the elements of $\mc{U}_i$ are pairwise transverse and are totally ordered with respect to the order: $U\leq V$ if  $d_U(\rho^V_U,y) \leq \kappa_0$. Let $U_{i,1}<\cdots< U_{i,k_i}$ be the distinct domains in $\mc{U}_i$. Now for each $i$, there exist $a_i,b_i \in Y$ such that 
$\pi_{U_{i,1}}(x)$ is within $\theta$ of the $CU_{i,1}$ geodesic between $a_i$ and $b_i$. If $a_i$ and $b_i$ project $2\kappa_0+E$ close to $y$ in $CU_{i,1}$, then $d_{U_{i,1}}(x,y) \leq \theta + 4\kappa_0+3E$ which contradicts $U_{i,1} \in \relevant_K(x,y)$.  Thus without loss of generality, $d_{U_{i,1}}(a_i,y) > 
2\kappa_0+E$ and in particular $d_{U_{i,1}}(a_i,\rho_{U_{i,1}}^{U_{i,j}}) >\kappa_0$ for all $j>1$. The total order on $\mc{U}_i$ and the consistency axioms (Axiom \ref{item:dfs_transversal}) therefore ensures that $d_{U_{i,j}}(x,a_i) \leq 2\kappa_0 +E$ for all $1<j\leq k_i$. Thus for each $U_{i,j}$, $x$ projects $\theta+2\kappa_0 +E$ close to the $CU_{i,j}$ 
geodesic between $a_i$ and $b_i$ and $x \in H_{K}(y,a_1,\hdots,a_n,b_1,\hdots,b_n)$.
\end{proof}

Armed with Lemma \ref{lem:finite_catch}, the next step is to prove that  for a finite set of points, the hierarchical hull is contained in the path hull.

\begin{prop}\label{prop:hulls_finite_case}
For each $\theta \geq \overline{\theta}$ and $n\geq2$, there exists $\lambda \geq 1$ such that \[H_\theta(x_1,\hdots,x_n) \subseteq \Path^{n-1}_\lambda(x_1,\hdots x_n)\] for any $n$ distinct points $x_1,\hdots, x_n \in \mc{X}$.
\end{prop}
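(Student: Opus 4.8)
The plan is to prove the statement by induction on $n$, using the bridge theorem (Theorem~\ref{thm:parallel_gates}) as the engine that lets us split a hull of $n$ points into pieces governed by fewer points. The base case $n=2$ is essentially a definition chase: a $\lambda_0$--hierarchy path from $x_1$ to $x_2$ has, for every $U \in \mf{S}$, projection an unparameterized quasigeodesic with endpoints in $\pi_U(\{x_1,x_2\})$, hence it coarsely fellow-travels $\hull_{CU}(x_1,x_2)$; conversely, given $x \in H_\theta(x_1,x_2)$, one builds a $\lambda$--hierarchy path from $x_1$ to $x$ concatenated with one from $x$ to $x_2$ and checks, using consistency and bounded geodesic image (Axiom~\eqref{item:dfs:bounded_geodesic_image}) together with the fact that both $\pi_U(x_1)$, $\pi_U(x_2)$, $\pi_U(x)$ lie uniformly close to the geodesic $[\pi_U(x_1),\pi_U(x_2)]$, that the concatenation is still an (unparameterized) quasigeodesic in each $CU$ and globally. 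This shows $x$ lies on a single $\lambda$--hierarchy path between the two given points, with $\lambda$ depending only on $\theta$ and $\mf{S}$.

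For the inductive step, assume the statement for $n-1$ and take distinct points $x_1,\dots,x_n$ and a point $x \in H_\theta(x_1,\dots,x_n)$. Set $A = H_\theta(x_1,\dots,x_{n-1})$ (hierarchically quasiconvex by Lemma~\ref{lem:hull}) and $B = H_\theta(x_{n-1},x_n)$, and apply the bridge theorem to $A$ and $B$. The idea is that the hull of all $n$ points is coarsely contained in the ``bridge'' region $H_\theta(\gate_A(B) \cup \gate_B(A))$ together with $A$ itself: more precisely, I expect to show $H_\theta(x_1,\dots,x_n)$ is coarsely covered by $A \cup H_\theta(\gate_A(B)\cup\gate_B(A))$, since projections of the $n$-point hull to each $CU$ are controlled by the geodesics among $\pi_U(x_1),\dots,\pi_U(x_n)$, and any such geodesic either stays near $\hull_{CU}(x_1,\dots,x_{n-1})$ or must pass near the relevant $\rho$-sets detecting the bridge. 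Thus $x$ is either coarsely in $A$ — in which case $x \in H_{\theta'}(x_1,\dots,x_{n-1}) \subseteq \Path^{n-2}_{\lambda}(x_1,\dots,x_{n-1}) \subseteq \Path^{n-1}_\lambda(x_1,\dots,x_n)$ by the inductive hypothesis after enlarging $\theta$ slightly via Lemma~\ref{lem:unique_hulls} — or $x$ lies coarsely in the bridge $H_\theta(\gate_A(B)\cup\gate_B(A))$. In the latter case, part~\eqref{item:bridge} of the bridge theorem identifies the bridge with $\gate_A(B) \times H_\theta(a,b)$ up to quasi-isometry, and $H_\theta(a,b)$ is a two-point hull, so $x$ decomposes as a product of a point coarsely in $\gate_A(B) \subseteq A$ (reached by fewer hierarchy path iterations, using the inductive hypothesis since $\gate_A(B) \subseteq H_{\theta'}(x_1,\dots,x_{n-1})$) and a point on a hierarchy path between $a$ and $b$; assembling these with one more layer of hierarchy paths places $x$ in $\Path^{n-1}_\lambda(x_1,\dots,x_n)$ for a suitable $\lambda = \lambda(\theta,n,\mf{S})$.

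The main obstacle I anticipate is the bookkeeping in the inductive step: making precise the claim that $H_\theta(x_1,\dots,x_n)$ is coarsely covered by $A$ together with the $A$--$B$ bridge, and then tracking exactly how many iterations of $\Path^1_\lambda(\cdot)$ are consumed — one must verify that reaching a point of $\gate_A(B)$ from $\{x_1,\dots,x_{n-1}\}$, then connecting to a point on an $[a,b]$ hierarchy path, then connecting that to $x$, all fits within $n-1$ total iterations rather than requiring $n$. This is where the specific bound $N = 2\chi$ in Theorem~\ref{thm: hierarchy paths to hulls} comes from (each iteration can roughly double the ``complexity'' of points handled, cf.\ the $\mathbb{Z}^n$ example), and getting the finite-point count to match will require care with the coarse equalities supplied by parts~\eqref{item:double_gate_surjective} and~\eqref{item:bridge} of the bridge theorem. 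A secondary technical point is ensuring that the hierarchy paths produced in the product decomposition can be taken to be genuine hierarchy paths in $\mc{X}$ (not merely quasigeodesics), which follows from Theorem~\ref{thm:monotone_hierarchy_paths} applied within the hierarchically quasiconvex — hence itself hierarchically hyperbolic — bridge region, together with the fact that hierarchy paths in a hierarchically quasiconvex subset are uniformly hierarchy paths in the ambient space.
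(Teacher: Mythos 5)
Your base case matches the paper's: concatenate a hierarchy path from $x_1$ to $z$ with one from $z$ to $x_2$ and verify the result is still a hierarchy path. But your inductive step has a genuine gap that is not merely ``bookkeeping.'' The claim that $H_\theta(x_1,\dots,x_n)$ is coarsely covered by $A \cup H_\theta(\gate_A(B)\cup\gate_B(A))$ is false. Take $\mc{X}=\mathbb{Z}^n$ with the standard HHG structure and $x_i = Me_i$ for large $M$. Then the barycenter $(M/2,\dots,M/2)$ lies in $H_\theta(x_1,\dots,x_n)$ (in each axis domain it projects into the segment $[0,M]$), but it is at distance $\sim M/2$ from $A = H_\theta(x_1,\dots,x_{n-1})$ (coarsely the hyperplane $\{x_n=0\}$), and the $A$--$B$ bridge with $B=H_\theta(x_{n-1},x_n)$ coarsely coincides with the segment from $0$ to $Me_{n-1}$, which also misses the barycenter by $\sim M$. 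The flaw in your heuristic is that ``in each $CU$ the projection is near $\hull_{CU}(x_1,\dots,x_{n-1})$ or near the bridge'' does not globalize: different domains can vote differently, so the point need not lie near either region in $\mc{X}$.

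The paper avoids any covering claim. Instead, given $x \in H_\theta(x_1,\dots,x_n)$, it constructs a single companion point $y$ by taking, in each $CU$, the closest-point projection $y_U$ of $\pi_U(x)$ onto $\pi_U(H_\theta(x_1,\dots,x_{n-1}))$, then realizing the tuple $(y_U)$ as an actual point $y \in H_{\theta'}(x_1,\dots,x_{n-1})$ via Lemma~\ref{lem:proj_consistent}. A thin-triangle argument in each $CU$ then shows $x \in H_{\theta''}(x_n, y)$, so the base case puts $x$ on a single $\lambda$--hierarchy path from $x_n$ to $y$; induction supplies $y \in \Path^{n-2}_\lambda(x_1,\dots,x_{n-1})$, and the count $n-1$ comes out exactly. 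In the $\mathbb{Z}^n$ example this $y$ is $(M/2,\dots,M/2,0)$, which is indeed in the $(n-1)$-point hull and joined to $x$ through $x_n$ by a hierarchy path. If you want to salvage your approach, the missing ingredient is precisely this realization step that builds a coarsely-median companion point; the bridge theorem alone does not give you a point of $A$ that controls $x$ domain-by-domain.
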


\begin{proof}
We shall proceed by induction on $n$. First we will show the base case of $n=2$

\begin{claim}[Base Case]\label{claim:hulls_of_pairs_of_points}
	For each $\theta\geq \overline{\theta}$ there exists $\lambda \geq 1$ such that \[H_{\theta}(x,y) \subseteq \Path^1_\lambda(x,y)\] for each $x,y\in\mc{X}$.
\end{claim}

\begin{proof}[Proof of Claim \ref{claim:hulls_of_pairs_of_points}]
	Let $z \in H_\theta(x,y)$, $\gamma_0:[a,b]\to \mc{X}$ be a $\lambda_0$--hierarchy path from $x$ to $z$ and $\gamma_1:[b,c]\to \mc{X}$ is a $\lambda_0$--hierarchy path from $z$ to $y$. We will show that $\gamma=\gamma_0*\gamma_1 \colon [a,c]\to \mc{X}$ is a $\lambda$--hierarchy path from $x$ to $y$, where $\lambda$ depends only on $\theta$. By the definition of $H_\theta(x,y)$ and hyperbolicity of the $CU$'s  we have that $\pi_U(\gamma)$ is an unparameterized $(\lambda_1,\lambda_1)$--quasi-geodesic for each $U\in \mf{S}$, where $\lambda_1$ depends only on $\theta$. Therefore, it suffices to show that $\gamma$ is a $(\lambda,\lambda)$--quasi-geodesic in $\mc{X}$, where $\lambda$ depends only on $\theta$. That is, we need to prove for each $t,s \in [a,c]$ we have \[|t-s| \stackrel{\lambda,\lambda}{\asymp} d_{\mc{X}}(\gamma(t),\gamma(s)). \]
	
	Since $\gamma_0$ and $\gamma_1$ are both $(\lambda_0,\lambda_0)$--quasi-geodesics, we can restrict ourselves to the case where $t \in [a,b)$ and $s\in (b,c]$.  Let $u = \gamma(t)$ and $v = \gamma(s)$. Since $\pi_U(\gamma)$ is a uniform unparameterized quasi-geodesic for each $U \in \mf{S}$, we have that \[d_U(u,z) + d_U(z,v)  \stackrel{C,C}{\asymp} d_U(u,v)\] where $C\geq 1$ depends only on $\theta$.  Hence,  Lemma \ref{lem:distributing_distance_formula}  provides a constant $K \geq 1$ depending only on $\theta$  so that \[  d_{\mc{X}}(u,z) + d_{\mc{X}}(z,v)  \stackrel{K,K}{\asymp} d_{\mc{X}}(u,v).\] Since  $\gamma_0$ and $\gamma_1$ are both $(\lambda_0,\lambda_0)$--quasi-geodesics, we have \[ \frac{1}{\lambda_0 K} | t-s| -\frac{2\lambda_0}{K} - k \leq  d_{\mc{X}}(\gamma(t),\gamma(s)) \leq  \lambda_0 |t-s| + 2\lambda_0  \] as desired.
\end{proof}

We now show the key fact for the inductive step, that the hull of \(n\) points can be obtained by taking the hull on \(n-1\) points, and then considering all the hierarchy paths between this smaller hull and the remaining point. 

\begin{claim} \label{claim:inductive_step}
	Let $x_1,\hdots, x_n \in \mc{X}$, for \(n \geq 2\). If $x \in H_{\theta}(x_1,\hdots,x_n)$ where $\theta\geq \overline{\theta}$, then there exist $\theta'$ and $\lambda$ depending only on $\theta$ and  $y \in H_{\theta'}(x_1,\hdots,x_{n-1})$ such that $x$ is on a $\lambda$--hierarchy path from $x_n$ to $y$.
\end{claim} 

\begin{proof}[Proof of Claim \ref{claim:inductive_step}]
	For $1 \leq i \leq n$, let $A_i = \{ x_1,\hdots,x_i\}$.  For each $U \in \mf{S}$, $\pi_U(H_\theta(A_{n-1}))$ is $R$--quasiconvex where $R$ depends only on $\theta$. Let $y_U$ be the closest point projection of $\pi_U(x)$ to 
	$\pi_U(H_\theta(A_{n-1}))$, $z_U$ be a point in $\hull_{CU}(A_n)$ within $\theta$ of $\pi_U(x)$, and $z'_U$ be the closest point projection of $z_U$ to $\pi_U(H_\theta(A_{n-1}))$. 
	By Lemma \ref{lem:proj_consistent}, there exist $y \in \mc{X}$ and a constant $\theta'$ depending on $\theta$ and $\delta$ such that $d_U(\pi_U(y),y_U) \leq \theta'$.  Further, we can assume $\theta'$ is large enough so that the following hold:
	\begin{enumerate}[(1)]
		\item $\theta'> \theta +\delta+R+1$ 
		\item $y \in H_{\theta'}(A_{n-1})$
		\item \label{contracting} For all $v,w \in CU$, if $d_U(v,w) < d_U(v, H_{\theta}(A_{n-1}))$, then the closest point projection of $v$ and $w$ to $\pi_U(H_{\theta}(A_{n-1}))$ are no more than $\theta'$ apart.
	\end{enumerate}
	
	For each $U\in\mf{S}$, let $\gamma_U$ be a $CU$ geodesic from $\pi_U(x_n)$ to $\pi_U(y)$. We will show that $d_U(x_n,\gamma_U)$ is uniformly bounded for each $U \in\mf{S}$. If $d_U(y_U,z_U) \leq 5\theta' $, then $d_U(x,y_U) \leq 6\theta'$ which implies   $d_U(x,\gamma_U) \leq 7\theta'$. Otherwise 
	$d_U(y_U,z_U) > 5\theta'$ implies that $d_U(x,H_{\theta}(A_{n-1})) > d_U(x,z_U)$ and thus $d_U(y_U,z'_U) \leq \theta'$ by (\ref{contracting}). This implies that  $d_U(z_U,H_\theta(A_{n-1})) > 3\theta'$. Since $z_U \in \hull_{CU}(A_n)$ and $z_U \not\in H_\theta(A_{n-1})$, there exist $D\geq0$ depending only on $\theta$ and  $x_U \in \pi_U(A_{n-1})$ such that $z_U$ is within $D$ of any $CU$ geodesic from $\pi_U(x_n) $ to $x_U$. Further, by increasing $\theta'$, we can assume $D<\theta'$. Take a geodesic triangle with endpoints $\pi_U(x_n)$, $y_U$
	and $x_U$. Since $d_U(z_U,H_\theta(A_{n-1})) > 3\theta'$, it must be the case that $z_U$ is within $2\theta'$ of any $CU$ geodesic from $\pi_U(x_n)$ to $y_U$.
	
	Thus there exists $\theta''$ depending ultimately only on $\theta$, such that  $d_U(x,\gamma_U) \leq \theta''$ for all $U \in \mf{S}$.  Therefore $x \in H_{\theta''}(x_n,y)$ and the statement in Claim \ref{claim:inductive_step} follows from Claim \ref{claim:hulls_of_pairs_of_points}.
\end{proof}

We now finish the proof of Proposition \ref{prop:hulls_finite_case}. 
Let $x \in H_\theta(x_1,\hdots, x_n)$. Claim \ref{claim:inductive_step} shows that there exist a $\lambda'\geq 1$ and $\theta' \geq \overline{\theta}$  such that $x$ is on a $\lambda'$--hierarchy path from $x_n$ to a point in $H_{\theta'}(x_1,\hdots,x_{n-1})$. By induction, there exists $\lambda\geq \lambda'$ such that $H_{\theta'}(x_1,\hdots,x_{n-1}) \subseteq \Path_\lambda^{n-2}(x_1,\cdots, x_{n-1})$ and therefore $x \in \Path^{n-1}_\lambda(x_1,\hdots, x_n)$.
\end{proof}

We can now finish the proof of Theorem \ref{thm: hierarchy paths to hulls}.

\begin{proof}[Proof of Theorem \ref{thm: hierarchy paths to hulls}]
Recall, we need to show that for all sufficiently large $\theta$ and $\lambda$, $H_\theta(Y)$ coarsely coincides with $\Path_\lambda^N(Y)$ where $N=2\chi$. First we will show that for all $\theta \geq \overline{\theta}$, there exists $\lambda\geq 1$ such that $H_\theta(Y) \subseteq \Path^N_\lambda(Y)$.

Let $x \in H_\theta(Y)$ and let $x_1,\hdots,x_{\ell+1}$ be the finite number of points in $Y$ provided by Lemma \ref{lem:finite_catch}. By Proposition \ref{prop:hulls_finite_case}, there exists $\lambda$ depending on $\theta$ such that $x \in \Path^{\ell}_\lambda(x_1,\hdots, x_{\ell+1}) \subseteq \Path^{\ell}_\lambda(Y) \subseteq \Path^N_\lambda(Y)$. Thus $H_\theta(Y) \subseteq \Path^N_\lambda(Y)$.

Now, fix $\overline{\lambda}\geq \lambda_0$ such that $H_{\overline{\theta}}(Y) \subseteq \Path^N_{\overline{\lambda}}(Y)$. If $\theta\geq \overline{\theta}$ and $\lambda \geq \overline{\lambda}$, then by Lemma \ref{lem:Path_in_hulls} there exists $\theta'>\overline{\theta}$  such that \[H_{\overline{\theta}}(Y) \subseteq \Path_\lambda^N(Y) \subseteq H_{\theta'}(Y).\] The conclusion now follows by Lemma \ref{lem:unique_hulls}.
\end{proof}

The primary use of Theorem \ref{thm: hierarchy paths to hulls} in this paper is the following proof that hierarchically quasiconvex subsets are exactly the subsets that are ``quasiconvex with respect to hierarchy paths." From this it immediately follows that all {\sqc} subsets are hierarchically quasiconvex.

\begin{prop}\label{prop: quasiconvex implies HQC}
Let $(\mc{X},\mf{S})$ be a hierarchically hyperbolic space. A subset $Y \subseteq \mc{X}$ is $k$--hierarchically quasiconvex if and only if there exists a function $R \colon [1,\infty) \rightarrow [0,\infty)$ such that if $\gamma$ is a $\lambda$--hierarchy path with endpoints on $Y$, then $\gamma \subseteq N_{R(\lambda)}(Y)$ where $k$ and $R$  each determines the other.  In particular, if $Y$ is $Q$--{\sqc}, then $Y$ is $k$--hierarchically quasiconvex where $k$ is determined by $Q$.
\end{prop}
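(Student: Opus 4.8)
The plan is to prove the two directions of the equivalence separately, deriving the ``in particular'' clause from the quasiconvex-to-hierarchy-path-quasiconvex direction together with Theorem~\ref{thm:quasiconvex_iff_superlinear} or directly from the definition of {\sqc}. For the forward direction, suppose $Y$ is $k$--hierarchically quasiconvex. This is precisely the content sketched right before the statement: given a $\lambda$--hierarchy path $\gamma$ with endpoints on $Y$, I would first observe that for each $U \in \mf{S}$ the projection $\pi_U(Y)$ is $k(0)$--quasiconvex, and $\pi_U \circ \gamma$ is an unparameterized $(\lambda,\lambda)$--quasi-geodesic with endpoints in $\pi_U(Y)$; by stability of quasi-geodesics in the $\delta$--hyperbolic space $CU$, $\pi_U(\gamma)$ lies in a uniformly bounded neighborhood $N_{\kappa}(\pi_U(Y))$ with $\kappa$ depending only on $\lambda$, $\delta$, and $k(0)$. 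Then for any point $p$ on $\gamma$ we have $d_U(p, Y) \leq \kappa$ for all $U$, so condition (2) of hierarchical quasiconvexity gives $d_{\mc X}(p,Y) \leq k(\kappa) =: R(\lambda)$. This yields the required function $R$.

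For the reverse direction, suppose such an $R$ exists. I need to verify the two conditions of Definition~\ref{defn:hierarchical_quasiconvexity}. For condition (1), I would take two points in $\pi_U(Y)$, realize them as $\pi_U(x)$, $\pi_U(y)$ with $x,y \in Y$, connect $x$ to $y$ by a $\lambda_0$--hierarchy path $\gamma$ (Theorem~\ref{thm:monotone_hierarchy_paths}); by hypothesis $\gamma \subseteq N_{R(\lambda_0)}(Y)$, so $\pi_U(\gamma) \subseteq N_{KR(\lambda_0)+K}(\pi_U(Y))$ since $\pi_U$ is $(K,K)$--coarsely Lipschitz, and because $\pi_U \circ \gamma$ is an unparameterized $(\lambda_0,\lambda_0)$--quasi-geodesic joining the two chosen points, this forces any $CU$--geodesic between them into a uniformly bounded neighborhood of $\pi_U(Y)$ by hyperbolicity; hence $\pi_U(Y)$ is $k(0)$--quasiconvex with $k(0)$ depending only on $R(\lambda_0)$, $K$, and $\delta$. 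Condition (2) is where Theorem~\ref{thm: hierarchy paths to hulls} does the real work: given $x$ with $d_U(x,Y) \leq \kappa$ for all $U$, I would show $x$ lies in $H_\theta(Y)$ for $\theta$ depending on $\kappa$ and $\mf{S}$ (using that $\hull_{CU}(Y)$ is within $\delta$ of the quasiconvex set $\pi_U(Y)$, so $\pi_U(x)$ is within $\kappa + \delta$ of $\hull_{CU}(Y)$); then by Theorem~\ref{thm: hierarchy paths to hulls}, $x$ is within $D$ of $\Path_{\overline\lambda}^N(Y)$, and by iterating the hypothesis $N = 2\chi$ times — each application of $\Path_{\overline\lambda}^1(\cdot)$ moves a point at most $R(\overline\lambda)$ away from the previous stage, and all intermediate stages are still reached by hierarchy paths with endpoints ultimately traceable back to $Y$ — one obtains $d_{\mc X}(x,Y) \leq N \cdot R(\overline\lambda) + D =: k(\kappa)$.

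The main obstacle is the bookkeeping in condition (2) of the reverse direction: controlling the distance from $x$ to $Y$ after unwinding $N$ levels of the $\Path$--construction. One must be careful that a $\lambda$--hierarchy path between two points of $\Path_\lambda^{j}(Y)$ stays within $R(\lambda)$ of $\Path_\lambda^{j}(Y)$ only if the \emph{endpoints} lie on $Y$; for intermediate levels the hypothesis doesn't directly apply. The clean fix is to instead argue that $\Path_{\overline\lambda}^N(Y) \subseteq N_{R'}(Y)$ for a uniform $R'$ by inducting on the level: if $z \in \Path^{1}_\lambda(Y)$ then $z$ is on a $\lambda$--hierarchy path between two points of $Y$, so $d(z,Y) \leq R(\lambda)$; and a point of $\Path^{j}_\lambda(Y)$ lies on a $\lambda$--hierarchy path between two points $y_1, y_2 \in \Path^{j-1}_\lambda(Y)$, each within $R'_{j-1}$ of $Y$ — but this still requires a hierarchy path between points \emph{near} $Y$ rather than on $Y$, so one needs the slightly stronger input that $Y$ is quasiconvex-with-respect-to-hierarchy-paths is actually inherited by small neighborhoods, or one reroutes through gates. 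I expect the cleanest route is: first establish condition (1) to get $\pi_U(Y)$ quasiconvex, then run the argument of Lemma~\ref{lem:Path_in_hulls} in reverse together with the gate map $\gate_Y$ (Lemma~\ref{lem:gate}) — since $\gate_Y$ is coarsely Lipschitz and fixes $Y$ coarsely, $d(z, Y) \asymp d(z, \gate_Y(z))$, and one bounds $d_U(z, \gate_Y(z))$ uniformly via the distance formula using that $z \in H_\theta(Y)$ and Lemma~\ref{lem: closest_point_proj}. Finally, the ``in particular'' clause follows since a $Q$--{\sqc} subset satisfies $\gamma \subseteq N_{Q(\lambda,\lambda)}(Y)$ for every $\lambda$--hierarchy path (a $(\lambda,\lambda)$--quasi-geodesic) with endpoints on $Y$, so we may take $R(\lambda) = Q(\lambda,\lambda)$ and apply the reverse direction.
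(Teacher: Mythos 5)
Your overall structure matches the paper's: forward direction via stability of unparameterized quasi-geodesics in each $CU$ together with condition (2) of hierarchical quasiconvexity; reverse direction via Theorem~\ref{thm: hierarchy paths to hulls}; the ``in particular'' clause by taking $R(\lambda)=Q(\lambda,\lambda)$. The forward direction, condition (1) of the reverse direction, and the ``in particular'' clause are correct, and the paper's proof of condition (2) reduces, exactly as you propose, to showing $d_{Haus}\bigl(\Path^n_\lambda(Y),Y\bigr)$ is bounded in terms of $R$, $n$, $\lambda$ and then invoking Theorem~\ref{thm: hierarchy paths to hulls}. The gap is that you never establish this Hausdorff bound, and the fallback you settle on is circular. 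You correctly flag that the hypothesis controls only hierarchy paths with endpoints \emph{on} $Y$, so the induction from $\Path_\lambda^{j-1}(Y)$ to $\Path_\lambda^{j}(Y)$ does not apply directly; but you then propose routing through the gate map $\gate_Y$ of Lemma~\ref{lem:gate}, which requires $Y$ to already be $k$--hierarchically quasiconvex --- in particular it requires condition (2), which is precisely what you are trying to prove. Condition (1) alone does not suffice: the realization theorem (Lemma~\ref{lem:proj_consistent}) gives a consistent candidate gate point, but showing that point lies near $Y$ is exactly condition (2).

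The idea you float and then abandon --- that hierarchy-path-quasiconvexity is inherited by bounded neighborhoods of $Y$ --- is the correct resolution and requires no extra input. If $\gamma$ is a $\lambda$--hierarchy path with endpoints $y_1', y_2' \in N_D(Y)$, pick $y_1, y_2 \in Y$ with $d(y_i,y_i')\leq D$ and concatenate $\gamma$ with the two short jumps. The concatenation is a $(\lambda',\lambda')$--quasi-geodesic with $\lambda'$ depending only on $\lambda$ and $D$, and for each $U\in\mf{S}$ the projected path remains an unparameterized $(\lambda',\lambda')$--quasi-geodesic in the hyperbolic space $CU$, since one has only prepended and appended points within $KD+K$ of the original endpoints (coarse Lipschitzness of $\pi_U$). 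Hence the concatenation is a $\lambda'$--hierarchy path with endpoints on $Y$, so the hypothesis gives $\gamma\subseteq N_{R(\lambda')}(Y)$. Iterating this $N=2\chi$ times yields $\Path^N_\lambda(Y)\subseteq N_{R'}(Y)$ with $R'$ depending only on $R$, $\lambda$, and $\mf{S}$, which closes the gap; this is the argument the paper compresses into the single word ``observe.''
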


\begin{proof}
The proof of the forward implication follows directly from the definition of hierarchical quasiconvexity and hierarchy path. Assume there exists a function $R \colon [1,\infty) \rightarrow [0,\infty)$ such that if $\gamma$ is a $\lambda$--hierarchy path with endpoints on $Y$, then $\gamma \subseteq N_{R(\lambda)}(Y)$. The first condition of hierarchical quasiconvexity now follows from the existence of hierarchy paths (Theorem \ref{thm:monotone_hierarchy_paths}), the coarse Lipschitzness of the projection maps (Axiom \ref{item:dfs_curve_complexes}), and the hyperbolicity of the $CU$'s. For the second condition,  observe that the hypothesis implies there exists a bound on the Hausdorff distance between $Y$ and $\Path^n_\lambda(Y)$ depending only on $R$, $n$, and $\lambda$. Thus by Theorem \ref{thm: hierarchy paths to hulls}, for each $\theta \geq \overline{\theta}$, there exists $D_\theta$ such that $d_{Haus}(H_\theta(Y),Y) \leq D_\theta$. Let $\kappa > 0$ and $x\in\mc{X}$ such that $d_U(x,Y) \leq \kappa$ for all $U \in \mf{S}$. Thus $x \in H_\theta(Y)$ for each $\theta \geq \kappa+\overline{\theta}$. Let $k(\kappa) = D_{\overline{\theta}+\kappa}$, then $d_\mc{X}(x,Y) \leq k(\kappa)$ and $Y$ is hierarchically quasiconvex.
\end{proof}

\begin{rem}
If $\mc{X}$ is a hyperbolic space, there exists many HHS structures on $\mc{X}$ (see \cite{Spriano2018}). In this case, Proposition \ref{prop: quasiconvex implies HQC} recovers \cite[Proposition 3.5]{Spriano2018} which states that a subset $Y\subseteq \mc{X}$ is quasiconvex if and only if $Y$ is hierarchically quasiconvex in any of the HHS structures on $\mc{X}$.
\end{rem}

\subsection{Hulls and coarse medians}\label{subsec:hulls_and_coarse_medians}

We now take a small detour from the main thrust  of the paper to highlight an application of Theorem \ref{thm: hierarchy paths to hulls}  and discuss the relation of our work in this section to the hulls in coarse median spaces constructed in \cite{BowditchConvexity}.  

In \cite{BowditchCoarseMedian}, Bowditch axiomatized the notion of a coarse center of three points in a metric space and defined \emph{coarse median spaces} as metric spaces where every triple of points has such a coarse center. Bowditch observed that all hierarchically hyperbolic spaces are coarse median spaces;  see also \cite[Theorem 7.3]{BHS17b}. The salient property of the coarse median structure of an HHS is the following fact.

\begin{lem}[See proof of {\cite[Theorem 7.3]{BHS17b}}]\label{lem:definition coarse median map}
Let \((\X, \mf{S})\) be a hierarchically hyperbolic space. There exist $\mu>0$ and a map \(\mf{m}\colon \X \times \X \times \X \to \X\)  with the property that for every  \((x,y,z) \in \X^3\) and \(U\in \mf{S}\), the projection \(\pi_U(\mf{m}(x,y,z))\) is within $\mu$ of all three sides of any $CU$ triangle with vertices \(\pi_U(x), \pi_U(y), \pi_U(z)\).
\end{lem}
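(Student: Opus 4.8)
## Proof Proposal for Lemma~\ref{lem:definition coarse median map}

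The plan is to build the map \(\mf{m}\) one domain at a time using the hyperbolic centers, and then invoke the realization theorem (Lemma~\ref{lem:proj_consistent}) to glue these choices into a single point of \(\X\). Concretely, for each \(U \in \mf{S}\) and each triple \((x,y,z) \in \X^3\), pick a coarse center \(c_U(x,y,z) \in CU\) of the geodesic triangle with vertices \(\pi_U(x), \pi_U(y), \pi_U(z)\); since \(CU\) is \(\delta\)-hyperbolic, such a point exists and lies within, say, \(4\delta\) of each of the three sides. The main task is to verify that the assignment \(U \mapsto c_U(x,y,z)\) is ``consistent'' in the sense required by Lemma~\ref{lem:proj_consistent}, i.e. that it is \(\kappa\)-consistent for a uniform \(\kappa\), and that each \(c_U(x,y,z)\) lies uniformly close to \(\pi_U(\X)\) (which follows since \(\pi_U(\X)\) is \(K\)-quasiconvex and contains \(\pi_U(x), \pi_U(y), \pi_U(z)\), so it contains a uniform neighborhood of their convex hull, hence of \(c_U(x,y,z)\)). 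Granting consistency, Lemma~\ref{lem:proj_consistent} produces a point \(p = \mf{m}(x,y,z) \in \X\) with \(d_U(p, c_U(x,y,z)) \leq \mu'\) for all \(U\), and then \(\pi_U(\mf{m}(x,y,z))\) is within \(\mu := \mu' + 4\delta\) of all three sides of the triangle on \(\pi_U(x), \pi_U(y), \pi_U(z)\), as desired.

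The step I expect to be the main obstacle is checking the consistency inequalities of Definition~\ref{defn:HHS}.(\ref{item:dfs_transversal}) for the tuple \((c_U(x,y,z))_{U \in \mf{S}}\). I would argue this by cases on the relative position of a pair \(V \trans W\) (or \(V \nest W\)). Since \(c_W(x,y,z)\) lies within \(4\delta\) of the \(CW\)-geodesic joining two of \(\pi_W(x), \pi_W(y), \pi_W(z)\), say between \(\pi_W(x)\) and \(\pi_W(y)\), one compares \(d_W(c_W(x,y,z), \rho^V_W)\) with \(d_W(\pi_W(x), \rho^V_W)\) and \(d_W(\pi_W(y), \rho^V_W)\): the center is ``between'' \(x\) and \(y\) along a geodesic, so up to \(4\delta\) it is no farther from \(\rho^V_W\) than the closer of the two endpoints — more precisely, by thin triangles one shows \(d_W(c_W, \rho^V_W) \geq \min\{d_W(x,\rho^V_W), d_W(y,\rho^V_W)\} - O(\delta)\) is false in general, so instead one observes that if \(d_W(c_W, \rho^V_W)\) is small then the geodesic \([\pi_W(x),\pi_W(y)]\) passes within \(O(\delta)\) of \(\rho^V_W\), hence \(\gamma \cap N_E(\rho^V_W) \neq \emptyset\) for that geodesic. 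Then the bounded geodesic image axiom (\ref{item:dfs:bounded_geodesic_image}) is not quite what is needed; rather one uses the already-known consistency of the \emph{point} tuples \((\pi_U(x))\), \((\pi_U(y))\), \((\pi_U(z))\) (each of which is consistent by Definition~\ref{defn:HHS}.(\ref{item:dfs_transversal}) applied to \(x\), \(y\), \(z\)) together with the fact that \(c_U(x,y,z)\) is coarsely the median of these three realized points in the \(\delta\)-hyperbolic space \(CU\); a standard ``median of consistent tuples is consistent'' argument then gives the claim with \(\kappa\) depending only on \(\delta\), \(\kappa_0\), and \(E\).

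Let me streamline: the cleanest route is to take the three point tuples \((\pi_U(x))_U\), \((\pi_U(y))_U\), \((\pi_U(z))_U\), each of which is \(\kappa_0\)-consistent, and show directly that the tuple of hyperbolic medians is \(\kappa_1\)-consistent for \(\kappa_1 = \kappa_1(\delta, \kappa_0, E)\). Fix \(V \trans W\); by consistency of the three tuples, for each of \(x,y,z\) at least one of \(d_W(\cdot, \rho^V_W) \leq \kappa_0\) or \(d_V(\cdot, \rho^W_V) \leq \kappa_0\) holds. If two of the three, say \(x\) and \(y\), satisfy \(d_W(\cdot,\rho^V_W) \leq \kappa_0\), then the \(CW\)-geodesic \([\pi_W(x), \pi_W(y)]\) lies in a bounded neighborhood of \(\rho^V_W\) and so does its center; combined with the location of \(c_W(x,y,z)\) on a side of the triangle one of whose endpoints is among \(\{x,y\}\), one gets \(d_W(c_W(x,y,z), \rho^V_W) \leq \kappa_1\). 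Otherwise at least two of \(x,y,z\) satisfy \(d_V(\cdot, \rho^W_V) \leq \kappa_0\), and the symmetric argument in \(CV\) gives \(d_V(c_V(x,y,z), \rho^W_V) \leq \kappa_1\); either way the consistency inequality holds. The nested case \(V \nest W\) and the final clause about \(\rho^U_W\)-to-\(\rho^V_W\) distances are handled identically, using that centers lie on triangle sides together with the corresponding clauses of Definition~\ref{defn:HHS}.(\ref{item:dfs_transversal}) for the three point tuples. With consistency established, Lemma~\ref{lem:proj_consistent} finishes the proof and yields the uniform constant \(\mu\).
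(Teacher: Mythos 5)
Your overall strategy---take hyperbolic centers $c_U(x,y,z)$ in each $CU$, verify the resulting tuple is consistent, and realize it---is the right one and is what the cited proof in BHS17b does, and your pigeonhole argument for the transverse case is sound. However, there are two genuine issues. First, Lemma~\ref{lem:proj_consistent} is not the general realization theorem: it is a specialized version that only applies to tuples arising as closest-point projections onto $\pi_W(Y)$ for a fixed hierarchically quasiconvex $Y$, and your tuple $(c_U(x,y,z))_U$ does not arrive in that form. You should cite the full realization theorem {\cite[Theorem 3.1]{BHS17b}}, which realizes any $\kappa$--consistent tuple. (Alternatively, one could observe that $c_U(x,y,z)$ coarsely agrees with the closest-point projection of $\pi_U(x)$ onto $\pi_U(H_\theta(y,z))$ and set $\mf{m}(x,y,z) := \gate_{H_\theta(y,z)}(x)$, at which point Lemma~\ref{lem:proj_consistent} does apply---but that is a different argument from the one you wrote.)

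Second, and more substantively, the nested case $V \nest W$ is not ``handled identically.'' That consistency inequality reads $\min\left\{d_W(b_W,\rho^V_W),\ \diam_{CV}\bigl(b_V \cup \rho^W_V(b_W)\bigr)\right\}\leq\kappa$, so the second term involves $\rho^W_V$ evaluated at $c_W$, not at $\pi_W(x),\pi_W(y),\pi_W(z)$. Since $\rho^W_V$ is not coarsely Lipschitz, knowing that two of $x,y,z$ satisfy $\diam_{CV}\bigl(\pi_V(\cdot)\cup\rho^W_V(\pi_W(\cdot))\bigr)\leq\kappa_0$ gives no direct control on $\rho^W_V(c_W)$, so the pigeonhole argument stalls. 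What is needed instead: if $d_W(c_W,\rho^V_W)$ is large, the tripod approximation of the $\delta$--thin triangle shows that one side, say $[\pi_W(y),\pi_W(z)]$, stays uniformly far from $\rho^V_W$; the bounded geodesic image axiom (Axiom~\ref{item:dfs:bounded_geodesic_image}) applied to that side and to the short geodesic from $c_W$ to it then bounds $\diam\bigl(\rho^W_V(c_W)\cup\rho^W_V(\pi_W(y))\bigr)$; finally the single-point nested consistency of $y$ and $z$ places $\pi_V(y),\pi_V(z)$ (hence $c_V$, which lies near $[\pi_V(y),\pi_V(z)]$) uniformly close to $\rho^W_V(c_W)$. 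So BGI is genuinely required here, contrary to your remark that it ``is not quite what is needed.''
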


We call the point $\mf{m}(x,y,z)$ the \emph{coarse center} of $x$, $y$, and $z$. There is a natural notion of convexity for coarse median spaces,  which we formulate in the hierarchically hyperbolic setting as follows.

\begin{defn}[Coarse median quasiconvexity]
Let \( (\X,\mf{S}) \) be an HHS. A subset \(Y\) of \(\X\) is said to be \emph{\(Q\)--median quasiconvex} if for every \(y,y' \in Y\) and \(x \in \X\) we have \(\mf{m}(y,y', x) \in N_Q (Y)\).
\end{defn}

Behrstock-Hagen-Sisto showed that a hierarchically quasiconvex subset is median quasiconvex in  \cite[Proposition 7.12]{BHS17b}. Using Theorem \ref{thm: hierarchy paths to hulls}, we establish the converse.

\begin{prop}\label{prop:median_convexity}
Let $(\mc{X},\mf{S})$ be an HHS and $Y \subseteq \mc{X}$. $Y$ is $k$--hierarchically quasiconvex if and only if $Y$ is $Q$--median quasiconvex where $k$ and $Q$ each determines the other. 
\end{prop}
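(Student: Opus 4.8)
# Proof Proposal for Proposition \ref{prop:median_convexity}

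The plan is to prove the two implications separately. One direction — that hierarchically quasiconvex subsets are median quasiconvex — is already \cite[Proposition 7.12]{BHS17b}, so the real content is the converse: a $Q$--median quasiconvex subset $Y$ is $k$--hierarchically quasiconvex with $k$ determined by $Q$. By Proposition \ref{prop: quasiconvex implies HQC}, it suffices to show that $Y$ is ``quasiconvex with respect to hierarchy paths'', i.e. to produce a function $R\colon[1,\infty)\to[0,\infty)$, depending only on $Q$ and $\mf{S}$, such that every $\lambda$--hierarchy path with endpoints on $Y$ stays in the $R(\lambda)$--neighborhood of $Y$.

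\textbf{Main strategy.} First I would observe that $Q$--median quasiconvexity is closed under iteration in a controlled way: if $Y$ is $Q$--median quasiconvex, then for $x\in\mc{X}$ and $y_1,\dots,y_m\in Y$, the coarse center $\mf{m}(y_1,y_2,x)$ lies in $N_Q(Y)$, hence (up to enlarging the constant by the coarse Lipschitz constant of $\mf{m}$, Lemma \ref{lem:definition coarse median map}) $\mf{m}$ of any triple drawn from $N_Q(Y)\cup\{x\}$ lies in a uniform neighborhood of $Y$. Iterating $\chi$ times (the bound from Proposition \ref{prop:partial_ordering_of_domains}), one controls the coarse median hull of finitely many points of $Y$ together with a single point $x$. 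Concretely, I want to show: there is $Q'=Q'(Q,\mf{S})$ so that for any $y_1,\dots,y_m\in Y$, the iterated-median hull $M(y_1,\dots,y_m)$ (all points obtained by repeatedly taking $\mf{m}$ of triples) lies in $N_{Q'}(Y)$; this follows by induction on the number of median operations using $Q$--median quasiconvexity at each step, noting the process stabilizes after boundedly many steps by the coarse median axioms. The next ingredient is the connection between the coarse median hull and the hierarchically quasiconvex hull $H_\theta$: for a finite set of points, Bowditch's join construction / the coarse median hull is coarsely equal to $H_\theta$ — this is exactly what the paragraph after Theorem \ref{thm: hierarchy paths to hulls} advertises, and it can be extracted from Theorem \ref{thm: hierarchy paths to hulls} together with Proposition \ref{prop:hulls_finite_case} and Claim \ref{claim:inductive_step} (a point on a hierarchy path between $a$ and $b$ is coarsely a median $\mf{m}(a,b,\cdot)$, since hierarchy paths coarsely pass through medians in each $CU$). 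So a point $z\in H_\theta(x_1,\dots,x_n)$ lies in a uniform neighborhood of the coarse median hull $M(x_1,\dots,x_n)$.

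\textbf{Putting it together.} Given a $\lambda$--hierarchy path $\gamma$ from $y$ to $y'$ with $y,y'\in Y$ and a point $z=\gamma(t)$, I would argue $z$ lies coarsely in $H_\theta(y,y')$ (hierarchy paths between two points stay in the hull of those points — this is essentially the $n=2$ content and follows from hyperbolicity of each $CU$ and the definition of $H_\theta$), hence coarsely in $M(y,y')\subseteq N_{Q'}(Y)$, giving $z\in N_{R(\lambda)}(Y)$ with $R$ depending only on $Q$, $\lambda$, and $\mf{S}$. Then Proposition \ref{prop: quasiconvex implies HQC} upgrades this to $k$--hierarchical quasiconvexity with $k$ determined by $R$, hence by $Q$. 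For the quantitative claim that ``$k$ and $Q$ each determines the other'', the forward direction already gives $Q$ in terms of $k$ via \cite[Proposition 7.12]{BHS17b}, and the construction above gives $k$ in terms of $Q$.

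\textbf{Expected main obstacle.} The delicate point is making precise and uniform the statement that ``a point on a $\lambda$--hierarchy path between two points is coarsely a coarse median of a bounded-size set of points in $Y$'' — equivalently, that the hierarchically quasiconvex hull $H_\theta(Y)$ is coarsely contained in $N_{Q'}(Y)$ whenever $Y$ is $Q$--median quasiconvex. Theorem \ref{thm: hierarchy paths to hulls} reduces $H_\theta(Y)$ to $\Path^N_\lambda(Y)$, so the crux becomes: each iteration $\Path^1_\lambda(\cdot)$ only moves us a bounded amount outside $N_Q(Y)$, because interior points of a hierarchy path from $p$ to $q$ are within a uniform distance of $\mf{m}(p,q,\cdot)$-type points. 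Establishing this hierarchy-path-versus-median comparison uniformly (using that in each hyperbolic $CU$ a quasi-geodesic from $\pi_U(p)$ to $\pi_U(q)$ coarsely contains the median, then invoking the realization/uniqueness axioms to assemble a genuine point of $\mc{X}$) is where the real work lies; everything else is bookkeeping with the constants from the Summary of constants list and the distance formula.
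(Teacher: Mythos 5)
Your proposal does contain the key idea, but it is buried under machinery that the paper's argument does not need. The paper's proof is a short direct argument: take $x$ on a $\lambda$--hierarchy path $\gamma$ from $y_1$ to $y_2$ in $Y$. For each $U\in\mf{S}$, $\pi_U\circ\gamma$ is an unparameterized quasi-geodesic from $\pi_U(y_1)$ to $\pi_U(y_2)$, so $\pi_U(x)$ lies uniformly close to the geodesic side of the triangle with vertices $\pi_U(y_1),\pi_U(y_2),\pi_U(x)$, hence uniformly close to $\pi_U\bigl(\mf{m}(y_1,y_2,x)\bigr)$ (Lemma \ref{lem:definition coarse median map}). The distance formula then bounds $d_\mc{X}\bigl(x,\mf{m}(y_1,y_2,x)\bigr)$, median quasiconvexity of $Y$ puts $\mf{m}(y_1,y_2,x)$ in $N_Q(Y)$, and Proposition \ref{prop: quasiconvex implies HQC} finishes. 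That is the entire argument. You have this observation, but only as a parenthetical remark (``a point on a hierarchy path between $a$ and $b$ is coarsely a median $\mf{m}(a,b,\cdot)$''), and you route everything through iterated-median hulls, Theorem \ref{thm: hierarchy paths to hulls}, Proposition \ref{prop:hulls_finite_case}, and a comparison between $H_\theta$ and Bowditch's hull $M$ — none of which is used in the paper's proof, and most of which is heavier than the result itself. Your ``main strategy'' paragraph on closure of median quasiconvexity under iteration and controlling $M(y_1,\dots,y_m)$ for $m$ points is entirely superfluous: since $\gamma$ has only two endpoints on $Y$, a single application of $Q$--median quasiconvexity to the triple $(y_1,y_2,x)$ suffices. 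Finally, your ``expected main obstacle'' conflates two statements that are not equivalent: the two-point fact that a hierarchy-path point is coarsely $\mf{m}(p,q,\cdot)$ is local and elementary, whereas ``$H_\theta(Y)\subseteq N_{Q'}(Y)$'' is a global statement about the hull of all of $Y$ that the proof does not need (it is a \emph{consequence} of the proposition, via the subsequent corollary, not an input to it — using it here would risk circularity with the material that follows).
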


\begin{proof}
Let $Y$ be a $Q$--median quasiconvex subset of the HHS $(\mc{X},\mf{S})$ and $\gamma$ be a $\lambda$--hierarchy path with endpoints $y_1,y_2 \in Y$.   If $x \in \gamma$, then $d_U\bigl(x, \mf{m}(y_1,y_2,x) \bigr)$ is uniformly bounded in terms of $\lambda$ and $\mf{S}$ for each $U \in \mf{S}$. By  the distance formula (Theorem \ref{thm:distance_formula}), \(d_{\mc{X}}(x, \mf{m}(y_1,y_2,x) \bigr)\) is also uniformly bounded. Since \(Y\) is  median quasiconvex, this implies that there exist $R(\lambda)$ such that $d_\mc{X}(x,Y) \leq R(\lambda)$. In particular, $\gamma \subseteq N_{R(\lambda)}(Y)$ and $Y$ is $k$--hierarchically quasiconvex, with $k$ determined by $Q$, by Proposition \ref{prop: quasiconvex implies HQC}.
\end{proof}

If $Y \subseteq \mc{X}$, let $M(Y)$  denote the \emph{coarse median hull} defined in \cite[Proposition 6.2]{BowditchConvexity}. Proposition \ref{prop:median_convexity} implies the following corollary that extends \cite[Lemma 7.3]{BowditchConvexity} in the special case of hierarchically hyperbolic spaces.

\begin{cor}
Let $(\mc{X},\mf{S})$ be an HHS and $Y \subseteq \mc{X}$.  For each $\theta \geq \theta_0$, there exists $D$ depending only on $\theta$ and $\mf{S}$ such that \[d_{Haus}\left(H_\theta(Y),{M}(Y) \right) \leq D. \]
\end{cor}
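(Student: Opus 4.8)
The plan is to prove the corollary by sandwiching both hulls between hierarchically quasiconvex hulls with comparable parameters, using Proposition \ref{prop:median_convexity} as the bridge between the median-theoretic and hierarchically-hyperbolic worlds. First I would unwind the definition of the coarse median hull $M(Y)$ from \cite[Proposition 6.2]{BowditchConvexity}: it is (coarsely) the smallest $Q$--median quasiconvex set containing $Y$, for a uniform constant $Q$ depending only on the coarse median structure (equivalently, on $\mf{S}$). By Proposition \ref{prop:median_convexity}, a subset is $Q$--median quasiconvex if and only if it is $k$--hierarchically quasiconvex with $k$ and $Q$ determining each other; this identifies the class of median quasiconvex sets with the class of hierarchically quasiconvex sets, with matched parameters.

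The containment $H_\theta(Y) \subseteq N_D(M(Y))$ for an appropriate $D$ follows because $M(Y)$ is hierarchically quasiconvex (apply the ``only if'' direction of Proposition \ref{prop:median_convexity}) and contains $Y$, hence contains a coarse copy of the hull: since $M(Y)$ is $k'$--hierarchically quasiconvex and $Y \subseteq M(Y)$, we have $H_\theta(Y) \subseteq H_{\theta'}(M(Y))$ for some $\theta'$, and $H_{\theta'}(M(Y))$ coarsely coincides with $M(Y)$ by the standard fact (Lemma \ref{lem:hull} and hierarchical quasiconvexity) that the hull of a hierarchically quasiconvex set is coarsely itself. Conversely, $M(Y) \subseteq N_D(H_\theta(Y))$ follows because $H_\theta(Y)$ is $k$--hierarchically quasiconvex (Lemma \ref{lem:hull}), hence $Q$--median quasiconvex (the ``if'' direction of Proposition \ref{prop:median_convexity}), and contains $Y$; by the coarse minimality of $M(Y)$ among median quasiconvex supersets of $Y$, we get $M(Y) \subseteq N_D(H_\theta(Y))$ with $D$ depending only on $\theta$ and $\mf{S}$. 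Combining the two containments gives the bound on Hausdorff distance.

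The main obstacle I anticipate is the bookkeeping needed to extract from \cite[Proposition 6.2]{BowditchConvexity} the precise sense in which $M(Y)$ is ``coarsely the smallest'' median quasiconvex set containing $Y$ — in particular, checking that the ambiguity constant in Bowditch's construction depends only on the coarse median parameters and not on $Y$, and that ``smallest'' is robust enough to yield a one-sided containment up to uniformly bounded error. Once that is in hand, everything else is a routine translation through Proposition \ref{prop:median_convexity} and the elementary fact that hierarchically quasiconvex sets are coarsely equal to their own hulls, together with an application of Lemma \ref{lem:unique_hulls} to absorb the dependence on $\theta$ versus $\theta'$. I would write the argument in a couple of short paragraphs, citing Proposition \ref{prop:median_convexity}, Lemma \ref{lem:hull}, and Lemma \ref{lem:unique_hulls}, and keeping the constant-tracking informal.
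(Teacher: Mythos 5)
Your proposal is correct and follows essentially the same route as the paper: both directions pass through Proposition \ref{prop:median_convexity} as the dictionary between median quasiconvexity and hierarchical quasiconvexity, use Bowditch's coarse minimality of $M(Y)$ for one containment, and use the hierarchical quasiconvexity of $M(Y)$ for the other. The only cosmetic difference is that for the containment $H_\theta(Y)\subseteq N_D(M(Y))$ the paper invokes condition (2) of Definition \ref{defn:hierarchical_quasiconvexity} directly, whereas you factor through the equivalent observation that $H_{\theta'}(M(Y))$ coarsely coincides with $M(Y)$; these are the same argument in different clothing.
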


\begin{proof}
Let $Y \subseteq \mc{X}$ and $\theta \geq \theta_0$.  By Proposition \ref{prop:median_convexity}, $H_\theta(Y)$ is $Q_1$--median quasiconvex for some $Q_1$ depending on $\theta$ and $\mf{S}$. By \cite[Proposition 6.2]{BowditchConvexity} $M(Y)$ is $Q_2$--median quasiconvex, where $Q_2$ depends only on $\mf{S}$, and there exists $D_1$ depending on $\theta$ such that $M(Y) \subseteq N_{D_1}(H_\theta(Y))$.  By Proposition \ref{prop:median_convexity}, $M(Y)$ is $k$--hierarchically quasiconvex where $k$ depends only on $\mf{S}$. By the second condition in Definition \ref{defn:hierarchical_quasiconvexity}, there exists $D_2$ depending on $\theta$ and $\mf{S}$ such that $H_\theta(Y) \subseteq N_{D_2}(M(Y))$.
\end{proof}

\section{Characterization of {\sqc} subsets in HHSs}\label{sec:characterizing_quasiconvex_subsets}

We now turn our attention to the main objective of this paper, characterizing the {\sqc} subsets of hierarchically hyperbolic spaces. From now on we shall restrict our attention to HHSs with the \emph{bounded domain dichotomy}; a minor regularity condition satisfied by all HHGs as well as Teichm\"uller space with either the Weil-Petersson or Teichm\"uller metric and the fundamental groups of $3$--manifolds without Nil or Sol components.

\begin{defn}[Bounded domain dichotomy]\label{defn: bounded domain dichotomy}
A hierarchically hyperbolic space $(\mc{X},\mf{S})$ has the \emph{$B$--bounded domain
	dichotomy} if there exists $B>0$ such that for all $U \in\mf{S}$, if $\diam(CU)> B$, then $\diam(CU) = \infty$.
\end{defn}

The key to characterizing the {\sqc} subsets of hierarchically hyperbolic spaces is to determine what the projection of a {\sqc} subset to each of the associated hyperbolic spaces looks like. The property that characterizes the projection of {\sqc} subsets is the following orthogonal projection dichotomy.

\begin{defn}[Orthogonal projection dichotomy]\label{defn: orthogonal projection dichotomy}
For $B \geq 0$, a subset $Y$ of an HHS $(\mc{X},\mf{S})$ has the \emph{$B$--orthogonal projection dichotomy} if  for all $U,V\in\mf{S}$ with $U\perp V$, if $\diam(\pi_U(Y)) >B $ then $CV \subseteq N_B(\pi_V(Y))$.
\end{defn}

From now on, when we consider an HHS with the \(B_0\)--bounded domain dichotomy and a subspace with the \(B\)--orthogonal projection dichotomy, we will assume that \(B \geq B_0\).

We can now state our characterization of {\sqc} subsets of hierarchically hyperbolic spaces with the bounded domain dichotomy.

\begin{thm}[Characterization of {\sqcity}]\label{thm:classification_of_quasiconvex_subset}
Let $(\X,\mathfrak S)$ be a hierarchically hyperbolic space with the bounded domain dichotomy and $Y \subseteq \X$. Then the following are equivalent:
\begin{enumerate}
	\item \label{item:contracting} $Y$ is an $(A,D)$--contracting subset.
	\item \label{item:quadratic} The lower relative divergence of $\X$ with respect to $Y$ is at least quadratic.
	\item \label{item:superlinear} The lower relative divergence of $\X$ with respect to $Y$ is completely superlinear.
	\item \label{item:quasiconvex} $Y$ is $Q$--{\sqc}.
	\item \label{item:projections} $Y$ is $k$--hierarchically quasiconvex and has the $B$--orthogonal projection dichotomy.
\end{enumerate}
Moreover, the pair $(A,D)$ in part (\ref{item:contracting}), the convexity gauge $Q$ in part (\ref{item:quasiconvex}), and the pair $(k,B)$ in part (\ref{item:projections}) each determines the other two.
\end{thm}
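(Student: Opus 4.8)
\textbf{Proof strategy for Theorem \ref{thm:classification_of_quasiconvex_subset}.} The plan is to establish a cycle of implications, building on the quasi-geodesic results from Section \ref{sec: contracting and relative divergence}. The implication (\ref{item:contracting}) $\Rightarrow$ (\ref{item:quadratic}) is Proposition \ref{prop:contracting_implies_quad}; the implication (\ref{item:quadratic}) $\Rightarrow$ (\ref{item:superlinear}) is immediate from Definition \ref{superlinear-quadratic} (at least quadratic is a strengthening of completely superlinear, up to passing to the subsequence $\sigma_\rho^{Mn}$); and the equivalence (\ref{item:superlinear}) $\Leftrightarrow$ (\ref{item:quasiconvex}) is Theorem \ref{thm:quasiconvex_iff_superlinear}. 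So the genuinely new content is to close the cycle: from (\ref{item:quasiconvex}) deduce (\ref{item:projections}), and from (\ref{item:projections}) deduce (\ref{item:contracting}). Proposition \ref{prop: quasiconvex implies HQC} already gives that a $Q$--{\sqc} subset is $k$--hierarchically quasiconvex with $k$ determined by $Q$, so for (\ref{item:quasiconvex}) $\Rightarrow$ (\ref{item:projections}) it remains only to verify the $B$--orthogonal projection dichotomy. For this I would argue by contradiction using partial realization (Axiom (\ref{item:dfs_partial_realization})): if $U \perp V$, $\diam(\pi_U(Y))$ is large, but some point $p \in CV$ is far from $\pi_V(Y)$, then because $CV$ is unbounded (bounded domain dichotomy), one can find $y_1, y_2 \in Y$ whose projections to $CU$ are far apart, realize a point $w$ with $\pi_U(w) \approx \pi_U(y_1)$ and $\pi_V(w) \approx p$, and then build a quasi-geodesic from $y_1$ to $y_2$ that detours through $w$ (pushing the $CV$ coordinate out to $p$ and back while keeping all other coordinates controlled). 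Such a path has length comparable to $d_V(p, \pi_V(Y))$ plus $d_U(y_1,y_2)$, hence is a uniform quasi-geodesic with endpoints on $Y$ but passing through $w$, which sits at distance $\gtrsim d_V(p,\pi_V(Y))$ from $Y$ by the distance formula; taking $p$ far enough out violates $Q$--{\sqcity}.

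The heart of the theorem is (\ref{item:projections}) $\Rightarrow$ (\ref{item:contracting}): producing an $(A,D)$--contracting map onto a $k$--hierarchically quasiconvex $Y$ with the $B$--orthogonal projection dichotomy. The natural candidate is the gate map $\gate_Y$ from Lemma \ref{lem:gate}, which is already $(K,K)$--coarsely Lipschitz and coarsely fixes $Y$, so conditions (1) and (2) of Definition \ref{defn:contracting} are free. The work is condition (3): for $x \in \mc{X}$ and $R = A\, d(x,Y)$, show $\diam(\gate_Y(B_R(x)))$ is uniformly bounded. By the distance formula and Lemma \ref{lem:gate}(3), it suffices to bound $d_U(\mf{p}_{\pi_U(Y)}(\pi_U(x)), \mf{p}_{\pi_U(Y)}(\pi_U(x')))$ for all $U \in \mf{S}$ and all $x'$ with $d(x,x') \leq R$, with all but finitely many such domains contributing nothing. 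Fix a domain $U$ with $d_U(\gate_Y(x), \gate_Y(x'))$ large. In the hyperbolic space $CU$, the coarse closest-point projection onto the quasiconvex set $\pi_U(Y)$ is strongly contracting, so for the projections of $x$ and $x'$ to move apart by more than a threshold in $CU$, we need $d_U(x,Y)$ to be small compared to $d_U(x,x')$ — more precisely $d_U(x,x') \gtrsim d_U(x, \pi_U(Y))$. Thus $U$ belongs to a controlled set of "active" domains where $x$ is already close to $\pi_U(Y)$. The key point is to bound how far $x'$ can be from $\pi_U(Y)$: since $d(x,x') \leq R = A\,d(x,Y)$ and $d(x,Y) \asymp d(x,\gate_Y(x))$ by Lemma \ref{lem: closest_point_proj}, the total "budget" $\sum_{U} \ignore{d_U(x,Y)}{\sigma}$ is comparable to $d(x,Y)$, and choosing $A$ small forces the sum over active domains of $d_U(x,x')$ to be a small fraction of $d(x,Y)$.

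The main obstacle, and where the orthogonal projection dichotomy enters, is handling the interaction \emph{between} active domains via the bridge/product-region structure. Even if each individual active domain contributes a bounded amount to $d(\gate_Y(x),\gate_Y(x'))$, there could in principle be many of them; and more subtly, when $x$ is close to $\pi_U(Y)$ for an "active" $U$, it may be close to $\pi_V(Y)$ for many $V \perp U$ as well, and moving $x$ to $x'$ could shift the gate simultaneously in all of them. The orthogonal projection dichotomy says exactly that when $\diam(\pi_U(Y))$ is large, $\pi_V(Y)$ coarsely covers $CV$ for every $V \perp U$, so the gate to $Y$ in those orthogonal directions is essentially unconstrained (the gate agrees with $\pi_V(x)$ up to bounded error), hence moving $x$ a bounded amount moves the $V$--coordinate of the gate a bounded amount. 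Combining this with Theorem \ref{thm:parallel_gates} and Proposition \ref{prop:partial_ordering_of_domains} (to control the combinatorics of the collection of active domains and their orthogonal complements) bounds the total displacement. To extract the precise dependence statements — that $(A,D)$, $Q$, and $(k,B)$ each determine the other two — I would keep track of constants along the cycle: Proposition \ref{prop: quasiconvex implies HQC} and the partial-realization argument give $(k,B)$ from $Q$; the contracting argument gives $(A,D)$ from $(k,B)$ (and $\mf{S}$); Corollary \ref{cor:contracting implies quasiconvex} gives $Q$ from $(A,D)$; and since the cycle is closed, each determines the other two up to functions of $\mf{S}$.
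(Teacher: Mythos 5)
Your high-level plan is correct: the cycle $(1)\Rightarrow(2)\Rightarrow(3)\Rightarrow(4)$ follows from Section \ref{sec: contracting and relative divergence}, $(4)\Rightarrow$ hierarchical quasiconvexity is Proposition \ref{prop: quasiconvex implies HQC}, and the genuinely new content is $(4)\Rightarrow(5)$ and $(5)\Rightarrow(1)$. The paper's proof has exactly this shape. But there are two real gaps in your argument.

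For $(4)\Rightarrow(5)$: your ``detour through $w$'' path is not a uniform quasi-geodesic, and this is the crux of the whole implication. If you fix $y_1,y_2\in Y$ with $d_U(y_1,y_2)=D$ and push out to a point $w$ with $d_V(\pi_V(w),\pi_V(Y))=M$, the out-and-back path has length $\approx 2M+D$ while $d(y_1,y_2)\approx D$ (plus bounded contributions in other domains), so its quasi-geodesic multiplicative constant is $\approx (2M+D)/D$, which blows up as you ``take $p$ far enough out.'' You therefore cannot conclude a violation of $Q$--strong quasiconvexity from a uniform gauge $Q(K,L)$. What you actually get from this naive path is $M\lesssim \diam(\pi_U(Y))$, which is the wrong shape of bound: the orthogonal projection dichotomy needs a \emph{fixed} $B$, not a neighborhood growing with $\diam(\pi_U(Y))$. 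The paper circumvents this with the \emph{spiral path} construction (Definition \ref{defn: spiral path}, Lemma \ref{lem: spiral paths are quasi-geodesics}): a concatenation of segments in $\PF_U\times\PE_U$ in which each interior segment is at least $N$ times longer than the previous one. The slope condition is exactly what forces the concatenation to be a quasi-geodesic with constants depending only on the product structure, not on how far out it reaches. One then fixes $r$ large enough to beat the strong quasiconvexity bound \emph{first}, sets $B=f(r)$, and transports the resulting quasi-geodesic from $\P_U$ to one with endpoints on $Y$ via the active-subpath substitution (Lemma \ref{lem: substituting quasi-geodesic}) and the bridge theorem (Theorem \ref{thm:parallel_gates}). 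None of these ingredients — the slope condition, fixing the target radius before the threshold $B$, the substitution lemma — appear in your sketch, and without them the contradiction does not close. (You also omit the reduction to $U\in\mf{S}^*$ using the bounded domain dichotomy, which is needed to handle domains whose orthogonal complement sees only bounded hyperbolic spaces.)

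For $(5)\Rightarrow(1)$: your ``budget'' argument is too diffuse to close. Knowing $\sum_U\ignore{d_U(x,x')}{\sigma}\lesssim A\,d(x,Y)$ bounds the \emph{total} movement of $x$, but it does not by itself bound the movement of $\gate_Y$ in any particular domain, and you have not explained how to aggregate domain-by-domain estimates into a bound on $d(\gate_Y(x),\gate_Y(x'))$. The paper's argument (Proposition \ref{prop: HQC + Dichotomy implies contracting}) works differently: it isolates a single witnessing domain $V$ satisfying $d_V(x_0,\gate_Y(x_0))>d_V(x_0,x)+L$ (whose existence follows from the distance formula once $A$ is small), and then bounds $d_U(\gate_Y(x_0),\gate_Y(x))$ for every $U$ by casing on the relation of $U$ to $V$ — nesting, transversality, and orthogonality — using BGI, consistency, and, in the orthogonal case, the projection dichotomy to derive an outright contradiction with $d_V(x_0,\gate_Y(x_0))$ being large. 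Your intuition that the dichotomy forces the gate in orthogonal directions to be essentially unconstrained is the right one, but the mechanism is a single-witness case analysis, not a global budget bound.
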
 

The work in Section \ref{sec: contracting and relative divergence}, showed that the implications \[(\ref{item:contracting}) \implies (\ref{item:quadratic}) \implies ( \ref{item:superlinear}) \implies (\ref{item:quasiconvex})\] hold in any quasi-geodesic space and that the pair $(A,D)$ determines $Q$. Further Proposition \ref{prop: quasiconvex implies HQC} showed that every $Q$--{\sqc} subset of a hierarchically hyperbolic space is $k$--hierarchically quasiconvex with $Q$ determining $k$. Thus in the next two subsections, we only need to prove the following:
\begin{itemize}
\item If $Y$ is $Q$--{\sqc}, then there exists $B>0$ determined by $Q$ such that $Y$ has the $B$--orthogonal projection dichotomy (Section \ref{subsec: quasiconvex subsets have the orthogonal projection dichotomy}).
\item If $Y$ is $k$--hierarchically quasiconvex and has the $B$--orthogonal projection dichotomy, then $Y$ is $(A,D)$--contracting where $(A,D)$ is determined by $(k,B)$ (Section \ref{subsec: Contracting Subsets in HHSs}).
\end{itemize}

Before beginning the proof, we record of the following corollary to Theorem \ref{thm:classification_of_quasiconvex_subset} that  allows us to characterize stable embeddings.

\begin{cor}\label{cor:classification_of_stable_subsets}
Let $(\mc{X},\mf{S})$ be an HHS with the bounded domain dichotomy and let $i:Y \to \mc{X}$ be a quasi-isometric embedding from a uniform quasi-geodesic space $Y$ to $\mc{X}$. The following are equivalent:
\begin{enumerate}
	\item $i$ is a stable embedding.
	\item $Z=i(Y)$ is hierarchically quasiconvex and there exists a $B>0$ such that for all $U,V\in\mf{S}$ with $U\perp V$, if $\diam(\pi_U(Z))>B$, then $\diam(CV)<B$.
\end{enumerate}
\end{cor}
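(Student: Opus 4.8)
The plan is to deduce Corollary~\ref{cor:classification_of_stable_subsets} from Theorem~\ref{thm:classification_of_quasiconvex_subset} and Proposition~\ref{defn:stable}, which together reduce stability to the combination of hierarchical quasiconvexity, the orthogonal projection dichotomy, and hyperbolicity of the subspace. The key observation is that, for a hierarchically quasiconvex subset $Z$ with the $B$--orthogonal projection dichotomy in an HHS with the bounded domain dichotomy, the failure of hyperbolicity of $Z$ is detected by the existence of a pair $U\perp V$ with $\pi_U(Z)$ and $\pi_V(Z)$ both of infinite diameter: in that case $Z$ coarsely contains a product region (via partial realization, Axiom~\ref{item:dfs_partial_realization}) and hence contains a coarsely isometrically embedded flat, so it cannot be hyperbolic. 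Conversely, if no such pair exists, then for every orthogonal pair at most one of the two projections is unbounded, and one shows $Z$ is hyperbolic. So the substantive content to verify is the equivalence, for hierarchically quasiconvex $Z$, between ``$Z$ is hyperbolic'' and ``there is no orthogonal pair $U\perp V$ with both $\diam(\pi_U(Z))=\infty$ and $\diam(\pi_V(Z))=\infty$.''

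First I would record the easy direction. Suppose $i$ is a stable embedding. By Proposition~\ref{defn:stable}, $Z=i(Y)$ is {\sqc} and $Y$ (hence $Z$, with the induced metric) is hyperbolic. By Theorem~\ref{thm:classification_of_quasiconvex_subset}, $Z$ is $k$--hierarchically quasiconvex and has the $B'$--orthogonal projection dichotomy for some $B'$. Now suppose for contradiction there were $U\perp V$ with both $\diam(\pi_U(Z))$ and $\diam(\pi_V(Z))$ infinite (note that by the bounded domain dichotomy, ``$>B$'' can be upgraded to ``infinite'' after enlarging $B$). Since $Z$ is itself an HHS with structure $\mf{S}$ restricted to $Z$ (Theorem~\cite[Proposition 5.6]{BHS17b}) and $U\perp V$ with both domains of infinite diameter, the standard product region $\P_U$ inside $Z$ contains a quasi-isometrically embedded copy of $\pi_U(Z)\times\pi_V(Z)$ via $\phi_U$, which is unbounded in two ``orthogonal directions'' and therefore not hyperbolic; this contradicts hyperbolicity of $Z$. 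Choosing $B$ to be the maximum of $B'$, the bounded-domain-dichotomy constant, and this product-region constant gives (2).

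For the converse, assume (2): $Z$ is $k$--hierarchically quasiconvex and there is $B$ with the property that whenever $U\perp V$ and $\diam(\pi_U(Z))>B$, then $\diam(CV)<B$. By the bounded domain dichotomy, after enlarging $B$ we may assume $\diam(CV)<B$ forces $\diam(\pi_V(Z))<B$ as well; in particular no orthogonal pair has both projections unbounded. Since $Z$ is an HHS with the induced structure, I want to conclude $Z$ is hyperbolic. The cleanest route is: an HHS is hyperbolic if and only if there is a uniform bound on $\diam(\pi_W(\cdot))$ over all $W$ in the structure that can be ``orthogonal to an unbounded domain'' — more precisely, one uses the criterion that an HHS is hyperbolic iff no two orthogonal domains both have unbounded image (this is essentially \cite[Corollary 2.16]{BHS17b} / the characterization of hyperbolic HHSs, combined with the bounded domain dichotomy to discard uniformly bounded domains). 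Applying this to $(Z,\mf{S})$ with hypothesis (2) gives that $Z$ is hyperbolic, so $Z$ is {\sqc} (it is hierarchically quasiconvex with the orthogonal projection dichotomy — the dichotomy holding trivially once every relevant $CV$ is uniformly bounded) and hyperbolic, hence by Proposition~\ref{defn:stable} the inclusion is a stable embedding, and therefore so is $i$.

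The main obstacle is making precise the equivalence ``$(Z,\mf{S})$ hyperbolic $\iff$ no orthogonal pair with both projections unbounded'' with uniform constants, and keeping track that all constants depend only on the HHS data, $k$, and $B$. One must be careful that the product region argument really produces a subset of $Z$ (not just of $\mc{X}$): this uses that a hierarchically quasiconvex subset is an HHS with the restricted structure and that partial realization (Axiom~\ref{item:dfs_partial_realization}) can be applied within $Z$ using points $p_U\in\pi_U(Z)$, $p_V\in\pi_V(Z)$, together with the distance formula to see the resulting quasi-flat is genuinely two-dimensional and unbounded. The reverse implication should cite the existing characterization of hyperbolic HHSs rather than reprove it; I would locate the precise statement in \cite{BHS17b} and invoke it, with the bounded domain dichotomy used exactly to ignore the uniformly bounded domains that are harmless.
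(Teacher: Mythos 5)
Your proof is essentially correct and converges on the paper's argument, which combines Proposition~\ref{defn:stable}, Theorem~\ref{thm:classification_of_quasiconvex_subset}, the inherited HHS structure on $Z$ (\cite[Proposition 5.6]{BHS17b}), and the characterization of hyperbolic HHSs from \cite[Corollary 2.16]{BHS17c} (note the citation: it is BHS17c, not BHS17b). Your initial sketch re-deriving one implication of that characterization via product regions and quasi-flats is an unnecessary detour --- since \cite[Corollary 2.16]{BHS17c} is already an if-and-only-if it disposes of both directions of the hyperbolicity step at once, which is precisely what the paper does; otherwise your handling of the $\diam(CV)$ versus $\diam(\pi_V(Z))$ bookkeeping via the orthogonal projection dichotomy and the bounded domain dichotomy is the same as in the paper.
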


\begin{proof}
By \cite[Corollary 2.16]{BHS_HHS_Quasiflats}, an HHS $(\mc{Z},\mf{T})$ is hyperbolic if and only if there exists $B$ such that for all $U,V\in \mf{T}$ with $U\perp V$, either $\diam\bigl(\pi_U(\mc{Z})\bigr) <B$ or $\diam\bigl(\pi_V(\mc{Z})\bigr) <B.$ By Proposition \ref{defn:stable}, $i$ is a stable embedding if and only if the image $Z=i(Y)$ is {\sqc} in $\mc{X}$ and hyperbolic. The equivalence follows from these observations and the fact that hierarchically quasiconvex subsets inherit the hierarchy structure from the ambient space as described in \cite[Proposition 5.6]{BHS17b}.
\end{proof}

Corollary \ref{cor:classification_of_stable_subsets} should be compared with \cite[Corollary 6.2]{ABD}. If $(\mc{X},\mf{S})$ has extra assumption of unbounded products required in \cite[Corollary 6.2]{ABD}, then  Corollary \ref{cor:classification_of_stable_subsets} can be immediately improved to \cite[Corollary 6.2]{ABD}. However, Corollary \ref{cor:classification_of_stable_subsets} is a strict expansion of \cite[Corollary 6.2]{ABD} as many HHS structures do not have unbounded products. Naturally occurring HHS structures without unbounded products can be found in right angled Coxeter groups and the Weil-Petersson metric on Teichm\"uller space. We briefly describe these structures in Section \ref{sec: Examples}.


\subsection{{\SQC} subsets have the orthogonal projection dichotomy}\label{subsec: quasiconvex subsets have the orthogonal projection dichotomy}

In this subsection, we provide the implication (\ref{item:quasiconvex}) to (\ref{item:projections}) in Theorem \ref{thm:classification_of_quasiconvex_subset}.
Our focus will be on studying the following set of domains.
\begin{defn}\label{def:S_star}
Define $\mf{S}^*$ to be the set of domains $U \in \mf{S}$ such that $\diam(CU) = \infty$ and there exists $V \in \mf{S}_U^\perp$ such that $\diam(CV) = \infty$.
\end{defn}

For each $U \in \mf{S}^*$ we have that both factors of the product region $\P_U$ have infinite diameter. In particular, if $\mf{S}^* = \emptyset$ and $\mf{S}$ has the bounded domain dichotomy, then $(\mc{X},\mf{S})$ is hyperbolic by \cite[Corollary 2.16]{BHS_HHS_Quasiflats}. Thus the intuition for restricting our attention to these domains is that the domains in $\mf{S}^*$ are the source of non-hyperbolic behavior in $(\mc{X},\mf{S})$.

The crucial step to proving {\sqc} subsets have the orthogonal projection dichotomy is the following proposition that establishes a sort of orthogonal projection dichotomy for the product regions of domains in $\mf{S}^*$.

\begin{prop}\label{prop:ortho_proj_for_product_regions}
Let $(\mc{X},\mf{S})$ be an HHS with the bounded domain dichotomy and \(Y\subseteq \mc{X}\) be a \(Q\)--{\sqc} subset. There is a constant $B_0>0$ depending on $\mf{S}$ and \(Q\) such that for all $B\geq B_0$ and  \(U \in \mf{S}^*\) we have \[\diam(\pi_U(Y)) > B \implies \P_U \subseteq N_B (\gate_{\P_U}(Y)).\]

\end{prop}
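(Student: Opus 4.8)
The plan is to argue by contradiction using the characterization of {\sqc} subsets via divergence (Theorem~\ref{thm:quasiconvex_iff_superlinear}, i.e. completely superlinear divergence), which transfers to the subset $Y$ the obstruction that a product region $\P_U$ with $U \in \mf{S}^*$ has linear divergence. Fix $U \in \mf{S}^*$, so both $\PF_U$ and $\PE_U$ have infinite diameter. Suppose $\diam(\pi_U(Y)) > B$ for a large $B$ but there is a point $p \in \P_U$ with $d(p, \gate_{\P_U}(Y)) > B$. Since $\diam(\pi_U(Y))$ is large, the {\sqc} hypothesis (via Proposition~\ref{prop: quasiconvex implies HQC}) tells us $Y$ is hierarchically quasiconvex, so $\pi_U(Y)$ is quasiconvex in $CU$ and coarsely equals the $\pi_U$--image of $\gate_{\P_U}(Y)$; I would first show that $p$ being far from $\gate_{\P_U}(Y)$ forces the ``far from $Y$'' distance to be realized in the $\PF_U$ or $\PE_U$ factor direction — more precisely, using the product structure $\P_U \cong \PF_U \times \PE_U$ and the gate formula of Lemma~\ref{lem:gate}, the discrepancy $d(p,\gate_{\P_U}(Y))$ is coarsely the sum over domains $V \nest U$ or $V \perp U$ of the corresponding projection discrepancies, so at least one factor contributes a definite fraction of $B$.

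The main step is then to exploit the product structure to travel ``around'' $Y$ cheaply. Because $U \in \mf{S}^*$ there is $W \perp U$ with $\diam(CW) = \infty$; combined with $\diam(\pi_U(Y)) > B$, I would pick two points $y_1, y_2 \in Y$ with $d_U(y_1,y_2)$ large, gate them into $\P_U$, and then inside $\P_U \cong \PF_U \times \PE_U$ use a path that first moves in the $\PE_U$-direction out past the point $p$, then moves in the $\PF_U$-direction (which changes only the $\P_U$-coordinate nested in $U$, hence covers the large $d_U$ separation between the gated images of $y_1$ and $y_2$), then moves back in the $\PE_U$-direction. This constructs a quasi-geodesic (a concatenation of hierarchy paths in the two factors, which is a uniform quasi-geodesic by the distance formula applied to the product, exactly as in the bridge theorem Theorem~\ref{thm:parallel_gates}) joining two points of $Y$ whose midsection stays at distance $\asymp B$ from $Y$ while the endpoints are arbitrarily far apart. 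This contradicts $Q$--{\sqcity} once $B$ exceeds the value of the convexity gauge $Q$ evaluated at the (uniform) quasi-geodesic constants of such concatenations.

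The hardest part will be the bookkeeping showing that the concatenated path stays uniformly far from $Y$: one must rule out that some hierarchy path from $y_1$ to $y_2$ re-enters a neighborhood of $Y$ through a domain transverse to $U$ or outside $\mf{S}_U \cup \mf{S}_U^\perp$. For this I would use the consistency axiom (Definition~\ref{defn:HHS}.(\ref{item:dfs_transversal})) and the defining property of $\P_U$ (Definition~\ref{const:embedding_product_regions}): any point of $\P_U$ projects within $\mu$ of $\rho_V^U$ in every domain $V$ with $U \nest V$ or $U \trans V$, so the only domains where our path can have large projections are those $V$ with $V \nest U$ or $V \perp U$, and on those we have direct control via the chosen coordinates in $\PF_U \times \PE_U$. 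Comparing the projection of our path to the projection of $Y$ domain-by-domain and invoking the distance formula then yields the uniform lower bound $d(\cdot, Y) \succeq B$ on the midsection, completing the contradiction. One should also verify that the relevant quasi-geodesic constants depend only on $\mf{S}$ (and not on $B$ or $U$), which follows since there are finitely many orbits of domains in the HHG case and, more generally, since all the constants in the distance formula, the gate lemma, and the bridge theorem are uniform over $\mf{S}$.
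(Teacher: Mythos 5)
Your high-level strategy is the same as the paper's: build a uniform quasi-geodesic related to $Y$ that travels far from $Y$ by exploiting the product structure $\P_U\cong\PF_U\times\PE_U$ and then contradict the $Q$--convexity gauge. However, three of the essential technical steps are asserted rather than argued, and as stated they would not go through.

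First, a concatenation of hierarchy paths in the two factors of $\P_U$ is \emph{not} automatically a uniform quasi-geodesic ``by the distance formula applied to the product.'' This is precisely what Lemma~\ref{lem: spiral paths are quasi-geodesics} establishes, and it needs a slope condition: each segment must be definitely longer than the sum of the preceding ones. Your plan of going ``out past $p$'' then ``across'' then ``back'' does not control this ratio: $p$ can be arbitrarily far from $\gate_{\P_U}(Y)$ while $\diam(\pi_U(Y))$ exceeds $B$ only barely, so the out/back segments can dwarf the across segment and the concatenation then has unbounded quasi-geodesic constants. (The paper avoids this by selecting a point $z$ at controlled distance $\approx r$ from $\gate_{\P_U}(Y)$, not an arbitrary far point, and then building a spiral of fixed slope $N>4L^2$ whose segments grow geometrically; the requisite size of $\diam(\pi_U(Y))$ is encoded in the function $f(r)$.) Second, the spiral you construct has endpoints $\gate_{\P_U}(y_1),\gate_{\P_U}(y_2)\in\P_U$, not in $Y$, so it does not directly contradict {\sqcity}. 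Promoting it to a uniform-constant quasi-geodesic with endpoints in $Y$ is the content of Lemma~\ref{lem: substituting quasi-geodesic}, which invokes the active subpath proposition (Proposition~\ref{prop: active subpath}); this is a real step your proposal does not address. Third, being far from $\gate_{\P_U}(Y)$ does not by itself imply being far from $Y$ — that is where the bridge theorem (Theorem~\ref{thm:parallel_gates}\ref{item:take_the_bridge}) actually enters, giving $d(x,Y)\succeq d(x,\gate_{\P_U}(Y))$ for $x\in\P_U$. You cite the bridge theorem only for the quasi-geodesic assertion (where it does not apply), not for this distance estimate, and your substitute argument via consistency and the defining property of $\P_U$ does not compare against the projections of $Y$ itself, so it cannot produce the needed lower bound.
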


Since \(U\) is in \( \mf{S}^\ast\), the product region \(\P_U\) coarsely coincides with the product of two infinite diameter metric spaces. The proof of Proposition \ref{prop:ortho_proj_for_product_regions} is therefore motivated by the situation described in Figure \ref{fig: Example of spiral path in R2}. Namely, if $Y$ is a subset of the product of two infinite-diameter metric spaces, then either \(Y\) coarsely coincide with the whole product or there exists a quasi-geodesic $\gamma$ with endpoints on $Y$ and fixed constants such that there are points of \(\gamma\) whose distance to \(Y\) is comparable to $\diam(Y)$. Thus if $Y$ is $Q$--{\sqc}, then either $Y$ has bounded diameter or it coarsely covers the entire product.
\begin{figure}[H] \begin{tikzpicture}[line cap=round,line join=round,>=triangle 45,x=1cm,y=1cm]
	\draw [line width=1.5pt,color=blue] (-2.5,-1)-- (2.5,-1);
	\draw [line width=1pt] (-1.5,-1)-- (-1.5,1.5);
	\draw [line width=1pt] (-1.5,1.5)-- (1.5,1.5);
	\draw [line width=1pt] (1.5,1.5)-- (1.5,-1);
	\draw [line width=1.5pt,dashed, color=blue] (-2.5,-1)-- (-3.4,-1);
	\draw [line width=1.5pt,dashed, color=blue] (2.5,-1)-- (3.4,-1);
	\begin{scriptsize}
		\draw [fill=black] (-1.5,-1) circle (1.5pt);
		\draw [fill=black] (1.5,-1) circle (1.5pt);
		\draw [color=blue] (0,-.7) node {$Y$};
		\draw (0,1.3) node{$\gamma$};
	\end{scriptsize}
\end{tikzpicture}\caption{\label{fig: Example of spiral path in R2}  In \(\field{R}^2\) (equipped with the \(\ell_1\)-metric)  consider \(Y\) to be the \(x\)-axis. Let \(\gamma\) be the  $(3,0)$--quasi-geodesic consisting of three sides of a square with the fourth side on \(Y\). While the quasi-geodesic constants do no change, increasing the distance between the endpoints of \(\gamma\) produces points of \(\gamma\) arbitrarily far away from \(Y\). }
\end{figure}
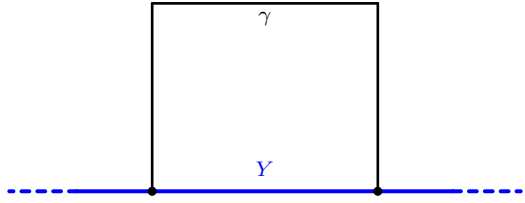

In Proposition \ref{prop: Hung's construction}, we prove that a similar situation holds for \(\P_U\). We show if  $\diam(\pi_U(Y))$ is sufficiently large and \(Y\) does not coarsely coincide with \(\P_U\), then we can find a uniform constant quasi-geodesic with endpoints on $\gate_{\P_U}(Y)$ that contains points relatively far from $\gate_{\P_U}(Y)$. To utilize this to prove Proposition \ref{prop:ortho_proj_for_product_regions}, we must promote this statement on \(\gate_{\P_U}(Y)\) to a statement on \(Y\). Specifically, we show that we can realize every quasi-geodesic of \(\P_U\) with endpoints on \(\gate_{\P_U}(Y)\) as a segment of a quasi-geodesic with endpoints on \(Y\), while maintaining uniform quasi-geodesic constants (Lemma \ref{lem: substituting quasi-geodesic}). This yields a quasi-geodesic with endpoints on \(Y\) that contains a point \(x\) of \(\P_U\) such that $d_\mc{X}(x,\gate_{\P_U}(Y))$ is comparable with \(\diam(\gate_{\P_U}(Y))\). If \(Y\) is {\sqc}, the bridge theorem (Theorem \ref{thm:parallel_gates}) implies that $d_\mc{X}(x,\gate_{\P_U}(Y))$ also provides a lower bound on the distance between \(x\) and \(Y\). However, since \(Y\) is {\sqc}, the distance between \(x\) and \(Y\) is uniformly bounded. Hence, if \(Y\) does not coarsely cover \(\P_U\), we obtain that \(\gate_{\P_U}(Y)\) must have bounded diameter which contradicts the assumption on $\diam(\pi_U(Y))$.

We  begin by describing a particularly nice class of paths in product spaces and show that they are quasi-geodesics (Lemma \ref{lem: spiral paths are quasi-geodesics}).

\begin{defn}[Spiral path]\label{defn: spiral path} 
Let \(X\) and \(Y\) be \((K,L)\)--quasi geodesic metric spaces, and let \(Z = X\times Y\) be equipped with the \(\ell_1\)--metric.
A \emph{spiral path} \(\gamma\) in \(Z\) is the concatenation \(\gamma = \gamma_1 \ast \cdots \ast \gamma_n\) of \((K,L)\)--quasi-geodesic of \(Z\) satisfying the following. 
\begin{itemize}
	\item Every \(\gamma_i\) is of the form \(\eta \times c_{y}\) or \(c_{x}\times \delta\) where \(\eta\) (resp. \(\delta\)) is a \((K,L)\)--quasi-geodesic of \(X\) (resp. \(Y\)) and \(c_{x_0}\) (resp. \(c_{y_0}\)) is the constant function with value \(x_0 \in X\) (resp. \(y_0 \in Y\)).
	\item For every \(i\), if  \(\gamma_i\) is constant on the $X$ (resp. $Y$) factor of $Z = X\times Y$, then  \(\gamma_{i+1}\) is constant on the $Y$ (resp. $X$) component of $Z = X \times Y$. 
\end{itemize}
A spiral path \(\gamma= \gamma_1 \ast \cdots \ast \gamma_n\) has \emph{slope \(N\)} if for every \(i\in \{1, \dots , n-2\}\) we have:
\[d(\gamma_{i+1}^+, \gamma_{i+1}^-) \geq N d(\gamma_i^+, \gamma_i^-),\] where \(\gamma_j^\pm\) are the endpoints of \(\gamma_j\). Note that the distance between the endpoints of \(\gamma_n\) can be arbitrary. 
\end{defn}

\begin{lem}[Spiral paths are quasi-geodesics]\label{lem: spiral paths are quasi-geodesics}
For each \(K\geq 1\), \(L\geq 0\) there are constants \(K', L'\) such that the following holds. 
Let \(X, Y\) be \((K,L)\)--quasi-geodesic metric space. If \(\gamma= \gamma_1 \ast \cdots \ast \gamma_n\) is a spiral path of slope \(N>4K^2\) in \(Z=X \times Y\), such that the endpoints of \(\gamma_1\) are at least \(3K^2L +1\) far apart, then \(\gamma\) is a \((K', L')\)--quasi-geodesic of \(X \times Y\).
\end{lem}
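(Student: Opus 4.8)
The plan is to prove that a spiral path of sufficiently large slope is a quasi-geodesic by showing that the lengths of the segments $\gamma_i$ grow geometrically, so that the total length of $\gamma$ is comparable to the length of its last (longest) segment, which in turn controls the endpoint distance from below. Throughout, write $\ell_i = d(\gamma_i^+,\gamma_i^-)$ for the ``size'' of the $i$-th segment. The slope condition gives $\ell_{i+1} \geq N\ell_{i-1}$ for $i \in \{1,\dots,n-2\}$, and since $N > 4K^2 > 1$ this forces the odd-indexed sizes and the even-indexed sizes each to grow geometrically. Combined with the hypothesis $\ell_1 \geq 3K^2 L + 1$, I would first observe that for every $i$ the quasi-geodesic constants give $\mathrm{len}(\gamma_i) \leq K\ell_i + L \leq (K + \tfrac{1}{3K})\ell_i \leq 2K\ell_i$ (using $\ell_i \geq \ell_1 \geq 3K^2L+1$ when $i$ is not the last index; the last segment needs separate, easier treatment since its length may be small but then it contributes negligibly, or one bounds $\mathrm{len}(\gamma_n) \leq K\ell_n + L$ directly). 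The key summation step: $\mathrm{len}(\gamma) = \sum_i \mathrm{len}(\gamma_i) \leq 2K \sum_i \ell_i$, and since consecutive sizes with a fixed parity satisfy $\ell_{j-2} \leq \frac{1}{N}\ell_j$, the geometric series gives $\sum_i \ell_i \leq C(\ell_{n-1} + \ell_n)$ for a constant $C$ depending only on $N$ (hence only on $K$), and then $\ell_{n-1} \leq \frac{1}{N}\ell_{n+1}$... wait, $\ell_{n+1}$ does not exist, so one bounds $\ell_{n-1} \le \ell_n / 1$ is false in general — instead I keep both $\ell_{n-1}$ and $\ell_n$ in the bound, noting $\ell_{n-1}$ is controlled by the slope relation $\ell_n \geq N \ell_{n-2}$ only. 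So the cleaner statement is $\sum_i \ell_i \leq C(\ell_{n-1}+\ell_n)$.

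The next, and main, task is the lower bound on $d(\gamma^-,\gamma^+)$ where $\gamma^- = \gamma_1^-$ and $\gamma^+ = \gamma_n^+$. Here I would use the product structure crucially. Because $Z = X \times Y$ carries the $\ell_1$-metric and the segments alternate between moving in the $X$-factor and moving in the $Y$-factor, consider the two "last" moves: $\gamma_{n-1}$ (say it moves in the $X$-factor) and $\gamma_n$ (moves in the $Y$-factor), or vice versa. The point is that all segments $\gamma_1,\dots,\gamma_{n-2}$ together displace the $X$-coordinate and $Y$-coordinate by amounts that are, by the geometric-growth estimate, much smaller than $\ell_{n-1}$ and $\ell_n$ respectively. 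Concretely: the total $X$-displacement of $\gamma_1,\dots,\gamma_{n-2}$ is at most $\sum_{i \le n-2,\ i \equiv n-1 \ (2)} \mathrm{len}(\gamma_i) \leq 2K \sum \ell_i \leq 2K \cdot \frac{1}{N}(\text{something} \cdot \ell_{n-1})$, which since $N > 4K^2$ is at most $\tfrac{1}{2}\ell_{n-1}$ for $N$ chosen large enough (this is where $N > 4K^2$ is used quantitatively). Therefore the $X$-coordinate of $\gamma^+$ differs from the $X$-coordinate of $\gamma_{n-1}^-$ by at least $\ell_{n-1} - \tfrac12 \ell_{n-1} = \tfrac12\ell_{n-1}$, and similarly the $Y$-coordinate of $\gamma^+$ differs from that of $\gamma^-$ in the $Y$-direction by at least $\tfrac12\ell_n$ minus lower-order terms. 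Adding the two coordinate displacements (the $\ell_1$-metric!) gives $d(\gamma^-,\gamma^+) \geq \tfrac14(\ell_{n-1} + \ell_n) - (\text{lower order})$. The lower-order terms are themselves bounded by a fixed fraction of $\ell_{n-1} + \ell_n$, so $d(\gamma^-,\gamma^+) \gtrsim \ell_{n-1} + \ell_n$.

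Combining the two estimates: for any two points $p = \gamma(s)$, $q = \gamma(t)$ on $\gamma$, the subpath from $p$ to $q$ is itself (essentially) a spiral path of slope $N$ — one has to be slightly careful that restricting to a subpath truncates the first and last segments, but truncation only decreases sizes and the slope condition among the interior segments is preserved, and if the truncated first segment becomes too short one absorbs it into an additive constant — so the same argument gives $\mathrm{len}(\gamma|_{[s,t]}) \asymp d(\gamma(s),\gamma(t))$ up to multiplicative and additive constants depending only on $K$, $L$ (through $N$, which we may take to be a fixed function of $K$, e.g. $N = 5K^2$, in the statement, although here $N$ is given and we only need $N > 4K^2$). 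Since $\gamma$ restricted to each $\gamma_i$ is a $(K,L)$-quasi-geodesic, the arc-length parametrization comparison gives that $\gamma$ with its natural parametrization is a $(K',L')$-quasi-geodesic with $K',L'$ depending only on $K$ and $L$.

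I expect the main obstacle to be the bookkeeping at the two ends of the spiral: the first segment $\gamma_1$ is the smallest, so it is the one most likely to be shorter than the additive constant $L$ after truncation (handled by the hypothesis $\ell_1 \geq 3K^2L + 1$ and by absorbing truncated pieces into additive error), and the last segment $\gamma_n$ has unconstrained size, so the geometric-series bound must be phrased to keep both $\ell_{n-1}$ and $\ell_n$ as "free" top terms rather than collapsing everything onto a single largest term. Getting the direction of the inequalities right — sizes grow with the index, so one sums a geometric series dominated by its last terms — and correctly exploiting the $\ell_1$-metric to add displacements in orthogonal factors, are the two conceptual points; everything else is routine estimation with the quasi-geodesic constants.
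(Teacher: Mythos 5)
Your approach is essentially correct and gives a valid proof, though organized differently from the paper's. Both proofs rest on (i) the geometric growth of the sizes \(\ell_i\) forced by the slope condition, so that \(\sum_{i<m}\ell_i \lesssim \tfrac{1}{N}\ell_m\), and (ii) the \(\ell_1\)-metric on \(X\times Y\), which lets one add displacements in orthogonal factors. You apply (ii) \emph{globally}, tracking cumulative \(X\)- and \(Y\)-displacements of the whole path to conclude \(d(\gamma^-,\gamma^+) \gtrsim \ell_{n-1}+\ell_n\), comparing with the total parameter length, and then reducing arbitrary pairs \(\gamma(s),\gamma(t)\) to this case by truncation. The paper applies (ii) \emph{locally}: each concatenation \(\gamma_i \ast \gamma_{i+1}\) is automatically a \((K,2L)\)-quasi-geodesic of \(Z\) because its two pieces move in orthogonal factors; then for arbitrary \(t_1 < t_2\) with \(t_2 \in [a_j, a_{j+1}]\), it bounds \(d(\gamma(t_1),\gamma(t_2))\) from below by comparison with \(d(\gamma(a_{j-1}),\gamma(t_2))\) via the triangle inequality, controls the correction by the same geometric series, and uses the two-segment quasi-geodesic fact for \(\gamma|_{[a_{j-1}, t_2]}\). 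That formulation sidesteps the ``truncated subpath is essentially a spiral path'' step, which your plan flags but does not carry out: after truncating the initial segment you would have to drop it entirely to restore the hypothesis \(\ell_1 \geq 3K^2L + 1\) and then argue that the dropped piece contributes only lower-order error. Two smaller points: \(\mathrm{len}(\gamma_i)\) should be read as the length of the parameter interval rather than arc-length, since \((K,L)\)-quasi-geodesics need not be rectifiable; and in the displacement estimate the comparison should be between the \(X\)-coordinates of \(\gamma^+\) and \(\gamma^-\), not of \(\gamma^+\) and \(\gamma_{n-1}^-\). You also leave the upper bound \(d(\gamma(s),\gamma(t)) \lesssim |t-s|\) implicit; it is easy, and the paper proves it first.
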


The following proof is essentially the same as showing the logarithmic spiral in $\mathbb{R}^2$ is a quasi-geodesic.  However, as we were not able to find a sufficient reference in the literature, we have included it in the interest of completeness. 

\begin{proof}
Let  \(\gamma = \gamma_1* \cdots * \gamma_n \colon [a_0, a_{n}] \rightarrow Z\) be spiral path of slope $N>4K^2$ and let \(a_1 < \cdots < a_{n}\) be points in \([a_0, a_{n}]\) such that \(\gamma_i = \gamma|_{[a_{i-1}, a_{i}]}\).

Let \(t_1, t_2 \in [a_0, a_n]\). We claim that \begin{align}\label{equation 0' spiral path} d\bigl(\gamma(t_1), \gamma(t_2)\bigr) \leq (K+1) |t_2-t_1| + 2L.\end{align} 
Since each \(\gamma_i\) is a \((K,L)\)--quasi-geodesic of \(Z\) for each $i$, we only need to consider the case where \(t_1 \in [a_k, a_{k+1}]\) and \(t_2 \in [a_j, a_{j+1}]\) with \(j-k \geq 1\).  By the choice on the distance between endpoints of $\gamma_1$ and the slope $N$ we have $d(\gamma(a_{i-1}),\gamma(a_i))> 3K^2L +1$, which implies  $|a_i-a_{i-1}|>L$. Therefore, \[|t_2-t_1|\geq |a_j-a_{k+1}|\geq (j-k-1)L.\] Since each $\gamma_i$ is $(K,L)$--quasi-geodesic we have
\begin{align*}d\bigl(\gamma(t_1), \gamma(t_2)\bigr) &\leq K |t_2-t_1| + (j-k+1)L\\&\leq (K+1) |t_2-t_1| + 2L.
\end{align*}

\noindent The remainder of the proof will show $|t_2-t_1| \preceq d\bigl(\gamma(t_1), \gamma(t_2)\bigr)$. 

For every \(i\), \(\gamma_i\ast \gamma_{i+1}\) is a \((K,2L)\)--quasi-geodesic of \(Z\), so  we only need to consider the case where \(t_1 \in [a_k, a_{k+1}]\) and \(t_2 \in [a_j, a_{j+1}]\) with \(j-k \geq 2\) as shown below. \vspace{2mm}

\begin{figure}[H]
	\begin{tikzpicture}[line cap=round,line join=round,>=triangle 45,x=1cm,y=1cm] 
		\draw [line width=.5mm] (-2,0)-- (-3,0);
		\draw [line width=.5mm] (-3,0)-- (-3,2.5);
		\draw [line width=.5mm] (-3,2.5)-- (5,2.5);
		\draw [line width=.5mm] (5,2.5)-- (5,-1.5);
		\draw [fill=black] (-2,0) circle (2.5pt);
		\draw[color=black] (-1.2,0) node {$\gamma(a_{j-3})$};
		\draw [fill=black] (-3,0) circle (2.5pt);
		\draw[color=black] (-3.74,0.04) node {$\gamma(a_{j-2})$};
		\draw [fill=black] (-3,2.5) circle (2.5pt);
		\draw[color=black] (-3.7,2.7) node {$\gamma(a_{j-1})$};
		\draw [fill=black] (5,2.5) circle (2.5pt);
		\draw[color=black] (5.7,2.7) node {$\gamma(a_j)$};
		\draw [color = blue, fill=blue] (-2.3,0) circle (2pt);
		\draw[color=blue] (-2.3,0.4) node {$\textcolor{blue}{\gamma(t_1)}$};
		\draw [fill=black] (5,-1.5) circle (2.5pt);
		\draw [color = blue, fill=blue] (5,1.6) circle (2pt);
		\draw[color=blue] (5.7,1.6) node {$\textcolor{blue}{\gamma(t_2)}$};
		\draw[color=black] (5.8,-1.4) node {$\gamma(a_{j+1})$};
		\draw[color=black] (-2.5,1.5) node {$\gamma_{j-1}$};
		\draw[color=black] (-2.5,-.4) node {$\gamma_{j-2}$};
		\draw[color=black] (1.14,2) node {$\gamma_j$};
		\draw[color=black] (4.6,.5) node {$\gamma_{j+1}$};
\end{tikzpicture}\end{figure}

\noindent We encourage the reader to refer to the above diagram as they follow the remainder of the proof.

By the triangle inequality we have
\begin{align}\label{equation 1' spiral path}
	d(\gamma(t_2), \gamma(t_1)) \geq d(\gamma(t_2), \gamma(a_{j-1})) - d(\gamma(a_{j-1}),\gamma(t_1)).
\end{align}
The remainder of the proof has two parts.  First we show that, $d(\gamma(t_2), \gamma(a_{j-1}))$  is much larger than $d(\gamma(a_{j-1}),\gamma(t_1))$ so that \[d(\gamma(t_2), \gamma(t_1)) \succeq d(\gamma(t_2), \gamma(a_{j-1})) \succeq |t_2-a_{j-1}|.\] We then finish by showing that $ |t_2 - a_{j-1}| \succeq |t_2-t_1|$.

To simplify notation let \(\ell(\gamma_i) = d(\gamma(a_{i-1}),\gamma(a_i))\). The slope condition then says \(\frac{1}{N}\ell(\gamma_{i}) >  \ell(\gamma_{i-1})\) for each $1\leq i \leq n-1$.
Since $N > 4K^2$,  we can iteratively apply the slope condition to get
\[
\sum_{i=1}^{j-1} \ell(\gamma_i) \leq \left(\frac{1}{N^{j-2}} + \cdots +\frac{1}{N} + 1 \right)\ell(\gamma_{j-1}) \leq 2 \ell(\gamma_{j-1})
\leq \frac{2}{N} \ell(\gamma_{j}). \tag{3}\label{geometric series spiral path'}
\]

\noindent From the triangle inequality and the fact $|a_{k+1}-t_1| \leq |a_{k+1}-a_k|$ we have
\begin{align*}
	d(\gamma(t_1), \gamma(a_{j-1}))&\leq d(\gamma(t_1), \gamma(a_{k+1}))+ \sum_{i=k+2}^{j-1} \ell(\gamma_i)\\
	&\leq K|a_{k+1}-a_k|+L + \sum_{i=k+2}^{j-1} \ell(\gamma_i)\\
	&\leq K\bigl(K\ell(\gamma_{k+1})+KL\bigr)+L+\sum_{i=k+2}^{j-1} \ell(\gamma_i)\\
	&\leq K^2\biggl(\sum_{i=k+1}^{j-1}\ell(\gamma_i)\biggr)+2K^2L.\\ 
\end{align*}

\noindent Then by applying Inequality (\ref{geometric series spiral path'}) we have
\[
d(\gamma(t_1), \gamma(a_{j-1}))\leq\biggl(\frac{2K^2}{N}\biggr) \ell(\gamma_{j}) +2K^2L
\leq\frac{1}{2}d(\gamma(t_2),\gamma(a_{j-1}))+2K^2L.\]
Substituting this into Inequality (\ref{equation 1' spiral path}) produces
\[
d(\gamma(t_2), \gamma(t_1)) \geq \frac{1}{2}d(\gamma(t_2), \gamma(a_{j-1}))-2K^2L.
\]
\noindent We can then use the fact that \(\gamma_j \ast \gamma_{j+1}\) is a \((K, 2L)\)--quasi-geodesic to obtain
\begin{align}
	d(\gamma(t_2), \gamma(t_1)) &\geq \frac{1}{2}d(\gamma(t_2), \gamma(a_{j-1}))-2K^2L  \nonumber\\
	&\geq \frac{1}{2}\left(\frac{1}{K}|t_2 - a_{j-1}| -2L \right)-2K^2L \nonumber\\
	&\geq \frac{1}{2K}|t_2 - a_{j-1}|-3K^2L. \nonumber \tag{4} \label{equation 2' spiral path}
\end{align}

We now show that \(|t_2 - a_{j-1}| \succeq |t_2-t_1|\), which completes the proof by Inequality (\ref{equation 2' spiral path}) . Since we required that \(\ell(\gamma_1) > 3K^2L+1\) and $N>4K^2$,  we have \(\frac{1}{K}|a_i - a_{i-1}| > 2L\) for each \(i\). This implies \[\ell(\gamma_i)\geq \frac{1}{K}|a_i- a_{i-1}| - L > \frac{1}{2K}|a_i - a_{i-1}|.\] In particular, using Inequality (\ref{geometric series spiral path'}) we obtain:

\begin{align*}
	\frac{2}{N}\left(K|a_{j}-a_{j-1}| + L\right) &\geq \frac{2}{N} \ell(\gamma_{j})\\
	&\geq \sum_{i=1}^{j-1} \ell(\gamma_i)  \\
	&\geq  \sum_{i=1}^{j-1} \frac{1}{2K}|a_{i}-a_{i-1}| \\
	&\geq  \frac{1}{2K}|a_{j-1}- t_1|.
\end{align*}

\noindent Hence we have
\[
|a_{j-1}-t_1| \leq \frac{4K^2}{N}|a_{j}-a_{j-1}|+\frac{4KL}{N} \leq |a_{j}-a_{j-1}| + L
\]
\noindent and we can conclude
\begin{align*}
	|t_2-t_1| &= |t_2-a_j| + |a_j-a_{j-1}| + |a_{j-1} -t_1|\\
	& \leq  |t_2-a_j| +2 |a_j-a_{j-1}| +L\\
	&\leq 3|t_2-a_{j-1}| + L.
\end{align*}

Combining this with Inequality (\ref{equation 0' spiral path}) and Inequality (\ref{equation 2' spiral path}), we obtain that there are constants \(K'\) and \(L'\) depending on $K$ and $L$ such that 
\[\frac{1}{K'} (t_2 - t_1) -L' \leq d(\gamma(t_2), \gamma(t_1)) \leq K' (t_2 - t_1)+L'. \]
\end{proof}

For the remainder of this section $(\mc{X},\mf{S})$ will be an HHS with the bounded domain dichotomy and \( \mf{S}^*\)  is as in Definition \ref{def:S_star}.
Recall, for each \(U \in \mf{S}\), the space $\PF_U\times \PE_U$ consists of tuples $a = (a_V)$, where $V \in \mf{S}_U \cup \mf{S}_U^{\perp}$, and that \(\P_U\) is defined as the image of  $\phi_U \colon \PF_U \times \PE_U \rightarrow \mc{X}$. By restricting to a choice of factor, we can endow $\PF_U$ and $\PE_U$ with the subspace metric of their images under $\phi_U$. While this relies on the choice of factor, the distance formula (Theorem \ref{thm:distance_formula}) says any two choices result in uniformly quasi-isometric metric spaces. Given $a,b \in \PF_U\times \PE_U $ we use $d_{V}(a,b)$ to denote $d_{V}(a_V,b_V)$, where $V \in \mf{S}_U \cup \mf{S}_U^{\perp}$.
If $U \in\mf{S}^*$, then both $\PF_U$ and $\PE_U$ are infinite diameter and so we can apply Lemma \ref{prop: Hung's construction in product} to build the desired quasi-geodesic in $\P_U$ based on $\gate_{\P_U}(Y)$. 
\begin{prop} \label{prop: Hung's construction in product}

Let $Y \subseteq \mc{X}$. There exist constants $L'$, $r_0$ and  functions $f,g,h:[r_0,\infty) \rightarrow [0,\infty)$, all depending only on $\mf{S}$,  
such that $f(r),g(r),h(r) \rightarrow \infty$ as $r \rightarrow \infty$  and the following holds: for each \(U \in \mf{S}^*\) and each $r\geq r_0$, if the $r$--neighborhood of $\phi_U^{-1} \left( \gate_{\P_U} (Y)\right)$ does not cover $\PF_U \times \PE_U$ and $\diam(\pi_U(Y)) > f(r)$, then there exists a $(L',L')$--quasi-geodesic $\eta$ with endpoints  $a,b \in \phi_U^{-1} \left( \gate_{\P_U} (Y)\right)$ such that $\eta$ is not contained in the $g(r)$--neighborhood of $\phi_U^{-1} \left( \gate_{\P_U} (Y)\right)$ and $d_{U}(a,b) >h(r)$. 
\end{prop}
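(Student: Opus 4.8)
The plan is to work inside the product space $P = \PF_U \times \PE_U$ equipped with the $\ell_1$-metric, using the identification $\phi_U \colon P \to \P_U$ and the fact that $\gate_{\P_U}(Y)$ pulls back to a subset $G := \phi_U^{-1}(\gate_{\P_U}(Y))$ of $P$. Since $U \in \mf{S}^*$, both factors $\PF_U$ and $\PE_U$ are infinite-diameter uniform quasi-geodesic spaces, with quasi-geodesic constants $(K,L)$ depending only on $\mf{S}$. First I would record the elementary observation that $\diam(\pi_U(Y))$ large, combined with Lemma \ref{lem:gate} (property (3)), forces $\diam\bigl(\pi_U(\gate_{\P_U}(Y))\bigr)$ to be large; since $\pi_U$ factors through the $\PF_U$-coordinate (indeed $U$ is $\nest$-maximal in $\mf{S}_U$), this means the projection of $G$ to the $\PF_U$-factor has diameter comparable to $\diam(\pi_U(Y))$ up to the gate constants.

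Next, the heart of the argument is the ``spiraling'' construction sketched in Figure \ref{fig: Example of spiral path in R2}, now in the product $P$. Fix $r \geq r_0$ and suppose the $r$-neighborhood of $G$ does not cover $P$; pick a point $p \in P$ with $d(p, G) > r$. Write $p = (p^{\PF}, p^{\PE})$. The idea is to build a spiral path (Definition \ref{defn: spiral path}) starting and ending on $G$ that is forced to travel within distance roughly $r$ of $p$, hence is not contained in the $g(r)$-neighborhood of $G$ for an appropriate $g(r) \to \infty$. Concretely: using that $G$ has large $\PF_U$-diameter (from the first step), choose $a_0 \in G$ whose $\PF_U$-coordinate is at distance at least (something growing in $r$, say $\sim r$) from $p^{\PF}$; travel along a $(K,L)$-quasi-geodesic in the $\PF_U$-factor (with the $\PE_U$-coordinate held fixed at $a_0^{\PE}$) toward $p^{\PF}$, stopping near $p^{\PF}$; then travel in the $\PE_U$-factor toward $p^{\PE}$ by a distance with slope ratio $> N$ for the fixed slope constant $N > 4K^2$ from Lemma \ref{lem: spiral paths are quasi-geodesics}; then alternate factors, each leg longer than the previous by a factor $> N$, until a leg is long enough to reach back into $G$ (possible because each factor is infinite-diameter and $G$ is nonempty, so we can land the final leg on a point $b \in G$). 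The slope-$N$, first-leg-length-$> 3K^2L+1$ hypotheses of Lemma \ref{lem: spiral paths are quasi-geodesics} are arranged by choosing $r_0$ large; that lemma then yields that $\eta$ is a $(L',L')$-quasi-geodesic with $L'$ depending only on $\mf{S}$. By construction $\eta$ passes within bounded distance of $p$, so it escapes the $g(r)$-neighborhood of $G$ with $g(r) \to \infty$; and the endpoints $a = a_0, b \in G$ have $\PF_U$-coordinates far apart (the total $\PF_U$-displacement along $\eta$ is at least the first two legs, which grow in $r$), giving $d_U(a,b) > h(r)$ with $h(r) \to \infty$. Finally $f(r)$ is chosen so that $\diam(\pi_U(Y)) > f(r)$ is strong enough to supply all the required margins in the construction.

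The step I expect to be the main obstacle is making the spiral construction terminate cleanly \emph{on} $G$ while keeping the slope and first-leg hypotheses and the escape-distance bound simultaneously under control: one must verify that after escaping from $G$ by distance $> r$ and spiraling outward, a later leg of controlled length genuinely re-enters $G$, and that the endpoint $b$ can be taken \emph{in} $G$ rather than merely close to it. This requires using that $G = \gate_{\P_U}(Y)$ is itself hierarchically quasiconvex (Theorem \ref{thm:parallel_gates}(\ref{item:gate_hq})) — hence coarsely coincides with a sub-product and is ``long'' in at least one direction — together with $U \in \mf{S}^*$ to guarantee room in both factors. A secondary bookkeeping obstacle is translating the $\ell_1$-geometry of $P$ into genuine $\mc{X}$-geometry via $\phi_U$: $\phi_U$ is only a coarse quasi-isometric embedding (with constants from $\mf{S}$), so all the quantitative statements about neighborhoods of $G$ in $P$ must be matched with the corresponding statements about $\gate_{\P_U}(Y)$ in $\mc{X}$ up to the $\phi_U$-distortion, which is why the hypotheses and conclusions of the proposition are phrased in terms of $\phi_U^{-1}(\gate_{\P_U}(Y)) \subseteq \PF_U \times \PE_U$ rather than directly in $\mc{X}$.
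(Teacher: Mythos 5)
You have the right ingredients — spiral paths, Lemma \ref{lem: spiral paths are quasi-geodesics}, and the observation that $\diam(\pi_U(Y))$ large forces large $\PF_U$--diameter for $G := \phi_U^{-1}(\gate_{\P_U}(Y))$ — but the construction you sketch has a gap that the paper's proof is engineered to avoid. Your plan picks an arbitrary point $p$ with $d(p, G) > r$, spirals from some $a_0 \in G$ out toward $p$, and then keeps spiraling ``until a leg is long enough to reach back into $G$.'' This termination step is the problem: a slope-$N$ spiral accelerates away (each leg except the last is at least $N$ times the previous), and there is no mechanism to aim the far end at $G$ while also matching both coordinates of $p$ — the leg lengths are forced by the slope condition, not by where $p$ or $G$ happen to be. Knowing that $G$ is hierarchically quasiconvex, hence coarsely a sub-product, tells you $G$ is long in some direction, not that any particular spiral endpoint lands on it. You flag exactly this as the main obstacle, but the resolution you propose remains hand-wavy.

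The paper inverts the order of operations, which dissolves the obstacle. Instead of escaping far from $G$ and hoping to return, it picks $z$ at distance \emph{between} $r$ and $r+2L$ from $G$ (not merely $>r$), lets $a\in G$ nearly realize $d(z,G)$, and forms an intermediate point $z'$ matching one coordinate of $a$ and the other of $z$, so that $d(z',G)\geq \tfrac{r-2L-1}{2}$ — this single short leg already gives escape of order $r$, which is all the proposition demands. Then, crucially, the hypothesis $\diam(\pi_U(Y))>f(r)$ is used to \emph{locate a far endpoint $a'\in G$ directly} (with $d_U(a,a')$ large enough that the corresponding $\PF_U$--distance dominates the preceding leg by the required factor $N$), and the spiral is built to terminate at $a'$. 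This works because Definition \ref{defn: spiral path} places no length constraint on the final leg; only three or four legs are ever used. The two corrections your plan needs are: (i) choose the escape point at distance $\Theta(r)$ from $G$, not arbitrarily far, so the short legs satisfy the slope and first-leg-length hypotheses of Lemma \ref{lem: spiral paths are quasi-geodesics}; and (ii) pick $a'\in G$ first using the $\diam(\pi_U(Y))$ hypothesis and build the spiral to end there — do not build the spiral and then hope for a landing point in $G$.
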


\begin{proof} Our approach is to construct a spiral path of sufficient slope in $\PF_U \times \PE_U$ and then apply Lemma \ref{lem: spiral paths are quasi-geodesics} to conclude it is a quasi-geodesic.
Let $d(\cdot,\cdot)$ denote the $\ell_1$--distance in $\PF_U \times \PE_U$ and fix the following constants that depend only on $\mf{S}$:
\begin{itemize}
	\item $L$ such that \(\PF_U\) and $\PE_U$ are $(L,L)$--quasi geodesic spaces.
	\item $K$  such that $\pi_U$ is $(K,K)$--coarsely Lipschitz.
	\item $N =4L^2+1$ will be the slope of the spiral path we construct.
\end{itemize}
Let $r>10L^3+6$ and \(A= \phi_U^{-1} \left( \gate_{\P_U} (Y)\right)\). Suppose that the $r$--neighborhood of \(A\) does not cover $\PF_U \times \PE_U$. Thus there exists a point $z = (x_1,y_1) \in \PF_U \times \PE_U$ such that \(r \leq d(z, A) \leq r+2L\). Let \(a = (x_2,y_2)\) be a point of $A$ such that $d(z,a)-1 \leq d(z,A) $. 
We have \(\min\{d_{\PF_U}(x_1, x_2), d_{\PE_U}(y_1,y_2)\} \leq \frac{r+2L+1}{2}\). There are two cases depending on which of the two factors realizes the minimum.

\myparagraph{If \(d_{\PF_U}(x_1, x_2)\) realizes the minimum.}
In this case let \(z' = (x_2,y_1)\) and $D_r = \frac{r-2L-1}{2}$. Then \(d(z', A)\geq d(z, A) - d(z, z') \geq D_r\), which implies $d(z',a) > 3L^3+1$ because $r>10L^3+6$.

There exists \(B_r>r\) such that for any pair of points \(u, v\) of \(\PF_U\) if \(d_{U} (u,v) \geq B_r\), then  
\[d_{\PF_U}(u,v) \geq 2(r+2L+1)N .\] We shall assume \(\diam(\pi_U(Y)) > 2B_r\), so there is a point \(a' = (x_3,y_3)\) of $A$ such that \(d_{U}(x_2, x_3) \geq B_r\) and $d_{\PF_U}(x_2,x_3) > d_{\PE_U}(y_2,y_1) N$. We can now form a spiral path $\eta$ of slope $N=4L^2+1$ by connecting each sequential pair of points in the sequence 
\[a = (x_2,y_2) - (x_2,y_1) - (x_3,y_1) - (x_3,y_3) = a'\]
by $(L,L)$--quasi-geodesics. Since $d_{\PE_U}(y_2,y_1) > 3L^3+1$ , $\eta$ satisfies the hypothesis of Lemma \ref{lem: spiral paths are quasi-geodesics} and is therefore an $(L',L')$--quasi-geodesic for some $L'$ determined by $L$.

\begin{figure}[H]
	\centering
	
	\centering
	\def\svgscale{.7}
	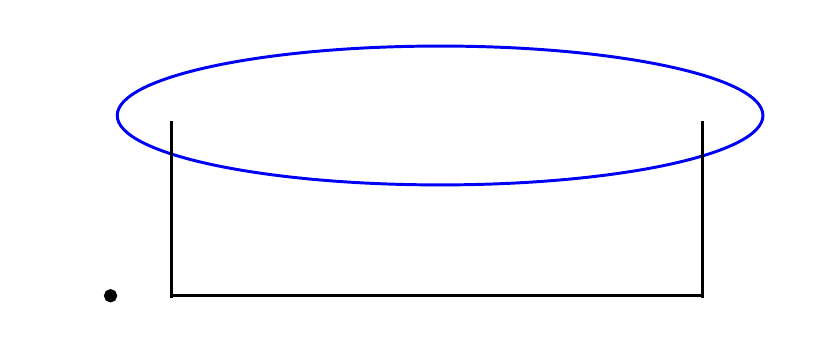
	\caption{Spiral path constructed when \(d_{\PF_U}(x_1, x_2)\leq \frac{r+2L+1}{2}\).}
	
\end{figure}

Since $z' = (x_2,y_1)$ is at least $D_r$ far from $A$, \(\eta\) has endpoints in \(A\) and is not contained in the $D_r$-neighborhood of $A$. Moreover, $d_{U}(a,a') \geq B_r$ and we get the claim with $f(r)=2B_r$, $g(r) = D_r$, and $h(r) = B_r$

\myparagraph{If \(d_{\PE_U}(y_1, y_2)\) realizes the minimum.}
Let \(z' = (x_1, y_2)\). As before we have that \(d(z', A) \geq D_r= \frac{r-2L-1}{2}\), which implies $d(z',a) > 3L^3+1$ .
Let \(y_3\) be a point of \(\PE_U\) such that \[(r+2L+1)N\leq d_{\PE_U}(y_2, y_3) \leq 2(r+2L+1)N.\]

\noindent There exists \(C_r>r\) such that for any pair of points \(u, v\) of \(\PF_U\) if \(d_{U} (u,v) \geq C_r\), then  
\[d_{\PF_U}(u,v) \geq 2(r+2L+1)N^2 .\] We shall assume \(\diam(\pi_U(Y)) > 2C_r\), so there exists \(a'= (x_4, y_4) \in A\) such that \(d_{U} (x_1, x_4) > C_r\). This implies $d_{\PF_U}(x_1,x_4) > 2(r+2L+1)N^2$ and we can now form a spiral path $\eta$ of slope $N=4L^2+1$ by connecting each sequential pair of points in the sequence 
\[a = (x_2,y_2) - (x_1,y_2) - (x_1,y_3) - (x_4,y_3) - (x_4,y_4) = a'\]
by an $(L,L)$--quasi-geodesics. 

\begin{figure}[H]
	\centering
	\def\svgscale{.7}
	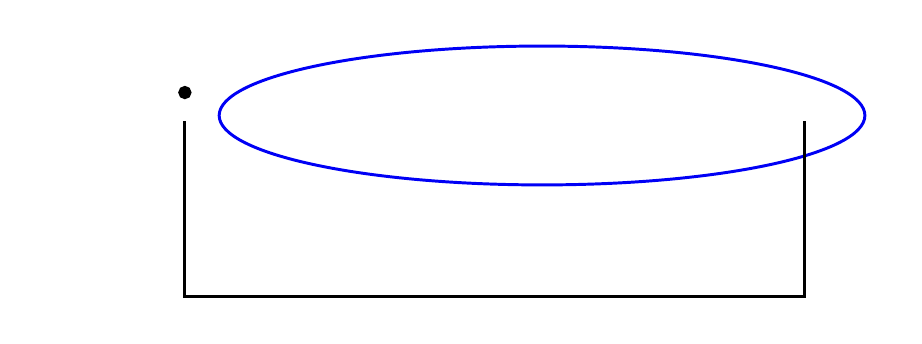
	\caption{Spiral path constructed when \(d_{\PE_U}(y_1, y_2)\leq \frac{r+2L+1}{2}\).}
\end{figure}

As before $\eta$ satisfies the hypothesis of Lemma \ref{lem: spiral paths are quasi-geodesics} and is therefore an $(L',L')$--quasi-geodes for some $L'$ determined by $L$.  The remaining claims follow as in the preceding case.
\end{proof}

The distance formula makes the map $\phi_U\colon \PF_U \times \PE_U \rightarrow \mc{X}$ a uniform quasi-isometric embedding.
Thus $\gate_{\P_U}(Y)$ coarsely covers \(\P_U\)  if and only if $\phi_U^{-1} \left( \gate_{\P_U} (Y)\right)$ coarsely covers $\PF_U \times \PE_U$, Proposition \ref{prop: Hung's construction in product} therefore allows us to immediately deduce the following result for $\P_U \subseteq \mc{X}$.

\begin{prop}\label{prop: Hung's construction}
Let $Y \subseteq \mc{X}$. There exist constants $L'$, $r_0$ and  functions $f,g,h:[r_0,\infty) \rightarrow [0,\infty)$, all depending only on $\mf{S}$,  
such that $f(r),g(r),h(r) \rightarrow \infty$ as $r \rightarrow \infty$  and the following holds: For each  \(U \in \mf{S}^*\)and each $r \geq r_0$, if the $r$--neighborhood of $ \gate_{\P_U}(Y)$ does not cover $\P_U$ and $\diam(\pi_U(Y)) > f(r)$, then there exists an $(L',L')$--quasi-geodesic $\eta$ with endpoints  $a,b \in \gate_{\P_U}(Y)$  such that:
\begin{enumerate}
	\item $ \eta \subseteq \P_U$,
	\item $\eta$ is not contained in the $g(r)$--neighborhood of $\gate_{\P_U}(Y)$,
	\item $d_{U}(a,b) > h(r)$.
\end{enumerate}

\end{prop}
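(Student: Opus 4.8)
The plan is to obtain Proposition \ref{prop: Hung's construction} directly from Proposition \ref{prop: Hung's construction in product} by transporting the quasi-geodesic produced there from the abstract product $\PF_U \times \PE_U$ into the product region $\P_U = \phi_U(\PF_U \times \PE_U) \subseteq \mc{X}$ along the map $\phi_U$. The two facts that make this work were recorded just before the statement: the distance formula (Theorem \ref{thm:distance_formula}) makes $\phi_U \colon \PF_U \times \PE_U \to \mc{X}$ a $(C,C)$--quasi-isometric embedding with image $\P_U$ and with $C$ depending only on $\mf{S}$ (in particular not on $U$); and, by Construction \ref{const:embedding_product_regions} applied with the domain $V = U \in \mf{S}_U$, for every $a \in \PF_U \times \PE_U$ the set $\pi_U(\phi_U(a))$ lies within a constant $\mu = \mu(\mf{S})$ of the $U$--coordinate $a_U$, so that $|d_U(\phi_U(a),\phi_U(b)) - d_U(a,b)| \le 2\mu$ for all $a,b$.

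First I would translate the hypotheses across $\phi_U$. Suppose the $r$--neighborhood of $\gate_{\P_U}(Y)$ does not cover $\P_U$, and choose $z = \phi_U(p) \in \P_U$ with $d_{\mc{X}}(z,\gate_{\P_U}(Y)) > r$. Every point of $\gate_{\P_U}(Y)$ lies in $\P_U$, hence equals $\phi_U(q)$ for some $q \in \phi_U^{-1}(\gate_{\P_U}(Y))$; the quasi-isometric embedding property then gives $d(p,q) \ge \tfrac1C\bigl(d_{\mc{X}}(\phi_U(p),\phi_U(q)) - C\bigr) \ge \tfrac1C(r - C)$ for every such $q$, so the $r^{\ast}$--neighborhood of $\phi_U^{-1}(\gate_{\P_U}(Y))$ fails to cover $\PF_U \times \PE_U$, where $r^{\ast} = \tfrac1C(r-C)$. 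Let $\overline f, \overline g, \overline h, \overline{L'}, \overline{r_0}$ denote the data furnished by Proposition \ref{prop: Hung's construction in product}. I would take $r_0$ large enough that $r \ge r_0$ forces $r^{\ast} \ge \overline{r_0}$, and define $f(r) = \overline f(r^{\ast})$, $h(r) = \overline h(r^{\ast}) - 2\mu$, and $g(r) = \tfrac1C\overline g(r^{\ast}) - C$; all of these still depend only on $\mf{S}$ and tend to infinity with $r$. Assuming $\diam(\pi_U(Y)) > f(r) = \overline f(r^{\ast})$, Proposition \ref{prop: Hung's construction in product} at parameter $r^{\ast}$ yields an $(\overline{L'},\overline{L'})$--quasi-geodesic $\eta_0$ of $\PF_U \times \PE_U$ with endpoints $a,b \in \phi_U^{-1}(\gate_{\P_U}(Y))$, not contained in the $\overline g(r^{\ast})$--neighborhood of $\phi_U^{-1}(\gate_{\P_U}(Y))$, and with $d_U(a,b) > \overline h(r^{\ast})$.

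Finally I would push $\eta_0$ forward and check the three conclusions. Set $\eta = \phi_U \circ \eta_0$; its endpoints $\phi_U(a),\phi_U(b)$ lie in $\gate_{\P_U}(Y)$ by the choice of $a,b \in \phi_U^{-1}(\gate_{\P_U}(Y))$. Being the composite of a uniform quasi-geodesic with a uniform quasi-isometric embedding, $\eta$ is an $(L',L')$--quasi-geodesic of $\mc{X}$ for some $L' = L'(\mf{S})$; since $\phi_U$ has image $\P_U$, we get $\eta \subseteq \P_U$, which is (1). For (3), the coordinate estimate above gives $d_U(\phi_U(a),\phi_U(b)) \ge d_U(a,b) - 2\mu > \overline h(r^{\ast}) - 2\mu = h(r)$. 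For (2), pick a point $p$ on $\eta_0$ with $d\bigl(p,\phi_U^{-1}(\gate_{\P_U}(Y))\bigr) > \overline g(r^{\ast})$; as above, every point of $\gate_{\P_U}(Y)$ is $\phi_U$ of a preimage point, so $d_{\mc{X}}\bigl(\phi_U(p),\gate_{\P_U}(Y)\bigr) \ge \tfrac1C \overline g(r^{\ast}) - C = g(r)$, showing $\eta$ is not contained in the $g(r)$--neighborhood of $\gate_{\P_U}(Y)$. I expect no substantial obstacle here: the entire analytic content lies in Proposition \ref{prop: Hung's construction in product}, and the only care required is bookkeeping to keep all constants dependent on $\mf{S}$ alone — which is exactly what the uniformity of the $\phi_U$ over $U \in \mf{S}^{\ast}$ provides — together with routine handling of the fact that $\phi_U$ is only coarsely well defined and that $\pi_U \circ \phi_U$ recovers the $U$--coordinate only up to $\mu$.
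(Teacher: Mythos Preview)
Your proposal is correct and takes essentially the same approach as the paper: the paper's proof is just the sentence immediately preceding the proposition, which observes that $\phi_U$ is a uniform quasi-isometric embedding so that the statement follows directly from Proposition \ref{prop: Hung's construction in product}. You have simply made the constant-bookkeeping explicit, which is exactly what is needed and introduces no new ideas.
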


Proposition \ref{prop: Hung's construction} furnishes a quasi-geodesic $\eta$ with endpoints on $\gate_{\P_U}(Y)$ that can be made as far from $\gate_{\P_U}(Y)$ as desired by increasing $\diam(\pi_U(Y))$. However, to exploit the fact that $Y$ is a {\sqc} subset, we need  the next lemma, which ``promotes" $\eta$ to a quasi-geodesic with endpoints on $Y$.

\begin{lem}\label{lem: substituting quasi-geodesic}
There exists $D>0$ such that if $x,y\in \mc{X}$ and $U\in \mf{S}$, with $d_U(x,y)>D$ and $\eta$ is a $(k,c)$--quasi-geodesic contained in $\P_U$ with endpoints $\gate_{\P_U}(x)$ and $\gate_{\P_U}(y)$, then there exists a $(k',c')$--quasi-geodesic containing $\eta$ and with endpoints $x$ and $y$, where $k'$ and $c'$ depend only on $\lambda$ and $\epsilon$.
\end{lem}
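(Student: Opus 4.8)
The strategy is to take a $\lambda_0$--hierarchy path $\gamma \colon [a,b] \to \mc{X}$ joining $x$ to $y$ (which exists by Theorem \ref{thm:monotone_hierarchy_paths}), apply the active subpath proposition (Proposition \ref{prop: active subpath}) to extract a subpath $\alpha = \gamma_{|[a_1,b_1]}$ that lies in $N_\nu(\P_U)$ and has the property that the initial piece $\gamma([a,a_1])$ and the terminal piece $\gamma([b_1,b])$ project with diameter at most $\nu$ to every domain in $\mf{S}_U \cup \mf{S}_U^\perp$, and then \emph{replace} the middle piece $\alpha$ with the given quasi-geodesic $\eta \subseteq \P_U$. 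That is, I would form the concatenation $\gamma' = \gamma_{|[a,a_1]} * \eta * \gamma_{|[b_1,b]}$, after reparametrizing $\eta$, and claim that $\gamma'$ is the desired $(\lambda',\epsilon')$--quasi-geodesic from $x$ to $y$ containing $\eta$. For this to typecheck we need $d_U(x,y) > D$ where $D$ is the constant from Proposition \ref{prop: active subpath} applied with $\lambda = \lambda_0$; this is the hypothesis of the lemma. Note also that by the Remark following Proposition \ref{prop: active subpath}, $\gate_{\P_U}(x)$ coarsely coincides with $\gate_{\P_U}(\gamma(a_1))$ and $\gate_{\P_U}(y)$ with $\gate_{\P_U}(\gamma(b_1))$; since $\gamma(a_1) \in N_\nu(\P_U)$ we also have $\gamma(a_1)$ uniformly close to $\gate_{\P_U}(\gamma(a_1))$, and hence $\gamma(a_1)$ is uniformly close to the initial endpoint $\gate_{\P_U}(x)$ of $\eta$ (similarly at the other end). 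Thus the concatenation points match up to bounded error, which is all that is needed to glue quasi-geodesics.

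The core of the argument is the quasi-geodesic estimate for $\gamma'$. The upper bound $d_\mc{X}(\gamma'(s),\gamma'(t)) \preceq |s-t|$ is automatic since each of the three pieces is a quasi-geodesic (the bounded gluing error only changes constants). For the lower bound, the only nontrivial case is when $s$ and $t$ lie in different pieces, and the essential subcase is $s \in [a,a_1]$ (in the original parameter) and $t$ somewhere along $\eta$ or in the terminal piece. Here I would use the distance formula (Theorem \ref{thm:distance_formula}): I claim that for every domain $W \in \mf{S}$, the projections satisfy $d_W(\gamma'(s),\gamma'(a_1)) + d_W(\gamma'(a_1),\gamma'(t))$ is coarsely bounded below by something comparable to $d_W(\gamma'(s),\gamma'(t))$, with a careful threshold analysis as in the proof of Claim \ref{claim:hulls_of_pairs_of_points}. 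The reason this works: for $W \in \mf{S}_U \cup \mf{S}_U^\perp$, the piece $\gamma([a,a_1])$ has $\pi_W$-diameter at most $\nu$ (part (2) of Proposition \ref{prop: active subpath}), so $\pi_W(\gamma'(s))$ is within $\nu$ of $\pi_W(\gamma(a_1))$ and there is essentially no projection to "lose" from the first piece; while for $W \notin \mf{S}_U \cup \mf{S}_U^\perp$, either $U \nest W$ or $U \trans W$ or $W \nest U$ (but $W \nest U$ is covered by the first case), and in the remaining cases the whole of $\eta \subseteq \P_U$ projects to a bounded-diameter subset of $CW$ near $\rho_W^U$, so again the contribution of the middle piece to $d_W$ is bounded, and $d_W(\gamma'(s),\gamma'(t))$ is coarsely realized by going through the active-subpath endpoints. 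Combining these domain-by-domain estimates via the distance formula gives $d_\mc{X}(\gamma'(s),\gamma'(a_1)) + d_\mc{X}(\gamma'(a_1),\gamma'(t)) \succeq d_\mc{X}(\gamma'(s),\gamma'(t))$, and then lower-bounding each summand using that the outer pieces are $\lambda_0$--quasi-geodesics and $\eta$ is a $(\lambda,\epsilon)$--quasi-geodesic yields the linear lower bound on $|s-t|$.

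\textbf{Main obstacle.} The delicate point is the bookkeeping with the distance formula thresholds when splitting $d_\mc{X}(\gamma'(s),\gamma'(t))$ across the gluing point $\gamma(a_1)$ (and symmetrically $\gamma(b_1)$) — the distance formula does not distribute over the triangle inequality, exactly the subtlety flagged in the proof of Claim \ref{claim:hulls_of_pairs_of_points}. One must verify that each domain $W$ either contributes a bounded amount from the "wrong" piece (so that $\ignore{d_W(\gamma'(s),\gamma'(a_1))}{\sigma} + \ignore{d_W(\gamma'(a_1),\gamma'(t))}{\sigma}$ is, up to uniform multiplicative/additive error, at most $\ignore{d_W(\gamma'(s),\gamma'(t))}{\sigma'}$ for an appropriate threshold) or the projection is genuinely monotone through $\gamma(a_1)$. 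The case analysis on the relation between $W$ and $U$ — handling $W \in \mf{S}_U$, $W \in \mf{S}_U^\perp$, $U \nest W$, $W \trans U$, and $U \trans W$ — is where care is required, using consistency (Axiom (\ref{item:dfs_transversal})) to control $d_W$ of points in $\P_U$ relative to $\rho_W^U$. Once this is in place, choosing $\sigma$ large enough (a fixed multiple of $\sigma_0$ depending on $\lambda_0$, $\lambda$, $\epsilon$, $\nu$) and invoking Theorem \ref{thm:distance_formula} twice produces constants $\lambda', \epsilon'$ depending only on $\lambda$ and $\epsilon$ (and $\mf{S}$), completing the proof.
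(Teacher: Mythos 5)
Your construction agrees with the paper's in its first half — both extract the active subpath via Proposition \ref{prop: active subpath}, replace it with $\eta$, and observe that $\gamma(a_1)$, $\gamma(b_1)$ are uniformly close to $\gate_{\P_U}(x)$, $\gate_{\P_U}(y)$ — but the two proofs diverge in how they establish the lower bound $|s-t|\preceq d_\mc{X}(\gamma'(s),\gamma'(t))$. You route through the distance formula, analyzing projections domain by domain and arguing that passing through $\gamma(a_1)$ is ``coarsely not a detour'' in each $CW$ (the first piece has bounded $\pi_W$-image when $W\in\mf{S}_U\cup\mf{S}_U^\perp$; otherwise $\eta\subseteq\P_U$ and $\gamma(a_1)\in N_\nu(\P_U)$ both project near $\rho_W^U$), then dealing with the thresholding bookkeeping as in Claim \ref{claim:hulls_of_pairs_of_points}. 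The paper avoids the distance formula entirely: it observes that for $u$ on the initial piece, $d_\mc{X}(u,x'')\asymp d_\mc{X}(u,\gate_{\P_U}(u))$ (the gate of the whole initial piece has uniformly bounded diameter), and then uses Lemma \ref{lem: closest_point_proj} to get $d_\mc{X}(u,\gate_{\P_U}(u))\leq Kd_\mc{X}(u,v)+K$ for any $v\in\P_U$, which bounds the distance from $u$ to the gluing point directly in terms of $d_\mc{X}(u,v)$. The paper's argument is thus more elementary (no threshold juggling); yours is more in the spirit of tracking the hierarchical geometry and makes the coarse betweenness of the gluing point explicit, which is instructive but costs more bookkeeping. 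Both are valid. Two small remarks: (i) the inequality $d_\mc{X}(\gamma'(s),\gamma'(a_1)) + d_\mc{X}(\gamma'(a_1),\gamma'(t)) \succeq d_\mc{X}(\gamma'(s),\gamma'(t))$ in your last sentence should read $\preceq$ — the $\succeq$ direction is just the triangle inequality, whereas $\preceq$ is what the domain-by-domain analysis buys you and what, combined with the lower bounds on each summand, gives $|s-t|\preceq d_\mc{X}(\gamma'(s),\gamma'(t))$; (ii) when $s$ is in the initial piece and $t$ in the terminal one, you need to split through both $\gamma(a_1)$ and $\gamma(b_1)$, using the coarse additivity of $\pi_W\circ\gamma$ along the original hierarchy path to control $d_W(\gamma'(s),\gamma'(a_1))+d_W(\gamma'(a_1),\gamma'(b_1))+d_W(\gamma'(b_1),\gamma'(t))$ — you gesture at this but the case should be made explicit.
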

\begin{proof}
Let $D$ and $\lambda$ be as in Proposition \ref{prop: active subpath}. We further assume $\lambda$ is large enough that every pair of points in $ \mc{X}$ can be joined by an $\lambda$--hierarchy path (Theorem \ref{thm:monotone_hierarchy_paths}). 

Assume $d_U(x,y) > D$ and let $\tilde{\gamma}$ be the $\lambda$--hierarchy path connecting $x$ and $y$ provided by Proposition \ref{prop: active subpath}. Let $\alpha$ be the active subpath of $\tilde{\gamma}$ corresponding to $U$. Define $x'$ (resp. $y'$) be the endpoint of $\alpha$ closest to $x$ (resp. $y$) and $x'' = \mf{g}_{\P_U}(x)$ (resp. $y''=\mf{g}_{\P_U}(y)$). If $\eta \colon [b,c] \rightarrow \P_U$ is any $(k,c)$--quasi-geodesic in $\P_U$ connecting $x''$ and $y''$, let $\gamma$ be the concatenation of $\tilde{\gamma} - \alpha$, any  $\lambda$--hierarchy path from $x'$ to $x''$, $\eta$, and any $\lambda$--hierarchy path from $y'$ to $y''$. We will show that this path $\gamma$ is a $(k',c')$--quasi-geodesic where the constants depend only on $k$ and $c$.

The distances $d_\mc{X}(x', \P_U)$ and $d_\mc{X}(y',\P_U)$ are uniformly bounded by Proposition \ref{prop: active subpath}. By Lemma \ref{lem: closest_point_proj}, the distances $d_\mc{X}(x',\gate_{\P_U}(x'))$ and $d_\mc{X}(y',\gate_{\P_U}(y'))$  are uniformly bounded as well. Again by Proposition \ref{prop: active subpath}, $\gate_{\P_U}(x)$   coarsely coincides with $\gate_{\P_U}(x')$ and $\gate_{\P_U}(y)$   coarsely coincides with $\gate_{\P_U}(y')$. Thus there exists $\mu$ depending only on $\mf{S}$ such that $d_\mc{X}(x',x''), d_\mc{X}(y',y'') \leq \mu$.

Now, let $\gamma_x$ (resp. $\gamma_y$) be the subset of $\gamma$ from $x$ to $x''$ (resp. $y$ to $y''$). Since $d_\mc{X}(x',x'')$ and $d_\mc{X}(y',y'')$ are uniformly bounded by $\mu$,  $\gamma_x$ and $\gamma_y$ are both uniform quasi-geodesics.  By Lemma \ref{lem: closest_point_proj} and Proposition \ref{prop: active subpath}, there exists $K\geq 1$ depending on $k$, $c$, and $\mf{S}$ such that the following hold:
\begin{itemize}
	\item $d_\mc{X}(x',x'')$, $d_\mc{X}(y',y'') \leq K$.
	\item $\diam(\gate_{\P_U}(\gamma_x)),\diam(\gate_{\P_U}(\gamma_y)) \leq K$.
	\item $\gamma_x, \gamma_y, \eta$ are all $(K,K)$--quasi-geodesics.
	\item For all $p \in \P_U$ and $q \in \mc{X}$, $d_\mc{X}(q,\gate_{\P_U}(q)) \leq K d_\mc{X}(p,q) +K $.
\end{itemize}

Let $\gamma = \gamma_x * \eta * \gamma_y \colon [a,d] \rightarrow \mc{X}$ and $a<b<c<d$ such that $\gamma|_{[a,b]} = \gamma_x$, $\gamma|_{[b,c]} = \eta$ and $\gamma|_{[c,d]} = \gamma_y$. For $t,s \in [a,d]$, let $u=\gamma(t)$, $v= \gamma(s)$. We want to show $|t-s| \asymp d_\mc{X}(u,v)$ for some constants depending only on $K$. The only interesting cases are when $u$ and $v$ are in different components of $\gamma = \gamma_x * \eta * \gamma_y$, so without loss of generality, we have the following two cases.
\begin{enumerate}[{Case} 1:]
	\item Assume $t \in [a,b]$ and $s \in [b,c]$. Thus $u \in \gamma_x$ and $v \in \eta$ and we have:
	\begin{align*}
		d_\mc{X}(u,v) \leq& d_\mc{X}(u,x'') + d_\mc{X}(x'',v) \\
		\leq& K|t-b|+K|b-s| +2K \\
		\leq& K|t-s| +2K
	\end{align*}

	For the inequality $|t-s| \preceq d_{\mc{X}}(u,v)$, our choice of $K$ provides \[d_\mc{X}(u,x'')\leq d_\mc{X}(u,\gate_{\P_U}(u))+K \leq Kd_\mc{X}(u,v)+2K.\] By the triangle inequality $d_\mc{X}(v,x'') \leq d_\mc{X}(v,u)+d_\mc{X}(u,x'')$ and we derive the desired inequality as follows:
	\begin{align*}
		|t-s| =& |t-b|+|b-s|\\
		\leq& K d_\mc{X}(u,x'') +K d_\mc{X}(v,x'') +2K\\
		\leq& K^2 d_\mc{X}(u,v) +K\left(d_\mc{X}(u,v) + d_\mc{X}(u,x'')\right) +2K^2+2K\\
		\leq&  K^2 d_\mc{X}(u,v) +K d_\mc{X}(u,v) + K^2 d_\mc{X}(u,v) +4K^2+2K\\
		\leq& 3K^2 d_\mc{X}(u,v) + 6K^2
	\end{align*}

	\item Assume $t\in [a,b]$ and $s\in [c,d]$ so that $u\in \gamma_x$ and $v\in\gamma_y$. Further we can assume $u,v \in \tilde{\gamma}$, since otherwise the above proof holds by increasing the constants by $4K$. The inequality $d_\mc{X}(u,v) \preceq |t-s|$ can be established by a nearly identical argument to the previous case. For the inequality $|t-s| \preceq d_{\mc{X}}(u,v)$ we need to utilize the fact that $\tilde{\gamma}$ is a $(\lambda_0,\lambda_0)$--quasi-geodesic. Thus, by increasing $K$, we can ensure that
	\begin{itemize}
		\item $d_\mc{X}(u,v) \stackrel{K,K}{\asymp} d_\mc{X}(u,x') + d_\mc{X}(x',y') + d_\mc{X}(y',v)$,
		\item $d_\mc{X}(x',y') \stackrel{1,2K}{\asymp} d_\mc{X}(x'',y'') \stackrel{K,K}{\asymp} |b-c|$,
		\item $d_\mc{X}(u,x') \stackrel{1,K}{\asymp} d_\mc{X}(u,x'') \stackrel{K,K}{\asymp} |t-b|$,
		\item $d_\mc{X}(v,y') \stackrel{1,K}{\asymp} d_\mc{X}(v,y'') \stackrel{K,K}{\asymp} |c-s|$.
	\end{itemize}
	We then have the following calculation
	\begin{align*}
		|t-s| =& |t-b| + |b-c| + |c-s|\\
		\leq& K d_\mc{X}(u,x'') + K d_\mc{X}(x'',y'') + K d_\mc{X}(y'',v) +3K \\
		\leq& K d_\mc{X} (u,x') + K d_\mc{X}(x',y') + K d_\mc{X}(y',v) +7K^2\\
		\leq& K^2 d_\mc{X}(u,v) +8K^2.
	\end{align*}
\end{enumerate}
\end{proof}

We can now provide the proof of Proposition \ref{prop:ortho_proj_for_product_regions}.

\begin{proof}[Proof of Proposition \ref{prop:ortho_proj_for_product_regions}]
Let $Y\subseteq \mc{X}$ be $Q$--{\sqc} and $U \in \mf{S}$ such that $\diam(CU) = \infty$ and there exists $V \in \mf{S}_U^{\perp}$ with $\diam(CV) = \infty$. Recall our goal is to show that there exists $B$ depending on $\mf{S}$ and $Q$ such that if $\diam(\pi_U(Y)) >B$, then $\P_U \subseteq N_B(\gate_{\P_U}(Y))$.
Begin by fixing the following constants that all depend only on $\mf{S}$ and $Q$:
\begin{itemize}
	\item $\mu$ such that for all $x \in \mc{X}$, $d_U(x,\gate_{\P_U}(x)) < \mu$
	\item $D$, the constant from Proposition \ref{lem: substituting quasi-geodesic} 
	\item $L'$, the quasi-geodesic constant from Proposition \ref{prop: Hung's construction}
	\item  $k'$, the quasi-geodesic constant  obtained by applying Lemma \ref{lem: substituting quasi-geodesic} to a $(L',L')$--quasi-geodesic
	\item $K$, the constant from the bridge theorem (Theorem \ref{thm:parallel_gates}) for $Y$ and $\P_U$ (recall $Y$ is hierarchically quasiconvex by Proposition \ref{prop: quasiconvex implies HQC})
	
\end{itemize}

Let $f,g,h$ be as in Proposition \ref{prop: Hung's construction} and fix $r$ be large enough that \[g(r) > 2KQ(k',k')+K^2 +K  \text{ and } h(r)>D+ 2\mu.\] If $\P_U \subseteq N_r(\gate_{\P_U}(Y))$, then we are done. So for the purposes of contradiction, suppose that $\P_U \not\subseteq N_r(\gate_{\P_U}(Y))$ and that $\diam(\pi_U(Y)) > f(r)$.  Let $\eta$ be the $(L',L')$--quasi-geodesic provided by Proposition \ref{prop: Hung's construction} and let $a_1,b_1 \in \gate_{\P_U}(Y)$ be the endpoints of $\eta$.  Let $a_0,b_0 \in Y$ such that $\gate_{\P_U}(a_0) = a_1$ and $\gate_{\P_U}(b_0) = b_1$. Since \[d_U(a_0,b_0) > d_U(a_1,b_1) -2\mu>h(r) -2\mu >D,\] Lemma \ref{lem: substituting quasi-geodesic} produces a $(k',k')$--quasi-geodesic $\gamma$ with endpoints $a_0$ and $b_0$ and containing $\eta$ where $k'$ depending ultimately only on $\mf{S}$.  Since $Y$ is $Q$--{\sqc}, $\gamma \subseteq N_{Q(k',k')}(Y)$.  By Proposition \ref{prop: Hung's construction}, there exists $x \in \eta$ such that $d_\mc{X}(x,\gate_{\P_U}(Y)) > g(r)$. Let $y \in Y$ be such that $d_\mc{X}(x,y)-1 \leq d_\mc{X}(x,Y)$, then by the bridge theorem (Theorem \ref{thm:parallel_gates}) we have the following contradiction:
\[Q(k',k') \geq d_\mc{X}(x,y)-1 \geq \frac{1}{K}d_\mc{X}(x,\gate_{\P_U}(Y)) -K - 1 >  2Q(k',k').\]
\end{proof}

The following proposition uses Proposition \ref{prop:ortho_proj_for_product_regions} to finish the proof of the implication from (\ref{item:quasiconvex}) to (\ref{item:projections}) in Theorem~\ref{thm:classification_of_quasiconvex_subset}.

\begin{prop}\label{prop: quasiconvex => (1)+(2)}
If $(\mc{X},\mf{S})$ is an HHS with the bounded domain dichotomy and $Y$ is a $Q$--{\sqc} subset of $\mc{X}$, then there exists $B>0$ depending only on $Q$ and $\mf{S}$ such that $Y$ has the $B$--orthogonal projection dichotomy.
\end{prop}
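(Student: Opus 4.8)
The plan is to deduce the $B$--orthogonal projection dichotomy for a $Q$--{\sqc} subset $Y$ from Proposition \ref{prop:ortho_proj_for_product_regions}, which already gives the analogous statement for the product region $\P_U$ whenever $U \in \mf{S}^*$. So the main point is to pass from ``$\P_U \subseteq N_B(\gate_{\P_U}(Y))$'' to ``$CV \subseteq N_B(\pi_V(Y))$'' for each $V \perp U$, and to dispatch the case $U \notin \mf{S}^*$.

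First I would fix $U,V \in \mf{S}$ with $U \perp V$ and assume $\diam(\pi_U(Y)) > B$ for a constant $B$ to be chosen. By the bounded domain dichotomy, if $\diam(CV) \le B_0$ then trivially $CV \subseteq N_B(\pi_V(Y))$ (since $B \ge B_0$ and $\pi_V(Y) \ne \emptyset$), so we may assume $\diam(CV) = \infty$; similarly $\diam(CU) = \infty$ since it exceeds $B \ge B_0$. Hence $U \in \mf{S}^*$, with $V$ witnessing the orthogonal infinite-diameter domain. Now I would take $B$ large enough (larger than the $B_0$ of Proposition \ref{prop:ortho_proj_for_product_regions}, which depends only on $\mf{S}$ and $Q$) so that the hypothesis $\diam(\pi_U(Y)) > B$ triggers the conclusion $\P_U \subseteq N_B(\gate_{\P_U}(Y))$.

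The remaining step is to observe that the projection $\pi_V$ restricted to $\P_U$ is coarsely onto $CV$ and that $\pi_V(\gate_{\P_U}(Y))$ coarsely agrees with $\pi_V(Y)$. For the first: by the partial realization axiom (Axiom \ref{item:dfs_partial_realization}) applied to $\{V\}$ (using $V \nest U$, hence $V$ is one of the domains coordinatizing $\PF_U$), every point of $CV$ is coarsely hit by $\pi_V(\P_U)$, i.e. $CV \subseteq N_{C}(\pi_V(\P_U))$ for $C$ depending only on $\mf{S}$. For the second: by part (3) of Lemma \ref{lem:gate} (existence of coarse gates), $\pi_V(\gate_{\P_U}(x))$ coarsely coincides with $\cpproj_{\pi_V(\P_U)}(\pi_V(x))$, which for $x \in Y$ gives a point of $\pi_V(Y)$ up to the quasiconvexity of $\pi_V(\P_U)$ in the $\delta$--hyperbolic space $CV$; so in fact $\pi_V(\gate_{\P_U}(Y)) \subseteq N_{C'}(\pi_V(Y))$ with $C'$ depending only on $\mf{S}$. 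Since $\pi_V$ is $(K,K)$--coarsely Lipschitz, $\P_U \subseteq N_B(\gate_{\P_U}(Y))$ gives $\pi_V(\P_U) \subseteq N_{KB+K}(\pi_V(\gate_{\P_U}(Y)))$, and chaining the three inclusions yields
\[
CV \subseteq N_{C}(\pi_V(\P_U)) \subseteq N_{C + KB + K}(\pi_V(\gate_{\P_U}(Y))) \subseteq N_{C+KB+K+C'}(\pi_V(Y)).
\]
Finally I would enlarge $B$ once more so that $B \ge C + KB_{\mathrm{old}} + K + C'$ where $B_{\mathrm{old}}$ is the threshold from Proposition \ref{prop:ortho_proj_for_product_regions}; then the same $B$ works both as the hypothesis threshold and as the neighborhood constant, and it depends only on $Q$ and $\mf{S}$. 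I expect the only real subtlety to be the bookkeeping in this last step — making sure a single constant $B$ simultaneously serves as the ``large diameter'' threshold and the ``neighborhood radius,'' which is a standard fixed-point/monotonicity argument since all the auxiliary constants ($C$, $C'$, $K$, and the $B_0$ of the product-region statement) are independent of $B$.
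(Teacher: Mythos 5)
Your argument is correct and follows exactly the paper's route: dispatch $U \notin \mf{S}^*$ via the bounded domain dichotomy, apply Proposition \ref{prop:ortho_proj_for_product_regions} to get $\P_U \subseteq N_B(\gate_{\P_U}(Y))$, and then push through $\pi_V$. One small slip in the parenthetical: you wrote ``using $V \nest U$, hence $V$ is one of the domains coordinatizing $\PF_U$,'' but here $V \perp U$, so $V \in \mf{S}_U^\perp$ and $V$ coordinatizes $\PE_U$, not $\PF_U$; the coarse surjectivity of $\pi_V$ restricted to $\P_U$ still holds for the same reasons, so the argument is unaffected.
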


\begin{proof}
Let $Y \subseteq \mc{X}$ be $Q$--{\sqc} and $B'>0$ be larger than the bounded domain dichotomy constant for $\mf{S}$ and the constant $B_0$ from Proposition \ref{prop:ortho_proj_for_product_regions}. Let $U \in \mf{S}$. If $U \not \in \mf{S}^*$, then by the bounded domain dichotomy, either $\diam(CU) < B'$ or for all $V \in \mf{S}^\perp_U$, $\diam(CV) <B'$.  In either case, the $B'$--orthogonal projection dichotomy is satisfied for $U$. Thus we can assume that $U \in\mf{S}^*$, so $\diam(CU) = \infty$ and there exists $V \in\mf{S}_U^{\perp}$ with $\diam(CV) = \infty$. Suppose $\diam(\pi_U(Y))>B'$. By Proposition \ref{prop:ortho_proj_for_product_regions}, $\P_U \subseteq N_{B'}(\gate_{\P_U}(Y))$. For all $V \in \mf{S}_U^\perp$, $\pi_V(\P_U)$ uniformly coarsely covers $CV$, thus there exists $B\geq B'$ depending only on $Q$ and $\mf{S}$ such that $CV \subseteq N_B(\pi_V(Y))$.
\end{proof}

\subsection{Contracting subsets in HHSs}\label{subsec: Contracting Subsets in HHSs}

We now finish the proof of Theorem \ref{thm:classification_of_quasiconvex_subset} by showing that for hierarchically quasiconvex subsets, the orthogonal projection dichotomy implies that the gate map $\gate_{Y}$ is contracting.

\begin{prop}
\label{prop: HQC + Dichotomy implies contracting}
Let $(\mc{X},\mathfrak S)$ be a hierarchically hyperbolic space with the bounded domain dichotomy and $Y \subseteq \mc{X}$ be  $k$--hierarchically quasiconvex. If $Y$ has the $B$--orthogonal projection dichotomy, then the gate map $\gate_Y:\mc{X} \rightarrow Y$ is $(A,D)$--contracting, where $A$ and $D$ depend only on $k$, $B$, and $\mf{S}$.
\end{prop}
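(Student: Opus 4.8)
Conditions (1) and (2) of Definition~\ref{defn:contracting} for the gate map $\gate_Y$ are exactly parts (1) and (2) of Lemma~\ref{lem:gate}, so the whole content is condition (3): given $x\in\mc{X}$ and $x'\in B_R(x)$ with $R=A\,d(x,Y)$, one must bound $d_{\mc{X}}(\gate_Y(x),\gate_Y(x'))$ by a constant depending only on $k$, $B$, and $\mf{S}$. The plan is to apply the distance formula (Theorem~\ref{thm:distance_formula}) and bound $\sum_{U\in\mf{S}}\ignore{d_U(\gate_Y(x),\gate_Y(x'))}{\sigma}$ for a threshold $\sigma$ chosen large in terms of $k$, $B$, and $\mf{S}$. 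The basic translation device is Lemma~\ref{lem:gate}(3): $\pi_U(\gate_Y(x))$ coarsely coincides with $\cpproj_{\pi_U(Y)}(\pi_U(x))$ in the $\delta$--hyperbolic space $CU$ (and likewise for $x'$), so each summand can be analyzed inside $CU$ using the fact, recorded at the end of Section~\ref{sec:coarse_geometry}, that the closest point projection onto the uniformly quasiconvex set $\pi_U(Y)$ is $(1,D_0)$--contracting.

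I would then partition $\relevant_\sigma(\gate_Y(x),\gate_Y(x'))$ according to whether $x$ projects close to $Y$ in the given domain. \emph{Domains $U$ with $d_U(x,Y)$ and $d_U(x',Y)$ both large.} Here the $(1,D_0)$--contraction of $\cpproj_{\pi_U(Y)}$ says that a ball of radius $\sim d_U(x,Y)$ about $\pi_U(x)$ has image of diameter $\le D_0$; together with $d_U(x,x')\le K\,d(x,x')+K\le KR+K$ and the ``budget'' $\sum_U\ignore{d_U(x,x')}{\sigma}\asymp d(x,x')\le A\,d(x,Y)$ supplied by the distance formula, choosing $A$ small (in terms of $\mf{S}$) forces the total contribution of these domains to be uniformly bounded. \emph{Domains $U$ with $d_U(x,Y)$ (or $d_U(x',Y)$) small.} Then $\pi_U(x)$ lies near $\pi_U(Y)$, so $d_U(\gate_Y(x),\gate_Y(x'))>\sigma$ forces $\diam(\pi_U(Y))>\sigma$, and the $B$--orthogonal projection dichotomy applies to $U$: every $V\perp U$ has $CV\subseteq N_B(\pi_V(Y))$.

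The second family is the crux, and the step I expect to be the main obstacle: one must show that the orthogonal projection dichotomy prevents these ``$x$--close--to--$Y$'' domains from contributing an unbounded amount — this is precisely the mechanism by which the hypothesis excludes the standard product counterexample (where $Y$ is a factor of a product region). The plan here is to bring in the product region $\P_U$ together with partial realization (Axiom~\ref{item:dfs_partial_realization}) and the bridge theorem (Theorem~\ref{thm:parallel_gates}): the orthogonal projection dichotomy forces $\gate_{\P_U}(Y)$ to coarsely fill $\P_U$ in the $\mf{S}_U^\perp$--directions, so any separation of $\gate_Y(x)$ from $\gate_Y(x')$ detected through $U$ must already be recorded in the $\mf{S}_U$--directions and in transverse domains. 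Finally, organizing this family via Proposition~\ref{prop:partial_ordering_of_domains} — partitioning it into at most $\chi$ chains totally ordered by the $\trans$--ordering based at $\gate_Y(x')$, and propagating estimates along each chain using the consistency inequalities of Axiom~\ref{item:dfs_transversal} — bounds each chain's contribution, hence the whole family's, by a constant. Assembling the two families in the distance formula yields $d_{\mc{X}}(\gate_Y(x),\gate_Y(x'))\le D$, with $A$ and $D$ depending only on $k$, $B$, and $\mf{S}$.
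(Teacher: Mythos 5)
Your reduction to Lemma~\ref{lem:gate} for conditions (1) and (2), and the plan to control $d_{\mc X}(\gate_Y(x),\gate_Y(x'))$ via the distance formula, is the right starting point — but the way you then try to bound the per-domain gate distances diverges from the paper's proof, and the first of your two families has a genuine gap.

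Concretely: for a domain $U$ with $d_U(x,Y)$ and $d_U(x',Y)$ ``large,'' the $(1,D_0)$--contraction of $\cpproj_{\pi_U(Y)}$ only bounds $d_U(\gate_Y(x),\gate_Y(x'))$ when $d_U(x,x')<d_U(x,Y)$; otherwise it tells you nothing. Your budget $\sum_U\ignore{d_U(x,x')}{\sigma}\asymp d(x,x')\le A\,d(x,Y)$ does not enforce that $d_U(x,x')<d_U(x,Y)$ domain-by-domain, nor does it make the aggregate gate-distance contribution from the bad sub-family uniformly bounded: the crude estimate $d_U(\gate_Y(x),\gate_Y(x'))\le d_U(x,Y)+d_U(x,x')+d_U(x',Y)+O(1)$ only yields a total of order $A\,d(x,Y)$, which grows with $d(x,Y)$ no matter how small you take $A$. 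So ``choosing $A$ small forces the contribution to be uniformly bounded'' is false as stated. And for the second family your sketch (product regions, partial realization, bridge theorem, chains from Proposition~\ref{prop:partial_ordering_of_domains}) is a list of tools, not yet an argument; the paper in fact uses none of them in this proof.

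The paper's proof is organized around a single witness domain rather than a partition of $\mf S$ into families. Writing $x_0$ for your $x$ and $x$ for your $x'$: after taking $d(x_0,Y)$ large and $d(x_0,x)<C_1d(x_0,Y)$ with $C_1<1/(2K^2+1)$, a pigeonhole argument with the distance formula produces a single domain $V$ with $d_V(x_0,\gate_Y(x_0))>d_V(x_0,x)+L$. In this one domain the contraction of the closest-point projection in the hyperbolic space $CV$ gives $d_V(\gate_Y(x_0),\gate_Y(x))<r$. Then for \emph{every other} $U\in\mf S$, the bound on $d_U(\gate_Y(x_0),\gate_Y(x))$ comes not from contraction in $CU$ but from the relation between $U$ and $V$: if $\diam(\pi_U(Y))\le B$ it is automatic; if $V\nest U$ or $U\nest V$ one uses the bounded geodesic image axiom; if $U\trans V$ one uses consistency; and $U\perp V$ is \emph{impossible} when $\diam(\pi_U(Y))>B$, because the $B$--orthogonal projection dichotomy would then force $CV\subseteq N_B(\pi_V(Y))$, contradicting $d_V(x_0,\gate_Y(x_0))>L$. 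This last step is the only place the orthogonal projection dichotomy enters, and the ``single dominating $V$ plus case analysis on $\nest/\perp/\trans$'' structure is the idea your proposal is missing. You should rebuild the argument around that witness rather than trying to bound the two families directly.
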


\begin{proof}
The gate map satisfies the first two condition in the definition of a contracting map by Lemma \ref{lem:gate}. It only remains to prove the following:
\begin{center} 
	\textit{There exist some $0<A < 1$ and $D\geq 1$ depending only on $k$, $B$, and $\mf{S}$, such that for all $x \in\mc{X}$, $\diam(\gate_{Y}(B_R(x))\leq D$ where $R = Ad(x,Y)$.}
\end{center}
Fix a point $x_0 \in \mc{X}$ with $d_\mc{X}(x_0,Y)\geq C_0$ and let $x \in \mc{X}$ be any point with $d_\mc{X}(x_0,x)<C_1 d_\mc{X}(x_0,Y)$ for constants $C_0$ and $C_1$ to be determined below. We will prove that for each domain $U\in \mathfrak S$ the distance $d_U\bigl(\gate_Y(x_0),\gate_Y(x)\bigr)$ is uniformly bounded, then the above will follow from the distance formula (Theorem \ref{thm:distance_formula}).

We choose a ``large'' number $L$ (we will clarify how large $L$ is later). Let $K\geq 1$ be the coarse equality constant from the distance formula 
with thresholds $L$ and $2L$. Take $C_0>(2K+1)K$ sufficiently large so there is $W\in \mathfrak S$ such that $d_W\bigl(x_0,\gate_Y(x_0)\bigr)>2L$. Choose $C_1<1/(2K^2+1)$, ensuring that $d_\mc{X}\bigl(x_0,\gate_Y(x_0)\bigr)>(2K^2+1)d_\mc{X}(x_0,x)$. If $d_\mc{X}(x_0,x)\leq C_0$, then by the coarse Lipschitzness of the projections $d_U\bigl(\gate_Y(x_0),\gate_Y(x)\bigr)$ is uniformly bounded by a number depending on $C_0$ for each $U \in\mf{S}$. Therefore, we can assume that $d_\mc{X}(x_0,x)>C_0$.
We claim that there is $V\in \mathfrak S$ such that $d_V\bigl(x_0,\gate_Y(x_0)\bigr)>d_V(x_0,x)+L$. 

Assume for the purposes of contradiction that $d_W\bigl(x_0,\gate_Y(x_0)\bigr)\leq d_W(x_0,x)+L$ for all $W\in \mathfrak S$. Therefore, we have $d_W\bigl(x_0,\gate_Y(x_0)\bigr)\geq 2L \implies d_W(x_0,x)\geq L$ and this implies \[\ignore{d_W\bigl(x_0,\gate_Y(x_0)\bigr)}{2L}\leq 2\ignore{d_W\bigl(x_0,x\bigr)}{L}\] for all $W\in \mathfrak S$. Thus,  
\begin{align*}
	d_\mc{X}\bigl(x_0,\gate_Y(x_0)\bigr)
	&\leq K \sum\limits_{W \in \mf{S}} \ignore{d_W\bigl(x_0,\gate_Y(x_0)\bigr)}{2L}+K\\
	&\leq 2K \sum\limits_{W \in \mf{S}} \ignore{d_W\bigl(x_0,x\bigr)}{L}+K\\
	&\leq 2K\bigl(Kd_\mc{X}(x_0,x)+K\bigr)+K\\
	&\leq 2K^2 d_\mc{X}(x_0,x)+(2K+1)K\\
	&\leq 2K^2 d_\mc{X}(x_0,x)+C_0\\
	&\leq (2K^2+1)d_\mc{X}(x_0,x)
\end{align*}
which contradicts $C_1<1/(2K^2+1)$. Therefore, we can fix $V\in \mathfrak S$ such that \[d_V\bigl(x_0,\gate_Y(x_0)\bigr)>d_V(x_0,x)+L.\]
The construction of the gate map and the hyperbolicity of $CV$ ensure that, after enlarging $L$ and shrinking $C_1$ if necessary, $d_V\bigl(\gate_Y(x_0),\gate_Y(x)\bigr)<r$ where $r$ depends only on $k$ and $\mf{S}$. The triangle inequality then gives us \[d_V\bigl(x,\gate_Y(x_0)\bigr)>L \text{ and } d_V\bigl(x,\gate_Y(x)\bigr)>L-r.\]

Now let $U \in \mf{S}$. If $\diam\bigl(\pi_U(Y)\bigr)\leq B$, then $d_U\bigl(\gate_Y(x_0),\gate_Y(x)\bigr)\leq B$ and we are done. Thus we can assume that $\diam\bigl(\pi_U(Y)\bigr)> B$.
If $U=V$, then the distance $d_U\bigl(\gate_Y(x_0),\gate_Y(x)\bigr)$ is uniformly bounded above by the number $r$ and we are done.  We now consider the other possible cases depending on the relation between $U$ and $V$:

\textbf{Case 1:}  $V\nest U$. If we choose $L$ greater than $E+r$, then $$d_V\bigl(x_0,\gate_Y(x_0)\bigr)>E \text{ and } d_V\bigl(x,\gate_Y(x)\bigr)>E.$$ Thus by the bounded geodesic image axiom (Axiom \ref{item:dfs:bounded_geodesic_image}), the $CU$ geodesics from $\pi_U(x_0)$ to $\pi_U(\gate_Y(x_0))$ and from $\pi_U(x)$ to $\pi_U(\gate_Y(x))$  must intersect $N_E(\rho_U^V)$. Therefore, the distance $d_U\bigl(\gate_Y(x_0),\gate_Y(x)\bigr)$ is uniformly bounded due to the hyperbolicity of $CU$ and the properties of the gate map (Lemma \ref{lem:gate}). 

\textbf{Case 2:}  $U\nest V$. If some $CV$ geodesic from $\pi_V(\gate_Y(x_0))$ to $\pi_V(\gate_Y(x))$ stays $E$--far from $\rho_V^U$, then by the bounded geodesic image axiom (Axiom \ref{item:dfs:bounded_geodesic_image}),  $d_U\bigl(\gate_Y(x_0),\gate_Y(x)\bigr) \leq E$  and we are done. Therefore, we assume that all $CV$ geodesics from $\pi_V(\gate_Y(x_0))$ to $\pi_V(\gate_Y(x))$ intersect ${N}_E(\rho_V^U)$. Since $d_V\bigl(x_0,\gate_Y(x_0)\bigr)>d_V(x_0,x)+L$, if there was also a $CV$ geodesic from $\pi_V(x_0)$ to $\pi_V(x)$ that intersected $N_E(\rho_V^U)$ we would have
\begin{align*}
	d_V\bigl(\gate_Y(x_0), \rho_V^U\bigr)&\geq d_V\bigl(\gate_Y(x_0),x_0\bigr)-d_V(x_0,\rho_V^U)\\
	&> d_V\bigl(\gate_Y(x_0),x_0\bigr)-d_V(x_0,x)-2E\\
	&\geq L-2E.
\end{align*}
However, $d_V\bigl(\gate_Y(x_0),\gate_Y(x)\bigr) \leq r$ which implies $\pi_V(\gate_Y(x_0))$ lies in $N_{E+r}(\rho_V^U)$. Therefore, by assuming $L > 4E+r$ we can ensure that no $CV$ geodesic from $\pi_V(x_0)$ to $\pi_V(x)$ intersects $N_E(\rho_V^U)$.  Thus $d_U(x_0,x) < E$ by the bounded geodesic image axiom and it immediately follows that $d_U(\gate_{Y}(x_0),\gate_{Y}(x))$ is bounded by a constant depending on $k$ and $\mf{S}$.

\textbf{Case 3:}  $U\not\nest V$ and $V \not\nest U$. Recall that we can assume $\diam(\pi_U(Y)) >B$. Thus if $U\perp V$, we have $CV \subseteq N_B(\pi_V(Y))$ by the orthogonal projection dichotomy. However $d_V\bigl(x_0,\gate_Y(x_0)\bigr)>L$, so by Lemma \ref{lem: closest_point_proj} we can choose $L$ large enough so that $\pi_V(x_0)$ does not lie in the $B$--neighborhood of $\pi_V(Y)$. Thus $U$ and $V$ cannot be orthogonal and hence $U \trans V$. 

Now assume $L>2\kappa_0+3r+2E+1$. Then if $d_V\bigl(\gate_Y(x_0),\rho_V^U\bigr)\leq \kappa_0+r+E$ we have 

\begin{align*}d_V(x_0,\rho_V^U)\geq& d_V\bigl(x_0,\gate_Y(x_0)\bigr)-d_V\bigl(\gate_Y(x_0),\rho_V^U\bigr)-E\\
	\geq& L-(\kappa_0+r+E)-E\\
	>&\kappa_0\end{align*}
and
\begin{align*} 
	d_V(x,\rho_V^U)\geq& d_V\bigl(x,\gate_Y(x_0)\bigr)-d_V\bigl(\gate_Y(x_0),\rho_V^U\bigr)-E\\
	>&L-(\kappa_0+r+E)-E\\
	>&\kappa_0. 
\end{align*}
Therefore, $d_U(x_0, \rho_U^V)<\kappa_0$ and $d_U(x, \rho_U^V)<\kappa_0$ by consistency (Axiom \ref{item:dfs_transversal}). This implies that $d_U(x_0,x)\leq 2\kappa_0+E$ and thus $d_U(\gate_Y(x_0),\gate_Y(x))$ is bounded by a constant depending on $k$ and $\mf{S}$.

If instead $d_V\bigl(\gate_Y(x_0),\rho_V^U\bigr)>\kappa_0+r+E$, then  $d_V\bigl(\gate_Y(x),\rho_V^U\bigr)>\kappa_0$ since  $d_V\bigl(\gate_Y(x_0),\gate_Y(x)\bigr)<r$. By consistency $d_U\bigl(\gate_Y(x_0), \rho_U^V\bigr)<\kappa_0$ and $d_U\bigl(g(x), \rho_U^V\bigr)<\kappa_0$,  which implies that \[d_U\bigl(\gate_Y(x_0),\gate_Y(x)\bigr)\leq2\kappa_0+E.\]\end{proof}

\begin{rem}\label{rem: we need all the hypotheses}
Both hypotheses on the subspace in Proposition \ref{prop: HQC + Dichotomy implies contracting} are in fact required. In the standard HHG structure of $\mathbb{Z}^2$, the subgroup $\langle (1,0) \rangle$ is hierarchically quasiconvex, but does not satisfy the orthogonal projection dichotomy. On the other hand, the subgroup $\langle (1,1) \rangle$ has the orthogonal projection dichotomy, but is not hierarchically quasiconvex. Neither of these subsets are  {\sqc} and thus neither are contracting. Both of the above examples can even be made to be (non-strongly) quasiconvex by choosing \{(1,0), (1,1), (0,1)\} to be the generating set for $\mathbb{Z}^2$.
\end{rem}

\subsection{A generalization of the bounded geodesic image property}\label{subsec:bounded_geodesic_image}

As a first application of our characterization of {\sqc} subsets (Theorem \ref{thm:classification_of_quasiconvex_subset}) we show that {\sqc} subspaces of HHSs satisfy a version of the bounded geodesic image property. First recall the bounded geodesic image property for {quasiconvex} subsets of hyperbolic spaces (not to be confused with the bounded geodesic image axiom of an HHS).

\begin{prop}[Bounded geodesic image property for hyperbolic spaces]
Let $Y$ be a $K$--quasiconvex subset of a geodesic \(\delta\)--hyperbolic space $X$. Then there exists $r>0$ (depending on \(\delta\) and $K$) such that if $d(\cpproj_Y(x),\cpproj_Y(y))>r$, then every geodesic connecting $x$ and $y$ must intersect the $r$--neighborhood of $Y$. 
\end{prop}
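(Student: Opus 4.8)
The plan is to realize the concatenation of geodesics $x \to p \to q \to y$, where $p = \cpproj_Y(x)$ and $q = \cpproj_Y(y)$, as a quasi-geodesic with constants depending only on $\delta$ and $K$, and then invoke stability of quasi-geodesics in hyperbolic spaces. \textbf{Step 1 (a projection lemma).} First I would establish that there is a constant $C_0 = C_0(\delta, K)$ such that for every $z \in Y$ one has $d(x, p) + d(p, z) \le d(x, z) + C_0$, where $p = \cpproj_Y(x)$. This follows from $\delta$--thinness of the geodesic triangle with vertices $x, z, p$: since $[p,z] \subseteq N_K(Y)$ by quasiconvexity and $d(x,p) \le d(x,Y)+1$, any point of $[x,p]$ lying within $\delta$ of $[p,z]$ is within $\delta + K + 2$ of $p$; hence all but a bounded terminal portion of $[x,p]$ is $\delta$--fellow--travelled by $[x,z]$, and comparing lengths yields the displayed inequality. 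Equivalently, the Gromov product $(x \mid z)_p$ is at most $C_0/2$, i.e.\ $p$ lies coarsely on every geodesic from $x$ to a point of $Y$.

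\textbf{Step 2 (the broken path is a quasi-geodesic).} Apply Step 1 with $z = q$ to get $(x \mid q)_p \le C_0/2$, and apply it with the roles of $x, y$ exchanged and $z = p$ to get $(p \mid y)_q \le C_0/2$. Thus the path $\sigma = [x,p] * [p,q] * [q,y]$ has uniformly bounded Gromov product at each of its two corners. Assuming $d(p,q)$ exceeds a constant depending only on $\delta$ and $C_0$ — which we may, since we are free to enlarge the threshold $r$ — the standard local-to-global (thin polygon) estimates in $\delta$--hyperbolic spaces show that $\sigma$ is a $(\lambda, \epsilon)$--quasi-geodesic with $\lambda, \epsilon$ depending only on $\delta$ and $K$, the two end segments $[x,p]$ and $[q,y]$ contributing only a bounded additive error regardless of their lengths.

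\textbf{Step 3 (stability and conclusion).} By stability of quasi-geodesics in the $\delta$--hyperbolic space $X$ \cite{DrutuKapovichBook}, there is $M = M(\delta, K)$ with $d_{Haus}(\sigma, \beta) \le M$ for every geodesic $\beta$ from $x$ to $y$. Set $r = M + K + 1$. If $d(\cpproj_Y(x), \cpproj_Y(y)) = d(p,q) > r$, then $[p,q]$ is a genuine subsegment of $\sigma$ contained in $N_K(Y)$ by quasiconvexity; its midpoint $m$ satisfies $d(m, Y) \le K$ and $d(m, \beta) \le M$, so $\beta \cap N_{M+K}(Y) \ne \emptyset$ and hence $\beta \cap N_r(Y) \ne \emptyset$. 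The main obstacle I anticipate is Step 2: pinning down, with constants depending only on $\delta$ and $K$, the claim that a geodesic path with two nearly geodesic corners and a long central segment is globally a quasi-geodesic while permitting arbitrarily short end segments. An alternative route that avoids this is to use that $Y$, being quasiconvex in a hyperbolic space, is {\sqc}, so (as recorded at the end of Section~\ref{sec:coarse_geometry}) $\cpproj_Y$ is $(1,D)$--contracting with $D = D(\delta,K)$; the geodesic version of the bounded geodesic image property can then be derived from the contraction estimate applied along $\beta$ together with quasiconvexity of $Y$.
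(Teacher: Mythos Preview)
The paper does not prove this proposition: it is stated as a classical recalled fact (``First recall the bounded geodesic image property for quasiconvex subsets of hyperbolic spaces\ldots'') to motivate the subsequent HHS generalization in Proposition~\ref{prop: active subpaths for quasiconvex}, and no proof is supplied. So there is no paper proof to compare against.

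Your argument is a correct and standard proof of this well-known fact. The worry you flag in Step~2 is not a real obstacle: a concatenation of geodesic segments with uniformly bounded Gromov products at the corners is always a uniform quasi-geodesic in a $\delta$--hyperbolic space, without any lower bound on the lengths of the segments (this is the standard ``taming'' lemma for broken geodesics; see e.g.\ \cite[III.H]{DrutuKapovichBook} or the analogous arguments in Bridson--Haefliger). So you do not need the middle segment to be long, nor do you need to worry about short end segments. Your alternative route via the $(1,D)$--contracting property of $\cpproj_Y$ is also perfectly valid and arguably cleaner, since it avoids stability of quasi-geodesics entirely.
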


In the case of {\sqc} subsets of hierarchically hyperbolic space, we replace the closest point projection with the gate map and geodesics with hierarchy paths. Theorem \ref{intro:contracting_features} from the introduction will follow as a result of the following proposition, which is a version of the active subpath theorem (Proposition \ref{prop: active subpath}) for {\sqc} subsets.

\begin{prop}\label{prop: active subpaths for quasiconvex}
Let $(\mc{X},\mf{S})$ be an HHS with the bounded domain dichotomy and  $Y\subseteq \mc{X}$ be a $Q$--{\sqc}. For all $\lambda\geq 1$, there exist constants \(\nu, D\), depending on $\lambda$ and $Q$, so that the following holds for all \(x, y\in \X\). If \(d_\X(\gate_Y (x), \gate_Y(y))>D\) and  $\gamma:[a,b]\to \mc{X}$  is a \(\lambda\)--hierarchy path joining \(x\) and \(y\), then there is a subpath \(\alpha=\gamma_{|[a_1,b_1]}\) of \(\gamma\) with the following properties:
\begin{enumerate}
	\item \(\alpha \subseteq N_\nu(Y)\).
	\item The diameters of \(\gate_Y\bigl(\gamma([a,a_1])\bigr)\) and \(\gate_Y\bigl(\gamma([b_1,b])\bigr)\) are both bounded by $\nu$.
\end{enumerate}
\end{prop}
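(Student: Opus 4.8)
The plan is to mimic the proof of the active subpath theorem for product regions (Proposition \ref{prop: active subpath}) but using the gate map $\gate_Y$ onto the {\sqc} subset $Y$ in place of the gate onto a standard product region. The key input is that, by Theorem \ref{thm:classification_of_quasiconvex_subset}, a {\sqc} subset $Y$ is hierarchically quasiconvex and has a contracting gate map $\gate_Y \colon \mc{X} \to Y$. So $Y$ behaves like a coarsely convex ``blob'' with a well-behaved retraction, and we want to show that a hierarchy path between two points whose gates are far apart must actually run close to $Y$ for a long stretch, entering near $\gate_Y(x)$ and leaving near $\gate_Y(y)$.

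First I would set up the machinery: let $\gamma \colon [a,b] \to \mc{X}$ be a $\lambda$--hierarchy path from $x$ to $y$, and consider the composition $\gate_Y \circ \gamma$. Since $\gate_Y$ is $(K,K)$--coarsely Lipschitz (Lemma \ref{lem:gate}) and since $\pi_U \circ \gate_Y$ coarsely agrees with $\cpproj_{\pi_U(Y)} \circ \pi_U$ on each domain $U$, the path $\pi_U \circ \gate_Y \circ \gamma$ is, for each $U \in \mf{S}$, an unparameterized quasi-geodesic in $CU$ with uniform constants (projections of hierarchy paths are unparameterized quasi-geodesics, and closest-point projection in a hyperbolic space preserves this). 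Together with the distance formula (Theorem \ref{thm:distance_formula}), this shows $\gate_Y \circ \gamma$ is itself a uniform unparameterized quasi-geodesic in $Y$ from $\gate_Y(x)$ to $\gate_Y(y)$. Now define $a_1 = \inf\{t : d_\mc{X}(\gamma(t), Y) \leq \nu_0\}$ and $b_1 = \sup\{t : d_\mc{X}(\gamma(t), Y) \leq \nu_0\}$ for a suitable threshold $\nu_0$ (to be fixed in terms of $\lambda$, $Q$, and $\mf{S}$), and set $\alpha = \gamma|_{[a_1,b_1]}$. Property (1), $\alpha \subseteq N_\nu(Y)$, needs a separate argument: I would use the {\sqcity} of $Y$ — the subpath of $\gamma$ from $\gamma(a_1)$ to $\gamma(b_1)$ is a $\lambda$--quasi-geodesic with endpoints within $\nu_0$ of $Y$, hence within $Q(\lambda,\lambda) + \nu_0$ of $Y$, so $\nu := Q(\lambda,\lambda) + \nu_0$ works (after the usual adjustment to replace endpoints on $Y$ by endpoints in $N_{\nu_0}(Y)$, which only changes constants).

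For property (2) — that $\gate_Y(\gamma([a,a_1]))$ and $\gate_Y(\gamma([b_1,b]))$ have bounded diameter — this is where the contracting property of $\gate_Y$ is essential. The point is that on $[a,a_1]$ the path $\gamma$ stays at distance more than $\nu_0$ from $Y$, so I would like to say its $\gate_Y$-image is forced to be small because the contracting map collapses balls far from $Y$. The cleanest route is the one used in Proposition \ref{prop:contracting_implies_quad}: subdivide $[a, a_1]$ at times $t_0 < t_1 < \cdots < t_m$ so that consecutive images $\gamma(t_{i-1}), \gamma(t_i)$ are within $A \cdot d(\gamma(t_{i-1}), Y)$ of each other (possible since $d(\gamma(t_{i-1}),Y) \geq \nu_0 > 0$ is bounded below, while $\gamma$ is a coarse path), so each $d(\gate_Y(\gamma(t_{i-1})), \gate_Y(\gamma(t_i))) \leq D$. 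This alone does not bound the total, so the real argument must instead combine the two facts that (i) $d_\mc{X}(\gamma(a), Y) = d_\mc{X}(x, Y) \leq \nu_0$ would be false in general — rather, I should use that $\gate_Y(x) = \gate_Y(\gamma(a))$ and argue that any point $p = \gamma(t)$ with $t \in [a,a_1]$ has $\gate_Y(p)$ close to $\gate_Y(x)$. This follows because the hierarchy-path subpath from $x$ to $p$ together with a hierarchy path from $p$ back close to $\gate_Y(p)$ and then to $\gate_Y(x)$, if $\gate_Y(p)$ and $\gate_Y(x)$ were far apart, would (by {\sqcity}, or by the already-proven bridge-type behavior) force $\gamma$ to pass within $\nu_0$ of $Y$ before time $a_1$, contradicting the definition of $a_1$. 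Making this precise is the main obstacle: I expect to need a quantified statement that if $d_\mc{X}(\gate_Y(u), \gate_Y(v)) > D$ then every $\lambda$--hierarchy path from $u$ to $v$ meets $N_{\nu_0}(Y)$ — which is essentially the content of Theorem \ref{intro:contracting_features}, so the logical structure is that Proposition \ref{prop: active subpaths for quasiconvex} and Theorem \ref{intro:contracting_features} are proven together, bootstrapping from the contracting property via the divergence/quasiconvexity estimates of Section \ref{sec: contracting and relative divergence} and the distance formula. Once that quantified meeting statement is in hand, both parts of the proposition follow: the boundary times $a_1, b_1$ are well-defined and give the active subpath, and $\gate_Y$ is coarsely constant before $a_1$ and after $b_1$ because those subpaths cannot meet $N_{\nu_0}(Y)$, hence cannot have gate-image of diameter exceeding $D$.
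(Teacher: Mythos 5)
Your proposal has a genuine gap at the core of part~(2), one you yourself flag (``Making this precise is the main obstacle''). The claim that $\gate_Y$ is coarsely constant on $\gamma|_{[a,a_1]}$ is reduced in your argument to the quantified meeting statement --- that a $\lambda$--hierarchy path between points whose gates are far apart must enter a fixed neighborhood of $Y$ --- but in the paper that statement is precisely Theorem~\ref{intro:contracting_features}, which is \emph{deduced from} Proposition~\ref{prop: active subpaths for quasiconvex}, not available before it. Invoking it here is circular, and the ``bootstrapping'' you gesture at is left unspecified. The circularity cannot be dissolved by the contracting property alone: the bound $D$ on $\diam\bigl(\gate_Y(B_{Ad(p,Y)}(p))\bigr)$ only contracts balls of radius proportional to $d(p,Y)$, so if $\gamma|_{[a,a_1]}$ stays at distance roughly $\nu_0$ from $Y$, chopping it into pieces of that scale bounds $\diam\bigl(\gate_Y(\gamma([a,a_1]))\bigr)$ only linearly in the length of the subpath rather than uniformly. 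Your fallback, concatenating $\gamma|_{[a,t]}$ with a path through $\gate_Y(p)$ to $\gate_Y(x)$, is a ``dog-leg'' whose quasi-geodesic constants are not controlled by $\lambda$ and $Q$, so strong quasiconvexity cannot be applied to it. Your definition of $a_1,b_1$ has the same issue: well-definedness already presupposes that $\gamma$ meets $N_{\nu_0}(Y)$, which is part of what must be proved.

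The paper's actual proof is different in kind: it is a domain-by-domain argument that uses the orthogonal projection dichotomy half of Theorem~\ref{thm:classification_of_quasiconvex_subset}, not the contracting-map half. It discretizes $\gamma$ at scale $2\lambda$, uses hyperbolicity of $CV$ and quasiconvexity of $\pi_V(Y)$ to produce, for each $V\in\relevant_B(\gate_Y(x),\gate_Y(y))$, an index interval $[I_V,J_V]$ on which $\pi_V\circ\gamma$ runs close to $\pi_V(Y)$, then sets $i_0=\min_V i_V$, $j_0=\max_V j_V$. The delicate step --- bounding $d_{\mc{X}}(x_{i_0},\gate_Y(x_{i_0}))$ --- fixes a relevant $V$ with $i_0=i_V$ and runs a four-way case analysis on the relation of an arbitrary $U\in\mf{S}$ to $V$: the orthogonal case is killed by the orthogonal projection dichotomy, and the transverse and nested cases by the consistency and bounded geodesic image axioms. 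Your proposal never touches the orthogonal projection dichotomy, consistency, or bounded geodesic image; these HHS-specific tools are what replace the thin-triangles argument one would use in a hyperbolic space and carry the entire weight of the proof. Once that bound is in hand, the strong quasiconvexity of $Y$ finishes part~(1) exactly as you say.
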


\begin{proof}
By Theorem \ref{thm:classification_of_quasiconvex_subset}, $Y$ is hierarchically quasiconvex and has the orthogonal domain dichotomy. In particular, $\pi_U(Y)$ is uniformly quasiconvex in $CU$ for all $U\in \mf{S}$. Let $x,y \in \mc{X}$ and $\gamma$ be a $\lambda$--hierarchy path connecting $x$ and $y$. Since $\gamma$ is a $(\lambda,\lambda)$--quasi-geodesic we can choose \[x=x_0,x_1,x_2,\cdots,x_n=y\] on $\gamma$ such that the distances between $x_i$ and $x_{i+1}$ are all bounded by $2\lambda$. We will show that there exist $0\leq i_0 \leq j_0 \leq n$ such that:
\begin{itemize}
	\item For $i=i_0$ or $i=j_0$,  $d_\mc{X}(x_i,\gate_Y(x_i))$ is bounded by a constant depending only on $Q$, $\lambda$, and $\mf{S}$.
	\item  If $s<t<i_0$ or $j_0<s<t$, then $d_\mc{X}(\gate_Y(x_s),\gate_Y(x_t))$ is bounded by a constant depending only on $Q$, $\lambda$, and $\mf{S}$.
\end{itemize}

\noindent Since $Y$ is {\sqc}, once we have shown the above, the proposition will follow with $\alpha$ as the subsegment of $\gamma$ between $x_{i_0}$ and $x_{j_0}$.

For each $U\in \mf{S}$, the projection $\pi_U$  is uniformly coarsely Lipschitz, thus there is $\lambda'$ depending on $(\mc{X},\mf{S})$ and $\lambda$ such that the distances $d_U(x_i,x_{i+1})$ are all bounded above by $\lambda'$. 

By the hyperbolicity of each $CU$ and the properties of gate map (Lemma \ref{lem:gate}), there are constants $B$ and $\mu$ depending only on $\mf{S}$, $Q$, and $\lambda$ such that for each $V\in \mf{S}$ satisfying $d_V(\gate_Y(x),\gate_Y(y))>B$ there are $0\leq I_V<J_V\leq n$ with the following properties: 

\begin{enumerate}
	\item $d_{V}\bigl(x_i,\gate_Y(x_i)\bigr) \leq \mu$ for $I_V\leq i \leq J_V$ 
	\item If $s<t<I_V$ or $J_V<s<t$, then $d_{V}(\gate_Y(x_s),\gate_Y(x_t)) < \mu$.
	\item $d_V(x_{I_V},x_{J_V})\geq 10D$ where $D= 3(E+\mu+\kappa_0+\lambda')$
\end{enumerate}
For future convenience, we can and shall assume $B$ is large enough that, $B>E$, $(\mc{X},\mf{S})$ has the $B$--bounded domain dichotomy, and $Y$ has the $B$--orthogonal projection dichotomy. By the uniqueness axiom (Axiom \ref{item:dfs_uniqueness}), there is a constant $K$ depending on $B$ and $(\mc{X},\mf{S})$ such that if $d_{\mc{X}}(\gate_Y(x),\gate_Y(y))>K$, then the set $\mathcal{R}= \relevant_B(\gate_Y(x),\gate_Y(y))$ is non-empty.   Since for each $V\in\mathcal{R}$ we have $d_V(x_{I_V},x_{J_V})\geq 10D$ and each distance $d_V(x_i,x_{i+1})$ is bounded above by $\lambda' <D$, there are $I_{V}<i_{V}<j_{V}<J_{V}$ such that 
\begin{equation*}\label{eq:active_subpaths_middle_piece}
	D\leq d_V(x_{i_V}, x_{I_V})\leq 2D \text{ and } D\leq d_V(x_{j_V}, x_{J_V})\leq 2D.\tag{$*$}
\end{equation*} 
Let $i_0=\min\limits_{V\in\mc{R}}i_V$ and $j_0=\max\limits_{V\in\mathcal{R}}j_V$.\\

We first prove that for each $s$, $t$ that are both less than $i_0$ or both greater than $j_0$ the distance $d_{\mc{X}}(\gate_Y(x_s),\gate_Y(x_t))$ is uniformly bounded by some constant depending only on $\mf{S}$, $Q$, and $\lambda$. We will provide the proof for the case $s$, $t$ are both less than $i_0$ and the proof for the other case is essentially identical. Let $V\in\mf{S}$. If $V \not \in \mc{R}$, then $d_V(\gate_Y(x),\gate_Y(y))\leq B$ which implies $\diam\bigl(\pi_V(\gate_Y(\gamma))\bigr)$ is bounded by a constant that depends only on $B$, $\lambda$, $Q$ and $\mf{S}$. In particular, $d_V(\gate_Y(x_s),\gate_Y(x_t))$ is also uniformly bounded by this constant. When $V \in \mc{R}$, then $s$ and $t$ are both less than $i_V$. Therefore by item (2) above and (\ref{eq:active_subpaths_middle_piece}) we have that $d_V(\gate_Y(x_s),\gate_Y(x_t))$ is bounded by a constant depending only on $\mf{S}$, $Q$, and $\lambda$. By the distance formula (Theorem \ref{thm:distance_formula}) the distance $d_{\mc{X}}(\gate_Y(x_s),\gate_Y(x_t))$ is therefore bounded by a constant that ultimately depends only on  $\mf{S}$, $Q$, and $\lambda$.

We now prove that there exists $\nu'$ depending on $\mf{S}$, $Q$, and $\lambda$ such that for $i=i_0$ or $i=j_0$ \begin{equation*} \label{eq:active_subpaths_conclusion_1}d_{\mc{X}}(x_i,\gate_Y(x_i) )\leq \nu'. \tag{$**$}\end{equation*} Again we only give the proof for the case of $i=i_0$ and the argument for the case $i=j_0$ is almost identical. By the distance formula, it is sufficient to check that  we can uniformly bound \(d_{U}(x_{i}, \gate_Y(x_{i}))\) for each \(U \in \mf{S}\).

Fix a domain $V\in \mathcal{R}$ such that $i=i_0=i_V$. We shall show \(d_{U}(x_{i}, \gate_Y(x_{i}))\) for all $U \in \mf{S}$ by examining the four cases for how $U$ can be related to $V$. 

\textbf{Case 1:} \(V \bot U\). Since $Y$ has the $B$--orthogonal domain dichotomy, \[V \in \mc{R} \implies CU \subseteq N_B(\pi_U(Y)).\] Therefore by the properties of the gate map (Lemma \ref{lem:gate}), we have that $d_\mc{X}(x_i,\gate_Y(x_i))$ is uniformly bounded. 

\textbf{Case 2:} \(V \trans U\). Suppose \(d_V(x_{i},\rho^U_V) > {\kappa_0 + \mu+E} \), then \[d_V(\gate_Y(x_{i}), \rho_V^U)>{\kappa_0 }\] and by the consistency axiom (Axiom \ref{item:dfs_transversal}) and triangle inequality \[d_{U}(x_{i}, \gate_{Y}(x_{i}))\leq 2\kappa_0 + E.\]
Now assume that  \(d_V(x_{i},\rho^U_V) < {\kappa_0 + \mu+E} \). Since $D > \mu+E+\kappa_0$, $d_V(x_i,x_{I_V})\geq D$, and $d_V(x_i,x_{J_V})\geq D$, we have that $x_{I_V}$, $\gate_Y(x_{I_V})$, $x_{J_V}$, and $\gate_Y(x_{J_V})$ all project at least $\kappa_0$ far from $\rho_V^U$ in $CV$. Therefore, by the consistency axiom and triangle inequality \[d_{U}(x_{I_V}, \gate_{Y}(x_{I_V}))\leq 2\kappa_0 + E \text{ and } d_{U}(x_{J_V}, \gate_{Z}(x_{J_V}))\leq 2\kappa_0 + E.\] Thus, by the quasiconvexity of $\pi_U(Y)$ in $CU$ and the properties of the gate map, the distance \(d_{U}(x_{i}, \gate_Y(x_{i}))\) is bounded by a uniform constant determined by $\mf{S}$, $Q$ and $\lambda$.

\textbf{Case 3:} \(U \sqsubseteq V\). Consider geodesics in \(CV\) connecting the projections of the pairs of points \((x_{I_V}, \gate_Y(x_{I_V}))\), \((x_{i}, \gate_Y(x_{i}))\) and \((x_{J_V}, \gate_Y(x_{J_V}))\). By the assumptions on \(I_V\), \( i \) and \(J_V\),  at most one of these geodesics intersects \(N_E(\rho^U_V)\). If such a geodesic is not the one connecting \(\pi_V(x_{i})\) and \(\pi_V(\gate_Y(x_{i}))\), then we are done by bounded geodesic image axiom (Axiom \ref{item:dfs:bounded_geodesic_image}). Otherwise, the bounded geodesic image axioms requires that \(\pi_V(x_{I_V})\) and \(\pi_V(x_{J_V})\) are contained in the $3E$--neighborhood of \(\pi_U(Y)\) in \(CU\). By the quasiconvexity of $\pi_U(Y)$ in $CU$ and the properties of the gate map, the distance \(d_{U}(x_{i}, \gate_Y(x_{i}))\) is thus bounded by a uniform constant determined by $\mf{S}$, $Q$ and $\lambda$.

\textbf{Case 4:} \(V \sqsubseteq U\). Recall that $\pi_U(\gamma)$ is a unparameterized quasi-geodesic in $CU$, and let $\gamma_0$ be the subsegment of $\pi_U(\gamma)$ from $x_{I_V}$ to $x_i$ and $\gamma_1$ be the subsegment from $x_i$ to $x_{J_V}$. By the bounded geodesic image axiom and (\ref{eq:active_subpaths_middle_piece}),  there exists $E'\geq E$ determined by $\mf{S}$, such that both $\gamma_0$ and $\gamma_1$ intersect the $E'$--neighborhood of $\rho_U^V$. Since $\pi_U(\gamma)$ is a unparameterized $(\lambda,\lambda)$--quasi-geodesic, there exists $R$ depending on $E'$ and $\lambda$ such that $d_U(x_i,\rho_U^V) \leq R$. If $\alpha$ is some $CU$ geodesic connecting $\gate_Y(x)$ and $\gate_Y(y)$, then $\alpha$ also intersects the $E$--neighborhood of $\rho_U^V$ by the bounded geodesic image axiom.   Therefore, by the quasiconvexity of $\pi_U(Y)$ in $CU$ and the properties of the gate map, the distance \(d_{U}(x_{i}, \gate_Y(x_{i}))\) is bounded by a uniform constant determined by $\mf{S}$, $Q$ and $\lambda$. 
\end{proof}

\begin{rem}
The hypotheses of Proposition \ref{prop: active subpaths for quasiconvex} cannot be relaxed by taking $Y$  to be hierarchically quasiconvex instead of {\sqc}.  As a counterexample, one can consider $\mathbb{Z}^2$ with the standard HHG structure and let $Y$ be the $x$--axis.  As any horizontal line  in $\mathbb{Z}^2$ is a hierarchy path, for any $D>0$, there exists a hierarchy path $\gamma$ where both $d_\mc{X}(\gamma,Y)>D$ and $\diam(\gate_Y(\gamma))>D$. 
\end{rem}

\section{{\SQC} subsets in familiar examples} \label{sec: Examples}

In this section, we utilize Theorem \ref{thm:classification_of_quasiconvex_subset} to give descriptions of the {\sqc} subsets in well studied examples of hierarchically hyperbolic spaces. We will begin by briefly discussing the  HHS structure for the mapping class group, Teichm\"uller space, right-angled Artin and Coxeter groups, and graph manifolds. The descriptions are not complete as we only describe the parts of the HHS structure that we shall need to be able to apply the results from the general case. 
We direct the reader to the  references provided along side each example for complete details.
\\

\noindent\textbf{The Mapping Class Group and Teichm\"uller Space}\\
\noindent Mapping class group \cite{MMII, BHS17b}, Teichm\"uller metric \cite{Durham,Rafi,EMR}, Weil-Petersson metric \cite{Brock}.\\

Let $S$ be an oriented, connected, finite type surface with genus $g$ and $p$ punctures. The \emph{complexity of $S$} is $\xi(S) = 3g-3+p$. Assume $\xi(S) \geq 1$ and let $\mc{X}$ be the marking complex of $S$.

\begin{itemize}
\item \textit{Index Set:} $\mf{S}$ will be the collection of isotopy classes of (non-necessarily connected) essential subsurfaces of $S$ excluding $3$--punctured sphere, but including annuli. 

\item \textit{Hyperbolic Spaces:} For each $U \in \mf{S}$, $CU$ will be the curve graph of $U$. The space $CU$ will be infinite diameter if and only if $U$ is connected. The projection maps, $\pi_U$, are the well studied subsurface projections of Masur-Minsky.

\item \textit{Relations:} $U \perp V$ if $U$ and $V$ are disjoint and $U \nest V$ if $U$ is nested into $V$. If $U\nest V$, then $\rho_V^U$ will be the subset of curves in $CV$ corresponding to $\partial U$.

\end{itemize}

\noindent The HHS structure for Teichm\"uller space with either metric is identical except for the annular domains of $\mf{S}$. For the Teichm\"uller metric, modify the curve graphs of the annular domains by attaching a horoball. For the Weil-Petersson metric, the index set \(\mf{S}\) simply excludes annuli. This difference in the treatment of annular domains accounts for all of the differences in the coarse geometry of the these three spaces.\\

\noindent\textbf{RAAGs and RACGs} \cite{BHS}

Let $\Gamma$ be a finite simplicial graph and $G_\Gamma$ be the associated right-angled Artin or right-angled Coxeter group equipped with a word metric from a finite generating set. For an induced subgraph $\Lambda \subseteq \Gamma$, $link(\Lambda)$ is  the subgraph of $\Gamma - \Lambda$ induced by the vertices adjacent to every vertex in $\Lambda$ and $star(\Lambda)$ be the induced subgraph of $link(\Lambda) \cup \Lambda$.   If $\Lambda$ is an induced subgraph of $\Gamma$, then $G_\Lambda$ is a subgroup of $G_\Gamma$. We call subgroups of this form the \emph{special subgroups} of $G_\Gamma$. The following is an HHG structure on $G_\Gamma$.

\begin{itemize}
\item \textit{Index Set:} For $g,h \in G_{\Gamma}$ and $\Lambda$ a non-empty, induced subgraph of $\Gamma$, define the  equivalence relation $gG_{\Lambda}\sim hG_{\Lambda}$ if $g^{-1}h \in G_{star(\Lambda)}$. Let  $\mf{S}$ be defined as $ \{gG_{\Lambda}\}/\sim$. 
\item \textit{Hyperbolic Spaces:} $C[gG_{\Lambda}]$ can be obtained by starting with the  coset $gG_\Lambda$ and coning off each left coset of the special subgroups contained in $gG_\Lambda$.   $C[gG_{\Lambda}]$ is infinite diameter if and only if $G_\Lambda$ is infinite and $\Lambda$ does not split as a join.
\item  \textit{Relations:} $[gG_{\Lambda'}]\nest [gG_{\Lambda}]$ if $\Lambda' \subseteq \Lambda$  and $[gG_{\Lambda'}]\perp [gG_{\Lambda}]$ if  $\Lambda \subseteq link(\Lambda')$ (and hence $\Lambda' \subseteq link(\Lambda)$). If $[gG_{\Lambda'}]\nest [gG_{\Lambda}]$, then $\rho_{[gG_{\Lambda}]}^{[gG_{\Lambda'}]}$ will be the subset $gG_{\Lambda'}$ in $C[gG_{\Lambda}]$.
\end{itemize}

\noindent \textbf{Graph Manifolds} \cite{BHS17b}

Let $M$ be a non-geometric graph manifold and  $\mc{X}$ be the universal cover of $M$. Since the fundamental group of every graph manifold is quasi-isometric to the fundamental group of a flip graph manifold, we will assume $M$ is flip.  Let $T$ be Bass-Serre tree for $M$ and $X_v$ be the subspace of $\mc{X}$ corresponding to a vertex $v \in T$. Each $X_v$ is bi-Lipschitz to the product $R_v \times H_v$ where $R_v$ is a copy of the real line and $H_v$ is the universal cover of a hyperbolic surface with totally geodesic boundary. If $v,w$ are adjacent vertices in $T$, then let $\partial_w H_v$ and $\partial_v H_w$ denote the boundary components of $H_v$ and $H_w$ such that $R_v \times \partial_w H_v$ is identified with $R_w \times \partial_v H_w$ in $\mc{X}$. Since $M$ is flip, $R_v$ is identified with $\partial_v H_w$. For each $v \in T$, let $\widehat{H}_v$ denote the spaced obtained from $H_v$ after coning off each copy of $ \partial_w H_v$ for each vertex $w$ adjacent to $v$. The following is a HHS structure on $\mc{X}$.

\begin{itemize}
\item \textit{Index Set:} For adjacent vertices $v,w \in T$, define $R_v\sim\partial_v H_w$ then let $\mf{S} = \{T,R_v,\partial_v H_w, \widehat{H}_w\}/\sim$.
\item \textit{Hyperbolic Spaces:} Every element of $\mf{S}$ is a hyperbolic space, so we have $CU = U$ for all $U \in \mf{S}$. The diameter of $CU$ is infinite for all $U\in \mf{S}$.
\item \textit{Relations:} $T$ is the $\nest$--maximal domain and $[\partial_w H_v] \nest \widehat{H}_v$ for all $w,v$ adjacent in $T$.  For adjacent vertices $v,w \in T$, $\rho_T^{[R_v]} = \rho_T^{[\partial_w H_v]} = \{v,w\} \subset T$ and $\rho_{\widehat{H}_v}^{[\partial_w H_v]}$ is the cone point for $\partial_w H_v$ in $\widehat{H}_w$. For $v,w$ adjacent in $T$, $[R_v] \perp \widehat{H}_v$  and $[R_v] \perp [R_w]$ (recall $[\partial_w H_v] = [R_w]$).
\end{itemize}

\begin{rem}
When the manifold $M$ is flip, the above describes an HHG structure on $\pi_1(M)$.  However, if $M$ is not flip, then the quasi-isometry from $\pi_1(M)$ to the fundamental group of the flip graph manifold need not be equivariant and the above will be an HHS, but not an HHG structure on $\pi_1(M)$. See \cite[Remark 10.2]{BHS17b} for a discussion of the existence of HHG structures on $3$--manifold groups.
\end{rem}

In the case of right-angled Artin groups with connected defining graphs, Tran and Genevois independently showed that {\sqc} subgroups are either finite index or hyperbolic(and are actually free when they are hyperbolic) \cite{Tran2017,Genevois_Hyp_in_CAT(0)}. The same result is shown for the mapping class group in \cite{Kim2017} and for certain $\mathcal{CFS}$ right-angled Coxeter groups in \cite{HH2017}. Based on these examples, one may conjecture that the {\sqc} subsets of any non-relatively hyperbolic, hierarchically hyperbolic space are either hyperbolic or coarsely cover the entire space. While \cite{Tran2017} provides a counterexample to this conjecture in right-angled Coxeter groups, it nevertheless holds in many of the examples described above.  In Proposition \ref{prop:one_orthogonal_component}, we give sufficient conditions for every {\sqc} subset of an HHS to be either hyperbolic or coarsely covering. We then unite and expand the work of Tran, Genevois, Kim, and Nguyen-Tran by applying Proposition \ref{prop:one_orthogonal_component} to the mapping class group, Teichm\"uller space, right-angled Artin and Coxeter groups, and graph manifolds in Corollary \ref{cor:MCG_Teich_RAAG}.

\begin{prop}\label{prop:one_orthogonal_component}
Let $(\mc{X},\mf{S})$ be an HHS with the bounded domain dichotomy and let $\mf{S}^*$ be as defined in Definition \ref{def:S_star}. Assume the following two conditions hold:
\begin{enumerate}[(1)]
	\item \label{item:cover} For all $W \in \mf{S}-\mf{S}^*$ either $CW$ has bounded diameter or the set  \[\{\rho_W^V : V \in \mf{S}^* \text{ with } V\trans W \text{ or } V\nest W \}\] uniformly coarsely covers $CW$.
	
	\item \label{item:connected}  For every $U,V \in\mf{S}^*$ there exists a sequence $U=U_1,\cdots, U_n=V$ of domains in $\mf{S}^*$ with $U_i \perp U_{i+1}$ for all $1\leq i\leq n-1$.
\end{enumerate}
\noindent Then, if $Y\subseteq\mc{X}$ is {\sqc}, either $Y$ is hyperbolic or some finite neighborhood of $Y$ covers all of $\mc{X}$.
\end{prop}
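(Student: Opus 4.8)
The plan is to use the characterization of {\sqc} subsets from Theorem \ref{thm:classification_of_quasiconvex_subset}, so that $Y$ being {\sqc} gives us both hierarchical quasiconvexity and the $B$--orthogonal projection dichotomy for some $B$ depending on the convexity gauge $Q$ of $Y$. The dichotomy in the conclusion will be driven by whether or not there exists a domain $U \in \mf{S}^*$ with $\diam(\pi_U(Y)) > B$. If no such domain exists, I will argue $Y$ is hyperbolic by checking the criterion of \cite[Corollary 2.16]{BHS17c}: since $Y$ with the inherited metric is itself an HHS with structure $\mf{S}$ (by \cite[Proposition 5.6]{BHS17b}), I need to show that for all $U \perp V$ in $\mf{S}$, either $\diam(\pi_U(Y))$ or $\diam(\pi_V(Y))$ is uniformly bounded. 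If both were large, then by the bounded domain dichotomy both $CU$ and $CV$ would be infinite, forcing $U, V \in \mf{S}^*$ with $\diam(\pi_U(Y)) > B$, a contradiction. (One needs to be slightly careful: one should take $B$ large enough that ``$\diam(\pi_U(Y)) > B$ and $CU$ unbounded'' forces $U \in \mf{S}^*$ whenever there is also an orthogonal $V$ with $\diam(\pi_V(Y))$ large; this is exactly Definition \ref{def:S_star}.)

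\textbf{The covering case.} Now suppose there exists $U_0 \in \mf{S}^*$ with $\diam(\pi_{U_0}(Y)) > B$. First, using hypothesis (\ref{item:connected}), I propagate largeness through $\mf{S}^*$: if $U \perp V$ with $U, V \in \mf{S}^*$ and $\diam(\pi_U(Y)) > B$, then the orthogonal projection dichotomy gives $CV \subseteq N_B(\pi_V(Y))$, so $\diam(\pi_V(Y)) = \diam(CV) = \infty > B$. Iterating along the sequence $U_0 = U_1, \dots, U_n = V$ provided by (\ref{item:connected}) shows that $\diam(\pi_V(Y)) > B$ \emph{for every} $V \in \mf{S}^*$, and moreover $CV \subseteq N_B(\pi_V(Y))$ for every $V \in \mf{S}^*$. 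Next I handle domains $W \in \mf{S} - \mf{S}^*$. If $CW$ is bounded, there is nothing to do. Otherwise hypothesis (\ref{item:cover}) says $\{\rho_W^V : V \in \mf{S}^*,\ V \trans W \text{ or } V \nest W\}$ uniformly coarsely covers $CW$; since for each such $V$ we now know $\pi_V(Y)$ coarsely covers $CV$, and $\rho_W^V = \rho_{CW}^{V}$ is (coarsely) the image in $CW$ of points of $\mc{X}$ projecting into $\P_V$, a bounded-diameter set $E$-close to the image of $Y$, I conclude $\pi_W(Y)$ uniformly coarsely covers $CW$ as well. The cleanest way to make this last point precise is: by partial realization / the definition of product regions, for any $V$ with $V \trans W$ or $V \nest W$, the gate $\gate_{\P_V}(y)$ of any point $y \in Y$ projects $E$-close to $\rho_W^V$ in $CW$ (Definition \ref{const:embedding_product_regions}), and since $Y$ is nonempty this already puts a point of $\pi_W(Y)$ within $E + K$ of each $\rho_W^V$; combined with (\ref{item:cover}) this shows $\pi_W(Y)$ coarsely covers $CW$.

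\textbf{Conclusion of the covering case.} Having shown that $\pi_W(Y)$ uniformly coarsely covers $CW$ for every $W \in \mf{S}$ with $CW$ unbounded (and trivially for $CW$ bounded), I invoke hierarchical quasiconvexity of $Y$. Given any $x \in \mc{X}$, for each $W \in \mf{S}$ there is a point of $\pi_W(Y)$ within a uniform constant $\kappa$ of $\pi_W(x)$; the second clause of Definition \ref{defn:hierarchical_quasiconvexity} then gives $d_\mc{X}(x, Y) \leq k(\kappa)$, so $\mc{X} = N_{k(\kappa)}(Y)$, i.e. a uniformly finite neighborhood of $Y$ covers $\mc{X}$.

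\textbf{Main obstacle.} The genuinely delicate step is the treatment of $W \in \mf{S} - \mf{S}^*$ in the covering case — translating ``$\pi_V(Y)$ coarsely covers $CV$ for $V \in \mf{S}^*$'' plus hypothesis (\ref{item:cover}) into ``$\pi_W(Y)$ coarsely covers $CW$.'' The subtlety is that $\rho_W^V$ is a single bounded set, not obviously in the image of $Y$, so one must use the structure of product regions (Definition \ref{const:embedding_product_regions}) together with the nonemptiness of $Y$ to produce a point of $\pi_W(Y)$ near each $\rho_W^V$; once that is in hand the argument is a routine assembly of the distance formula and hierarchical quasiconvexity. The hyperbolic case, by contrast, is an essentially immediate application of \cite[Corollary 2.16]{BHS17c} and the bounded domain dichotomy.
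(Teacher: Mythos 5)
Your overall strategy matches the paper's: invoke Theorem \ref{thm:classification_of_quasiconvex_subset}, split on whether some $U\in\mf{S}^*$ has $\diam(\pi_U(Y))>B$, get hyperbolicity from \cite[Corollary 2.16]{BHS17c} in one branch, propagate largeness through $\mf{S}^*$ via hypothesis (\ref{item:connected}) and the orthogonal projection dichotomy in the other, and then transfer to $W\in\mf{S}-\mf{S}^*$ using hypothesis (\ref{item:cover}). However, the step you yourself flag as the ``genuinely delicate'' one contains a real gap.

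You write: ``the gate $\gate_{\P_V}(y)$ of any point $y \in Y$ projects $E$-close to $\rho_W^V$ in $CW$ \dots and since $Y$ is nonempty this already puts a point of $\pi_W(Y)$ within $E+K$ of each $\rho_W^V$.'' The first half of the sentence is correct: any point of $\P_V$ projects close to $\rho_W^V$ in $CW$ when $V\trans W$ or $V\nest W$, so in particular $\pi_W(\gate_{\P_V}(y))$ is near $\rho_W^V$. But $\gate_{\P_V}(y)$ lies in $\P_V$, not in $Y$, and there is no reason for $\pi_W(y)$ to be close to $\pi_W(\gate_{\P_V}(y))$; the gate can move a point a long way. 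So nonemptiness of $Y$ alone does not place any point of $\pi_W(Y)$ near $\rho_W^V$ — that is exactly what needs to be proved, and your sentence begs the question. Concretely: take $\mc{X}=\mathbb{Z}^3$ with the standard structure, $Y$ the $x$-axis, $V$ the $y$-axis factor, $W$ the $z$-axis factor; $Y$ is nonempty but $\pi_W(Y)$ is a point nowhere near most of $CW$. (Of course this $Y$ is not strongly quasiconvex, and in the actual proof you do have by this point that $\pi_V(Y)$ coarsely covers $CV$ for all $V\in\mf{S}^*$ — but you don't use that fact here, and it is precisely what is needed.)

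The correct repair, which is what the paper does, uses the already-established coarse covering of $CV$ by $\pi_V(Y)$ rather than mere nonemptiness. For $V\nest W$: pick $x,x'\in Y$ with $d_V(x,x')>100E$ (possible since $\pi_V(Y)$ coarsely covers the infinite-diameter $CV$), join them by a hierarchy path $\gamma$; bounded geodesic image (Axiom \ref{item:dfs:bounded_geodesic_image}) forces the $CW$-geodesic, hence $\pi_W(\gamma)$, to pass near $\rho_W^V$, and since $Y$ is {\sqc} the hierarchy path $\gamma$ stays uniformly close to $Y$, giving a point of $\pi_W(Y)$ near $\rho_W^V$. For $V\trans W$: pick $y\in Y$ with $d_V(y,\rho_V^W)>\kappa_0$ (again using coarse covering of $CV$), and the consistency inequality (Axiom \ref{item:dfs_transversal}) gives $d_W(y,\rho_W^V)\leq\kappa_0$ directly. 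Note the two relation types require genuinely different arguments; your proposal treats them uniformly, which is another sign the key mechanism was not engaged.

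Everything else — the hyperbolic branch, the propagation through $\mf{S}^*$, the final appeal to hierarchical quasiconvexity (or equivalently the gate map and distance formula, as the paper uses) to conclude coarse covering of $\mc{X}$ — is sound and essentially identical to the paper's argument.
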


\begin{proof}
Let $Y\subseteq \mc{X}$ be $Q$--{\sqc}. By Theorem \ref{thm:classification_of_quasiconvex_subset} there exists $B$, depending only on $Q$ and $\mf{S}$, such that $Y$ has the $B$--orthogonal projection dichotomy. Further, we can assume $B$ is large enough such that $(\mc{X},\mf{S})$ satisfies the $B$--bounded domain dichotomy. We will show that if $Y$ is not hyperbolic, then for all $W \in \mf{S}$ we have that $CW$ is uniformly coarsely covered by $\pi_W(Y)$.  Thus for all $x \in \mc{X}$ we will have that $d_W(x,\gate_{Y}(x))$ is uniformly bounded and therefore $Y$ will coarsely cover $\mc{X}$ by the distance formula (Theorem \ref{thm:distance_formula}).

Suppose that $Y$ is not hyperbolic.  By Proposition \ref{defn:stable}, the inclusion map $i:Y\hookrightarrow X$  cannot be a stable embedding. Therefore by Corollary \ref{cor:classification_of_stable_subsets}, there exists a domain $U \in \mf{S}^{*}$ such that $\diam(\pi_U(Y)) >B$. First we will show that for any domain $W \in \mf{S}^*$, $CW \subseteq {N}_B(\pi_W(Y))$.

Let $W \in \mf{S}^*$. By hypothesis, there exists a sequence $U=U_1,\cdots, U_n=W$ of domains in $\mf{S}^*$ with $U_i \perp U_{i+1}$ for all $1\leq i\leq n-1$.  
Since $Y$ has the $B$--orthogonal projection dichotomy and $\diam(CU_i) = \infty$ for each $1\leq i \leq n$, we have 
$CU_{i} \subseteq {N}_B(\pi_{U_{i}}(Y))$ for all $1\leq i \leq n$.   In particular, $CW \subseteq N_B(\pi_{W}(Y))$.

Now let $W\in \mf{S} - \mf{S}^*$ such that $ \diam(CW) = \infty$.  We will show that $\pi_W(Y)$ uniformly coarsely covers $CW$ by showing that for all $V \in\mf{S}^*$ such that $\rho_W^V$ is defined there exists $y \in Y$ such that $\pi_W(y)$ is uniformly close to $\rho_W^V$. First suppose $V \in \mf{S}^*$ with $V\nest W$. By the preceding paragraph, there exist $x,x' \in Y$ such that $d_V(x,x') > 100E$.  If $\gamma$ is a hierarchy path connecting $x$ and $x'$, then  $\pi_W(\gamma)$ is uniformly close to $\rho_W^V$ by the bounded geodesic image axiom (Axiom \ref{item:dfs:bounded_geodesic_image}). Further, since $Y$ is {\sqc} there exists $y \in Y$ such that $d_W(\rho_W^V,\pi_W(y))<B'$ where $B'$ depends only on $Q$ and $\mf{S}$. If instead $V \in \mf{S}^*$ and $V\trans W$, then there exists $y \in Y$ such that $d_V(y,\rho_V^W) > \kappa_0$.  Thus $d_W(y,\rho_W^V)\leq\kappa_0$ by the consistency axiom (Axiom \ref{item:dfs_transversal}). Since the set $\{\rho_W^V : V \in \mf{S}^* \text{ with } V\trans W \text{ or } V\nest W \}$ uniformly coarsely covers $CW$ by hypothesis, we have that $\pi_W(Y)$ uniformly coarsely covers all of $CW$ as well. 

Hence we have that for all $W \in \mf{S}$, $CW$ is uniformly coarsely covered by $\pi_W(Y)$ and so $Y$ coarsely covers $\mc{X}$ by the distance formula.
\end{proof}

Before continuing, we will take a brief detour to define a property of graphs that will be relevant to our study of right-angled Coxeter groups. Given a graph $\Gamma$, define $\Gamma^4$ as the graph whose vertices are induced 4--cycles of $\Gamma$. Two vertices in $\Gamma^4$ are adjacent if and only if the corresponding induced 4-cycles in $\Gamma$ have two non-adjacent vertices in common. Given graphs $\Lambda_1, \Lambda_2$ recall that the join $\Lambda_1\ast\Lambda_2$, is the graph obtained  from $\Lambda_1\sqcup \Lambda_2$ by adding an edge between each vertex of $\Lambda_1$ and each vertex of $\Lambda_2$.

\begin{defn}[Constructed from squares]\label{defn:CFS}
A graph $\Gamma$ is \emph{$\mathcal{CFS}$} if $\Gamma=\Omega*K$, where $K$ is a (possibly empty) clique and $\Omega$ is a non-empty subgraph such that $\Omega^4$ has a connected component $T$ where every vertex of $\Omega$ is contained in a $4$--cycle that is a vertex of $T$. If $\Gamma$ is $\mc{CFS}$ and $\Omega^4$ is connected, then we say $\Gamma$ is \emph{strongly $\mc{CFS}$}. If $\Gamma$ is (strongly) $\mc{CFS}$, then by abuse of language we will say that the right-angled Coxeter group $G_\Gamma$ is (strongly) $\mc{CFS}$. See Figure \ref{aone} below for examples of $\mc{CFS}$ and strongly $\mc{CFS}$ graphs.
\end{defn}

\begin{figure}[H]
\begin{tikzpicture}[scale=.4]

	\draw (0-10,0.5-20) node[circle,fill,inner sep=.75pt, color=black](1){} -- (0.5-10,0-20) node[circle,fill,inner sep=.75pt, color=black](1){} -- (0-10,-0.5-20) node[circle,fill,inner sep=.75pt, color=black](1){} -- (-0.5-10,0-20) node[circle,fill,inner sep=.75pt, color=black](1){} -- (0-10,0.5-20);
	
	\draw (-0.5-10,0-20) node[circle,fill,inner sep=.75pt, color=black](1){} -- (0-10,1-20) node[circle,fill,inner sep=.75pt, color=black](1){} -- (0.5-10,0-20)  node[circle,fill,inner sep=.75pt, color=black](1){} -- (0-10,-1-20) node[circle,fill,inner sep=.75pt, color=black](1){} -- (-0.5-10,0-20);
	
	\draw (0-10,1-20) node[circle,fill,inner sep=.75pt, color=black](1){}  -- (1-10,0-20) node[circle,fill,inner sep=.75pt, color=black](1){}  -- (0-10,-1-20) node[circle,fill,inner sep=.75pt, color=black](1){} -- (-1-10,0-20) node[circle,fill,inner sep=.75pt, color=black](1){} -- (0-10,1-20);
	
	\draw (-1-10,0-20) node[circle,fill,inner sep=.75pt, color=black](1){} -- (0-10,2-20) node[circle,fill,inner sep=.75pt, color=black](1){} -- (1-10,0-20) node[circle,fill,inner sep=.75pt, color=black](1){}  -- (0-10,-2-20)node[circle,fill,inner sep=.75pt, color=black](1){}  -- (-1-10,0-20); 
	
	\draw (0-10,2-20) node[circle,fill,inner sep=.75pt, color=black](1){}  -- (2-10,0-20)node[circle,fill,inner sep=.75pt, color=black](1){}  -- (0-10,-2-20) node[circle,fill,inner sep=.75pt, color=black](1){} -- (-2-10,0-20)node[circle,fill,inner sep=.75pt, color=black](1){}  -- (0-10,2-20);
	
	\draw (-2-10,0-20) node[circle,fill,inner sep=.75pt, color=black](1){}  -- (0-10,4-20) node[circle,fill,inner sep=.75pt, color=black](1){}  -- (2-10,0-20) node[circle,fill,inner sep=.75pt, color=black](1){} -- (0-10,-4-20) node[circle,fill,inner sep=.75pt, color=black](1){} -- (-2-10,0-20);
	
	\draw (0-10,4-20) node[circle,fill,inner sep=.75pt, color=black](1){}  -- (4-10,0-20) node[circle,fill,inner sep=.75pt, color=black](1){}  -- (0-10,-4-20) node[circle,fill,inner sep=.75pt, color=black](1){} -- (-4-10,0-20) node[circle,fill,inner sep=.75pt, color=black](1){} -- (0-10,4-20);
	
	\draw (-4-10,0-20)node[circle,fill,inner sep=.75pt, color=black](1){}  -- (0-10,6-20) node[circle,fill,inner sep=.75pt, color=black](1){} -- (4-10,0-20) node[circle,fill,inner sep=.75pt, color=black](1){} -- (0-10,-6-20)node[circle,fill,inner sep=.75pt, color=black](1){}  -- (-4-10,0-20);
	
	\draw (0-10,6-20) node[circle,fill,inner sep=.75pt, color=black](1){} -- (6-10,0-20) node[circle,fill,inner sep=.75pt, color=black](1){} -- (0-10,-6-20) node[circle,fill,inner sep=.75pt, color=black](1){}  -- (-6-10,0-20) node[circle,fill,inner sep=.75pt, color=black](1){} -- (0-10,6-20);
	
	\node at (0-10,11-23) {$\Gamma_1$};

	\draw (0+5,0.5-20) node[circle,fill,inner sep=.75pt, color=black](1){} -- (0.5+5,0-20) node[circle,fill,inner sep=.75pt, color=black](1){} -- (0+5,-0.5-20) node[circle,fill,inner sep=.75pt, color=black](1){} -- (-0.5+5,0-20) node[circle,fill,inner sep=.75pt, color=black](1){} -- (0+5,0.5-20);
	
	\draw (-0.5+5,0-20) node[circle,fill,inner sep=.75pt, color=black](1){} -- (0+5,1-20) node[circle,fill,inner sep=.75pt, color=black](1){} -- (0.5+5,0-20)  node[circle,fill,inner sep=.75pt, color=black](1){} -- (0+5,-1-20) node[circle,fill,inner sep=.75pt, color=black](1){} -- (-0.5+5,0-20);
	
	\draw (0+5,1-20) node[circle,fill,inner sep=.75pt, color=black](1){}  -- (1+5,0-20) node[circle,fill,inner sep=.75pt, color=black](1){}  -- (0+5,-1-20) node[circle,fill,inner sep=.75pt, color=black](1){} -- (-1+5,0-20) node[circle,fill,inner sep=.75pt, color=black](1){} -- (0+5,1-20);
	
	\draw (-1+5,0-20) node[circle,fill,inner sep=.75pt, color=black](1){} -- (0+5,2-20) node[circle,fill,inner sep=.75pt, color=black](1){} -- (1+5,0-20) node[circle,fill,inner sep=.75pt, color=black](1){}  -- (0+5,-2-20)node[circle,fill,inner sep=.75pt, color=black](1){}  -- (-1+5,0-20); 
	
	\draw (0+5,2-20) node[circle,fill,inner sep=.75pt, color=black](1){}  -- (2+5,0-20)node[circle,fill,inner sep=.75pt, color=black](1){}  -- (0+5,-2-20) node[circle,fill,inner sep=.75pt, color=black](1){} -- (-2+5,0-20)node[circle,fill,inner sep=.75pt, color=black](1){}  -- (0+5,2-20);
	
	\draw (-2+5,0-20) node[circle,fill,inner sep=.75pt, color=black](1){}  -- (0+5,4-20) node[circle,fill,inner sep=.75pt, color=black](1){}  -- (2+5,0-20) node[circle,fill,inner sep=.75pt, color=black](1){} -- (0+5,-4-20) node[circle,fill,inner sep=.75pt, color=black](1){} -- (-2+5,0-20);
	
	\draw (0+5,4-20) node[circle,fill,inner sep=.75pt, color=black](1){}  -- (4+5,0-20) node[circle,fill,inner sep=.75pt, color=black](1){}  -- (0+5,-4-20) node[circle,fill,inner sep=.75pt, color=black](1){} -- (-4+5,0-20) node[circle,fill,inner sep=.75pt, color=black](1){} -- (0+5,4-20);
	
	\draw (-4+5,0-20)node[circle,fill,inner sep=.75pt, color=black](1){}  -- (0+5,6-20) node[circle,fill,inner sep=.75pt, color=black](1){} -- (4+5,0-20) node[circle,fill,inner sep=.75pt, color=black](1){} -- (0+5,-6-20)node[circle,fill,inner sep=.75pt, color=black](1){}  -- (-4+5,0-20);
	
	\draw (0+5,6-20) node[circle,fill,inner sep=.75pt, color=black](1){} -- (6+5,0-20) node[circle,fill,inner sep=.75pt, color=black](1){} -- (0+5,-6-20) node[circle,fill,inner sep=.75pt, color=black](1){}  -- (-6+5,0-20) node[circle,fill,inner sep=.75pt, color=black](1){} -- (0+5,6-20);

	\draw (0+5,0.5-20)[color=red] node[circle,fill,inner sep=.75pt, color=black](1){} -- (1+5,0-20) node[circle,fill,inner sep=.75pt, color=black](1){}-- (0+5,4-20) node[circle,fill,inner sep=.75pt, color=black](1){} -- (-6+5,0-20) node[circle,fill,inner sep=.75pt, color=black](1){} -- (0+5,0.5-20) node[circle,fill,inner sep=.75pt, color=black](1){};
	
	\node at (0+5,11-23) {$\Gamma_2$};

\end{tikzpicture}

\caption{Two graphs $\Gamma_1$ and $\Gamma_2$ are both $\mathcal{CFS}$. However, graph $\Gamma_1$ is graph strongly $\mathcal{CFS}$ but $\Gamma_2$ is not since the red induced 4-cycle in $\Gamma_2$ is not ``connected'' to any other induced 4-cycle in the graph.}
\label{aone}
\end{figure}
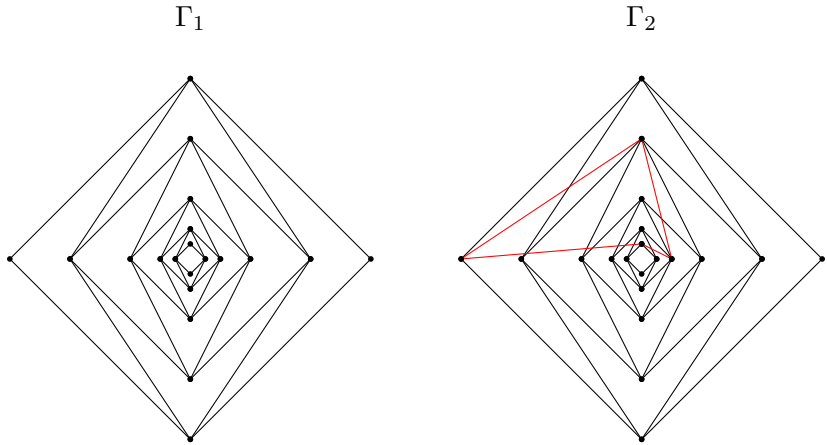 

\begin{cor}\label{cor:MCG_Teich_RAAG}
The following HHSs have the property that every {\sqc} subset is either hyperbolic or coarsely covers the entire space:
\begin{enumerate}[(a)]

	\item The Teichm\"uller space of a finite type, oriented surface with the Teichm\"uller metric
	\item The Teichm\"uller space of a finite type, oriented surface of complexity at least $6$ with the Weil-Petersson metric
	\item The mapping class group of a finite type, oriented surface
	\item A right-angled Artin group with connected defining graph
	\item A right-angled Coxeter group with strongly $\mathcal{CFS}$ defining graph
	\item The fundamental group of a non-geometric graph manifold
\end{enumerate}

\noindent In particular, if  $H$ is a {\sqc} subgroup in any of the groups (c)-(f),  then $H$ is either stable or finite index.
\end{cor}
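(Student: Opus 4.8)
The plan is to deduce Corollary \ref{cor:MCG_Teich_RAAG} from Proposition \ref{prop:one_orthogonal_component} by verifying its two hypotheses for the HHS structure of each of the six families. In every case we may assume $\mf{S}^* \neq \emptyset$, since otherwise the ambient space is hyperbolic by \cite[Corollary 2.16]{BHS17c} and every subset is vacuously hyperbolic, so there is nothing to prove. Recall that $U \in \mf{S}^*$ precisely when both $CU$ and $CV$ have infinite diameter for some $V \perp U$, i.e. both factors of the product region $\P_U$ are unbounded; in each family we will first identify $\mf{S}^*$ explicitly and then check condition (\ref{item:cover}) (coarse covering of the non-$\mf{S}^*$ domains by the $\rho_W^V$ coming from $\mf{S}^*$) and condition (\ref{item:connected}) (connectivity of $\mf{S}^*$ under orthogonality).

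For the surface cases (a), (b), (c), the set $\mf{S}^*$ is the collection of connected essential subsurfaces $U$ (non-annular, in the Weil--Petersson case) whose complement contains a disjoint connected essential subsurface of the same type, and $CU$ is the curve graph of $U$. For condition (\ref{item:cover}): the domains $W \in \mf{S} - \mf{S}^*$ with $CW$ infinite are the connected subsurfaces with no disjoint companion (in particular $W = S$), and for $V \in \mf{S}^*$ with $V \nest W$ one has $\rho_W^V = \partial V$; connectedness of the curve graph together with the complexity hypothesis shows that such multicurves are coarsely dense in $CW$. For condition (\ref{item:connected}): any two elements of $\mf{S}^*$ are joined by a chain of pairwise-disjoint elements of $\mf{S}^*$ -- for the mapping class group this is standard subsurface combinatorics once $\xi(S) \geq 2$ (when $\xi(S) \leq 1$ the group is hyperbolic), for the Teichm\"uller metric the same argument applies since the horoball cones over the annular graphs do not affect the poset $(\mf{S}, \nest, \perp, \trans)$, and for the Weil--Petersson metric the bound $\xi(S) \geq 6$ is exactly what makes $\mf{S}^*$ connected under disjointness.

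For the remaining cases: when $\Gamma$ is connected, a right-angled Artin group $G_\Gamma$ has $\nest$-maximal domain $[G_\Gamma] \notin \mf{S}^*$ with $\rho_{[G_\Gamma]}^{[gG_\Lambda]} = gG_\Lambda$, and connectedness of $\Gamma$ is used to see that these cosets (and the analogous data for the other domains of $\mf{S}-\mf{S}^*$ with infinite curve graph) coarsely cover, giving (\ref{item:cover}), while the link structure of $\Gamma$ gives (\ref{item:connected}). For a right-angled Coxeter group with $\Gamma = \Omega * K$ strongly $\mathcal{CFS}$, the induced $4$-cycles of $\Omega$ correspond exactly to the orthogonal pairs of domains with infinite curve graphs, so connectedness of $\Omega^4$ is precisely hypothesis (\ref{item:connected}), and the condition that every vertex of $\Omega$ lies on a $4$-cycle together with $\Gamma = \Omega * K$ yields (\ref{item:cover}); this is where ``strongly'' is essential, since the $\mathcal{CFS}$ but not strongly $\mathcal{CFS}$ graph $\Gamma_2$ of Figure \ref{aone} has disconnected $\mf{S}^*$. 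For a non-geometric graph manifold, $\mf{S} - \mf{S}^* = \{T\}$ with $T$ an infinite tree, the edges $\rho_T^{[R_v]} = \{v,w\}$ cover $T$ (giving (\ref{item:cover})), and since $[R_v] \perp \widehat{H}_v$ and $[R_v] \perp [R_w]$ for adjacent $v,w \in T$ and $T$ is connected, $\mf{S}^*$ is connected under orthogonality (giving (\ref{item:connected})).

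The hard part will be condition (\ref{item:connected}): showing that $\mf{S}^*$ is connected under the orthogonality relation. This is the step requiring genuine combinatorial input specific to each family -- connectedness of $\Gamma$ for RAAGs, connectedness of $\Omega^4$ (the ``strongly'' in strongly $\mathcal{CFS}$) for RACGs, and the complexity bound $\xi(S) \geq 6$ for the Weil--Petersson metric -- and it is where the stated hypotheses are actually used. Finally, for the last assertion: if $G$ is one of the groups in (c)--(f) and $H \leq G$ is $\sqc$, then by the first part $H$ is either hyperbolic, hence stable by definition, or a finite neighborhood of $H$ covers $G$; in the latter case $H$ is coarsely dense in $G$, so $G$ is a union of finitely many cosets of $H$ and thus $H$ has finite index.
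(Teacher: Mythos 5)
Your overall strategy is identical to the paper's: deduce the corollary from Proposition \ref{prop:one_orthogonal_component} by verifying hypotheses (\ref{item:cover}) and (\ref{item:connected}) for each HHS structure, then derive the finite-index conclusion from coarse covering. The identification of $\mf{S}^*$ in each case, the treatment of the surface cases, the graph manifold case, and the final finite-index argument all match the paper.

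However, there is a genuine gap in your verification of hypothesis (\ref{item:connected}) for RAAGs and RACGs. You assert that ``the link structure of $\Gamma$ gives (\ref{item:connected})'' for RAAGs and that ``connectedness of $\Omega^4$ is precisely hypothesis (\ref{item:connected})'' for RACGs. Neither of these is sufficient as stated, because the index set $\mf{S}$ consists of equivalence classes of cosets $[gG_\Lambda]$ ranging over all $g \in G$, and the relation $[gG_{\Lambda'}] \perp [gG_\Lambda]$ is defined only for domains sharing the same coset representative. Connectedness of $\Gamma$ (resp.\ $\Omega^4$) only produces the required chain of pairwise orthogonal domains when $g_1 = g_2$; to connect $[g_1 G_{\Lambda_1}]$ to $[g_2 G_{\Lambda_2}]$ for distinct $g_1, g_2$ one needs an additional argument. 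The paper supplies this via an induction on the word length $m = |g_1^{-1}g_2|$: the base case $m=0$ is exactly the graph-connectivity argument you cite, and the inductive step factors $g_1^{-1}g_2$ through an intermediate $g_3$ with $|g_1^{-1}g_3| = m-1$, using (in the RACG case) the hypothesis that every vertex of $\Omega$ lies on a $4$-cycle of $\Omega^4$ to produce a common domain $[g_3 G_{\{v,w\}}] = [g_2 G_{\{v,w\}}]$ to splice the two inductive chains together. Relatedly, you attribute the ``every vertex on a $4$-cycle'' condition to hypothesis (\ref{item:cover}), but in the paper it is used in precisely this inductive step for hypothesis (\ref{item:connected}); hypothesis (\ref{item:cover}) is handled via the cocompact action of $G_\Lambda$ on its Cayley graph and the cone-off construction of $C[gG_\Lambda]$. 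Your outline for the surface cases is compressed but compatible with the paper (path in $CS$ connecting $\partial U$ and $\partial V$, promoted to a chain of disjoint subsurfaces), and the graph manifold verification is essentially correct.
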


\begin{proof}
All of the above examples have the bounded domain dichotomy.  We shall show they satisfy the two hypotheses of Proposition \ref{prop:one_orthogonal_component}.

\textit{Mapping class group/Teichm\"uller metric:} If $\xi(S)\leq 1$, then the mapping class group and Teichm\"uller space will both be hyperbolic; thus we can assume $\xi(S)\geq 2$. In this case, $\mf{S}^*$ is the set of all connected proper subsurfaces. Thus Hypothesis (\ref{item:cover}) follows from the fact that every curve on the surface corresponds to the boundary curve of some connected subsurface. Given two subsurfaces $U$, $V$ a sequence satisfying Hypothesis (\ref{item:connected}) is found by taking a path in $CS$ connecting $\partial U$ and $\partial V$.

\textit{Weil-Petersson:} $\mf{S}^*$ is the collection of all connected proper subsurfaces whose complement contains a subsurface of complexity at least $1$. In particular, since the complexity is at least $6$, $\mf{S}^*$ contains every subsurface of complexity $1$.  For every connected subsurface $W \not\in \mf{S}^*$, every curve on $W$ corresponds to the boundary curve of some complexity $1$ subsurface providing Hypothesis (\ref{item:cover}). Hypothesis (\ref{item:connected}) follows from the observations that if $U \subset S$ is a subsurface of complexity $1$ and $\alpha$ is a curve disjoint from $U$, then there exists $V \subseteq S$, a subsurface of complexity $1$, such that $\alpha \subseteq \partial V $ and $U$ is disjoint from $V$. Thus any path in $CS$ can be promoted to a sequence of sequentially disjoint subsurfaces in $\mf{S}^*$.

\textit{RAAGs:} $\mf{S}^*$ is the collection of $[gG_\Lambda]$ such that there exists \(\Delta\subseteq link(\Lambda)\) where \(\Lambda\) and \(\Delta\) are both non-empty and not joins. In particular, since $\Gamma$ is connected, $\mf{S}^*$ contains all of the $[gG_\Lambda]$ where $\Lambda$ is a single vertex. 
Hypothesis (\ref{item:cover}) follows from the fact that $G_\Lambda$ acts cocompactly on its Cayley graph and the construction of $C[gG_\Lambda]$. For Hypothesis (\ref{item:connected}), let $[g_1G_{\Lambda_1}],[g_2G_{\Lambda_2}] \in \mf{S}^*$ and $m=|g_1^{-1}g_2|$. 
We shall proceed by induction on $m$.
If $m=0$, then $g_1=g_2=g$ and since $\Gamma$ is connected, there is a sequence of vertices $v_1, v_2, \cdots, v_n$ such that $v_i$ and $v_{i+1}$ are adjacent for all $1\leq i\leq n-1$ and $v_1 \in link(\Lambda_1)$, $v_n \in link(\Lambda_2)$. Thus $[gG_{\Lambda_1}],[gG_{v_1}],\hdots,[gG_{v_n}], [gG_{\Lambda_2}]$ is the required sequence. 

If $m>0$, then there 
exists $g_3 \in G_\Gamma$ such that $|g_1^{-1}g_3| = m-1$ and $|g_3^{-1}g_2|=1$. Let $v$ be the vertex of $\Gamma$ such 
that $g_3^{-1}g_2$ is either $v$ or $v^{-1}$. By induction, there exist two sequences of elements of $\mf{S}^*$,  \[ [g_1G_{\Lambda_1}] = U_1,U_2, \hdots,U_n = [g_3G_{v}] \] and \[[g_2G_{v}] = V_1,V_2,\hdots,V_k = [g_2G_{\Lambda_2}]\] such that $U_i \perp U_{i+1}$ for $1\leq i \leq n-1$ and $V_i\perp V_{i+1}$ for all $1 \leq i \leq k-1$. Since $[g_3G_{v}] = [g_2G_v]$,  \[[g_1G_{\Lambda_1}] = U_1,U_2, \hdots,U_n, V_2,\hdots, V_n = [g_2G_{\Lambda_2}]\] is the required sequence.

\textit{RACGs:} Since $\Gamma$ is strongly $\mathcal{CFS}$, we can write $\Gamma=\Omega*K$ where $K$ is a clique (possibly empty) and $\Omega$ is a non-empty graph such that $\Omega^4$ is connected and every vertex of $\Omega$ is contained a $4$--cycle that is a vertex of $\Omega^4$. Since $G_\Omega$ is a finite index subgroup of $G_\Gamma$, it suffices to prove that every {\sqc} subset of $G_\Omega$ is either hyperbolic or coarsely covers $G_\Omega$.  We now prove that the standard HHG structure, $\mf{S}$, on $G_\Omega$ satisfies satisfy the two hypotheses of Proposition \ref{prop:one_orthogonal_component}. The argument will be similar to the case of right-angled Artin groups above. 

We first observe that $\mf{S}^*$ is the collection of $[gG_\Lambda]$ such that there exists \(\Delta\subseteq link(\Lambda)\) where \(\Lambda\) and \(\Delta\) both have at least 2 points and they are not joins. In particular, $\mf{S}^*$ contains all domains $[gG_{\{a,b\}}]$ where $a$ and $b$ are two non-adjacent vertices of an induced 4-cycle. Hypothesis (\ref{item:cover}) follows from the fact that $G_\Lambda$ acts cocompactly on its Cayley graph and the construction of $C[gG_\Lambda]$.

For Hypothesis (\ref{item:connected}), let $[g_1G_{\Lambda_1}],[g_2G_{\Lambda_2}] \in \mf{S}^*$ and $m=|g_1^{-1}g_2|$. We shall proceed by induction on $m$. We first assume that $m=0$. Therefore, $g_1=g_2=g$. We note that for $i=0 \text{ or }1$ there exists \(\Delta_i\subseteq link(\Lambda_i)\) where \(\Lambda_i\) and \(\Delta_i\) both contain at least 2 vertices and are not joins. Therefore, $link(\Lambda_i)$ contains a pair $(u_i,v_i)$ of two non-adjacent vertices of some induced $4$ cycle. Since $\Omega^4$ is connected, there is a sequence of pairs of non-adjacent vertices $(u_1,v_1)=(a_1,b_1), (a_2,b_2), \cdots, (a_n,b_n)=(u_2,v_2)$ such that $a_i$ and $b_i$ are both adjacent to $a_{i+1}$ and $b_{i+1}$ for all $1\leq i\leq n-1$. Thus $[gG_{\Lambda_1}],[gG_{\{a_1,b_1\}}],\hdots,[gG_{\{a_n,b_n\}}], [gG_{\Lambda_2}]$ is the required sequence. 

If $m>0$, then there exists $g_3 \in G_\Omega$ such that $|g_1^{-1}g_3| = m-1$ and $|g_3^{-1}g_2|=1$. Let $v$ be the vertex of $\Omega$ such 
that $g_3^{-1}g_2=v$. Since every vertex of $\Omega$ is contained in a four cycle that is a vertex of $\Omega^4$, there is a vertex $w$ such that $v$ and $w$ are two non-adjacent vertices of an induced 4-cycle. By induction, there exist two sequences of elements of $\mf{S}^*$, \[[g_1G_{\Lambda_1}] = U_1,U_2, \hdots,U_n = [g_3G_{\{v,w\}}]\]  and \[[g_2G_{\{v,w\}}] = V_1,V_2,\hdots,V_k = [g_2G_{\Lambda_2}]\] such that $U_i \perp U_{i+1}$ for $1\leq i \leq n-1$ and $V_i\perp V_{i+1}$ for all $1 \leq i \leq k-1$. Since $[g_3G_{\{v,w\}}] = [g_2G_{\{v,w\}}]$,  \[[g_1G_{\Lambda_1}] = U_1,U_2, \hdots,U_n, V_2,\hdots, V_n = [g_2G_{\Lambda_2}]\] is the required sequence.

\textit{Graph Manifolds:} In this case, $\mf{S}^* = \mf{S} -\{T\}$ and Hypothesis (\ref{item:cover}) is immediate from the facts that for every vertex $v \in T$ is an element of $\rho_T^{[R_v]}$ and every point in $H_v$ is uniformly close to some boundary component $\partial_w H_v$. For Hypothesis (\ref{item:connected}), consider $U,W \in \mf{S}^*$. If $U=[R_u]$ and $W=[R_w]$, let $v_1,\hdots,v_n$ be a sequence of adjacent vertices in $T$ such that $v_1$ is adjacent to $u$ and $v_n$ is adjacent to $w$. In this case the sequence $[R_u], [R_{v_1}],\hdots,[R_{v_n}], [R_w]$ satisfies the hypothesis. If $U = [\widehat{H}_u]$ or $W = [\widehat{H}_w]$, the hypothesis is satisfied by adding $[\widehat{H}_u]$ before $[R_u]$ or $[\widehat{H}_w]$ after $[R_w]$ to $[R_u], [R_{v_1}],\hdots,[R_{v_n}], [R_w]$ as needed.
\end{proof}

In the setting of $2$--dimensional right-angled Coxeter groups, Tran provided a characterization of the special {\sqc} subgroups \cite{Tran2017}.  This characterization was expanded by Genevois to include all right-angled Coxeter groups in \cite{Genevois_Hyp_in_CAT(0)}.  We provide a new proof of this characterization using Theorem \ref{thm:classification_of_quasiconvex_subset}.

\begin{thm}[\cite{Tran2017}, \cite{Genevois_Hyp_in_CAT(0)}]
\label{thm:quasiconvex_subgroups_of_RACG}
Let $\Gamma$ be a simplicial graph and $\Delta$ an induced subgraph of $\Gamma$. If $G_\Gamma$ is the right-angled Coxeter group corresponding to $\Gamma$ and $G_\Delta$ is the subgroup generated by the vertices of $\Delta$, then the following conditions are equivalent:
\begin{enumerate}
	\item The subgroup $G_\Delta$ is {\sqc} in $G_\Gamma$.
	\item If $\Delta$ contains two non-adjacent vertices of an induced 4--cycle $\sigma$, then $\Delta$ contains all vertices of $\sigma$.
\end{enumerate}
\end{thm}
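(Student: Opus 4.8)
The plan is to use the characterization from Theorem \ref{thm:classification_of_quasiconvex_subset}, which says that $G_\Delta$ is {\sqc} in $G_\Gamma$ if and only if $G_\Delta$ is hierarchically quasiconvex and has the orthogonal projection dichotomy. The key observation is that for the special subgroup $G_\Delta$, both of these properties translate directly into combinatorial conditions on the graph $\Delta \subseteq \Gamma$. First I would record the following facts about the standard HHG structure $\mf{S}$ on $G_\Gamma$: the projection of $G_\Delta$ to a domain $[gG_\Lambda]$ is coarsely all of $C[gG_\Lambda]$ when $gG_\Lambda \cap G_\Delta$ contains a coset of $G_{\Lambda \cap \Delta}$ with $\Lambda \cap \Delta$ large enough to make $G_{\Lambda\cap\Delta}$ infinite and non-join, and is coarsely bounded otherwise. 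Concretely, $\diam(\pi_{[gG_\Lambda]}(G_\Delta))$ is infinite precisely when (a conjugate of) $G_{\Lambda \cap \Delta}$ sits inside $gG_\Lambda$ with $\Lambda \cap \Delta$ neither empty nor splitting as a join; in all other cases it is uniformly bounded. I would also note that $G_\Delta$ is always hierarchically quasiconvex as a special subgroup — this follows because $G_\Delta$ is convex in the Davis complex / Cayley graph with the standard generators, hence its projections are quasiconvex and it satisfies the realization condition; alternatively cite that undistorted special subgroups of HHGs arising as sub-HHG structures are hierarchically quasiconvex.

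The heart of the argument is then to show that (2) is equivalent to the $B$--orthogonal projection dichotomy for $G_\Delta$. Recall $[gG_{\Lambda_1}] \perp [gG_{\Lambda_2}]$ exactly when $\Lambda_2 \subseteq link(\Lambda_1)$. So the orthogonal projection dichotomy says: whenever $U \perp V$ and $\diam(\pi_U(G_\Delta))$ is large, then $CV \subseteq N_B(\pi_V(G_\Delta))$. Using the dichotomy for projections recorded above, a domain $U = [gG_{\{a\}}]$ for a single vertex $a \in \Delta$ always has infinite-diameter image under $\pi_U(G_\Delta)$ (since $G_{\{a\}}$ is infinite and trivially not a join, and $a \in \Delta$). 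Its orthogonal complement contains all domains $[gG_\Lambda]$ with $\Lambda \subseteq link(a)$. The relevant orthogonal domains $V$ with $\diam(CV) = \infty$ are those where $\Lambda$ is infinite and not a join — and to witness a failure one needs $\Lambda \cap \Delta$ to be a single vertex or empty while $\Lambda$ itself is a non-join infinite domain orthogonal to something. The cleanest way to extract condition (2): an induced $4$--cycle $\sigma$ with vertices $a,b,c,d$ in cyclic order gives $\{a,c\} \subseteq link(\{b,d\})$ and $\{b,d\} \subseteq link(\{a,c\})$, so $[G_{\{a,c\}}] \perp [G_{\{b,d\}}]$ and both of these domains have infinite diameter. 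If $\Delta$ contains two non-adjacent vertices of $\sigma$, say $a, c \in \Delta$, then $\diam(\pi_{[G_{\{a,c\}}]}(G_\Delta)) = \infty$, so the dichotomy forces $C[G_{\{b,d\}}] \subseteq N_B(\pi_{[G_{\{b,d\}}]}(G_\Delta))$; since the latter has infinite diameter only if $G_\Delta$ meets $G_{\{b,d\}}$ (up to conjugacy) in an infinite non-join subgroup, and $\{b,d\}$ is a two-point non-join, this requires $b, d \in \Delta$, giving (2). Conversely, if (2) fails, one picks the offending $4$--cycle and the pair $[G_{\{a,c\}}] \perp [G_{\{b,d\}}]$ with $a,c \in \Delta$ but (say) $b \notin \Delta$, so $\pi_{[G_{\{b,d\}}]}(G_\Delta)$ is bounded while $\pi_{[G_{\{a,c\}}]}(G_\Delta)$ is unbounded, violating the dichotomy; hence $G_\Delta$ is not {\sqc}.

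I expect the main obstacle to be the bookkeeping in the direction (1) $\Rightarrow$ (2), namely converting "$\diam(\pi_U(G_\Delta))$ large" into a clean statement about $\Lambda \cap \Delta$ uniformly across all cosets $gG_\Lambda$, and handling the general (not necessarily two-dimensional) case where $\Lambda$ can be an arbitrary induced subgraph rather than a single edge. One must be careful that the relevant threshold $B$ from Theorem \ref{thm:classification_of_quasiconvex_subset} genuinely only depends on the convexity gauge, and that "$CV \subseteq N_B(\pi_V(G_\Delta))$" for an infinite-diameter hyperbolic space $CV$ really does force the combinatorial containment $\Lambda \cap \Delta$ to be non-join and infinite — this uses the explicit description of $C[gG_\Lambda]$ as the coned-off coset together with cocompactness of the $G_{\Lambda}$--action. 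Once the projection dichotomy is pinned to induced $4$--cycles (via the observation that a non-join graph $\Lambda$ with $\Lambda \subseteq link(\Lambda')$, both non-join, forces an induced $4$--cycle with two opposite vertices in $\Lambda'$ and two in $\Lambda$), the equivalence with (2) drops out. A closing remark should note that this recovers \cite[Theorem 1.11]{Tran2017} and \cite[Proposition 4.9]{Genevois2017}.
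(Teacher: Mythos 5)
Your proposal follows essentially the same route as the paper: reduce to Theorem \ref{thm:classification_of_quasiconvex_subset}, observe that $G_\Delta$ is automatically hierarchically quasiconvex as a special subgroup, and translate the orthogonal projection dichotomy into the $4$--cycle condition using the observation that two orthogonal non-join subgraphs sitting in each other's links force an induced $4$--cycle with one opposite pair of vertices in each.

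One preliminary fact you record is not quite right and should be replaced by the cleaner nesting criterion. You claim that $\diam\bigl(\pi_{[gG_\Lambda]}(G_\Delta)\bigr)$ is unbounded exactly when (a conjugate of) $G_{\Lambda\cap\Delta}$ sits in $gG_\Lambda$ with $\Lambda\cap\Delta$ nonempty and non-join. This fails whenever $\Delta \subsetneq \Lambda$ with $\Delta$ itself infinite and non-join: there $\Lambda\cap\Delta = \Delta$ satisfies your criterion, yet $G_\Delta$ is a coset of a proper special subgroup of $G_\Lambda$ and so gets coned off in $C[G_\Lambda]$, making the projection uniformly bounded. The correct statement, which the paper uses, is that $\pi_V(G_\Delta)$ coarsely covers $CV$ when $V \nest [G_\Delta]$ and is uniformly bounded when $V \not\nest [G_\Delta]$; for $V=[gG_\Lambda]$ nesting amounts to $\Lambda \subseteq \Delta$ (together with the coset compatibility built into the equivalence relation), which is precisely why the paper can conclude $\Lambda\subseteq\Delta$ directly from $\diam(\pi_U(G_\Delta)) > B$. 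Once that fact is stated correctly, your argument matches the paper's, and both directions go through as you sketch them.
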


\begin{proof}
Before we begin, we document a few additional facts we will need about the HHG structure on a right-angled Coxeter group. For any induced subgraph $\Lambda$, $\P_{[G_\Lambda]}$  is coarsely equal to the subgroup $G_\Lambda \times G_{link(\Lambda)}$ and $G_\Lambda$ can be coarsely identified with $\PF_{[G_\Lambda]}$.  In particular, $G_\Lambda$ is hierarchically quasiconvex, $\pi_U(G_\Lambda)$  uniformly coarsely covers $CU$ for $U \nest [G_\Lambda]$, and $\pi_V(G_\Lambda)$ is  uniformly bounded for all $V \not \nest [G_\Lambda]$.  See \cite{BHS} for full details on the HHG structure on right-angled Coxeter groups.

$(1) \implies (2):$ Assume for a contradiction that $G_\Delta$ is {\sqc}, but there is a 4--cycle $\sigma$ with two pairs of non-adjacent vertices $\{a_1,a_2\}$ and $\{b_1,b_2\}$ such that $\{a_1,a_2\}$ is a subset of $\Delta$ and $\{b_1,b_2\}$ is not. We know that $U=[G_{\{a_1,a_2\}}]$ and $[G_{\{b_1,b_2\}}]=V$ are orthogonal domains. However, $\pi_U(G_\Delta)$ coarsely covers $CU$, but $\pi_V(G_\Delta)$ has uniformly bounded diameter which contradicts Theorem \ref{thm:classification_of_quasiconvex_subset}.

$(2) \implies (1):$ As $G_\Delta$ is hierarchically quasiconvex, we only need to demonstrate that $G_\Delta$ satisfies the orthogonal projection dichotomy. Let $B$ be a positive number such that $(G_\Gamma,\mf{S})$ has the $B$--bounded domain dichotomy, $CW \subseteq N_B(\pi_W(G_\Delta))$ for all $W \nest [G_\Delta]$, and $\diam(\pi_W(G_\Delta))<B$ for all $W \not \nest [G_\Delta]$.  If $\diam(\pi_U(G_\Delta)) > B$, then it must be the case that $U = [G_\Lambda]$ where  $\Lambda \subseteq \Delta$ and $\Lambda$ contains two non-adjacent vertices $s$ and $t$. If $V\in\mf{S}_U^\perp$, then  $V = [G_{\Lambda'}]$ where $\Lambda' \subseteq link(\Lambda)$ and $\Lambda \subseteq link(\Lambda')$. If  $\Lambda'$ is a join or $\Lambda' = \{v\}$, then $\diam(CV) \leq B$ and $CV \subseteq N_{2B}(\pi_V(G_\Delta))$. In the other case, we will show $\Lambda' \subseteq \Delta$. 

If $\Lambda'$ is not a join and contains at least two vertices, then for each vertex $v \in \Lambda'$ there exists a vertex $w \in \Lambda'$ that is not adjacent to $v$. Since $\Lambda \subseteq link(\Lambda')$, the vertices $v,s,w,t$ form a $4$--cycle. However, (2) then requires $ v,w \in \Delta$. Hence, $\Lambda' \subseteq \Delta$ and $V = [G_{\Lambda'}] \nest [G_\Delta]$ implying $CV \subseteq N_B(\pi_V(G_\Delta))$. Thus $G_\Delta$ has the $2B$--orthogonal projection dichotomy and we are finished by Theorem \ref{thm:classification_of_quasiconvex_subset}.  
\end{proof}

\subsection{CFS right-angled Coxeter groups}

Recently, Behrstock proposed the program of classifying all $\mc{CFS}$ right-angled Coxeter groups up to quasi-isometry and 
commensurability. This was motivated by the genericity of $\mc{CFS}$ right-angled Coxeter groups among random right-angled 
Coxeter groups as well as the fact that being $\mc{CFS}$ is a necessary (but not sufficient) condition for a right-angled Coxeter group 
to be quasi-isometric to a right-angled Artin group (see \cite{B}).

In \cite{B}, Behrstock presented the first example of a $\mc{CFS}$ right-angled Coxeter group that contains a one-ended stable subgroup answering outstanding questions about stable subgroups and quasi-isometries between right-angled Artin groups and right-angled Coxeter groups. Using Theorem \ref{thm:quasiconvex_subgroups_of_RACG}, we can expand Behrstock's construction to  produce $\mc{CFS}$ right-angled Coxeter groups that contain any other right-angled Coxeter group as a {\sqc} subgroup. This shows that there is incredible diversity among the quasi-isometry types of $\mc{CFS}$ right-angled Coxeter groups.

\begin{prop}
\label{prop:quasiconvex_subgroups_of_CFS_coxeter_groups}
Any right-angled Coxeter group (resp. hyperbolic right-angled Coxeter group) is an infinite index {\sqc} subgroup (resp. stable subgroup) of a $\mathcal{CFS}$ right-angled Coxeter group.
\end{prop}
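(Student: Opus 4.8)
The plan is to embed a given right-angled Coxeter group $G_\Delta$ into a larger right-angled Coxeter group $G_\Gamma$ whose defining graph $\Gamma$ is obtained from $\Delta$ by a ``squaring'' or ``doubling'' construction that forces $\Gamma$ to be $\mathcal{CFS}$, while arranging the combinatorics so that the $\mathcal{CFS}$-obstruction in Theorem \ref{thm:quasiconvex_subgroups_of_RACG}(2) is automatically satisfied by $\Delta$. Concretely, following Behrstock's construction in \cite{B}, I would build $\Gamma$ by taking $\Delta$ and, for each vertex $v$ of $\Delta$ (or each edge, depending on what makes the verification cleanest), attaching new vertices that complete $v$ into induced $4$-cycles linked together in $\Gamma^4$; the key design constraint is that no induced $4$-cycle of $\Gamma$ should have two non-adjacent vertices lying in $\Delta$ unless all four of its vertices lie in $\Delta$. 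Since $\Delta$ is an induced subgraph of $\Gamma$, condition (2) of Theorem \ref{thm:quasiconvex_subgroups_of_RACG} then holds vacuously or trivially for $G_\Delta \le G_\Gamma$, so $G_\Delta$ is {\sqc} in $G_\Gamma$.

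The steps, in order, are: (i) give the explicit construction of $\Gamma$ from $\Delta$ — I expect something like: add for each vertex $v\in\Delta$ a ``satellite'' $4$-cycle through $v$, and connect all these satellite squares into a single connected component of $\Gamma^4$ by adding a spine of mutually-linked squares disjoint from $\Delta$; (ii) verify that $\Gamma$ is $\mathcal{CFS}$, i.e. exhibit the decomposition $\Gamma = \Omega * K$ (here $K$ will be empty) and check that every vertex of $\Gamma$ lies on a $4$-cycle belonging to the chosen connected component $T$ of $\Gamma^4$ — this is where the spine makes everything connect up; (iii) verify that $\Delta$ is an induced subgraph of $\Gamma$, which is immediate from the construction since we only add vertices and edges incident to the new vertices; (iv) check condition (2) of Theorem \ref{thm:quasiconvex_subgroups_of_RACG}: show that any induced $4$-cycle $\sigma$ of $\Gamma$ containing two non-adjacent vertices of $\Delta$ has all its vertices in $\Delta$ — this requires controlling which induced $4$-cycles can occur, and it is the heart of the argument; (v) conclude $G_\Delta$ is {\sqc}, and observe it has infinite index because $\Gamma$ strictly contains $\Delta$ (indeed $\Gamma$ has strictly more vertices, so $[G_\Gamma : G_\Delta] = \infty$); (vi) for the hyperbolic case, recall that a {\sqc} subgroup of a right-angled Coxeter group which is itself hyperbolic is, by definition, stable, and $G_\Delta$ hyperbolic gives the stable conclusion directly.

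The main obstacle I anticipate is step (iv): one must argue that the satellite/spine vertices cannot conspire with $\Delta$-vertices to form an induced $4$-cycle violating condition (2). This amounts to a careful case analysis of adjacency in $\Gamma$ — for a would-be induced $4$-cycle $(a_1,b_1,a_2,b_2)$ with $\{a_1,a_2\}\subseteq\Delta$ non-adjacent in $\Delta$, one needs to show $b_1,b_2\in\Delta$. The construction should be rigged so that new vertices are attached with small, controlled links (e.g. each new satellite vertex is adjacent only to the two $\Delta$-vertices of its own square and to its neighbors along the spine), so that a new vertex $b_i$ adjacent to two non-adjacent $\Delta$-vertices $a_1,a_2$ is impossible unless $a_1,a_2$ are precisely the pair in $b_i$'s own satellite square — but then the fourth vertex of that $4$-cycle is the other satellite vertex, not a second $\Delta$-vertex, so such a $\sigma$ has only one $\Delta$-vertex among any non-adjacent pair, and the hypothesis of (2) is never triggered with a mixed square. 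Getting this bookkeeping exactly right (and making sure $\Gamma^4$ is genuinely connected, not just that each vertex lies on some square) is the delicate part; I would likely include a figure analogous to Figure \ref{aone} to make the construction transparent, and possibly treat the cases ``$\Delta$ has an edge'' and ``$\Delta$ is edgeless/a single vertex'' separately since the spine attachment differs slightly.
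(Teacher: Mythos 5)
Your high-level strategy is the same as the paper's — build a $\mathcal{CFS}$ graph $\Omega$ containing $\Delta$ as an induced subgraph so that condition (2) of Theorem \ref{thm:quasiconvex_subgroups_of_RACG} holds, then apply that theorem — but the mechanism you propose for guaranteeing (2) is more fragile than the one the paper uses, and you have (rightly) flagged the verification of (iv) as the weak point. The paper does \emph{not} augment $\Delta$ by gluing satellite squares and a spine onto it. Instead it starts with Behrstock's graph $\Omega_n$ (the strip of $2n$ vertices $a_1,b_1,\dots,a_n,b_n$ with consecutive pairs joined), which is already $\mathcal{CFS}$, chooses $n$ large enough that $\Omega_n$ contains a set $S$ of $|V(\Delta)|$ vertices pairwise at distance $\geq 3$, identifies $V(\Delta)$ with $S$, and then adds the edges of $\Delta$. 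The point of the $3$-separation is twofold and does all the work you anticipate having to do by hand. First, any induced $4$-cycle of $\Omega_n$ has its vertices pairwise within distance $2$, so it contains at most one point of $S$; hence adding $\Delta$'s edges (which join points of $S$) destroys no $4$-cycle of $\Omega_n$, so $\Omega_n^4 \subseteq \Omega^4$ and $\mathcal{CFS}$-ness is inherited for free, with no separate verification of $\Gamma^4$-connectivity needed. Second, if $a_1,a_2\in S$ are non-adjacent and $b$ is a vertex of $\Omega_n$ adjacent to both, then $d_{\Omega_n}(a_1,a_2)\leq 2 < 3$, a contradiction; so no new vertex can be adjacent to two $\Delta$-vertices, and condition (2) follows immediately. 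This is exactly the property you want your satellites to have, but obtained globally by a single metric condition rather than by local bookkeeping on the attachment maps.

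Your approach can likely be made to work, but you would need to commit to a precise construction of the satellites and spine and then check both the $\Gamma^4$-connectivity and the ``no mixed $4$-cycle'' property case by case, and the phrase ``the two $\Delta$-vertices of its own square'' suggests some ambiguity about whether satellites are hung off vertices or edges of $\Delta$ (note a satellite square through a single vertex $v$ has only one $\Delta$-vertex, and a square on a non-adjacent pair of $\Delta$-vertices would violate (2) outright). The paper's route avoids all of this, so when you write up the argument I'd strongly recommend adopting the $3$-separated embedding into $\Omega_n$. Your remaining steps (infinite index because a proper special subgroup of an infinite RACG has infinite index; stability in the hyperbolic case by definition of stable subgroup via Proposition \ref{defn:stable}) are correct and match the paper.
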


\begin{proof}
To prove the proposition we shall utilize a construction of certain $\mathcal{CFS}$ graphs described in \cite{B}. Let $\Omega_n$ be a graph with $2n$ vertices built in the following inductive way.  Let $\Omega_1$ be a pair of vertices $a_1$, $b_1$ with no edge between them. Given the graph $\Omega_{n-1}$, we obtain the graph $\Omega_n$ by adding a new pair of vertices $a_n$, $b_n$ to the graph $\Omega_{n-1}$ and adding four new edges, one connecting each of $\{a_{n-1}, b_{n-1}\}$ to each of $\{a_{n}, b_{n}\}$. In Figure~\ref{afourth} graph $\Gamma_1$ is exactly $\Omega_{13}$. For each integer $m\geq 2$ there is a sufficiently large $n$ such that the graph $\Omega_n$ contains $m$ vertices whose pairwise distances are at least $3$. 

Let $G_\Gamma$ be an arbitrary right-angled Coxeter group. We will construct a $\mathcal{CFS}$ right-angled Coxeter group $G_\Omega$ that contains $G_\Gamma$ as a {\sqc} subgroup. Let $m$ be a number of vertices of $\Gamma$. Choose a positive integer $n$ sufficient large so the graph $\Omega_n$ contains a set $S$ of $m$ vertices whose pairwise distance is at least $3$. We glue the graphs $\Gamma$ and $\Omega_n$ by identifying the vertex set of $\Gamma$ to $S$. Let $\Omega$ be the resulting graph. In Figure~\ref{afourth} graph $\Gamma_2$ is an example of graph $\Omega$ when $\Gamma$ is the 5-cycle graph and graph $\Gamma_3$ is another example of graph $\Omega$ when $\Gamma$ is the 4-cycle graph.

The graphs $\Omega$ and $\Omega_n$ have the same vertex set and \(\Omega_n^4 \subset \Omega^{4}\). Thus $\Omega$ is a $\mc{CFS}$ graph as $\Omega_n$ is a $\mathcal{CFS}$ graph.  Since the distance in $\Omega_n$ between any distinct vertices of $S$ is at least $3$, $\Gamma$ is an induced subgraph of $\Omega$ with the property that if $\Gamma$ contains two non-adjacent vertices of an induced 4--cycle $\sigma$, then $\Gamma$ contains all vertices of $\sigma$. Therefore, $G_\Gamma$ is a {\sqc} subgroup of $G_\Omega$ by Theorem \ref{thm:quasiconvex_subgroups_of_RACG}. If $G_\Gamma$ is a hyperbolic group, then it is a stable subgroup of $G_\Omega$. 
\end{proof}

In light of Proposition \ref{prop:quasiconvex_subgroups_of_CFS_coxeter_groups}, we believe that {\sqc} subgroups will play an important role in understanding the quasi-isometry classification of $\mc{CFS}$ right-angled Coxeter groups. In particular, it suggests that the  quasi-isometry classification of $\mc{CFS}$ right-angled Coxeter groups maybe no simpler than the quasi-isometry classification of all right-angled Coxeter groups.

We finish this section by illustrating the results of this section with  three $\mc{CFS}$ right-angled Coxeter groups whose quasi-isometry types can be distinguished utilizing their {\sqc} subsets.

\begin{exmp}\label{ex:qi_classifification}
Let $\Gamma_1$, $\Gamma_2$, and $\Gamma_3$ be the graphs in Figure~\ref{afourth}. All of the right-angled Coxeter groups $G_{\Gamma_1}$, $G_{\Gamma_2}$, and $G_{\Gamma_3}$ are $\mc{CFS}$, but no pair of them are quasi-isometric. By \cite{HH2017}, $G_{\Gamma_1}$ is quasi-isometric to a right-angled Artin group with connected defining graph. Thus, all of $G_{\Gamma_1}$'s non-coarsely covering {\sqc} subsets are quasi-trees. However, $G_{\Gamma_2}$ contains a one-ended hyperbolic {\sqc} subgroup (induced by the blue 5--cycle) and $G_{\Gamma_3}$ contain a virtually $\mathbb{Z}^2$ {\sqc} subgroup (induced by the red 4--cycle). The table below summarizes some of the differences between $G_{\Gamma_1}$, $G_{\Gamma_2}$, and $G_{\Gamma_3}$.
\end{exmp}

\begin{center}
\begin{tabular}{|m{3.4cm}|m{3.4cm}|m{3.4cm}|m{3.4cm}|}\hline
	& {$G_{\Gamma_1}$}& {\centering $G_{\Gamma_2}$}& {\centering $G_{\Gamma_3}$} \\ \hline
	Strongly $\mc{CFS}$& Yes & Yes & No\\ \hline
	Non-coarsely \newline covering \newline {\sqc} \newline subsets  & All quasi-trees & All hyperbolic. \newline Contains a one-ended  stable \newline subgroup. &    Contains a \newline {\sqc} \newline virtually $\mathbb{Z}^2$ \newline subgroup. \\ \hline
	Morse \newline boundary & Totally \newline disconnected & Contains a circle & Connectivity \newline unknown$^4$\\ \hline
	QI to a RAAG & Yes  & No & No\\ \hline
\end{tabular}
\end{center}
{\footnotesize{$^4$ Karrer has since shown that the Morse boundary of $G_{\Gamma_3}$ is totally disconnected \cite{Karrer_Totally_disconnected}.}}

\begin{figure}[H]
\begin{tikzpicture}[scale=.4]
	
	\draw (0,0.5) node[circle,fill,inner sep=.75pt, color=black](1){} -- (0.5,0) node[circle,fill,inner sep=.75pt, color=black](1){} -- (0,-0.5) node[circle,fill,inner sep=.75pt, color=black](1){} -- (-0.5,0) node[circle,fill,inner sep=.75pt, color=black](1){} -- (0,0.5);
	
	\draw (-0.5,0) node[circle,fill,inner sep=.75pt, color=black](1){} -- (0,1) node[circle,fill,inner sep=.75pt, color=black](1){} -- (0.5,0)  node[circle,fill,inner sep=.75pt, color=black](1){} -- (0,-1) node[circle,fill,inner sep=.75pt, color=black](1){} -- (-0.5,0);
	
	\draw (0,1) node[circle,fill,inner sep=.75pt, color=black](1){}  -- (1,0) node[circle,fill,inner sep=.75pt, color=black](1){}  -- (0,-1) node[circle,fill,inner sep=.75pt, color=black](1){} -- (-1,0) node[circle,fill,inner sep=.75pt, color=black](1){} -- (0,1);
	
	\draw (-1,0) node[circle,fill,inner sep=.75pt, color=black](1){} -- (0,2) node[circle,fill,inner sep=.75pt, color=black](1){} -- (1,0)node[circle,fill,inner sep=.75pt, color=black](1){}  -- (0,-2)node[circle,fill,inner sep=.75pt, color=black](1){}  -- (-1,0); 
	
	\draw (0,2) node[circle,fill,inner sep=.75pt, color=black](1){}  -- (2,0)node[circle,fill,inner sep=.75pt, color=black](1){}  -- (0,-2) node[circle,fill,inner sep=.75pt, color=black](1){} -- (-2,0)node[circle,fill,inner sep=.75pt, color=black](1){}  -- (0,2);
	
	\draw (-2,0) node[circle,fill,inner sep=.75pt, color=black](1){}  -- (0,4) node[circle,fill,inner sep=.75pt, color=black](1){}  -- (2,0) node[circle,fill,inner sep=.75pt, color=black](1){} -- (0,-4) node[circle,fill,inner sep=.75pt, color=black](1){} -- (-2,0);
	
	\draw (0,4) node[circle,fill,inner sep=.75pt, color=black](1){}  -- (4,0) node[circle,fill,inner sep=.75pt, color=black](1){}  -- (0,-4) node[circle,fill,inner sep=.75pt, color=black](1){} -- (-4,0) node[circle,fill,inner sep=.75pt, color=black](1){} -- (0,4);
	
	\draw (-4,0)node[circle,fill,inner sep=.75pt, color=black](1){}  -- (0,6) node[circle,fill,inner sep=.75pt, color=black](1){} -- (4,0) node[circle,fill,inner sep=.75pt, color=black](1){} -- (0,-6)node[circle,fill,inner sep=.75pt, color=black](1){}  -- (-4,0);
	
	\draw (0,6) node[circle,fill,inner sep=.75pt, color=black](1){} -- (6,0) node[circle,fill,inner sep=.75pt, color=black](1){} -- (0,-6) node[circle,fill,inner sep=.75pt, color=black](1){}  -- (-6,0) node[circle,fill,inner sep=.75pt, color=black](1){} -- (0,6);
	
	\draw (-6,0) node[circle,fill,inner sep=.75pt, color=black](1){} -- (0,8) node[circle,fill,inner sep=.75pt, color=black](1){} -- (6,0) node[circle,fill,inner sep=.75pt, color=black](1){} -- (0,-8) node[circle,fill,inner sep=.75pt, color=black](1){} -- (-6,0);
	
	\draw (0,8) -- (8,0) -- (0,-8) node[circle,fill,inner sep=.75pt, color=black](1){} -- (-8,0) node[circle,fill,inner sep=.75pt, color=black](1){} -- (0,8) node[circle,fill,inner sep=.75pt, color=black](1){};
	
	\draw (-8,0) node[circle,fill,inner sep=.75pt, color=black](1){} -- (0,10) node[circle,fill,inner sep=.75pt, color=black](1){}-- (8,0) node[circle,fill,inner sep=.75pt, color=black](1){} -- (0,-10) node[circle,fill,inner sep=.75pt, color=black](1){} -- (-8,0) node[circle,fill,inner sep=.75pt, color=black](1){};
	
	\node at (0,11) {$\Gamma_1$};

	\draw (0-10,0.5-20) node[circle,fill,inner sep=.75pt, color=black](1){} -- (0.5-10,0-20) node[circle,fill,inner sep=.75pt, color=black](1){} -- (0-10,-0.5-20) node[circle,fill,inner sep=.75pt, color=black](1){} -- (-0.5-10,0-20) node[circle,fill,inner sep=.75pt, color=black](1){} -- (0-10,0.5-20);
	
	\draw (-0.5-10,0-20) node[circle,fill,inner sep=.75pt, color=black](1){} -- (0-10,1-20) node[circle,fill,inner sep=.75pt, color=black](1){} -- (0.5-10,0-20)  node[circle,fill,inner sep=.75pt, color=black](1){} -- (0-10,-1-20) node[circle,fill,inner sep=.75pt, color=black](1){} -- (-0.5-10,0-20);
	
	\draw (0-10,1-20) node[circle,fill,inner sep=.75pt, color=black](1){}  -- (1-10,0-20) node[circle,fill,inner sep=.75pt, color=black](1){}  -- (0-10,-1-20) node[circle,fill,inner sep=.75pt, color=black](1){} -- (-1-10,0-20) node[circle,fill,inner sep=.75pt, color=black](1){} -- (0-10,1-20);
	
	\draw (-1-10,0-20) node[circle,fill,inner sep=.75pt, color=black](1){} -- (0-10,2-20) node[circle,fill,inner sep=.75pt, color=black](1){} -- (1-10,0-20) node[circle,fill,inner sep=.75pt, color=black](1){}  -- (0-10,-2-20)node[circle,fill,inner sep=.75pt, color=black](1){}  -- (-1-10,0-20); 
	
	\draw (0-10,2-20) node[circle,fill,inner sep=.75pt, color=black](1){}  -- (2-10,0-20)node[circle,fill,inner sep=.75pt, color=black](1){}  -- (0-10,-2-20) node[circle,fill,inner sep=.75pt, color=black](1){} -- (-2-10,0-20)node[circle,fill,inner sep=.75pt, color=black](1){}  -- (0-10,2-20);
	
	\draw (-2-10,0-20) node[circle,fill,inner sep=.75pt, color=black](1){}  -- (0-10,4-20) node[circle,fill,inner sep=.75pt, color=black](1){}  -- (2-10,0-20) node[circle,fill,inner sep=.75pt, color=black](1){} -- (0-10,-4-20) node[circle,fill,inner sep=.75pt, color=black](1){} -- (-2-10,0-20);
	
	\draw (0-10,4-20) node[circle,fill,inner sep=.75pt, color=black](1){}  -- (4-10,0-20) node[circle,fill,inner sep=.75pt, color=black](1){}  -- (0-10,-4-20) node[circle,fill,inner sep=.75pt, color=black](1){} -- (-4-10,0-20) node[circle,fill,inner sep=.75pt, color=black](1){} -- (0-10,4-20);
	
	\draw (-4-10,0-20)node[circle,fill,inner sep=.75pt, color=black](1){}  -- (0-10,6-20) node[circle,fill,inner sep=.75pt, color=black](1){} -- (4-10,0-20) node[circle,fill,inner sep=.75pt, color=black](1){} -- (0-10,-6-20)node[circle,fill,inner sep=.75pt, color=black](1){}  -- (-4-10,0-20);
	
	\draw (0-10,6-20) node[circle,fill,inner sep=.75pt, color=black](1){} -- (6-10,0-20) node[circle,fill,inner sep=.75pt, color=black](1){} -- (0-10,-6-20) node[circle,fill,inner sep=.75pt, color=black](1){}  -- (-6-10,0-20) node[circle,fill,inner sep=.75pt, color=black](1){} -- (0-10,6-20);
	
	\draw (-6-10,0-20) node[circle,fill,inner sep=.75pt, color=black](1){} -- (0-10,8-20) node[circle,fill,inner sep=.75pt, color=black](1){} -- (6-10,0-20) node[circle,fill,inner sep=.75pt, color=black](1){} -- (0-10,-8-20) node[circle,fill,inner sep=.75pt, color=black](1){} -- (-6-10,0-20);
	
	\draw (0-10,8-20) node[circle,fill,inner sep=.75pt, color=black](1){} -- (8-10,0-20)node[circle,fill,inner sep=.75pt, color=black](1){} -- (0-10,-8-20) node[circle,fill,inner sep=.75pt, color=black](1){} -- (-8-10,0-20) node[circle,fill,inner sep=.75pt, color=black](1){} -- (0-10,8-20) node[circle,fill,inner sep=.75pt, color=black](1){};
	
	\draw (-8-10,0-20) node[circle,fill,inner sep=.75pt, color=black](1){} -- (0-10,10-20) node[circle,fill,inner sep=.75pt, color=black](1){}-- (8-10,0-20) node[circle,fill,inner sep=.75pt, color=black](1){} -- (0-10,-10-20) node[circle,fill,inner sep=.75pt, color=black](1){} -- (-8-10,0-20) node[circle,fill,inner sep=.75pt, color=black](1){};
	
	\draw (0-10,0.5-20)[color=blue] node[circle,fill,inner sep=.75pt, color=black](1){} -- (1-10,0-20) node[circle,fill,inner sep=.75pt, color=black](1){}-- (0-10,4-20) node[circle,fill,inner sep=.75pt, color=black](1){} -- (-6-10,0-20) node[circle,fill,inner sep=.75pt, color=black](1){} -- (0-10,-10-20); 
	\draw (0-10,-10-20)[color=blue]  .. controls (2-10,-7-20) and (1-10, -2-20) ..  (0-10,0.5-20);
	
	\node at (0-10,11-20) {$\Gamma_2$};

	\draw (0+10,0.5-20) node[circle,fill,inner sep=.75pt, color=black](1){} -- (0.5+10,0-20) node[circle,fill,inner sep=.75pt, color=black](1){} -- (0+10,-0.5-20) node[circle,fill,inner sep=.75pt, color=black](1){} -- (-0.5+10,0-20) node[circle,fill,inner sep=.75pt, color=black](1){} -- (0+10,0.5-20);
	
	\draw (-0.5+10,0-20) node[circle,fill,inner sep=.75pt, color=black](1){} -- (0+10,1-20) node[circle,fill,inner sep=.75pt, color=black](1){} -- (0.5+10,0-20)  node[circle,fill,inner sep=.75pt, color=black](1){} -- (0+10,-1-20) node[circle,fill,inner sep=.75pt, color=black](1){} -- (-0.5+10,0-20);
	
	\draw (0+10,1-20) node[circle,fill,inner sep=.75pt, color=black](1){}  -- (1+10,0-20) node[circle,fill,inner sep=.75pt, color=black](1){}  -- (0+10,-1-20) node[circle,fill,inner sep=.75pt, color=black](1){} -- (-1+10,0-20) node[circle,fill,inner sep=.75pt, color=black](1){} -- (0+10,1-20);
	
	\draw (-1+10,0-20) node[circle,fill,inner sep=.75pt, color=black](1){} -- (0+10,2-20) node[circle,fill,inner sep=.75pt, color=black](1){} -- (1+10,0-20) node[circle,fill,inner sep=.75pt, color=black](1){}  -- (0+10,-2-20)node[circle,fill,inner sep=.75pt, color=black](1){}  -- (-1+10,0-20); 
	
	\draw (0+10,2-20) node[circle,fill,inner sep=.75pt, color=black](1){}  -- (2+10,0-20)node[circle,fill,inner sep=.75pt, color=black](1){}  -- (0+10,-2-20) node[circle,fill,inner sep=.75pt, color=black](1){} -- (-2+10,0-20)node[circle,fill,inner sep=.75pt, color=black](1){}  -- (0+10,2-20);
	
	\draw (-2+10,0-20) node[circle,fill,inner sep=.75pt, color=black](1){}  -- (0+10,4-20) node[circle,fill,inner sep=.75pt, color=black](1){}  -- (2+10,0-20) node[circle,fill,inner sep=.75pt, color=black](1){} -- (0+10,-4-20) node[circle,fill,inner sep=.75pt, color=black](1){} -- (-2+10,0-20);
	
	\draw (0+10,4-20) node[circle,fill,inner sep=.75pt, color=black](1){}  -- (4+10,0-20) node[circle,fill,inner sep=.75pt, color=black](1){}  -- (0+10,-4-20) node[circle,fill,inner sep=.75pt, color=black](1){} -- (-4+10,0-20) node[circle,fill,inner sep=.75pt, color=black](1){} -- (0+10,4-20);
	
	\draw (-4+10,0-20)node[circle,fill,inner sep=.75pt, color=black](1){}  -- (0+10,6-20) node[circle,fill,inner sep=.75pt, color=black](1){} -- (4+10,0-20) node[circle,fill,inner sep=.75pt, color=black](1){} -- (0+10,-6-20)node[circle,fill,inner sep=.75pt, color=black](1){}  -- (-4+10,0-20);
	
	\draw (0+10,6-20) node[circle,fill,inner sep=.75pt, color=black](1){} -- (6+10,0-20) node[circle,fill,inner sep=.75pt, color=black](1){} -- (0+10,-6-20) node[circle,fill,inner sep=.75pt, color=black](1){}  -- (-6+10,0-20) node[circle,fill,inner sep=.75pt, color=black](1){} -- (0+10,6-20);
	
	\draw (-6+10,0-20) node[circle,fill,inner sep=.75pt, color=black](1){} -- (0+10,8-20) node[circle,fill,inner sep=.75pt, color=black](1){} -- (6+10,0-20) node[circle,fill,inner sep=.75pt, color=black](1){} -- (0+10,-8-20) node[circle,fill,inner sep=.75pt, color=black](1){} -- (-6+10,0-20);
	
	\draw (0+10,8-20) node[circle,fill,inner sep=.75pt, color=black](1){} -- (8+10,0-20)node[circle,fill,inner sep=.75pt, color=black](1){} -- (0+10,-8-20) node[circle,fill,inner sep=.75pt, color=black](1){} -- (-8+10,0-20) node[circle,fill,inner sep=.75pt, color=black](1){} -- (0+10,8-20) node[circle,fill,inner sep=.75pt, color=black](1){};
	
	\draw (-8+10,0-20) node[circle,fill,inner sep=.75pt, color=black](1){} -- (0+10,10-20) node[circle,fill,inner sep=.75pt, color=black](1){}-- (8+10,0-20) node[circle,fill,inner sep=.75pt, color=black](1){} -- (0+10,-10-20) node[circle,fill,inner sep=.75pt, color=black](1){} -- (-8+10,0-20) node[circle,fill,inner sep=.75pt, color=black](1){};
	
	\draw (0+10,0.5-20)[color=red] node[circle,fill,inner sep=.75pt, color=black](1){} -- (1+10,0-20) node[circle,fill,inner sep=.75pt, color=black](1){}-- (0+10,4-20) node[circle,fill,inner sep=.75pt, color=black](1){} -- (-6+10,0-20) node[circle,fill,inner sep=.75pt, color=black](1){} -- (0+10,0.5-20) node[circle,fill,inner sep=.75pt, color=black](1){};

	\node at (0+10,11-20) {$\Gamma_3$};

\end{tikzpicture}

\caption{Three graphs $\Gamma_1$, $\Gamma_2$, and $\Gamma_3$ are all $\mathcal{CFS}$, but no pair of them are quasi-isometric.}
\label{afourth}
\end{figure}
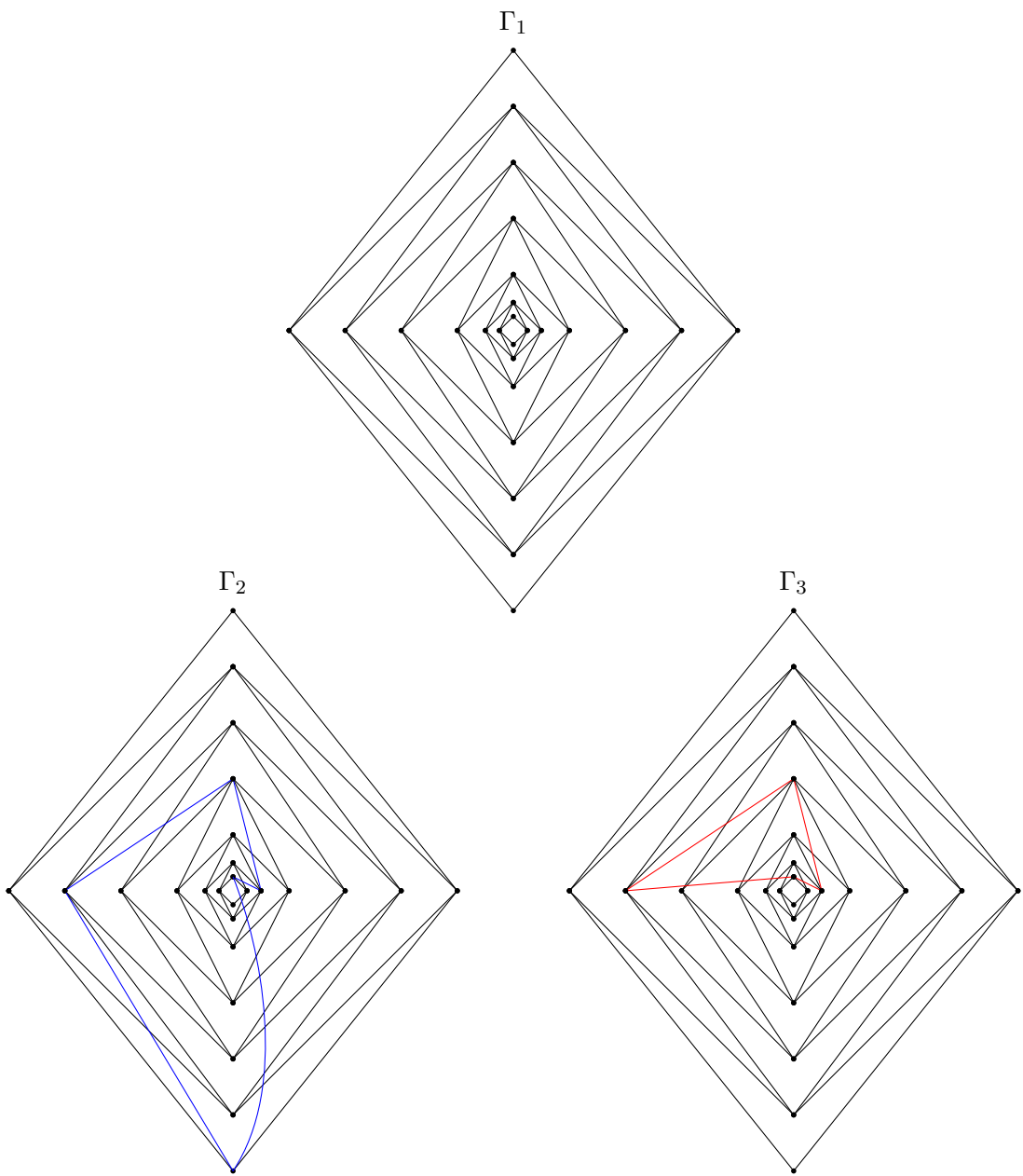

\section{Hyperbolically embedded subgroups of HHGs}\label{sec: Hyperbolically embedded subgroups of HHGs}

In this section, we utilize Theorem \ref{thm:classification_of_quasiconvex_subset} to prove the following classification of hyperbolically embedded subgroups of hierarchically hyperbolic groups. As our proof does not directly utilize the definition of hyperbolically embedded we shall omit the definition here and direct the curious reader to \cite{DGO}.

\begin{thm}\label{thm: classification hyp. emb in HHG}
Let \(G\) be a hierarchically hyperbolic group and let \(\{H_i\}\) be a finite collection of subgroups. Then the following are equivalent:
\begin{enumerate}
	\item The collection \(\{H_i\}\) is hyperbolically embedded in \(G\).
	\item The collection \(\{H_i\}\) is  almost malnormal and each $H_i$ is {\sqc}.
	
\end{enumerate}
\end{thm}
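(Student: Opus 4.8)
The implication $(1) \Rightarrow (2)$ is essentially known and does not require the HHS machinery: if $\{H_i\}$ is hyperbolically embedded in the finitely generated group $G$, then the results of Dahmani--Guirardel--Osin \cite{DGO} give almost malnormality, and the work of Sisto \cite{MR3519976} shows each $H_i$ is {\sqc} (indeed each $H_i$ is Morse/stable-type quasiconvex). So the content of the theorem is the converse $(2) \Rightarrow (1)$. The plan is to use the characterization of hyperbolically embedded subgroups due to Sisto: a collection $\{H_i\}$ is hyperbolically embedded in $G$ if and only if it is ``hyperbolically embedded with respect to a hyperbolic space'' in the sense that the $H_i$ act on an appropriate hyperbolic space with the correct properties, or equivalently, one exhibits a suitable family of ``hyperbolically embedded-like'' data. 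The cleanest route is to invoke the criterion that $\{H_i\}$ is hyperbolically embedded provided each $H_i$ is almost malnormal, undistorted, and \emph{contracting} (in the sense that the cosets $gH_i$ form a ``contracting family''), which is precisely the kind of statement Theorem \ref{thm:classification_of_quasiconvex_subset} is built to feed into.

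\textbf{Key steps, in order.} First I would record that, by Theorem \ref{thm:classification_of_quasiconvex_subset}, each {\sqc} subgroup $H_i$ is $(A,D)$--contracting, with a gate map $\gate_{H_i}\colon G \to H_i$ that is coarsely Lipschitz and has the contracting property; moreover by Lemma \ref{lem:coarse_equiv_of_gate} this gate map is coarsely $G$--equivariant, so the whole coset collection $\{gH_i : g \in G\}$ is a uniformly contracting family with uniformly contracting (equivariant) gate maps. Second, I would use almost malnormality to control the coarse intersections of distinct cosets: if $gH_i$ and $g'H_j$ are distinct cosets, then $\gate_{g'H_j}(gH_i)$ has uniformly bounded diameter. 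This is the ``bounded coarse intersection'' condition, and it follows from almost malnormality together with the contracting property — a distinct coset cannot have large gate-image onto another, else one produces a large coarse intersection contradicting almost malnormality (here one uses that contracting subgroups have the bounded geodesic image-type behavior established in Section \ref{subsec:bounded_geodesic_image}, in particular Proposition \ref{prop: active subpaths for quasiconvex}). Third, I would feed the data ``uniformly contracting, equivariant family of subsets with uniformly bounded coarse pairwise intersections'' into the known criterion (Sisto's projection-complex / hyperbolically-embedded criterion, or equivalently the construction via a cusped/coned-off space) to conclude $\{H_i\}$ is hyperbolically embedded. The coning-off construction: form the space obtained from a Cayley graph of $G$ by coning each coset $gH_i$; the contracting property plus bounded coarse intersections guarantees this coned-off space is hyperbolic and that the $H_i$ embed with the required finiteness of the relative metric, which is exactly the definition of hyperbolically embedded.

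\textbf{Main obstacle.} The hard part will be Step 2 — extracting the bounded coarse intersection of distinct cosets from almost malnormality in a \emph{quantitatively uniform} way. Almost malnormality of $\{H_i\}$ only says that $H_i^g \cap H_j$ is finite for $(i,g) \neq (j,e)$, but for the hyperbolically embedded conclusion one needs a uniform bound on $\diam(\gate_{g'H_j}(gH_i))$ over all distinct coset pairs, not merely finiteness for each. The way around this is to argue by contradiction using a limiting/pigeonhole argument: if the gate-images were unbounded, one would find a sequence of coset pairs with gate-images of diameter $\to \infty$; translating so that these gate-images pass near the identity and using the contracting property (large gate-image forces the two cosets to coarsely fellow-travel along a long segment, by the bridge theorem, Theorem \ref{thm:parallel_gates}, and Proposition \ref{prop: active subpaths for quasiconvex}), one produces arbitrarily large ``coarse intersections'' of translates of the $H_i$; then by a finiteness argument (only finitely many $G$--orbits of such configurations up to the relevant scale, using that $G$ is finitely generated and the $H_i$ are finitely many) one contradicts almost malnormality. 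Assembling this argument carefully, and checking the precise hypotheses of whichever hyperbolically-embedded criterion is invoked, is where the real work lies; the contracting/gate technology from Sections \ref{sec:characterizing_quasiconvex_subsets} and \ref{subsec:bounded_geodesic_image} is exactly what makes it go through, which is the point of the paper.
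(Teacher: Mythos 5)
Your proposal follows the same high-level route as the paper: use Theorem~\ref{thm:classification_of_quasiconvex_subset} to pass from {\sqcity} to the contracting/gate technology, exploit almost malnormality to bound the gate images between distinct cosets, and then feed this into the BBF/Sisto projection criterion (the paper's Theorem~\ref{thm:short_BBF_construction}). Your identification of the ``quantitative uniformity'' issue as a central obstacle, and the pigeonhole/translate-to-the-identity fix for it, matches the paper's Assertion (1) of Proposition~\ref{prop:BBF_axioms_for_malnormal_quasiconvex_subgroups} (using Lemma~\ref{lem:malnormal_finite_intersection} and Lemmas~\ref{lem:contracting_dichotomy}--\ref{lem:gate_is_coarse_intersection}).

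However, there is a genuine gap in Step~3. You treat the passage from ``uniformly contracting family with bounded coarse pairwise intersections'' to the full hypotheses of the projection criterion as nearly automatic, but the Behrstock-type axiom (P1) and the finiteness axiom (P2) of Theorem~\ref{thm:short_BBF_construction} are not consequences of (P0) plus contracting in any formal sense; they each require their own bridge-theorem argument. Concretely: (P1) asks that for distinct $Z_1,Z_2,Z_3$, large $d^{\proj}_{Z_3}(Z_1,Z_2)$ forces small $d^{\proj}_{Z_1}(Z_2,Z_3)$ and $d^{\proj}_{Z_2}(Z_1,Z_3)$. The paper's Assertion~(2) proves this by locating a point $b$ on the bridge $H_\theta(\gate_{Z_2}(Z_1)\cup\gate_{Z_1}(Z_2))$ that lies close to $Z_3$ (via Lemma~\ref{lem:contracting_dichotomy}), and then using Lemma~\ref{lem:gate_of_bridge} to conclude that $\gate_{Z_2}(b)$ is near both $\gate_{Z_2}(Z_1)$ and $\gate_{Z_2}(Z_3)$. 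Likewise, (P2) in Assertion~(3) requires showing the entire bridge between $Z_1$ and $Z_2$ has bounded diameter (using the product structure from the bridge theorem and Proposition~\ref{prop:hulls_finite_case}), then a local-finiteness argument for $G$. Neither of these is a formal consequence of the contracting property plus bounded coarse intersections; each is roughly as involved as your Step~2.

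There is also a smaller but nontrivial fix you omit: the criterion (P3) in Theorem~\ref{thm:short_BBF_construction} asks for \emph{exact} $G$--equivariance of the projections, whereas the gate map is only \emph{coarsely} equivariant (Lemma~\ref{lem:coarse_equiv_of_gate}). The paper handles this by replacing $\gate_{Z_1}(Z_2)$ with the $G$--symmetrized set $\proj_{Z_1}(Z_2) = \bigcup_{g\in G} g^{-1}\gate_{gZ_1}(gZ_2)$, which is exactly equivariant and, by coarse equivariance, coarsely coincides with the original gate so that (P0)--(P2) persist. Without this modification your Step~3 would be applying the criterion to data that does not satisfy (P3).
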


Combining work of Dahmani-Guirardel-Osin \cite{DGO} and Sisto \cite{Sisto_quasiconveity_of_hyperbolically_embedded}, the implication \((1) \implies (2)\) holds for all finitely generated groups. To see that the converse does not hold in general, consider a non-virtually cyclic lacunary hyperbolic group $G$ where every proper subgroup is infinite cyclic and {\sqc} (the existence of such a group is shown in \cite[Theorem1.12]{OOS_Lacunary}). If $I$ is a proper subgroup of $G$, then by \cite[Theorem 1.2]{Tran2017}, $I$ has finite index in its commensurator $H$. Thus $H$ is a proper, infinite, almost malnormal, {\sqc} subgroup of $G$. However, $H$ cannot be hyperbolically embedded  as $G$ does not contain any non-abelian free subgroups and thus fails to be acylindrically hyperbolic (see \cite{Osin_acyl_hyp, DGO}).

Despite this failure in general, Genevois showed that in the setting of CAT(0) cubical groups, $(2)$ does imply $(1)$ (\cite[Theorem 6.31]{Genevois_Hyp_in_CAT(0)}). Genevois employees a combination of the Bestvina-Bromberg-Fujiwara construction (\cite[Theorems A, B]{BBF15}) with some work of Sisto (\cite[Theorems 6.3, 6.4]{SistoOnMetricRelative}) that is summarized in the following sufficient conditions for a collection of subgroups to be hyperbolically embedded.
\begin{thm}[\cite{BBF15, SistoOnMetricRelative}]\label{thm:short_BBF_construction}
Let $G$ be a finitely generated group and $\mc{Z}$ be the collection of all (left) cosets of a finite collection of finitely generated subgroups $\{H_i\}$ in $G$. Fix a finite  generating set \(S\) for \(G\) such that \(H_i = \langle H_i \cap S \rangle\) for all \(i\).  Suppose for every \(Z_1 \neq Z_2 \in \mc{Z}\) we are given a subset \(\proj_{Z_1}(Z_2) \subseteq Z_1\) and for \(Z_1, Z_2, Z_3 \in \mc{Z}\) define \(d^\proj_{Z_3}(Z_1,Z_2) = \diam_{Z_3} \left(\proj_{Z_3}(Z_1) \cup \proj_{Z_3}(Z_2)\right)\). The collection $\{H_i\}$ is hyperbolically embedded in $G$ if there exists \(C> 0\) such that the following hold:
\begin{enumerate}
	\item[(P0)] For all \(Z_1 \neq Z_2\), \(\diam(\proj_{Z_1}(Z_2)) \leq C\).
	\item[(P1)] For any triple \(Z_1,Z_2,Z_3 \in \mc{Z}\) of distinct elements, at most one of the three numbers \(d^\proj_{Z_1}({Z_2},{Z_3}), d^\proj_{Z_2}({Z_1},{Z_3}), d^\proj_{Z_3}({Z_1},{Z_2})\) is greater than \(C\).
	\item[(P2)] For any \(Z_1,Z_2\in \mc{Z}\), the set
	\[\{Z \in \mc{Z} \mid d^{\proj}_{Z}(Z_1,Z_2)>C\}\] is finite.
	\item[(P3)] For all $g \in G$, \(d^\proj_{gZ_1} (gZ_2, gZ_3)= d^\proj_{Z_1}(Z_2,Z_3)\) for any \(Z_1,Z_2,Z_3 \in \mc{Z}\).
\end{enumerate}
\end{thm}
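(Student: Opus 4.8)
The plan is to feed the data $(\mc{Z},\proj,C)$ into the projection-complex machinery of Bestvina--Bromberg--Fujiwara to build a hyperbolic graph on which $G$ acts, and then to translate that action into hyperbolic embeddedness using the results of Sisto; the statement is essentially an assembly of \cite{BBF15} and \cite{SistoOnMetricRelative}, and the proof plan is to carry out that assembly carefully. First I would observe that axioms (P0)--(P2) are, after the standard modification of projections (replacing $\proj$ by a uniformly close family so as to upgrade the ``at most one exceeds $C$'' clause of (P1) to a Behrstock-type inequality, at the cost of enlarging $C$ to some $C'$), exactly the projection axioms of \cite{BBF15}. Fixing $K$ large compared to $C'$, form the projection complex $\mc{P}_K(\mc{Z})$: the graph with vertex set $\mc{Z}$ in which $Z\sim Z'$ precisely when $d^\proj_W(Z,Z')\le K$ for every $W\in\mc{Z}\setminus\{Z,Z'\}$. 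By \cite{BBF15} this graph is connected and is a quasi-tree, hence $\delta$--hyperbolic with $\delta$ depending only on $C$. Axiom (P3) says the left $G$--action on $\mc{Z}$ preserves all the numbers $d^\proj$, so $G$ acts on $\mc{P}_K(\mc{Z})$ by isometries; since $\{H_i\}$ is finite and $G$ acts transitively on the cosets of each $H_i$, this action has finitely many orbits of vertices, with the stabilizer of the vertex $gH_i$ equal to $gH_ig^{-1}$.

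Next I would blow up $\mc{P}_K(\mc{Z})$ to the associated quasi-tree of metric spaces: replace each vertex $Z=gH_i$ by a copy of the Cayley graph of $H_i$ with respect to $H_i\cap S$ (so that the symbols $\proj_Z$ and $\diam_Z$ acquire their intended meaning), and for each edge $Z\sim Z'$ of $\mc{P}_K(\mc{Z})$ attach a connecting edge joining a chosen vertex of $\proj_Z(Z')$ to a chosen vertex of $\proj_{Z'}(Z)$. Because the vertex spaces need not be hyperbolic, I would then pass to the coned-off complex $\widehat{\mc{C}}_K(\mc{Z})$ in which each vertex space is replaced by the cone over it; now every vertex space is geodesic of diameter at most $2$, uniformly hyperbolic, so the hyperbolicity criterion for quasi-trees of metric spaces in \cite{BBF15} applies and $\widehat{\mc{C}}_K(\mc{Z})$ is hyperbolic. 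The $G$--action persists with finitely many orbits of vertices, and for each $i$ the subcomplex carried by the coset $gH_i$ together with its cone point is $gH_ig^{-1}$--cocompact, with $gH_ig^{-1}$ acting transitively on the underlying coset.

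Finally I would match this picture with the Dahmani--Guirardel--Osin definition of hyperbolic embeddedness. The coned-off complex $\widehat{\mc{C}}_K(\mc{Z})$ is $G$--equivariantly quasi-isometric to the relative Cayley graph $\Gamma\!\left(G,\,S\cup\bigsqcup_i H_i\right)$ (this is where $H_i=\langle H_i\cap S\rangle$ and finite generation are used to identify the coset spaces), so hyperbolicity of $\widehat{\mc{C}}_K(\mc{Z})$ gives hyperbolicity of the relative Cayley graph; and the structure of the construction, via \cite[Theorems 6.3 and 6.4]{SistoOnMetricRelative}, yields that for each $i$ the relative metric $\widehat{d}_i$ on $H_i$ is locally finite. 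By the DGO criterion this gives $\{H_i\}\hookrightarrow_h(G,S)$, and since hyperbolic embeddedness is independent of the finite generating set, $\{H_i\}\hookrightarrow_h G$.

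The step I expect to be the main obstacle is this last one --- in particular verifying the local finiteness (properness) of the metrics $\widehat{d}_i$, which is the only place that genuinely uses axiom (P2). Concretely, (P2) says that any two points $x,y$ of a fixed coset $Z$ have $d^\proj_W(\cdot,\cdot)$ large for only finitely many $W\neq Z$; translated through the quasi-tree-of-spaces geometry of $\widehat{\mc{C}}_K(\mc{Z})$, this bounds, in terms of $\diam_Z$--data, the length of a shortest path from $x$ to $y$ that avoids $Z$, so that balls for $\widehat d_i$ are finite. Isolating this as a lemma, and reconciling the several metrics living on a single coset (its intrinsic word metric, its image metric in $\widehat{\mc{C}}_K(\mc{Z})$, and $\widehat d_i$), is where the real bookkeeping lies; this is precisely what \cite[Theorems 6.3 and 6.4]{SistoOnMetricRelative} are designed to absorb, so in practice I would present Steps 1--2 in detail and invoke those theorems for Step 3.
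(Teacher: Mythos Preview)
The paper does not prove this theorem; it is quoted as a black box from the literature, with the sentence preceding it explaining that it ``summarizes'' a combination of \cite[Theorems A, B]{BBF15} with \cite[Theorems 6.3, 6.4]{SistoOnMetricRelative}. Your proposal is a reasonable sketch of exactly that assembly---feed (P0)--(P2) into the BBF projection-complex machinery, use (P3) for the $G$--action, and invoke Sisto's results to extract hyperbolic embeddedness---and is consistent with how the paper attributes the statement. There is nothing to compare against in the paper itself.
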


As Genevois does in the cubical case, we shall show that an almost malnormal collection of {\sqc} subgroups of an HHG satisfies (P0) - (P3) of Theorem \ref{thm:short_BBF_construction}. The bulk of that work is in Proposition \ref{prop:BBF_axioms_for_malnormal_quasiconvex_subgroups}, which we will state and prove after collecting a few preliminary lemmas.

\begin{lem}\label{lem:malnormal_finite_intersection}
Let $\{H_1,\cdots, H_n\}$ be an almost malnormal collection of subgroups of a finitely generated group $G$ and $B\geq 0$. For all $g_1,g_2\in G$, if $g_1H_i \neq g_2H_j$, then $\diam({N}_B(g_1H_i) \cap {N}_B(g_2H_j))$ is finite.
\end{lem}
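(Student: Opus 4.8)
The plan is to argue by contradiction: suppose $\diam(N_B(g_1H_i)\cap N_B(g_2H_j))=\infty$ and derive that $g_1H_i=g_2H_j$, contradicting the hypothesis. After replacing $g_1H_i$ by $H_i$ (translating the whole picture by $g_1^{-1}$, which changes nothing since $G$ acts by isometries on its Cayley graph), we may assume $g_1=e$, so we have a coset $g H_j$ with $H_i\neq gH_j$ and an infinite-diameter set of points lying within $B$ of both $H_i$ and $gH_j$. From this we want to produce infinitely many distinct elements $h\in H_i$ such that $h^{-1}gH_jg^{-1}h$ — or rather $H_i\cap h(gH_jg^{-1})h^{-1}$, after suitable bookkeeping — is infinite, which by almost malnormality forces $H_i$ and a conjugate of $H_j$ to coincide up to finite index, and then a pigeonhole/Helly-type argument pins down the cosets.

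Concretely, first I would pick a sequence of points $x_n$ in the intersection $N_B(H_i)\cap N_B(gH_j)$ with $d(x_n,x_m)\to\infty$. For each $n$ choose $a_n\in H_i$ and $b_n\in gH_j$ with $d(x_n,a_n)\le B$ and $d(x_n,b_n)\le B$, so $d(a_n,b_n)\le 2B$; write $b_n=gc_n$ with $c_n\in H_j$. Then $a_n^{-1}b_n = a_n^{-1}gc_n$ has length at most $2B$ in $G$, so it lies in a finite set $F=\{w\in G: |w|\le 2B\}$. By pigeonhole, passing to a subsequence, $a_n^{-1}gc_n = w$ is constant, i.e. $g = a_n w c_n^{-1}$ for all $n$ in the subsequence. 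Now for two indices $n,m$ we get $a_n w c_n^{-1} = a_m w c_m^{-1}$, hence $a_m^{-1}a_n \, w = w\, c_m^{-1}c_n$, i.e. $w^{-1}(a_m^{-1}a_n)w = c_m^{-1}c_n$. Fixing $m$ and letting $n$ vary over the subsequence, the elements $a_m^{-1}a_n\in H_i$ are infinitely many distinct elements (since $d(a_n,x_n)\le B$ and $d(x_n,x_{n'})\to\infty$ forces the $a_n$ to be infinitely many distinct elements), and they all satisfy $w^{-1}(a_m^{-1}a_n)w\in H_j$. Thus $w^{-1}H_i w\cap H_j$ is infinite, so by almost malnormality of $\{H_1,\dots,H_n\}$ we must have $i=j$ and $w^{-1}H_i w\cap H_i$ of finite index in both — in particular $w$ normalizes $H_i$ up to finite index, and more importantly $w\in$ the set making $wH_i=H_i$...

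The subtle point, and what I expect to be the main obstacle, is upgrading ``$w^{-1}H_iw\cap H_j$ is infinite'' to ``$H_i = gH_j$'' as cosets rather than merely ``$i=j$ and some conjugate of $H_i$ is commensurable with $H_i$.'' Almost malnormality only says the intersection of two distinct cosets' stabilizers is finite, or in the subgroup formulation that $gH_ig^{-1}\cap H_j$ infinite implies $i=j$ and $g\in H_i$ — one must be careful to use exactly the right formulation. The cleanest route: recall $g=a_nwc_n^{-1}$, so $gH_j = a_nwc_n^{-1}H_j = a_n w H_j$ (as $c_n\in H_j$), hence $gH_j = a_n(wH_j)$ for every $n$ in the subsequence; equivalently $a_m^{-1}a_n\, wH_j = wH_j$ so $w^{-1}a_m^{-1}a_n w\in H_j$, which is what we had. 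Now $i=j$ from almost malnormality applied to the infinite intersection $w^{-1}H_iw\cap H_j$; and the element $g w^{-1} = a_n$ lies in $H_i$ — wait, we need $gH_j = H_i$, i.e. that $g$ can be taken in $H_i$ after the identification $H_i=H_j$: indeed $gH_i = a_n w H_i$, and since $w^{-1}H_iw\cap H_i$ is infinite, almost malnormality gives $w\in H_i$ (here using the precise statement that a conjugate $w^{-1}H_iw$ meeting $H_i$ in an infinite subgroup forces $w\in H_i$ when the collection is almost malnormal and $H_i$ is one of its members), whence $gH_i = a_n w H_i = H_i$, contradicting $H_i\ne gH_j = gH_i$. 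I would carefully check the quantifier ``infinitely many distinct $a_n$'': since $x_n\in N_B(H_i)$ with $d(x_n,x_m)\ge d(x_n,x_m)$ unbounded and $d(a_n,x_n)\le B$, we get $d(a_n,a_m)\ge d(x_n,x_m)-2B\to\infty$, so indeed infinitely many of the $a_n$ are distinct, and then the $a_m^{-1}a_n$ give infinitely many distinct elements of $w^{-1}H_iw\cap H_i$ for any fixed $m$, so that intersection is infinite. This completes the contradiction.
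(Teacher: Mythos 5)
Your proof is correct. The paper's own proof of this lemma is a single sentence citing \cite[Proposition 9.4]{Hruska10} together with the definition of almost malnormality, so you have, in effect, supplied the standard direct argument that underlies the cited fact. Your route is the usual ``pigeonhole on short connectors'' argument: given an unbounded set in $N_B(H_i)\cap N_B(gH_j)$, you produce a fixed element $w$ of word length at most $2B$ with $g=a_nwc_n^{-1}$, $a_n\in H_i$, $c_n\in H_j$, along a subsequence; the identity $w^{-1}(a_m^{-1}a_n)w=c_m^{-1}c_n$ then forces $w^{-1}H_iw\cap H_j$ to be infinite, and almost malnormality yields $i=j$ and $w\in H_i$, hence $gH_j=a_nwH_i=H_i$, contradicting $H_i\neq gH_j$. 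Each step is sound: finite generation gives finiteness of the $2B$--ball (pigeonhole) and local finiteness of $G$ (so infinite diameter yields a sequence with pairwise distances going to infinity), and the lower bound $d(a_n,a_m)\geq d(x_n,x_m)-2B$ guarantees infinitely many distinct $a_m^{-1}a_n$. The two advantages of the paper's approach are brevity and deferring to a reference already present in the surrounding literature; the advantage of your approach is that it is self-contained and makes transparent exactly where finite generation and almost malnormality enter. The mid-proof exploratory remarks (the parenthetical ``after suitable bookkeeping'' and the trailing sentence ``$w\in$ the set making $wH_i=H_i$\ldots'') are stylistically loose but are superseded by the clean final paragraph, which closes the argument correctly; in a polished write-up you should delete that scaffolding.
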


\begin{proof}
The conclusion follows directly from \cite[Proposition 9.4]{Hruska10} and the definition of almost malnormal.
\end{proof}

The next two lemmas tell us that a hierarchically quasiconvex subset coarsely intersects a {\sqc} subset whenever the image under the gate map is large. Further, the diameter of this coarse intersection is proportional to the diameter of the gate. In addition to being key components in our proof of Theorem \ref{thm: classification hyp. emb in HHG}, these lemmas can also be interpreted as additional generalizations of the bounded geodesic image property of {\sqc} subsets of hyperbolic spaces.
\begin{lem}\label{lem:contracting_dichotomy}
Let $(\mc{X},\mf{S})$ be an HHS with the bounded domain dichotomy, $A\subseteq \mc{X}$ be $k$--hierarchically quasiconvex subset, and $Y\subseteq \mc{X}$ be $Q$--{\sqc}. There exists $r>1$ depending on $Q$ and $k$ such that if $\diam_\mc{X}(\gate_Y(A)) > r$, then $d_\mc{X}(a,\gate_{Y}(a)) < r$ for each $a \in \gate_A(Y)$. 
\end{lem}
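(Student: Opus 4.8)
\textbf{Proof strategy for Lemma \ref{lem:contracting_dichotomy}.}

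The plan is to use the bridge theorem (Theorem \ref{thm:parallel_gates}) applied to the pair of hierarchically quasiconvex sets $A$ and $Y$ — recall that $Y$ is hierarchically quasiconvex by Proposition \ref{prop: quasiconvex implies HQC}, with hierarchical quasiconvexity gauge determined by $Q$. Set $B = \gate_A(Y)$ and $A' = \gate_Y(A) = \gate_Y(\gate_A(Y))$ (coarsely, using part (\ref{item:double_gate_surjective}) of the bridge theorem). The key input is part (\ref{item:take_the_bridge}) of Theorem \ref{thm:parallel_gates}: for $a \in A$ and $y \in Y$,
\[
d(a,y) \asymp_{K_0,K_0} d\bigl(a,\gate_A(Y)\bigr) + d\bigl(y,\gate_Y(A)\bigr) + d(A,Y) + d\bigl(\gate_{\gate_Y(A)}(a),\gate_{\gate_Y(A)}(y)\bigr).
\]
Now take $a \in \gate_A(Y)$, so $d(a,\gate_A(Y)) = 0$ and $a$ sits on the ``bridge''. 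If we let $y = \gate_Y(a) \in \gate_Y(A)$, then $d(y,\gate_Y(A)) = 0$ as well, and $\gate_{\gate_Y(A)}(a)$ and $\gate_{\gate_Y(A)}(y)$ both coarsely coincide with $y = \gate_Y(a)$ (again using that the double gate composition is coarsely the identity on $\gate_Y(A)$). Thus the formula gives $d(a,\gate_Y(a)) \asymp_{K_0,K_0} d(A,Y)$, i.e. the distance from any bridge point in $\gate_A(Y)$ to $Y$ is coarsely $d(A,Y)$, with constants depending only on $Q$ and $k$ via the bridge theorem constants.

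It then remains to show: if $\diam(\gate_Y(A)) > r$ for suitably large $r$, then $d(A,Y)$ is uniformly bounded (by a constant depending on $Q$ and $k$), which combined with the previous paragraph yields $d(a,\gate_Y(a)) < r$ after enlarging $r$. The idea is that a large gate forces $A$ and $Y$ to come close. Indeed, suppose $d(A,Y)$ is large. Pick $a_1, a_2 \in A$ with $d(\gate_Y(a_1),\gate_Y(a_2))$ comparable to $\diam(\gate_Y(A))$; let $b_i = \gate_A(a_i)$ (on the bridge) and $y_i = \gate_Y(a_i)$. Connect $y_1$ to $y_2$ by a $\lambda_0$--hierarchy path $\gamma$ inside $\mc{X}$; since $Y$ is $Q$--{\sqc}, $\gamma \subseteq N_{Q(\lambda_0,\lambda_0)}(Y)$. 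On the other hand, consider the path that goes $y_1 \to b_1$ along the bridge fiber, then $b_1 \to b_2$ along $\gate_A(Y)$, then $b_2 \to y_2$; using parts (\ref{item:df_bridge_0}) and (\ref{item:df_bridge}) of the bridge theorem (the product structure of $H_\theta(\gate_A(Y)\cup\gate_Y(A))$), one builds from this a uniform-constant quasi-geodesic from $y_1$ to $y_2$ whose midsection (the $b_1 \to b_2$ piece) lies at distance $\asymp d(A,Y)$ from $Y$. By $Q$--{\sqcity}, every point of this quasi-geodesic lies within $Q$ of $Y$, forcing $d(A,Y)$ to be bounded in terms of $Q$ — a contradiction once $d(A,Y)$ exceeds that bound. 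Hence $\diam(\gate_Y(A)) > r$ (for $r$ larger than this bound, composed with the quasi-geodesic constants) implies $d(A,Y) \leq r$, and we are done.

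\textbf{Main obstacle.} The delicate point is the construction in the last paragraph: turning the ``go out to the bridge, across, and back'' route into a genuine uniform-constant quasi-geodesic between two points of $Y$, so that {\sqcity} of $Y$ can be invoked. This is precisely the kind of spiral/product-region gluing argument carried out in Lemma \ref{lem: spiral paths are quasi-geodesics} and Lemma \ref{lem: substituting quasi-geodesic}, and one must be careful that the quasi-geodesic constants depend only on $\mf{S}$, $k$, and $Q$ — not on $d(A,Y)$. An alternative, cleaner route avoiding this: apply Lemma \ref{lem: substituting quasi-geodesic}-style reasoning directly, or observe that $\gate_A(Y)$ is itself hierarchically quasiconvex (part (\ref{item:gate_hq})) and $\gate_Y(A)$ coarsely equals $\gate_Y(\gate_A(Y))$, then run the argument of Proposition \ref{prop:ortho_proj_for_product_regions}'s proof nearly verbatim with $\P_U$ replaced by the bridge $H_\theta(\gate_A(Y)\cup\gate_Y(A))$. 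Either way, the quasi-isometry invariance of the relevant constants is where the care is needed; everything else is bookkeeping with the bridge theorem.
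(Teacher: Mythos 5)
Your reduction in the first paragraph — that $d(a,\gate_Y(a))\asymp d(A,Y)$ for $a\in\gate_A(Y)$, via part (\ref{item:take_the_bridge}) of the bridge theorem — is correct, but the rest of the proposal has a real gap, at exactly the step you flag as delicate. In the three-leg spiral $y_1\to b_1\to b_2\to y_2$, the first and last legs have length $\asymp d(A,Y)$ and the middle leg has length $\asymp\diam(\gate_Y(A))$, so the slope in the sense of Definition \ref{defn: spiral path} is $\asymp\diam(\gate_Y(A))/d(A,Y)$. Lemma \ref{lem: spiral paths are quasi-geodesics} requires this ratio to exceed $4K^2$, and the hypothesis $\diam(\gate_Y(A))>r$ provides no lower bound on the ratio, since $d(A,Y)$ is exactly the quantity you are trying to bound. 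If $d(A,Y)\gg\diam(\gate_Y(A))$ the concatenation resembles the long sides of a tall, thin rectangle and is nowhere near a uniform-constant quasi-geodesic, so strong quasiconvexity of $Y$ cannot be invoked. The suggested alternative of running Proposition \ref{prop:ortho_proj_for_product_regions} verbatim meets the same obstruction: that proof uses $U\in\mf{S}^*$, i.e., both factors of $\P_U$ having infinite diameter, whereas the fiber factor $H_\theta(a,b)$ of the bridge has diameter $\asymp d(A,Y)$, which is bounded by the very thing at issue. One could likely repair the route by truncating the wings at a fixed distance $r'$ from $Y$ rather than running them all the way out to $\gate_A(Y)$, arranging the slope by taking $r>4K^2 r'$ and concluding $d(A,Y)\leq r'$; but that is a genuinely different construction, not the bookkeeping you describe.

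The paper avoids all of this by working domain by domain rather than constructing quasi-geodesics. It defines $\mc{H}$ as the set of domains $U$ with $\diam(\pi_U(\gate_Y(A)))>2K$, uses part (\ref{item:df_bridge}) of the bridge theorem to place $\relevant_{2K}(a,\gate_Y(a))$ inside $\mc{H}^\perp$, and then applies the orthogonal projection dichotomy for $Y$: any $V\in\mc{H}^\perp$ satisfies $CV\subseteq N_K(\pi_V(Y))$, which forces $d_V(a,\gate_Y(a))<2K$ — a contradiction. Hence $\relevant_{2K}(a,\gate_Y(a))=\emptyset$ and the distance formula gives the bound. The key lever there is the orthogonal projection dichotomy from Theorem \ref{thm:classification_of_quasiconvex_subset}, rather than strong quasiconvexity applied directly to a hand-built quasi-geodesic as in your sketch.
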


\begin{proof}
By Proposition \ref{prop: quasiconvex implies HQC}, there exists $k'$ such that both $A$ and $Y$ are $k'$--hierarchically quasiconvex. Recall that for each point \(x \in \X\) and \(U \in \mf{S}\), the distance in \(CU\) between \(\gate_Y(x)\) and the closest point projection of \(\pi_U(x)\) onto \(\pi_U(Y)\) is uniformly bounded by some \(\epsilon >1\). Let $K\geq \epsilon$ be such that $Y$ has the $K$--orthogonal projection dichotomy and that $K$ is larger than the constant from the bridge theorem (Theorem \ref{thm:parallel_gates}) determined by $k'$. Define $\mc{H} = \{U \in \mf{S} : \diam\bigl(\pi_U(\gate_{Y}(A))\bigr) > 2K\}$. By the uniqueness axiom (Axiom \ref{item:dfs_uniqueness}), there exists $C$ such that if $\diam(\gate_Y(A))>C$, then $\mc{H} \neq \emptyset$. Assume $\diam(\gate_Y(A)) > C$ and let $a \in \gate_A(Y)$. By item (\ref{item:df_bridge}) of the bridge theorem, $\relevant_{2K}(a,\gate_{Y}(a)) \subseteq \mc{H}^{\perp}$. Suppose for the purposes of contradiction that $V\in \relevant_{2K}(a,\gate_{Y}(a))$. Thus, there must exist $H \in \mc{H}$ with $V\perp H$. By Theorem \ref{thm:classification_of_quasiconvex_subset}, $CH \subseteq N_{K}(\pi_H(Y))$ and $CV \subseteq N_K(\pi_V(Y))$ which implies that $d_V(a,\gate_{Y}(a))<K + \epsilon < 2K$. However, this contradicts $V \in \relevant_{2K}(a,\gate_{Y}(a))$. Hence, $\relevant_{2K}(a,\gate_{Y}(a)) = \emptyset$, and by the distance formula (Theorem \ref{thm:distance_formula}), there exists $K'$ depending only on $K$ (and thus only on $Q$ and $\kappa_1$) such that $d_{\mc{X}}(a,\gate_{Y}(a))<K'$.  The conclusion follows by choosing $r = \max\{K',C\}$.
\end{proof}

\begin{lem}\label{lem:gate_is_coarse_intersection}
Let $(\mc{X},\mf{S})$ be an HHS with the bounded domain dichotomy, $A\subseteq \mc{X}$ be a  $k$--hierarchically quasiconvex subset and $Y\subseteq \mc{X}$ be $Q$--{\sqc}. There exists $r>1$ depending on $k$ and $Q$ such that for all $D\geq r$ if $\diam(\gate_Y(A))>r$, then there exists $K\geq 1$ depending on $k$, $D$, and $Q$ such that
\[ \diam(N_D(A)\cap N_D(Y)) \stackrel{1,K}{\asymp} \diam(\gate_Y(A)).\]
\end{lem}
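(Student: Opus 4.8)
\textbf{Proof plan for Lemma \ref{lem:gate_is_coarse_intersection}.}
The plan is to establish the two inequalities separately, using the bridge theorem (Theorem \ref{thm:parallel_gates}) together with Lemma \ref{lem:contracting_dichotomy}. For the upper bound $\diam(N_D(A)\cap N_D(Y)) \preceq \diam(\gate_Y(A))$, note that if $z \in N_D(A)\cap N_D(Y)$, then $\gate_Y(z)$ and $\gate_A(z)$ are both within a bounded distance (depending on $D$, $k$, and $Q$) of $z$, using that $\gate_Y$ and $\gate_A$ are coarsely Lipschitz and coarsely fix $Y$ and $A$ respectively (Lemma \ref{lem:gate}) together with Lemma \ref{lem: closest_point_proj}. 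Hence $\gate_Y(z)$ lies uniformly close to $\gate_Y(A)$ (since $\gate_Y(z)$ is within bounded distance of a point of $A$, and $\gate_Y$ is coarsely Lipschitz). So the whole set $N_D(A)\cap N_D(Y)$ gates into a bounded neighborhood of $\gate_Y(A)$ under $\gate_Y$, and since $\gate_Y$ is coarsely Lipschitz and coarsely a retraction, we recover the diameter bound: $\diam(N_D(A)\cap N_D(Y)) \preceq \diam(\gate_Y(N_D(A)\cap N_D(Y))) + (\text{const}) \preceq \diam(\gate_Y(A)) + K$.

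For the lower bound $\diam(\gate_Y(A)) \preceq \diam(N_D(A)\cap N_D(Y))$, I would use Lemma \ref{lem:contracting_dichotomy}: once $\diam(\gate_Y(A)) > r$, every point $a \in \gate_A(Y)$ satisfies $d_\mc{X}(a,\gate_Y(a)) < r$, so $\gate_A(Y) \subseteq N_r(Y)$, and trivially $\gate_A(Y) \subseteq N_K(A)$ by Lemma \ref{lem:gate}. Thus $\gate_A(Y) \subseteq N_D(A)\cap N_D(Y)$ for $D \geq \max\{r,K\}$, which gives $\diam(\gate_A(Y)) \leq \diam(N_D(A)\cap N_D(Y))$. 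It then remains to compare $\diam(\gate_A(Y))$ with $\diam(\gate_Y(A))$; this follows from item (\ref{item:double_gate_surjective}) of the bridge theorem, which says $\gate_A \circ \gate_Y|_{\gate_A(Y)}$ is bounded distance from the identity on $\gate_A(Y)$ (and symmetrically), so $\gate_Y$ restricted to $\gate_A(Y)$ is a coarse bijection onto $\gate_Y(A)$ — in particular the two sets have coarsely equal diameters.

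The main obstacle I anticipate is bookkeeping the dependence of constants: the constant $r$ must depend only on $k$ and $Q$ (so that it agrees with the threshold in Lemma \ref{lem:contracting_dichotomy}), while the additive error $K$ in the coarse equality is allowed to depend also on $D$. Care is needed in the upper bound argument, where a point of $N_D(A)\cap N_D(Y)$ is only $D$-close to $A$ and to $Y$, so the gate estimates pick up $D$-dependent additive constants — but this is fine since $K$ may depend on $D$. One should also double-check that the hypothesis $\diam(\gate_Y(A)) > r$ is genuinely needed only for the lower bound (to invoke Lemma \ref{lem:contracting_dichotomy}); the upper bound holds unconditionally, but stating it under the same hypothesis does no harm. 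Finally, to invoke the bridge theorem one uses that both $A$ and $Y$ are $k'$-hierarchically quasiconvex for a common $k'$ depending on $k$ and $Q$, via Proposition \ref{prop: quasiconvex implies HQC}.
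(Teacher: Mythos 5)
Your upper bound argument is essentially the paper's: for $z \in N_D(A)\cap N_D(Y)$ pick $a'\in A$ with $d(z,a')\leq D$, use Lemma \ref{lem: closest_point_proj} to get $d(z,\gate_Y(z))$ bounded (since $z\in N_D(Y)$), and then coarse Lipschitzness of $\gate_Y$ to conclude $z$ is uniformly close to $\gate_Y(a')\in\gate_Y(A)$; this gives $\diam(N_D(A)\cap N_D(Y))\leq\diam(\gate_Y(A))+K$ with $K$ depending on $D,k,Q$. That direction is fine.

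The lower bound has a genuine precision gap. You take $\gate_A(Y)$ as the intermediate set, show $\gate_A(Y)\subseteq N_D(A)\cap N_D(Y)$ (correct, via Lemma \ref{lem:contracting_dichotomy}, with $\gate_A(Y)\subseteq A$ trivially), and then try to pass from $\diam(\gate_A(Y))$ to $\diam(\gate_Y(A))$ by arguing these two sets have ``coarsely equal diameters'' via item (\ref{item:double_gate_surjective}) of the bridge theorem. But that item only says $\gate_A\circ\gate_Y|_{\gate_A(Y)}$ is bounded distance from the identity; combining it with the $(K_0,K_0)$-coarse Lipschitzness of the gate maps gives that $\gate_Y|_{\gate_A(Y)}$ is a $(K_0,\cdot)$-quasi-isometric embedding, so you get only $\diam(\gate_Y(A))\leq K_0\,\diam(\gate_A(Y))+K''$ — a multiplicative loss, whereas the statement asserts the purely additive $\stackrel{1,K}{\asymp}$. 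The paper instead uses $\gate_Y(\gate_A(Y))$ as the intermediate set: Lemma \ref{lem:contracting_dichotomy} places each $\gate_Y(a)$, $a\in\gate_A(Y)$, within $r$ of $a\in A$, so $\gate_Y(\gate_A(Y))\subseteq N_D(A)\cap N_D(Y)$; and the bridge theorem (applied with the roles of $A$ and $Y$ swapped) gives the \emph{containment} $\gate_Y(A)\subseteq N_{K''}(\gate_Y(\gate_A(Y)))$, hence $\diam(\gate_Y(A))\leq\diam(\gate_Y(\gate_A(Y)))+2K''\leq\diam(N_D(A)\cap N_D(Y))+2K''$ with no multiplicative loss. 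The key is to compare $\gate_Y(A)$ to another subset of $Y$ via a coarse containment, rather than comparing diameters across the two ``sides'' $\gate_A(Y)$ and $\gate_Y(A)$ of the bridge, which are only related by a quasi-isometry with multiplicative constant greater than $1$.
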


\begin{proof}
Let $r$ be the constant given by Lemma \ref{lem:contracting_dichotomy} and suppose $ \diam(\gate_Y(A))>r$. 
Thus, for $D\geq r$, $\diam(N_D(A)\cap N_D(Y)) \neq \emptyset$. First consider $x,y \in N_D(A)\cap N_D(Y)$. Let $x',y'\in A$ be points such that $d_\mc{X}(x,x') \leq D$ and $d_{\mc{X}}(y,y') \leq D$. By Lemma \ref{lem: closest_point_proj} and the fact that $x,y \in N_D(Y)$, there exists $K'$ depending on $Q$ such that
\[d_\mc{X}(x,\gate_Y(x')) \leq 4DK'; \hspace{2cm} d_\mc{X}(y,\gate_Y(y')) \leq 4DK'.\]
Hence we have 
\[d_\mc{X}(x,y) \leq  d_\mc{X}(\gate_Y(x'),\gate_Y(y'))+ 8DK'\]
which shows
\[\diam(N_D(A)\cap N_D(Y)) \leq \diam(\gate_Y(A))+8DK'.\]

For the inequality $\diam(\gate_Y(A)) \preceq \diam(N_D(A)\cap N_D(Y))$, Lemma \ref{lem:contracting_dichotomy} provides $\gate_Y\bigl(\gate_A(Y)\bigr) \subseteq N_D(A)\cap N_D(Y)$ and the bridge theorem (Theorem  \ref{thm:parallel_gates}) says there exists $K''$ depending on $k$ and $Q$ such that $\gate_Y(A) \subseteq N_{K''}(\gate_Y(\gate_A(Y)))$. Thus we have 
\[\diam(\gate_Y(A)) \leq \diam(\gate_Y(\gate_A(Y)))+2K'' \leq \diam(N_D(A)\cap N_D(Y)) +2K''\]
and we are finished by choosing $K = \max\{2K'',6DK'+3K'\}.$
\end{proof}

We now prove that the cosets of a collection of almost malnormal, {\sqc} subgroups of an HHG  satisfy (P0)-(P2) of Theorem \ref{thm:short_BBF_construction} when $\proj_{Z_1}(Z_2)$ is defined by the gate map. This is the main tool for the proof of Theorem \ref{thm: classification hyp. emb in HHG}.

\begin{prop}\label{prop:BBF_axioms_for_malnormal_quasiconvex_subgroups}
Let $(G,\mf{S})$ be an HHG and  $d(\cdot,\cdot)$ denote the distance in the word metric on $G$ with respect to some fixed finite generating set. If $\{H_1,\cdots, H_n\}$ is a collection of $Q$--{\sqc}, almost malnormal subgroups of $G$ and $\mc{Z}$ is the collection of all left cosets of the $H_i$, then there exists $C>0$ such that for all distinct $Z_1,Z_2,Z_3 \in \mc{Z}$ we have:
\begin{enumerate}[(1)]
	\item $\diam(\gate_{Z_1}(Z_2)) \leq C$;
	\item  if $d(\gate_{Z_3}(Z_1), \gate_{Z_3}(Z_2)) >C$, then  $d(\gate_{Z_2}(Z_1), \gate_{Z_2}(Z_3)) <C$ and $d(\gate_{Z_1}(Z_2), \gate_{Z_1}(Z_3)) <C$;
	\item $\{Z\in\mc{Z} : d(\gate_{Z}(Z_1), \gate_{Z}(Z_2)) > C\}$ has only a finite number of elements.
\end{enumerate}
\end{prop}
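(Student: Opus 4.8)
The plan is to verify the three properties (P0)--(P2) (stated here as (1)--(3)) for the ``gate projection'' $\gate_{Z_1}(Z_2)$ between distinct cosets of the almost malnormal, \sqc{} subgroups $H_i$, using the characterization of \sqc{} subsets (Theorem \ref{thm:classification_of_quasiconvex_subset}) together with the bridge theorem (Theorem \ref{thm:parallel_gates}) and the two coarse-intersection lemmas (Lemmas \ref{lem:contracting_dichotomy} and \ref{lem:gate_is_coarse_intersection}). Since each $H_i$ is \sqc, it is hierarchically quasiconvex by Proposition \ref{prop: quasiconvex implies HQC}, and since left-multiplication is an isometry, every coset $Z \in \mc{Z}$ is uniformly $Q$--\sqc{} and uniformly $k$--hierarchically quasiconvex; thus the gate maps $\gate_Z$ exist with uniform constants and all the ambient HHG machinery applies uniformly across $\mc{Z}$.

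For (1): given distinct $Z_1, Z_2 \in \mc{Z}$, I would argue by contradiction. If $\diam(\gate_{Z_1}(Z_2))$ is larger than the constant $r$ of Lemma \ref{lem:gate_is_coarse_intersection}, then for any $D \geq r$ the coarse intersection $N_D(Z_1) \cap N_D(Z_2)$ has diameter coarsely equal to $\diam(\gate_{Z_1}(Z_2))$; in particular, if $\diam(\gate_{Z_1}(Z_2))$ were unbounded as $Z_1, Z_2$ range over distinct cosets, then $\diam(N_D(Z_1) \cap N_D(Z_2))$ would be unbounded too. But $Z_1 = g_1 H_i$ and $Z_2 = g_2 H_j$ are distinct cosets of members of an almost malnormal collection, so by Lemma \ref{lem:malnormal_finite_intersection} each such intersection is \emph{finite}. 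This does not immediately give a \emph{uniform} bound; to get uniformity I would use the cocompactness of the $G$--action on $\mc{Z}$ (there are finitely many $G$--orbits of cosets, and $\gate$ is coarsely $G$--equivariant by Lemma \ref{lem:coarse_equiv_of_gate}) to reduce to finitely many orbit-representatives of \emph{pairs} $(Z_1,Z_2)$, modulo the diagonal $G$--action; on each orbit the diameter of the gate is constant up to uniform error, and there are only finitely many relevant orbits of pairs with bounded gate, while pairs with large gate are excluded by the finiteness just noted. This orbit-counting/equivariance step is where I expect the main subtlety to lie, since one must be careful that ``finitely many orbits of pairs'' is genuinely what one gets.

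For (2): this is the ``Behrstock inequality'' / $\mathrm{projection}$ axiom for the system of cosets. Suppose $d(\gate_{Z_3}(Z_1), \gate_{Z_3}(Z_2)) > C$ with $C$ large compared to all the previous constants. I would apply item (\ref{item:take_the_bridge}) of the bridge theorem (the ``take the bridge'' distance formula) to pairs of points $p \in Z_1$, $q \in Z_2$ realizing the gate distance, together with the consistency-type estimates of the bridge theorem. The heuristic is that if the gates $\gate_{Z_3}(Z_1)$ and $\gate_{Z_3}(Z_2)$ are far apart in $Z_3$, then a geodesic (or hierarchy path) from $Z_1$ to $Z_2$ passes deeply through $N_D(Z_3)$, and by Lemma \ref{lem:gate_is_coarse_intersection} the coarse intersections $Z_1 \cap N_D(Z_3)$ and $Z_2 \cap N_D(Z_3)$ are large; hence $\gate_{Z_2}(Z_1)$ and $\gate_{Z_2}(Z_3)$ must coarsely coincide, giving $d(\gate_{Z_2}(Z_1), \gate_{Z_2}(Z_3)) \leq C$, and symmetrically for $Z_1$. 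The cleanest route may be to invoke Lemma \ref{lem:gate_of_bridge}: the bridge $H_\theta(\gate_{Z_3}(Z_1) \cup \gate_{Z_1}(Z_3))$ gates onto $Z_1$ as $\gate_{Z_1}(Z_3)$, and combining this with the fact that the bridge between $Z_1$ and $Z_2$ must ``use'' the region near $Z_3$ when the $Z_3$--gate is large, one extracts the required coarse coincidence.

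For (3): fix distinct $Z_1, Z_2$ and suppose $d(\gate_Z(Z_1), \gate_Z(Z_2)) > C$ for some $Z = gH_i \in \mc{Z}$ with $C$ exceeding the constant $r$ of Lemma \ref{lem:gate_is_coarse_intersection}. Then $\diam(\gate_Z(Z_1)) > r$ and $\diam(\gate_Z(Z_2)) > r$, so by Lemma \ref{lem:gate_is_coarse_intersection} there exist points of $Z$ within bounded distance of $Z_1$ and (other) points of $Z$ within bounded distance of $Z_2$; chaining these through $Z$ and using that $\gate_Z$ is coarsely Lipschitz, one sees that $Z$ passes within a uniformly bounded distance $D_0$ of \emph{a fixed geodesic} (or hierarchy path) joining a point of $Z_1$ to a point of $Z_2$ — more precisely, both $\gate_Z(Z_1)$ and $\gate_Z(Z_2)$ lie within bounded distance of such a path, and they are far apart, so a definite-length sub-path lies in $N_{D_0}(Z)$. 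Since any two distinct such cosets $Z, Z'$ have finite coarse intersection (Lemma \ref{lem:malnormal_finite_intersection}) with, again, a uniform bound on that finite diameter coming from the orbit-cocompactness argument used in (1), only finitely many cosets $Z$ can contain a sub-path of the fixed finite-length path $[p,q]$ in their $D_0$--neighborhood, since the sub-paths they capture are uniformly long and pairwise share only a bounded amount. Hence the set in (3) is finite. I'd expect (1) and the uniformity across orbits to be the genuine obstacle; (2) and (3) are then formal consequences of the bridge theorem and the coarse-intersection lemmas.
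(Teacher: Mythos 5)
Your proposal for Assertion (1) has a genuine gap. You suggest reducing to "finitely many orbit-representatives of pairs $(Z_1,Z_2)$ modulo the diagonal $G$--action," but this fails: the diagonal $G$--orbits of pairs of cosets are parametrized (roughly) by double cosets $H_i\backslash G/H_j$, of which there are infinitely many in general. Knowing that each individual coarse intersection is finite (Lemma \ref{lem:malnormal_finite_intersection}) does not by itself give a bound that is uniform over all pairs, and the orbit-counting idea cannot supply the missing uniformity. The paper's resolution is different and essential: one only needs to control pairs whose gate is already large, and for those Lemma \ref{lem:contracting_dichotomy} produces a point of $g_1H_i$ within distance $r$ of $g_2H_j$. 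Translating by an element of $g_1H_i$ reduces to the case where the first coset is $H_i$ itself and the second coset lies in the \emph{finite} set $F$ of cosets meeting the $r$--ball around the identity. On $F$ one gets a uniform bound from Lemma \ref{lem:malnormal_finite_intersection} and Lemma \ref{lem:gate_is_coarse_intersection}, and coarse equivariance of the gate map (Lemma \ref{lem:coarse_equiv_of_gate}) transfers it back. You should fill in this step; the rest of your argument for (1) does not work without it.

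Your Assertion (2) sketch is essentially the paper's argument once made precise: take the bridge $B=H_\theta(\gate_{Z_2}(Z_1)\cup\gate_{Z_1}(Z_2))$, use Assertion (1) to get both $Z_3$--gates inside $\gate_{Z_3}(B)$, apply Lemma \ref{lem:contracting_dichotomy} to find $b\in B$ near $Z_3$, and then use Lemma \ref{lem:gate_of_bridge} to conclude $\gate_{Z_2}(Z_3)$ is close to $\gate_{Z_2}(Z_1)$. For Assertion (3) you take a different route (long subsegments of a fixed path captured by each coset, with pairwise-bounded overlap via almost malnormality); this could in principle be made to work, but again it needs the same uniformity you lack in (1). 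The paper's argument is cleaner: by Assertion (1) together with Proposition \ref{prop:hulls_finite_case} the bridge $B$ has bounded diameter, hence contains only finitely many group elements; any offending $Z$ must meet $N_r(B)$ by Lemma \ref{lem:contracting_dichotomy}, and local finiteness of $G$ plus the fact that each element lies in boundedly many cosets finishes it, with no need for a packing argument along a path.
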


\begin{proof}

We will prove each of the three assertions individually. Before beginning, we remind the reader that all hierarchically hyperbolic groups satisfy the bounded domain dichotomy and that every element of $\mc{Z}$ is $k$--hierarchically quasiconvex for some $k$ depending only on $Q$. \\

\noindent \textbf{Assertion (1):} there exists $C_1>0$ such that $\diam(\gate_{Z_1}(Z_2)) \leq C_1$ for all $Z_1,Z_2 \in \mc{Z}$.

\begin{proof}
	Let $r>1$ be the constant from Lemma \ref{lem:gate_is_coarse_intersection} for $Q$ and define \[F = \{gH_i \in \mc{Z} : gH_i \cap B_{r}(e) \neq \emptyset\}\] where $B_r(e)$ is the ball of radius $r$ around the identity in $G$. Since $F$ is a finite set, Lemma \ref{lem:malnormal_finite_intersection} provides a uniform number $D_1$ such that $\diam(N_r(gH_i)\cap N_r(H_j)) \leq D_1$ for any distinct $gH_i,H_j \in F$. By Lemma \ref{lem:gate_is_coarse_intersection}, there exists $D_2$ depending on $Q$ such that $\diam(\gate_{H_j}(gH_i))\leq D_2$ where $gH_i \neq H_j$ are elements in $F$.
	
	We now prove that there is a uniform constant $C_1$ such that for each pair of distinct cosets
	\(g_1H_i\) and \(g_2H_j\) we have \[\diam(\gate_{g_1H_i}(g_2H_j))\leq C_1.\] If \(\diam(\gate_{g_1H_i}(g_2H_j))\leq r\), then we are done. Otherwise, by Lemma \ref{lem:contracting_dichotomy}, there are elements \(h_i \in H_i\) and \(h_j\in H_j\) such that \(d_G(g_1h_i, g_2h_j) < r\). This implies that 
	$h_i^{-1}g_1^{-1}g_2H_j$ is an element in $F$ and $h_i^{-1}g_1^{-1}g_2H_j\neq H_i$. Therefore,  \(\diam(\gate_{H_i}(h_i^{-1}g_1^{-1}g_2H_j))\leq D_2\). Thus, by the coarse equivariance of the gate maps (Lemma \ref{lem:coarse_equiv_of_gate}), the diameter of $\gate_{g_1H_i}(g_2H_j)$ is bounded above by a uniform number $C_1$. 
\end{proof}

\noindent \textbf{Assertion (2):} there exists $C_2>0$ such that for all $Z_1,Z_2,Z_3 \in \mc{Z}$, if $d(\gate_{Z_3}(Z_1), \gate_{Z_3}(Z_2)) >C_2$, then \[d(\gate_{Z_2}(Z_1), \gate_{Z_2}(Z_3)) <C_2 \text{ and } d(\gate_{Z_1}(Z_2), \gate_{Z_1}(Z_3)) <C_2.\]
\begin{proof}

	Fix $\theta \geq \theta_0$. Let $Z_1,Z_2,Z_3 \in \mc{Z}$ and $B = H_\theta(\gate_{Z_2}(Z_1) \cup \gate_{Z_1}(Z_2))$. We remind the reader that they should view $B$ as a bridge between $Z_1$ and $Z_2$. 
	Our goal is to show that there exists $b \in B$ such that $d(b,\gate_{Z_3}(b))$ is uniformly bounded. From this our conclusion will follow from the coarse Lipschitzness of the gate map. 
	
	By Assertion (1), $\gate_{Z_3}(Z_1),\gate_{Z_3}(Z_2)$ are uniformly coarsely contained in $\gate_{Z_3}(B)$.  Since the gate map is coarsely Lipschitz we have
	\[\diam(\gate_{Z_3}(B)) \succeq d(\gate_{Z_3}(Z_1),\gate_{Z_3}(Z_2))\] 
	with constants depending only on $Q$. Let $r$ be the constant from Lemma \ref{lem:contracting_dichotomy} with $A=B$ and $Y = Z_3$ and suppose $d(\gate_{Z_3}(Z_1),\gate_{Z_3}(Z_2))$ is large enough that $\diam(\gate_{Z_3}(B)) > r$. By Lemma \ref{lem:contracting_dichotomy}, there exists $b \in B$ such that $d(b,Z_3) <r$.
	
	By Lemma \ref{lem:gate_of_bridge}, we have that $\gate_{Z_2}(Z_1)$ is uniformly coarsely equal to $\gate_{Z_2}(B)$ in particular $\gate_{Z_2}(b)$ is uniformly coarsely contained in $\gate_{Z_2}(Z_1)$. Since the gate maps are uniformly coarsely Lipschitz and $d(b,Z_3) <r$, we have that $d(\gate_{Z_2}(Z_3),\gate_{Z_2}(Z_1)) < C_2$.  By switching the roles of $Z_1$ and $Z_2$, we get $d(\gate_{Z_1}(Z_3),\gate_{Z_1}(Z_2)) < C_2$.
\end{proof}

\noindent  \textbf{Assertion (3):} there exists $C_3>0$ such that for all $Z_1,Z_2 \in \mc{Z}$, the set $\{Z\in\mc{Z} : d_\mc{X}(\gate_{Z}(Z_1), \gate_{Z}(Z_2)) > C_3\}$ has only a finite number of elements.

\begin{proof}
	Let $Z_1,Z_2 \in \mc{Z}$. Fix $\theta\geq \theta_0$ and let $B= H_\theta(\gate_{Z_2}(Z_1) \cup \gate_{Z_1}(Z_2))$. By the bridge theorem, we have that \(B\) is coarsely equals to the product of \(\gate_{Z_1}(Z_2) \times H_\theta (a,b)\), where \(a \in \gate_{Z_1}(Z_2)\) and \(b= \gate_{Z_2}(a)\). 
	By Assertion (1), the gate \(\gate_{Z_1}(Z_2)\) has uniformly bounded diameter.  By Proposition \ref{prop:hulls_finite_case}, there exists $\lambda\geq \lambda_0$, such that $H_\theta(a,b)$ is contained in $\Path_{\lambda}^1(a,b)$, the set of \(\lambda\)--hierarchy paths between \(a\) and \(b\). Since the distance between \(a\) and \(b\) is finite, so is the diameter of $\Path_{\lambda}^1(a,b)$. Therefore \(H_\theta (a,b)\) has bounded diameter and so does the set $B =H_\theta(\gate_{Z_2}(Z_1) \cup \gate_{Z_1}(Z_2))$. Since $G$ is locally finite, $B$ can contain only a finite number of elements of $G$.
	
	Let $r$ be as in Lemma \ref{lem:contracting_dichotomy}. Since $\gate_{Z_2}(Z_1),\gate_{Z_1}(Z_2) \subseteq B$, for any \(Z \in \mc{Z}\) with $d(\gate_Z(Z_1), \gate_Z(Z_2))$ larger than $r$ we have \(\diam(\gate_{Z}(B))>r\). Thus every such \(Z\) intersects the \(r\)--neighborhood of \(B\). 
	By locally finiteness of \(G\), we obtain that \(N_r(B)\) contains a finite number of element of $G$. Since the elements of \(\mc{Z}\) are cosets of finitely many subgroups, every point of \(N_r(B)\) can belong to uniformly finitely many elements of \(\mc{Z}\), which concludes the proof of Assertion (3).
\end{proof}

\noindent Proposition \ref{prop:BBF_axioms_for_malnormal_quasiconvex_subgroups} now holds by taking $C = \max\{C_1,C_2,C_3\}$.

\end{proof}

We now have all the ingredients needed to give the proof of Theorem \ref{thm: classification hyp. emb in HHG}.

\begin{proof}[Proof of Theorem {\ref{thm: classification hyp. emb in HHG}}]
Recall, we need to show that if \(G\) is a hierarchically hyperbolic group and \(\{H_i\}\) a finite almost malnormal  collection of {\sqc} subgroups, then \(\{H_i\} \) is hyperbolically embedded in \(G\). In particular, we shall show that the left cosets of the $H_i$'s satisfy the requirements of Theorem \ref{thm:short_BBF_construction}. Since each \(H_i\) is a {\sqc} subgroup of \(G\), by \cite[Theorem 1.2]{Tran2017} we have that they are all finitely generated. Let $S$ be a finite generating set for $G$ such that for each $i$, $H_i \cap S$ generates $H_i$. As before, let \(\mc{Z}\) be the set of all left cosets of \(\{H_i\}\). For every pair of distinct \(Z_1, Z_2 \in \mc{Z}\) we want to define a set \(\proj_{Z_1}(Z_2)\) that satisfies (P0) - (P3) of Theorem \ref{thm:short_BBF_construction}. If we define \(\proj_{Z_1}(Z_2)\)  as \(\gate_{Z_1}(Z_2)\), Proposition \ref{prop:BBF_axioms_for_malnormal_quasiconvex_subgroups} provides that (P0) - (P2) will be satisfied. However, since the gate maps are only coarsely equivariant, condition (P3) may not hold.

Thus, for \(Z_1 \neq Z_2\) define 
\[\proj_{Z_1}(Z_2) = \bigcup_{g \in G} g^{-1}\gate_{gZ_1}(gZ_2).\]
By construction we have that \(\proj_{gZ_1}(gZ_2) = g(\proj_{Z_1}(Z_2))\) and thus (P3) holds. Since  $\proj_{Z_1}(Z_2)$ and  \(\gate_{Z_1}(Z_2))\) uniformly coarsely coincide by the coarse equivariance of the gates maps (Lemma \ref{lem:coarse_equiv_of_gate}),  (P0) - (P2) are  satisfied as a corollary of Proposition \ref{prop:BBF_axioms_for_malnormal_quasiconvex_subgroups}. Hence, the collection $\{H_i\}$ is hyperbolically embedded in $G$ by Theorem \ref{thm:short_BBF_construction}.
\end{proof}
Our method of proof for Theorem \ref{thm: classification hyp. emb in HHG} relies in a fundamental way upon the coarse equivariance of the gate map.  If the group $G$ has an HHS structure, but not an HHG structure, then the gate map need not be coarsely equivariant.  In particular, Theorem \ref{thm: classification hyp. emb in HHG} does not (currently) apply to the fundamental groups of non-flip graph manifolds and thus we have the following interesting case of Question \ref{ques:hyperbolically_embedded}.

\begin{ques}
If $M$ is a non-flip graph manifold and $\{H_i\}$ is a finite, almost malnormal collection of {\sqc} subgroups of $\pi_1(M)$, is $\{H_i\}$ hyperbolically embedded in $\pi_1(M)$?
\end{ques}

\section*{Appendix A. Subsets with arbitrary reasonable lower relative divergence}

The  proposition in this appendix utilizes the notion of asymptotic equivalence between families of functions. We will present the definition in the specific case we need and direct the reader to \cite[Section 2]{Tran2015} for the more general case. 
\begin{defnappend}
Let $f$ and $g$ be two functions from $[0,\infty)$ to $[0,\infty)$. The function $f$ is \emph{dominated by the function $g$} 
if there are positive constants $A$, $B$, $C$ and $D$ such that $f(r)\leq Ag(Br)+Cr$ for all $r>D$. Two functions $f$ and $g$ are \emph{equivalent} if $f$ is dominated by $g$ and vice versa. 

Let $X$ be a geodesic metric space and $\{\sigma_\rho^n\} = div(X,Y)$ be the lower relative divergence of $X$ with respect to some subset $Y \subseteq X$. We say $div(X,Y)$ is \emph{equivalent} to a function  $f \colon [0,\infty) \rightarrow [0,\infty)$ if there exist $L\in (0,1]$ and positive integer $M$ such that $\sigma_{L\rho}^{Mn}$ is equivalent to $f$ for all $\rho\in (0,1]$ and $n\geq 2$.
\end{defnappend}

\begin{propappend}
\label{prop:lower_relative_div_example}
Let $f\colon [0, \infty)\to [0, \infty)$ be a non-decreasing function, and assume that there is a positive integer $r_0$ such that $f(r)\geq r$ for each $r>r_0$.  There is a geodesic space $X$ with a subspace $Y$ such that the lower relative divergence $div(X,Y)$ is equivalent to $f$. 
\end{propappend}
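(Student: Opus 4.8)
The plan is to build $X$ explicitly as a modification of the ray, in the spirit of Example~\ref{ex:quasiconvex_not_contracting}. Take $Y$ to be a geodesic ray with initial point $x_0$ and a distinguished sequence of points $(x_n)$ along it, placed so that $d(x_{n-1},x_n)$ grows (say $d(x_{n-1},x_n)=n$, or more conveniently something like $n$ so that the scales $r=n$ are separated). For each $n$ attach an extra arc $J_n$ joining $x_{n-1}$ to $x_n$ whose length is chosen in terms of $f$; the natural choice is $\ell(J_n) \asymp f(d(x_{n-1},x_n))$, i.e.\ roughly $f(n)$, possibly with a small correction term so the arithmetic of the divergence calculation comes out cleanly. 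Let $X$ be the resulting geodesic metric space (it is a graph, hence geodesic). The arc $J_n$ together with the segment of $Y$ from $x_{n-1}$ to $x_n$ gives a ``detour'' around the portion of $Y$ near scale $n$, and the hypotheses $f$ non-decreasing and $f(r)\geq r$ for $r>r_0$ guarantee these arcs are at least as long as the direct route, so $Y$ is still quasi-convex-ish and, more to the point, the detours are the cheapest way to avoid neighborhoods of $Y$.

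The core of the argument is then a direct computation of $\{\sigma_\rho^n\} = \operatorname{div}(X,Y)$. First I would show the \emph{upper} bound: given $r$, the two points $x_{n-1},x_n\in\partial N_r(Y)$... wait—one must be careful, since $Y$ is a ray the natural points at distance exactly $r$ from $Y$ lie on the arcs $J_n$. So instead take, for a scale comparable to $r$, the two points on $J_n$ (one near each endpoint) that sit at distance exactly $r$ from $Y$; travelling between them while staying outside $N_{\rho r}(Y)$ one is essentially forced onto the middle portion of $J_n$, whose length is $\asymp f(n)\asymp f(r)$, giving $\sigma_\rho^n(r)\preceq f(r)$ plus lower-order terms. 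For the \emph{lower} bound I would argue that any path in $X - N_{\rho r}(Y)$ connecting two points that are $\geq nr$ apart along $Y$ cannot use $Y$ itself (it is inside the removed neighborhood), cannot use short arcs $J_m$ for $m$ much smaller than the scale (those are entirely inside $N_{\rho r}(Y)$ once $\ell(J_m)$ is small compared to $\rho r$, using $f$ non-decreasing), and so must traverse an arc $J_m$ with $m$ comparable to the scale $r$; each such traversal costs $\asymp \ell(J_m)\asymp f(r)$, and since $f(r)\geq r$ for large $r$, the bound $d(x_1,x_2)\geq nr$ forces a controlled number of such traversals, yielding $\sigma_\rho^n(r)\succeq f(r)$. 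Combining, $\sigma_\rho^n$ is equivalent to $f$ for all $\rho\in(0,1]$ and all $n\geq 2$, which is exactly the asserted equivalence of $\operatorname{div}(X,Y)$ with $f$ (with $L=1$, $M=1$).

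The main obstacle I anticipate is the bookkeeping in the lower bound: one needs to verify that for a given scale $r$ there is essentially a \emph{unique} relevant arc $J_m$ (or a bounded collection of them) that any $N_{\rho r}(Y)$-avoiding path must use, and to rule out ``cheating'' by taking a long walk along far-away pieces of $X$ — but since $X$ is essentially one-dimensional and $Y$ is a ray cutting it, the complement $X-N_{\rho r}(Y)$ decomposes into the outer portions of the individual arcs $J_m$, which are pairwise far apart, so a finite-length path can only live in (the neighborhood of) one such arc. The monotonicity of $f$ and the coercivity $f(r)\geq r$ are used precisely to make ``which arcs are swallowed by $N_{\rho r}(Y)$'' and ``how many traversals are needed'' both clean; a secondary technical point is choosing the separation of the scales $d(x_{n-1},x_n)$ (and possibly passing to a sparse subsequence) so that distinct scales $r$ genuinely correspond to distinct arcs and the equivalence constants can be absorbed into the definition of asymptotic equivalence. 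None of this should require more than careful but routine estimates once the space is set up correctly.
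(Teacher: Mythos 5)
Your construction is exactly the one the paper uses (ray $Y$ with marked points $x_i$ at spacing $i$, arcs $J_i$ of length $f(i)$; no correction term turns out to be needed), and your two bounds $\sigma_\rho^n(r)\preceq f(r)$ and $\sigma_\rho^n(r)\succeq f(r)$ match the paper's $\sigma_\rho^n(r)\leq f((n+3)r)$ and $\sigma_\rho^n(r)\geq f(r)-2r$. The "bookkeeping" you flagged is resolved exactly as you anticipated: $X-N_r(Y)$ is a disjoint union of open middle portions of the $J_i$'s, so any two points of $\partial N_r(Y)$ joined by a path in the complement lie on a single arc $J_{i_1}$, and $d(u,v)\geq nr$ forces $i_1\geq r$, giving the lower bound directly rather than via a counting-of-traversals argument.
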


\begin{proof}
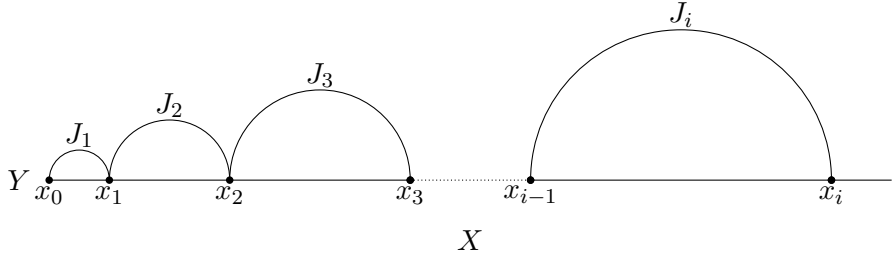
\begin{figure}[H]
	\begin{tikzpicture}[scale=.8]
		
		\draw (1,0) node[circle,fill,inner sep=1pt, color=black](1){} -- (2,0) node[circle,fill,inner sep=1pt, color=black](1){}-- (4,0) node[circle,fill,inner sep=1pt, color=black](1){}-- (7,0) node[circle,fill,inner sep=1pt, color=black](1){}; 
		
		\draw[densely dotted] (7,0) node[circle,fill,inner sep=1pt, color=black](1){} -- (9,0) node[circle,fill,inner sep=1pt, color=black](1){};
		
		\draw (9,0) node[circle,fill,inner sep=1pt, color=black](1){} -- (14,0) node[circle,fill,inner sep=1pt, color=black](1){};
		
		\draw (14,0) node[circle,fill,inner sep=1pt, color=black](1){} -- (15,0);

		\draw (2,0) arc (0:180:0.5);
		
		\draw (4,0) arc (0:180:1);
		
		\draw (7,0) arc (0:180:1.5);
		
		\draw (14,0) arc (0:180:2.5);

		\node at (1,-0.25) {$x_0$};
		
		\node at (2,-0.25) {$x_1$};
		
		\node at (4,-0.25) {$x_2$};
		
		\node at (7,-0.25) {$x_3$};
		
		\node at (9,-0.25) {$x_{i-1}$};
		
		\node at (14,-0.25) {$x_i$};

		\node at (1.5,0.75) {$J_1$};
		
		\node at (3,1.25) {$J_2$};
		
		\node at (5.5,1.75) {$J_3$};
		
		\node at (11.5,2.75) {$J_i$};

		\node at (0.5,0) {$Y$};
		
		\node at (8,-1) {$X$};

	\end{tikzpicture}
	
	\caption{By controlling the length of each arc $J_i$ we can get the desired lower relative divergence of the geodesic space $X$ with respect to the subspace $Y$.}
	\label{asecond}
\end{figure}
Let $Y$ be a ray with initial point $x_0$. Let $(x_i)$ be the sequence of points along $Y$ such that for each $i\geq 1$ the distance $d_Y(x_{i-1},x_i)=i$ and we connect each pair $(x_{i-1},x_i)$ by a segment $J_i$ of length $f(i)$ (see Figure~\ref{asecond}). Let $X$ be the resulting geodesic space and $div(X,Y) = \{\sigma^n_{\rho}\}$.  We shall show that $div(X,Y)$ is equivalent to $f$. 

We first prove that for all $n\geq 3$ and $\rho \in (0,1]$, $f$ dominates $\sigma^n_{\rho}$ by showing that $\sigma^n_{\rho}(r)\leq f((n+3)r)$ for each $r>r_0$. Let $i_0$ be a smallest integer that is greater or equal to $(n+2)r$. Let $x$ and $y$ be two points in the segment $J_{i_0}$ such that $d(x_{i_0-1}, x)=d(x_{i_0},y)=r$. Both $x$ and $y$ belong to $\partial N_r(Y)$. Moreover, the subpath $\alpha$ of $J_{i_0}$ connecting $x$ and $y$ lies outside the $r$--neighborhood of $Y$, and the length of $\alpha$ is exactly is $f(i)-2r$. Therefore, $d(x,y)=\min\{i_0+2r,f(i_0)-2r\}$. Hence $d(x,y)\geq nr$ as \[f(i_0)-2r\geq f((n+2)r)-2r\geq (n+2)r-2r=nr\] and \[i_0+2r\geq (n+4)r \geq nr.\] Since $\alpha$ is the unique path outside the $\rho r$--neighborhood of $Y$ connecting $x$ and $y$, we have 
\[\sigma^n_{\rho}(r)\leq d_{\rho r}(x,y)=f(i_0)-2r\leq f(i_0).\] 
Since $i_0\leq (n+2)r+1\leq (n+3)r$ and $f$ is non-decreasing, $f(i_0)\leq f((n+3)r)$. Thus,  $\sigma^n_{\rho}(r)\leq f((n+3)r)$ which implies that $\sigma^n_{\rho}$ is dominated by $f$.

Now we prove that for all $n\geq 3$ and $\rho\in (0,1]$, $\sigma^n_{\rho}$ dominates $f$ by showing that $\sigma^n_{\rho}(r)\geq f(r)-2r$ for each $r>r_0$. Let $u$ and $v$ be an arbitrary points in $\partial N_r(Y)$ such that $d(u,v)\geq nr$ and there is a path outside the $r$--neighborhood of $Y$ connecting $u$ and $v$. Therefore, $u$ and $v$ must lies in some segment $J_{i_1}$. We can assume that $d(u,x_{i_1-1})=d(v,x_{i_1})=r$. Therefore, \[i_1\geq d(x_{i_1-1},x_{i_1})\geq d(u,v)-2r\geq nr-2r\geq r.\]
This implies that $f(i_1)\geq f(r)$ since $f$ is non-decreasing. Since the subpath $\beta$ of $J_{i_1}$ connecting $u$ and $v$ is the unique path outside the $\rho r$--neighborhood of $Y$ connecting these points, we have
\[d_{\rho r}(u,v)=f(i_1)-2r\geq f(r)-2r.\]
Therefore, $\sigma^n_{\rho}(r)\geq f(r)-2r$ which implies that $\sigma^n_{\rho}$ dominates $f$. Thus, the lower relative divergence $div(X,Y)$ is equivalent to $f$.
\end{proof}

\bibliographystyle{alpha}
\bibliography{Tran}
\end{document}